\newtheorem{The}{Theorem}[section]
\newtheorem{Lemme}[The]{Lemma}
\newtheorem{Prop}[The]{Proposition}
\newtheorem{Cor}[The]{Corollary}
\theoremstyle{definition}
\theoremstyle{remark}
\newtheorem{Rk}[The]{Remark}
\title{\normalsize 
\textbf{FIRST PASSAGE PERCOLATION, LOCAL UNIQUENESS FOR INTERLACEMENTS AND CAPACITY OF RANDOM WALK}}\author{Alexis Prévost\thanks{University of Geneva,
			Section of Mathematics,
			24, rue du Général Dufour,
			1211 Geneva, Switzerland.
			\\Email: \url{alexis.prevost@unige.ch}}
	}
\date{}
\newcommand{\E}{\mathbb{E}}
\newcommand{\R}{\mathbb{R}}
\newcommand{\Z}{\mathbb{Z}}
\newcommand{\N}{\mathbb{N}}
\newcommand{\G}{\mathcal{G}}
\newcommand{\A}{\mathcal{A}}
\renewcommand{\P}{\mathbb{P}}
\newcommand{\eps}{\varepsilon}
\newcommand{\I}{{\cal I}}
\newcommand{\V}{{\cal V}}
\renewcommand{\L}{\mathcal{L}}
\newcommand{\calC}{\mathcal{C}}
\renewcommand{\phi}{\varphi}
\renewcommand{\tilde}{\widetilde}
\renewcommand{\hat}{\widehat}
\renewcommand{\epsilon}{\varepsilon}
\newcommand{\B}{\mathcal{B}}
\definecolor{Red}{rgb}{1,0,0} 
\definecolor{Blue}{rgb}{0,0,1}
\definecolor{Olive}{rgb}{0.41,0.55,0.13}
\definecolor{Yarok}{rgb}{0,0.5,0}
\definecolor{Green}{rgb}{0,1,0}
\definecolor{MGreen}{rgb}{0,0.8,0}
\definecolor{DGreen}{rgb}{0,0.55,0}
\definecolor{Yellow}{rgb}{1,1,0}
\definecolor{Cyan}{rgb}{0,1,1}
\definecolor{Magenta}{rgb}{1,0,1}
\definecolor{Orange}{rgb}{1,.5,0}
\definecolor{Violet}{rgb}{.5,0,.5}
\definecolor{Purple}{rgb}{.75,0,.25}
\definecolor{Brown}{rgb}{.75,.5,.25}
\definecolor{Grey}{rgb}{.7,.7,.7}
\definecolor{Black}{rgb}{0,0,0}
\begin{document}
\maketitle

\vspace{-0.8cm}
\begin{abstract}
\centering
\begin{minipage}{0.75\textwidth}
\vspace{0.3cm}
The study of first passage percolation (FPP) for the random interlacements model has been initiated in \cite{AndPre}, where it is shown that on $\Z^d$, $d\geq3$, the FPP distance is comparable to the graph distance with high probability. In this article, we give an asymptotically sharp lower bound on this last probability, which additionally holds on a large class of transient graphs with polynomial volume growth and polynomial decay of the Green function. When considering the interlacement set in the low-intensity regime, the previous bound is in fact valid throughout the near-critical phase.  In low dimension, we also present two applications of this FPP result: sharp large deviation bounds on local uniqueness of random interlacements, and on the capacity of a random walk in a ball. 
\end{minipage}
\end{abstract}

\vspace{0.2cm}

\section{Introduction}
\label{sec:intro}
First passage percolation (FPP) has been a central topic in probability theory since its introduction in the 1960s by Hammersley and Welsh, and we refer to \cite{ADH17} for a recent survey concerning independent percolation. In the context of models with long-range correlations on $\Z^d$, $d\geq3$, it was shown in \cite{AndPre} that the associated time constant is positive under appropriate conditions.  A prominent example of such a long-range correlated model is random interlacements, which was introduced in \cite{MR2680403} to investigate simple random walk related problems, see for instance \cite{MR2520124,MR2561432,MR2838338,MR3563197}, and to which the method from \cite{AndPre} can be applied, see Theorem~2.2 and Proposition~4.9 therein. The law of the interlacement set $\I^u\subset \Z^d$, $u>0$, is characterized by the identity
\begin{equation}
\label{eq:defIuintro}
    \P(\I^u\cap K=\varnothing)=\exp(-u\mathrm{cap}(K))\text{ for all finite sets }K\text{ and }u>0,
\end{equation}
where $\mathrm{cap}(K)$ denotes the capacity of the set $K$, see \eqref{eq:defequicap}. The existence of a random set $\I^u$ that satisfies \eqref{eq:defIuintro} was established in \cite{MR2680403}, and we refer to Section~\ref{sec:preliminaries} for a more detailed description of its construction. We will only need in this introduction that $\I^u$ can be realized as the union of the traces on $\Z^d$ of infinitely many random walks, and in particular always contains a.s.\ an infinite connected component, which is in fact a.s.\ unique by \cite[Theorem~3.3]{MR3076674}. We further denote by $\V^u\stackrel{\textnormal{def.}}{=}(\I^u)^c$ the vacant set of interlacements, and write $B(N)$ for the ball centered at $0$ and with radius $N$ for the Euclidean distance on $\Z^d$.

Random interlacements is an intriguing percolation model, in particular when considering its vacant set. On $\Z^d$, $d\geq3$, precise asymptotic bounds have been obtained in the study of the associated disconnection event in \cite{li2014lower,MR3602841,nitzschner2020solidification,MR4607724}, and of the associated one-arm event and two-point function in dimension three in \cite{GosRod}. Corresponding results for the Gaussian free field are available in \cite{MR3417515,GRS21}, and for general Gaussian fields on $\R^d$ in \cite{MuiSev}.  These bounds are different and stronger than the corresponding ones for independent percolation, and suggest that random interlacements exhibit a different (near)-critical behavior in low dimension.

The natural FPP distance associated to $\V^u$ counts the minimal number of times a path starting in $0$ visits $\I^u$ before leaving $B(N)$, see \eqref{eq:defduN}. It was established in \cite{AndPre} that this distance is linear in $N$ with high probability throughout the subcritical regime of $\V^u$. In this article, we obtain precise asymptotic bounds for the probability that this FPP distance in dimension three is sublinear, which differs from the case of independent weights. Furthermore, in the low intensity regime $u$ close to $0$, we prove similar asymptotic results for the FPP distance associated to the complement of the "sausage" of width $R$ around $\I^u$. Here, $R=Cu^{-\frac1{d-2}}$ for large $C$, ensuring that the complement of this sausage is subcritical (similarly as $\V^u$, the complement of $\I^u$, was chosen subcritical before). Notably, we significantly enhance this last result  by providing effective bounds throughout the near-critical regime, instead of only asymptotically as $N\rightarrow\infty$, and in any dimension $d\geq3$, instead of only $d=3$. In particular, it exhibits $u^{-\frac1{d-2}}$ as the typical length scale for the FPP of random interlacements in the low-intensity regime.

All our findings actually never use the symmetries of the lattice $\Z^d$,  and in fact hold on a larger class of transient graphs, which are not necessarily transitive. More precisely, we consider $\G=(G,\lambda)$ a transient weighted graph, where $G$ is a countable infinite set, and $\lambda=(\lambda_{x,y})_{x,y\in{G}}\in{[0,\infty)}^{G\times G}$ is a sequence of weights. We further mainly assume that $\G$ has uniformly bounded weights, polynomial volume growth and polynomial decay of the Green function, and refer to Section~\ref{sec:preliminaries} for a more precise description of these assumptions, see also \eqref{eq:standingassumptionintro}. In the context of percolation for random interlacements, a very similar class of graphs was first introduced in \cite{DrePreRod2}, see also \cite{MR2891880} for a close but weaker setup, and then further studied in \cite{DrePreRod5}. This class of graphs encompasses many Cayley and fractal graphs with polynomial volume growth, and we refer to \cite[(1.4)]{DrePreRod2} for concrete examples. For clarity in presenting our results in this section, we will only focus on the case where $G$ is the square lattice $\Z^d$ in dimension $d\geq3$, always implicitly endowed with unit weights. We refer the reader to the end of the introduction for guidance on locating the corresponding results for more general graphs.

Before stating precisely the aforementioned FPP results, we first focus on interesting applications. The first application concerns the near-critical two-point function associated to the Gaussian free field on the cable system on $\Z^d$, $d\in{\{3,4\}}$, and is explained in \cite[Theorem~1.2]{DrePreRod8}. We derive two further interesting applications in this article, concerning respectively local uniqueness for random interlacements, and the capacity of random walk.

\subsection{Local uniqueness for random interlacements}

In our main result, we study at which scale the interlacement set $\I^u$, which is infinite and connected, starts to have the typical properties of a supercritical percolation cluster. Since the capacity of $B(u^{-\frac1{d-2}})$ is of order $u^{-1}$, see \eqref{eq:assumptionZd} and \eqref{eq:capball}, it is clear from \eqref{eq:defIuintro} that one typically starts to observe $\I^u$ at scale $u^{-\frac1{d-2}}$. However, it is less clear whether  $\I^u$ is already really supercritical, in the sense that there is at this scale a unique very large connected component. To assess this for $u>0$ and $N\in{\N}$, we introduce the local uniqueness event \text{LocUniq}$_{u,N}$ for the interlacement set $\I^u$ as the event that every two vertices in $\I^u\cap B(N)$  are connected by a path in $\I^u\cap B(2 N)$; see also \eqref{eq:deflocaluniqnointro} with $\lambda=2$ and $x=0$ therein. Controlling the probability of this event is essential when using intricate renormalization schemes to create highways along which "good" events for a connected component of interlacements occur, see for instance \cite{MR3024098,DrePreRod,DrePreRod2,DrePreRod5}. 

An upper bound on the probability of the event $\text{LocUniq}_{u,N}^c$, where $\text{LocUniq}_{u,N}^c$ is the complement of the event $\text{LocUniq}_{u,N}$, was first obtained in \cite[Proposition~1]{MR2819660} for a fixed $u$ and in any dimension, and the dependency on $u$ was first made explicit in \cite[Lemma~3.2]{DrePreRod}. When $d\in{\{3,4\}}$, this bound, as well as the dependency on $u$, was further improved in \cite[Theorem~5.1]{DrePreRod5}, where it is shown that for $u$ small enough
\begin{equation}
\label{eq:oldboundlocaluniq}
    \P\big(\text{LocUniq}_{u,N}^c\big)\leq \begin{dcases}
        C\exp\Big(-\big({cuN}\big)^{1/3}\Big)&\text{ if }G=\Z^3,\\
         C\exp\Big(-\Big(\frac{c\sqrt{u}N}{\log(N)}\Big)^{2/5}\Big)\!\!\!&\text{ if }G=\Z^4.
         \end{dcases}
\end{equation}
The interest of this last bound is that it converges to $0$ when $uN^{d-2}\rightarrow\infty$, up to logarithmic corrections in dimension four, which seems to be the optimal scaling due to its link with the correlation length of the Gaussian free field on the cable system as explained above \cite[Corollary~5.2]{DrePreRod5}. However, the exact function that appears in the exponential of \eqref{eq:oldboundlocaluniq} seems rather arbitrary. One can in fact improve this function as an consequence of our FPP results.

\begin{The}
\label{the:localuniquenessintro}
Fix some $\eta\in{(0,1)}$. There exist constants $0<c, C<\infty$, depending on $\eta$, such that for all $N\in{\N}$ and $u\in{(0,1/2]}$, if $G=\Z^3$ and $uN\geq C$
\begin{equation}
\label{eq:boundlocaluniqintro}
     \exp\Big(-\frac{(1+\eta)\pi Nu}{3\log(Nu)}\Big)\leq \P\big(\textnormal{LocUniq}_{u,N}^{c}\big)\leq
        \exp\Big(-\frac{(1-\eta)\pi Nu}{3\log(Nu)}\Big);
\end{equation}
whereas if $G=\Z^4$ and $\sqrt{u}N\geq C\log(1/u)$
\begin{equation}
\label{eq:boundlocaluniqintro4}
     \exp\big(-{C\sqrt{u}N}\big)\leq \P\big(\textnormal{LocUniq}_{u,N}^{c}\big)\leq
         \exp\Big(-\frac{c\sqrt{u}N}{\log(1/u)}\Big).
\end{equation}
\end{The}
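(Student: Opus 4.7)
Both inequalities in \eqref{eq:boundlocaluniqintro} and \eqref{eq:boundlocaluniqintro4} follow by sandwiching $\textnormal{LocUniq}_{u,N}^{c}$ between two FPP-type small-distance events with matching exponential rates, and then invoking the sharp two-sided FPP asymptotic stated earlier in the paper as the main FPP result. The key heuristic is that $\textnormal{LocUniq}^c$ is driven by the same rare event as $\{d^u \le 1\}$: the existence of an interlacement trajectory that pierces $B(N)$ while failing to meet the bulk of $\I^u$ inside $B(2N)$.

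\textbf{Upper bound.} If $\textnormal{LocUniq}_{u,N}$ fails, there exist $x,y\in \I^u\cap B(N)$ in distinct connected components $\calC_x,\calC_y$ of $\I^u\cap B(2N)$. Taking $\calC_x$ to be the smaller cluster, a duality argument between vacant clusters and interlacement cuts in the finite volume $B(2N)$ (of the type already exploited in \cite{DrePreRod,DrePreRod5}) provides a vertex $z\in B(2N)$ from which $\V^u$ reaches $\partial B(2N)$ while crossing only trajectories of $\calC_x$; since these trajectories are mutually connected inside $\calC_x$, a single crossing suffices, so $d^u(z,\partial B(2N))\le 1$. Summing the sharp FPP upper bound over the $O(N^d)$ candidate vertices $z\in B(N)$ yields the claimed bound, the polynomial prefactor being absorbed into the $(1-\eta)$ factor under the hypothesis $Nu\ge C$ (respectively $\sqrt{u}N\ge C\log(1/u)$). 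In $\Z^3$ the duality is clean; in $\Z^4$ it is inherently lossier and costs an additional $\log(1/u)$ factor, accounting for the corresponding term in \eqref{eq:boundlocaluniqintro4}.

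\textbf{Lower bound.} For the matching lower bound, exhibit an explicit witness $F\subseteq \textnormal{LocUniq}_{u,N}^{c}$. Fix $v\in B(N/2)$ and let $F$ be the event that $v\in\I^u$ lies on a single interlacement trajectory $\pi$ that enters $B(N)$ and exits $B(2N)$ without ever meeting any other trajectory inside $B(2N)$, while $\I^u\cap B(N)$ contains at least one further vertex (an event of probability $1-o(1)$ when $uN\ge C$ by \eqref{eq:defIuintro} and the capacity estimate). Using the Poisson description of the interlacement point process and conditioning on $\pi$, the probability of the isolation requirement equals $\exp(-u\,\textnormal{cap}(\mathcal{N}(\pi)))$ for a suitable tubular neighbourhood $\mathcal{N}(\pi)$ of $\pi$. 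Optimising the choice of $\pi$ and $\mathcal{N}$ gives $\textnormal{cap}(\mathcal{N}(\pi))\sim \pi N/(3\log N)$ in $\Z^3$, producing \eqref{eq:boundlocaluniqintro}, and the corresponding $\sqrt{u}N$ scale in $\Z^4$, producing the lower bound in \eqref{eq:boundlocaluniqintro4}.

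\textbf{Main obstacle.} The most delicate step is the upper bound, namely the passage from the global disconnection event $\textnormal{LocUniq}^c$ to a local FPP-$\le 1$ event with only a $(1+\eta)$ loss in the exponent. In $\Z^3$ the 3-dimensional duality is tight enough to achieve this, so the matching constant $\pi/3$ is recovered on both sides. In $\Z^4$ the same scheme loses a $\log(1/u)$ factor, which is the source of the gap between the two sides of \eqref{eq:boundlocaluniqintro4}; closing this gap would require a sharper FPP upper bound in $d=4$ than the one supplied by the main FPP theorem of the paper.
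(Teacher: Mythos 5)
Your upper bound rests on a reduction that does not exist, and it is precisely the reduction the paper warns against (see the discussion in the sketch of proofs, just above the paragraph introducing Theorem~\ref{the:boundcapacityintro}): failure of local uniqueness does \emph{not} produce a vertex $z$ with $d_u(z,\partial B(2N))\leq 1$. If $\I^u\cap B(2N)$ splits into two large clusters each containing many trajectories, every point of $B(2N)$ can be surrounded by thick layers of $\I^u$ and the FPP distance to $\partial B(2N)$ stays large, yet $\textnormal{LocUniq}_{u,N}$ fails. There is no planar-type duality in $d=3,4$ turning a disconnection of $\I^u\cap B(2N)$ into a cheap vacant crossing, and the step "a single crossing suffices since the trajectories of $\calC_x$ are mutually connected" confuses the vertex-counting distance \eqref{eq:defduN} with a cluster-counting one. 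Moreover, even granting the reduction, the FPP input you invoke is unavailable in the relevant regime: Theorem~\ref{the:limitVuintro2} requires $u>u_*$ and holds only as $N\to\infty$, $s\searrow 0$, with rate $\pi(\sqrt{u}-\sqrt{u_*})^2/3$, whereas here $u\in(0,1/2]$ may lie below $u_*$, where $\P(d_u(N)\leq 1)$ does not decay at all. The paper's actual route is entirely different: it uses the renormalized FPP bound \eqref{eq:boundonPEintro} at scale $R\sim u^{-1/(d-2)}$ with "typical" meaning "contains a cluster of capacity $\geq cR$", deduces from it the walk-avoidance estimate \eqref{eq:interetmarcheintro}, converts this via the Laplace transform \eqref{eq:linkcapRI} and a Chernov bound into the capacity lower bounds of Theorem~\ref{the:boundcapacitynointro}, and then connects any two trajectories $Y,Z$ hitting $B(x,N)$ through several generations of intermediate trajectories (each generation having large capacity with high probability, so that it is hit by many trajectories of the next, and any two final-generation trajectories intersect with constant probability, cf.\ \eqref{eq:interRW}--\eqref{eq:finalupperloluniq}).

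Your lower bound is in the right spirit — it matches the paper's construction of an isolated trajectory — but the quantitative accounting is incomplete. Conditioning on a \emph{typical} trajectory $\pi$ through $v$, the set $\pi\cap B(2N)$ already has capacity of order $N$ in $\Z^3$, so the isolation probability would be $\exp(-cuN)$, far too small. One must force $\pi$ to be confined in a thin tube of width $p$, which costs an extra factor $\exp(-cN/p)$ (Lemma~\ref{lem:exitviatubes}), and then pay $\exp(-u\,\mathrm{cap}(\mathcal{R}_{N,p}))$ with $\mathrm{cap}(\mathcal{R}_{N,p})\sim \pi N/(3\log(N/p))$; optimizing $p\approx \log(Nu)^2/u$ gives the denominator $\log(Nu)$, not $\log N$ as you wrote. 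One also has to control both the forward and backward halves of the doubly infinite trajectory after it leaves the tube (the events $E_z^{\pm}$ in the paper) and produce a second witness point in a genuinely different cluster; these are routine but not free.
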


Theorem~\ref{the:localuniquenessintro} precisely characterizes the speed at which $\I^u$ becomes a genuine supercritical connected component. This characterization is optimal when $d=3$ and includes a logarithmic adjustment when $d=4$. Note that $\eta$ plays no role in the bounds \eqref{eq:boundlocaluniqintro4} when $d=4$, and the constants therein therefore do not depend on the choice of $\eta$. One can also lower bound the probability of $\text{LocUniq}_{u,N}^c$ by $\exp(-cu^{\frac1{d-2}}N)$ for all $d\geq4$, similarly as in  \eqref{eq:boundlocaluniqintro4}, but this bound is not expected to be sharp anymore when $d\geq5$, even after logarithmic corrections. It would be interesting to derive sharp bounds in this case, and we refer to Remark~\ref{rk:endlocuniq},\ref{rk:sharpalpha<2nu}) for details. In a somewhat related fashion, it is shown in \cite{hernandez2023chemical}  that the time constant associated to the chemical distance of $\I^u$ is of order  $u^{-\frac12}$ for all $d\geq5$. This typical length is the one corresponding to mean-field behavior, whereas the typical length $u^{-1}$ appearing in \eqref{eq:boundlocaluniqintro} for $d=3$ is not mean-field. Some logarithmic corrections to mean-field behavior are usually expected in the critical dimension $d=4$, which explains the additional logarithmic factors appearing in \eqref{eq:boundlocaluniqintro4}.

Many bounds similar to the ones appearing in Theorem~\ref{the:localuniquenessintro}, but for different events or fields, have been recently obtained in \cite{MR3602841,GRS21,DrePreRod5,MR4499015,MuiSev,GosRod}. Theorem~\ref{the:localuniquenessintro} is actually different and more refined than  all these previous bounds in at least three ways. Most of the previous bounds were only valid for a fixed value of the parameter as $N\rightarrow\infty$, whereas Theorem~\ref{the:localuniquenessintro} holds throughout the "near-critical" regime $N\geq cu^{-\frac1{d-2}}$, with $\log$ corrections in dimension four. Therefore, Theorem~\ref{the:localuniquenessintro} can be interpreted as a result on the critical window of the random interlacement set which exhibits $u^{-\frac1{d-2}}$ as its typical length scale  for $d\in{\{3,4\}}$. Moreover, all these previous bounds were either the same in all dimension, which is not the case for Theorem~\ref{the:localuniquenessintro} as previously explained, or only known in dimension three, whereas Theorem~\ref{the:localuniquenessintro} is true for $d\in{\{3,4\}}$. This suggests in particular that a similar extension of the results from \cite{GRS21,GosRod} in dimension four should be true. Finally, when the dependency in $u$ was explicit, all the previous bounds concerning interlacements were of the form $\exp(-u g(N))$ for some function $g$, which naturally appears from \eqref{eq:defIuintro}, or of the form $\exp(-u^2g(N))$ when considering a Gaussian field, which naturally appears from Gaussian large deviation bounds or the isomorphism \cite{MR2892408} with random interlacements. Here, $u$ denotes the difference between the parameter one considers and the critical parameter for percolation. On the contrary, the bounds in Theorem~\ref{the:localuniquenessintro} are the first bounds of the type $\exp(-f(uN^{d-2}))$ for some non-linear function $f$.

Let us also mention that Theorem~\ref{the:localuniquenessintro} can be used to slightly improve the results from \cite[Theorem~1.7 and Proposition~6.1]{DrePreRod5} on critical exponents for the Gaussian free field on the cable system in dimension $d\in\{3,4\}$, since these results were previously proved using \eqref{eq:oldboundlocaluniq}, and we refer to Remark~\ref{rk:endlocuniq},\ref{rk:betterctegffcablesystem}) for details.

\subsection{Capacity of random walk}

Our second application of the FPP results for interlacements concerns the probability that a random walk on $\Z^d$ stays confined inside a region of "size" comparable to $\mathcal{R}_{N,p}$ before exiting $B(N)$, where 
\begin{equation}
\label{eq:RNp}
    \mathcal{R}_{N,p}\stackrel{\textnormal{def.}}{=}([0,N]\times[-p,p]^{d-1})\cap\Z^d
\end{equation}
 denotes the "tube" of length $N$ and width $2p$. We denote by $(X_k)_{k\in{\N_0}}$ the random walk on $\Z^d$ with unit weights, started in $x$ under $\P_x$, see \eqref{eq:pxy} for a more precise definition, and let $T_A\stackrel{\textnormal{def.}}{=}\inf\{k\in{\N}:X_k\notin{A}\}$ be the first exit time of $A\subset G$. Let us also abbreviate $\overline{X}_A\stackrel{\textnormal{def.}}{=}\{X_k\}_{0\leq k< T_{A}}$ for the set of points hit by the random walk before exiting $A$. A natural way to measure the size of a random walk is in terms of its capacity, and we are thus interested in the probability that the capacity of $\overline{X}_{B(N)}$ is smaller than the capacity of $\mathcal{R}_{N,p}$. Upper bounds on this probability were previously obtained in any dimension in \cite[Lemma~6]{MR2819660}, using ideas from \cite[Section~4]{MR2889752}, and then later improved in dimension three and four in \cite[Lemma~5.3]{DrePreRod5}. Noting that the capacity of $\mathcal{R}_{N,p}$ is smaller than $CN/\log(N/p)$ in dimension three, and than $CNp$ in dimension four, see Remark~\ref{rk:capRnp}, these bounds imply that
\begin{equation}
\label{eq:oldboundcapacity}
    \P_0\Big(\mathrm{cap}(\overline{X}_{B(N)})\leq \mathrm{cap}\big(\mathcal{R}_{N,p}\big)\Big)\leq\begin{dcases}
        C\Big(\frac{N}{p}\Big)^{-c}&\text{ if }G=\Z^3,\\
         C\exp\Big(-\big(\frac{cN}{p\log(N)}\big)^{1/2}\Big)\!\!\!&\text{ if }G=\Z^4.
         \end{dcases}
\end{equation}
A lower bound on the left-hand side of \eqref{eq:oldboundcapacity} is the probability that the random walk remains confined in $\mathcal{R}_{N,p}$ before exiting $B(N)$, which can easily be lower bounded by $\exp(-cN/p)$, see Lemma~\ref{lem:exitviatubes} and below \eqref{eq:illustrationcapacitynointro}. The discrepancy between this lower bound and \eqref{eq:oldboundcapacity} warrants further investigation. The proof of the inequality \eqref{eq:oldboundcapacity} essentially uses a bound on the typical value of $\mathrm{cap}(\{X_{1},\dots,X_k\})$, $k\in{\N}$, combined with the strong Markov property and the tail of $T_{B(N)}$. We present here a new method to bound the probability in \eqref{eq:oldboundcapacity} which relies on FPP bounds for interlacements, and yields the following result.

\begin{The}
\label{the:boundcapacityintro}
Fix some $\eta\in{(0,1)}$. There exist constants $0<c\leq C<\infty$, depending on $\eta$, such that for all $N,p\in{\N}$ with $p\leq N$,
\begin{equation}
\label{eq:illustrationcapacityintro}
     c\exp\Big(-\frac{CN}{p}\Big)\leq \P_0\Big(\mathrm{cap}(\overline{X}_{B(N)})\leq \mathrm{cap}\big(\mathcal{R}_{N,p}\big)\Big)\leq\begin{dcases}
        C\exp\Big(-\Big(\frac{cN}{p}\Big)^{1-\eta}\Big)&\text{ if }G=\Z^3,\\
         C\exp\Big(-\frac{cN}{p\log(p)^2}\Big)\!\!\!&\text{ if }G=\Z^4.
     \end{dcases} 
\end{equation}
\end{The}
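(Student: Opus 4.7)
The plan is to prove the two sides of \eqref{eq:illustrationcapacityintro} separately. The lower bound is quick: if the walk stays inside $\mathcal{R}_{N,p}$ until its first exit of $B(N)$, then $\overline{X}_{B(N)}\subseteq\mathcal{R}_{N,p}$, and monotonicity of capacity gives $\mathrm{cap}(\overline{X}_{B(N)})\leq\mathrm{cap}(\mathcal{R}_{N,p})$. The probability of this confinement event is bounded below by $c\exp(-CN/p)$ via the already referenced Lemma~\ref{lem:exitviatubes}, which settles the left-hand side of \eqref{eq:illustrationcapacityintro}.

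For the upper bound I would implement the announced FPP-based method. Sample $X$ and an independent random interlacement $\I^u$ (with parameter $u>0$ to be tuned) on the same probability space. Identity \eqref{eq:defIuintro} gives, conditionally on $X$,
\[
\P\big(\I^u\cap\overline{X}_{B(N)}=\varnothing\,\big|\,X\big)=\exp\big(-u\,\mathrm{cap}(\overline{X}_{B(N)})\big),
\]
so on the event $E_{N,p}\stackrel{\textnormal{def.}}{=}\{\mathrm{cap}(\overline{X}_{B(N)})\leq\mathrm{cap}(\mathcal{R}_{N,p})\}$ this conditional probability is at least $\exp(-u\,\mathrm{cap}(\mathcal{R}_{N,p}))$. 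Moreover, on $E_{N,p}\cap\{\I^u\cap\overline{X}_{B(N)}=\varnothing\}$ the trace of $X$ is a nearest-neighbor path from $0$ to $\partial B(N)$ that entirely avoids $\I^u$, so the FPP distance from $0$ to $B(N)^c$ (see \eqref{eq:defduN}), denoted here $d_{u,N}(0)$, is zero. Combining these two observations and rearranging yields the master inequality
\[
\P_0(E_{N,p})\leq\exp\big(u\,\mathrm{cap}(\mathcal{R}_{N,p})\big)\cdot\P\big(d_{u,N}(0)=0\big),
\]
which reduces everything to pairing sharp FPP estimates against the capacity bounds $\mathrm{cap}(\mathcal{R}_{N,p})\leq CN/\log(N/p)$ on $\Z^3$ and $\mathrm{cap}(\mathcal{R}_{N,p})\leq CNp$ on $\Z^4$ supplied by Remark~\ref{rk:capRnp}.

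To conclude I would plug in the sharp FPP upper bounds of the paper (the same inputs that underlie Theorem~\ref{the:localuniquenessintro}), which for $\P(d_{u,N}(0)=0)$ take the form $\exp(-c_0 uN/\log(uN))$ on $\Z^3$ and $\exp(-c_0\sqrt{u}N/\log(1/u))$ on $\Z^4$, and then optimize in $u$. On $\Z^3$ I would choose $uN\asymp(N/p)^{1-\eta}$, so that $\log(uN)\asymp(1-\eta)\log(N/p)$ and the FPP exponent strictly dominates $u\,\mathrm{cap}(\mathcal{R}_{N,p})$; the net exponent is then of order $-(N/p)^{1-\eta}/\log(N/p)$, which collapses to $-(cN/p)^{1-\eta'}$ after slightly enlarging $\eta$. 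On $\Z^4$ I would balance $uNp$ against $\sqrt{u}N/\log(1/u)$, leading to $\sqrt{u}\asymp 1/(p\log p)$ and a final exponent of order $-N/(p\log(p)^2)$. The main obstacle lies precisely in the $\Z^3$ optimization: extracting the exponent $1-\eta$ requires the FPP constant $c_0$ to be asymptotically comparable to (or larger than) the capacity constant $C$, which is only guaranteed by the asymptotically sharp FPP bound featured in the abstract; a secondary bookkeeping point is to verify that the chosen $u$ always lies in the near-critical window $N\geq cu^{-1/(d-2)}$ where these FPP bounds apply, which reduces to the standing hypothesis $p\leq N$.
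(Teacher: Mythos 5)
Your lower bound and your ``master inequality'' are both fine and coincide with the paper's route: the confinement argument via Lemma~\ref{lem:exitviatubes} is exactly how the left-hand side of \eqref{eq:illustrationcapacityintro} is obtained, and the Chernov step $\P_0(E_{N,p})\leq \exp(u\,\mathrm{cap}(\mathcal{R}_{N,p}))\,\P\otimes\P_0(\I^u\cap\overline{X}_{B(N)}=\varnothing)$ is precisely \eqref{eq:linkcapRI} followed by the exponential Markov inequality, as in the proof of Theorem~\ref{the:boundcapacitynointro}. The gap is in how you then bound $\P\otimes\P_0(\I^u\cap\overline{X}_{B(N)}=\varnothing)$.

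Passing to $\P(d_u(N)=0)$, i.e.\ to the probability that \emph{some} path from $0$ to $B(N)^c$ avoids $\I^u$, destroys the estimate: your optimization forces $u$ of order $(N/p)^{1-\eta}/N$ on $\Z^3$ (and $u\asymp (p\log p)^{-2}$ on $\Z^4$), which for large $N/p$ lies far below $u_*$. In that regime $\V^u$ percolates, so $\P(d_u(N)=0)=\P(0\leftrightarrow B(N)^c\text{ in }\V^u)\geq \P(0\leftrightarrow\infty\text{ in }\V^u)>0$ is bounded away from zero, and the bound $\exp(-c_0uN/\log(uN))$ you invoke is simply false there. The sharp FPP inputs you cite do not cover this step: Theorem~\ref{the:limitVuintro2} requires $u>u_*$, and the near-critical bound \eqref{eq:boundonPEintro} valid for small $u$ concerns the renormalized/sausage distance, not connectivity in $\V^u$ --- precisely because the latter is trivial for small $u$. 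The missing idea is that one must keep the randomness of the walk in play: this is Lemma~\ref{lem:hittinginter} (the rigorous form of \eqref{eq:interetmarcheintro}). There one applies \eqref{eq:boundonPEintro} with $R\asymp u^{-1/(d-2)}$ and the weight ``the $R$-box contains an interlacement cluster of capacity $\gtrsim u^{-1}$'' (typical in $d=3,4$ by \cite[Lemma~5.3]{DrePreRod5}), concluding that except on an event of probability $\exp(-cuN/\log(uN))$ the trace of $X$ comes within distance $R$ of order $uN$ such clusters; the walk then hits each one independently with probability bounded below (up to a $\log$ in $d=4$), which yields the correct decay of $\P\otimes\P_0(\I^u\cap\overline{X}_{B(N)}=\varnothing)$. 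With that lemma in place of your $d_u(N)=0$ reduction, your optimization in $u$ goes through as in the paper.
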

The terms in the exponential in the upper and lower bound of \eqref{eq:illustrationcapacityintro} are roughly of the same order, which, in view of the discussion below \eqref{eq:oldboundcapacity}, suggests that remaining confined in $\mathcal{R}_{N,p}$ might be a typical kind of event that occurs when the capacity of $\overline{X}_{B(N)}$ is smaller than the capacity of $\mathcal{R}_{N,p}$. The reason why there is a discrepancy between the upper and lower  bound in \eqref{eq:illustrationcapacityintro} in dimension three is because three is the critical dimension for the capacity of $\mathcal{R}_{N,p}$, see for instance Lemma~\ref{lem:capatube} below, which in particular requires very strong bounds to achieve exponential decay, see Theorem~\ref{the:boundcapacitynointro}. This was also the reason why the bound \eqref{eq:oldboundcapacity} from \cite{DrePreRod5} was particularly weak in dimension three. In dimension four, this discrepancy comes from the fact that this is the critical dimension for the capacity of $\overline{X}_{B(N)}$, see  for instance \cite[Lemma~2.4]{DrePreRod2}. In fact, in intermediate dimension strictly between three and four and for a diffusive walk, one can obtain matching upper and lower bounds (up to constants) for the probability in \eqref{eq:illustrationcapacityintro}, see \eqref{eq:illustrationcapacitynointro}. When $d>4$,  the lower bound in \eqref{eq:illustrationcapacityintro} is still satisfied, but we do not expect it to be sharp anymore, see Remark~\ref{rk:endlocuniq},\ref{rk:sharpalpha<2nu}) for details.

Note that contrary to a lot of the existing literature, see for instance \cite{ChangYinshan2017Toot,AsselahAmine2018Cotr,AsselahAmine2019COTR,asselah2020deviations,dembo2022capacity,adhikariokada}, we study the capacity of $\overline{X}_{B(N)}$, i.e.\ the walk before exiting $B(N)$, and not its range $\{X_k,k\leq N\}$, and it would be interesting to also obtain large deviation bounds on the capacity of the latter in low dimension, which would however probably be of different order. Note that both questions are interesting in terms of intersection of random walks: bounds on the capacity of $\overline{X}_{B(N)}$ can be useful to compute the probability that two random walks intersect in a given ball, whereas bounds on the capacity of $\{X_k,k\leq N\}$ can be useful to compute the probability that two random walks intersect before a given time. Another possibility is to study the capacity of $\{X_k,k\geq0\}\cap B(N)$ instead of $\overline{X}_{B(N)}$, and in fact Theorem~\ref{the:boundcapacityintro} remains true for this change, see Remark~\ref{rk:overdefXbar}. We also refer to \cite{li2022large} for large deviations bounds on the capacity of random interlacements in a box. 

In \cite{MR2819660,DrePreRod,DrePreRod5}, the upper bounds \eqref{eq:oldboundcapacity} (or weaker version of these bounds) have been used to control the probability of local uniqueness for random interlacements. In fact, we will similarly use the better bounds from Theorem~\ref{the:boundcapacityintro} to prove the upper bounds from Theorem~\ref{the:localuniquenessintro} on local uniqueness for random interlacements. Some additional work to obtain these better bounds is however required compared to \cite{DrePreRod5}, see Remark~\ref{rk:endlocuniq},\ref{rk:differentlocaluniq}) as to why, as well as the end of the introduction for an outline of the proof. Interestingly, even if the discrepancy in dimension three in Theorem~\ref{the:boundcapacityintro} may seem stronger than in dimension four, one deduces from it optimal bounds on the local uniqueness event in Theorem~\ref{the:localuniquenessintro} in dimension three, whereas the log adjustments from Theorem~\ref{the:boundcapacityintro} carry other to Theorem~\ref{the:localuniquenessintro} in dimension four. This is due to the fact that the bounds from Theorem~\ref{the:boundcapacityintro} are in fact much stronger in dimension three than four when bounding the probability that the capacity of $\overline{X}_{B(N)}$ is smaller than $N^{d-2}/t$ ($CN^{d-2}$ being the maximal value it can reach), rather than bounding the probability that this capacity is smaller than the (dimension-dependent) capacity of the tube $\mathcal{R}_{N,p}$, see Theorem~\ref{the:boundcapacitynointro}.

\subsection{First passage percolation}

We now highlight a few first passage percolation results of interest, which are presented here on $\Z^d$ and in a simplified context compared to the rest of the article for the reader's convenience. We refer to Theorems~\ref{the:FPP}, \ref{the:limitVunointro} and~\ref{the:FPPnointro} for complete statements on more general graphs. We are first interested in the following FPP distance 
\begin{equation}
\label{eq:defduN}
d_{u}(N)\stackrel{\textnormal{def.}}{=}\inf_{\gamma:0\leftrightarrow B(N)^c}|\text{range}(\gamma)\cap\I^u|,
\end{equation}
where the infimum is taken over all nearest neighbor paths $\gamma\subset \Z^d$ from $0$ to $B(N)^c$, and $\text{range}(\gamma)$ is the set of vertices visited by $\gamma$. This FPP distance was proved in \cite{AndPre} to typically grow linearly in $N$  for all $u>u_{**}$, where $u_{**}$ is the critical parameter above which $\V^u$ is strongly non-percolating, see \eqref{eq:defu**}. In other words, any path from $0$ to $B(N)^c$ will intersect $\I^u$ at least $cN$ times with high probability for any $u>u_{**}$. In the series of articles \cite{sharpnessRI1,sharpnessRI2,sharpnessRI3}, it was recently proven that on $\Z^d$, $d\geq3$, the phase transition associated to $\V^u$ is sharp. In other words, $u_{**}$ is actually equal to the critical parameter $u_*$ associated to the percolation of $\V^u$, that is the largest $u$ such that $\V^u$ contains an infinite cluster with positive probability; see also \cite{MR4568695} for a similar result for the Gaussian free field. Combining this with \cite[Theorem~2.2 and Proposition~4.9]{AndPre}, one deduces that for all $\eta>0$, $u>u_*$, $s\leq c$ and $N\geq 1$ 
\begin{equation}
\label{eq:previousbounddu}
    \P(d_{u}(N)\leq sN))\leq \begin{dcases}
        C\exp\Big(-\frac{cN}{\log(N)^{4(1+\eta)}}\Big)&\text{ if }G=\Z^3,\\
         C\exp\big(-{cN}\big)\!\!\!&\text{ if }G=\Z^d,\,d\geq4
         \end{dcases}
\end{equation}
for some constants $0<c,C<\infty$ depending on $u$, $\eta$ and $d$. The bound \eqref{eq:previousbounddu} is optimal up to constants (depending on $u$) by the FKG inequality when $d\geq4$, but not necessarily when $d=3$. Our first main FPP result is to obtain the exact asymptotic of the probability in \eqref{eq:previousbounddu} when $d=3$.

\begin{The}
\label{the:limitVuintro2}
Let $G=\Z^3$. For all $u>u_{*}$,  
\begin{equation}
\label{eq:limitVuintro}
    \lim\limits_{s\searrow0}\lim\limits_{N\rightarrow\infty}\frac{\log(N)\log(\P(d_{u}(N)\leq sN))}{N}= -\frac{\pi(\sqrt{u}-\sqrt{u_{*}})^2}{3}.
\end{equation}
\end{The}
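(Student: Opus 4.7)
The plan is to establish matching asymptotic upper and lower bounds on $\P(d_u(N)\leq sN)$. The $N/\log N$ rate reflects the capacity asymptotics of thin tubes in $\Z^3$, and the constant $\pi(\sqrt u-\sqrt{u_*})^2/3$ should arise from a Cameron--Martin tilt of the Gaussian free field (GFF) on the cable system via the Lupu-type isomorphism with random interlacements. Sharpness $u_*=u_{**}$ (recently established in \cite{sharpnessRI1,sharpnessRI2,sharpnessRI3}) ensures that $u_*$ is the correct pivot around which the Gaussian shift should be centered.

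\textbf{Lower bound on $\P(d_u(N)\leq sN)$.} For fixed $s,\eta>0$, I would take a thin straight tube $T=[0,N]\times[-p,p]^2\cap\Z^3$ with $p=p(N)$ chosen so that $\log(N/p)=(1-o(1))\log N$; standard asymptotics then yield $\mathrm{cap}(T)=(1+o(1))\pi N/(3\log(N/p))$. Using the GFF--interlacement coupling on the cable system, I would tilt the GFF $\varphi$ by a harmonic function equal to $\sqrt{2u}-\sqrt{2u_*}$ on $T$, at Radon--Nikodym cost $\exp(-(1+o(1))(\sqrt u-\sqrt{u_*})^2\mathrm{cap}(T))$. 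Under the tilt, the law of $\I^u\cap T$ becomes that of $\I^{u_*}\cap T$, placing us effectively at criticality inside the tube. A near-critical tube-confined variant of local uniqueness, together with FPP bounds for $\I^{u_*}$ as provided by the general theory developed earlier in the paper, then produces with positive probability a nearest-neighbor path from $0$ to $\partial B(N)$ through $T$ meeting $\I^u$ at most $sN$ times. Combining the two contributions yields the lower bound with constant $\pi(\sqrt u-\sqrt{u_*})^2/3$.

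\textbf{Upper bound on $\P(d_u(N)\leq sN)$.} For small $s$, I would argue that $\{d_u(N)\leq sN\}$ forces, via a multi-scale renormalization at scale $L\sim \log N$, the existence of a connected chain of coarse boxes linking $0$ to $B(N)^c$ along which $\I^u$ is atypically depleted (so that a subcritical path can thread it). The number of such chains is subexponential in $N/\log N$, so it suffices to bound the probability for a fixed chain $T$. The depletion event is controlled via the GFF isomorphism: it requires $\varphi$ to be unusually large on (a thickening of) $T$, and a sharp Gaussian large-deviation estimate (Cameron--Martin applied to the equilibrium potential of $T$) gives probability at most $\exp(-(1-o(1))(\sqrt u-\sqrt{u_*})^2\mathrm{cap}(T))$. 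With $\mathrm{cap}(T)\geq(1-o(1))\pi N/(3\log N)$, this matches the lower bound.

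\textbf{Main obstacle.} The hardest part is the upper bound: converting the combinatorial FPP event into a sharp Gaussian depletion event along a specific tube, and controlling the entropy of the choice of chain, so that the constant $\pi(\sqrt u-\sqrt{u_*})^2/3$ is neither inflated by the union bound nor weakened by suboptimal tube geometry. The full FPP machinery developed elsewhere in the paper (in particular Theorem~\ref{the:FPP}) together with precise capacity asymptotics for thin tubes and the sharpness of the phase transition are expected to play the central role.
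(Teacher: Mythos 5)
Your proposal correctly identifies the two sources of the constant --- the capacity asymptotics $\mathrm{cap}\approx \pi N/(3\log N)$ for thin tubes in $\Z^3$ and a quadratic-in-$(\sqrt{v}-\sqrt{u})$ large-deviation rate for the occupation field --- and the role of $u_*=u_{**}$. But the division of labour differs from the paper's (which proves only the upper bound on $\P(d_u(N)\leq sN)$, via Theorem~\ref{the:limitVunointro} and the general Theorem~\ref{the:FPP}, and imports the lower bound wholesale from \cite{GosRod}), and both halves of your sketch have genuine gaps.

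For the upper bound, which is the actual content of the theorem here, the central gap is that $\{d_u(N)\leq sN\}$ does not directly force "depletion of $\I^u$ along a tube". After coarse-graining at scale $L\sim\log N$ it only yields a chain of boxes in each of which the \emph{local} FPP distance is atypically small, and converting this into an occupation-time large deviation requires two inputs your sketch omits: (i) an a priori estimate that the local FPP distance across a box of size $L$ is linear in $L$ except with small probability (Proposition~\ref{pro:apriori}; without it nothing links "short FPP distance" to "depleted interlacements"), and (ii) a decoupling of the boxes conditionally on the weighted occupation time of the chain being typical (Propositions~\ref{pro:thm4.2disco} and~\ref{pro:softlocaltimes}); the atypical-occupation alternative is what carries the cost $\exp(-(1+\eta)^{-1}(\sqrt{v}-\sqrt{u})^2\mathrm{cap}(\Sigma(\calC)))$, obtained from the explicit Laplace transform \eqref{eq:lap2} of interlacement local times with an optimized Chernoff parameter rather than from a GFF Cameron--Martin bound. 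Moreover the entropy of the chains is \emph{not} subexponential in $N/\log N$: by \eqref{eq:cardA} it is $\exp(C\log(K)N/(K\log N))$, of the same order as the main term, and beating it without spoiling the constant $\pi/3$ is one of the key technical points (the improved Proposition~\ref{pro:entropy}). For the lower bound, your tilting heuristic gives the right constant but is not a proof: the isomorphism of \cite{MR2892408} couples $\ell_{x,u}+\frac12\varphi_x^2$ with $\frac12(\varphi_x+\sqrt{2u})^2$, not $\ell_{x,u}$ alone, so a harmonic shift of $\varphi$ does not simply turn $\I^u\cap T$ into $\I^{u_*}\cap T$, and the residual positive-probability threading event sits exactly at criticality; this is the substance of \cite{GosRod}, which the paper cites rather than reproves.
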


The lower bound in \eqref{eq:limitVuintro} is proved in \cite{GosRod}, and the main interest of Theorem~\ref{the:limitVuintro2} is thus to show that this lower bound is sharp. Theorem~\ref{the:limitVuintro2} improves \eqref{eq:previousbounddu} by both identifying the leading order in $N$ of the probability in \eqref{eq:previousbounddu} as $\exp(-cN/\log(N))$, as well as by making the constant $c=c(u)$ appearing in this exponential explicit for $N$ large and $s$ small. Note that the left-hand side of \eqref{eq:limitVuintro} is equal to $0$ for $u<u_*$ by \cite[Proposition~2.1]{AndPre}, and also  presumably for $u=u_*$. It is not clear if one can find an explicit value for the left-hand side of \eqref{eq:limitVuintro} for a fixed $s>0$, that is without taking the limit $s\searrow0$. This value would be in any case different from the right-hand side of \eqref{eq:limitVuintro}, as can be seen using the inequality $\P(d_{u}(N)\leq sN)\geq \P(d_{u}(N(1-s/2))\leq sN/2)$.

When $s=0$, the event $d_u(N)\leq sN$ exactly corresponds to the probability of connecting $0$ to $B(N)^c$ in  $\mathcal{V}^u$. In particular, independently of the current work, the upper bound in \eqref{eq:limitVuintro} but for $s$ fixed equal to $0$ was obtained in \cite{GosRod} as one of its results, see also \cite[Theorem~1.1]{GRS21} or \cite[Theorem~1.11]{MuiSev} for similar bounds for the Gaussian free field, and a class of Gaussian fields with covariance between $x$ and $y$ decaying essentially as $|x-y|^{-\nu}$, $\nu\leq1$, respectively. We refer to Remark~\ref{rk:decayVu} for more information on the additional challenges that one must overcome to obtain the upper bound in \eqref{eq:limitVuintro} on the FPP distance compared to the bound from \cite{GosRod} on the probability to  connect $0$ to $B(N)^c$ in  $\mathcal{V}^u$.

The FPP distance from \eqref{eq:defduN} is the most natural choice associated to the percolation of $\V^u$, as it counts the minimal number of times a path starting in $0$ intersects $\I^u=(\V^u)^c$ before reaching $B(N)^c$. It is however only interesting when $u>u_*$, since whenever $\V^u$ percolates, $d_u(N)$ becomes eventually constant with respect to $N$. For a fixed $u$ which can be chosen arbitrarily small, one way to generalize  \eqref{eq:defduN} in a non-trivial FPP distance is to count the minimal number of times a path starting in $0$ intersects the "sausage" $B(\I^u,R)$ before reaching $B(N)^c$, where $B(\I^u,R)$ is the set of points within distance $R$ from $\I^u$. If $uR^{d-2}$ is large enough, then $B(\I^u,R)^c$ does not percolate. This can be demonstrated by noting that $\P(0\in{B(\I^u,R)})$ can be chosen arbitrarily close to $1$ when $uR^{d-2}$ is large in view of \eqref{eq:defIuintro} and since the capacity of a ball of radius $R$ is of order $R^{d-2}$, see \eqref{eq:capball}, and combining this observation with the decoupling inequalities from Lemma~\ref{pro:softlocaltimes}. In particular, the associated FPP distance is not eventually constant anymore. Moreover, the method from Theorem~\ref{the:limitVuintro2} still applies, and so this FPP distance increases linearly with high probability, as discussed in Remark~\ref{rk:endthmintro},\ref{rk:EvsE'}).

It is actually more natural, and useful for later purposes, to  count the whole ball of radius $R$ around $x\in{\I^u}$ only once in the definition of the FPP distance. When considering the  $|\cdot|_{\infty}$ distance for the sausage, which essentially does not change the problem up to rescaling $R$, this can be achieved by restricting the FPP distance to translations of the set $(-\tfrac{R}{2},\tfrac{R}2]^d$ by vertices in the renormalized lattice $R\Z^d=\{Rx:\,x\in{\Z^d}\}$. In other words, one counts the minimal number of $x\in{R\Z^d}$ such that $\I^u$ intersects $x+(-\tfrac{R}{2},\tfrac{R}2]^d$ (which form a partition of $\Z^d$), for all $x$ along a path on $R\Z^d$ from $0$ to $B(N)^c$. Here, we say that $(x_1,\dots,x_m)$ is a path on $R\Z^d$ if $x_i/R$ and $x_{i+1}/R$ are neighbors in $\Z^d$ for all $i<m$. One can then ask with which probability is this renormalized FPP distance proportional to the graph distance on $R\Z^d$, and we now introduce the corresponding event
\begin{equation}
\label{eq:defEintro}
    E=E_{s,R,u}\stackrel{\textnormal{def.}}{=}\left\{\begin{gathered}
        \text{there exists a path in }R\Z^d\text{ from }0\text{ to }B(N)^c\text{ which intersects at}
        \\\text{most }\tfrac{sN}{R}\text{ different }x\in{R\Z^d}\text{ such that }\I^u\cap \big(x+(-\tfrac{R}{2},\tfrac{R}2]^d\big)\text{ is typical}
    \end{gathered}\right\}.
\end{equation}
What is meant by $\I^u\cap \big(x+(-\tfrac{R}{2},\tfrac{R}2]^d\big)$ is typical is that it satisfies a certain property (that one is free to choose) which is increasing and occurs with high enough probability (but not with probability one), even after decreasing the parameter $u$ by some given multiplicative factor. For instance, if one chooses the property that $\I^u\cap \big(x+(-\tfrac{R}{2},\tfrac{R}2]^d\big)\neq\varnothing$, which is typical when $uR^{d-2}$ is large as explained before, the event $E$ says that the FPP distance introduced above \eqref{eq:defEintro} is smaller than $sN/R$, and we are thus interested in the probability of this event. For any $\eta\in{(0,1)}$ and $u>0$, if $R=\zeta u^{\frac1{d-2}}$ for some $\zeta\geq C$, $s\leq c$ and $N\geq Cu^{\frac1{d-2}}$, then we show that
\begin{equation}
\label{eq:boundonPEintro}
\begin{split}
       \exp\left(-\frac{\pi(1+\eta)Nu}{3\log(Nu)}\right)&\leq \P(E)\leq 
    \exp\left(-\frac{\pi(1-\eta)Nu}{3\log(Nu)}\right)\text{ if }G=\Z^3,
    \\        \exp\left(-CNu^{\frac{1}{d-2}}\right)&\leq \P(E)\leq 
     \exp\left(-cNu^{\frac{1}{d-2}}\right)\text{ if }G=\Z^d, d\geq4,
\end{split}
\end{equation}
where the constants $0<c,C<\infty$ depend only on $\zeta$, $\eta$ and $d$. The bounds in \eqref{eq:boundonPEintro} are valid when $\I^u\cap \big(x+(-\tfrac{R}{2},\tfrac{R}2]^d\big)$ is typical with sufficiently high probability. We state \eqref{eq:defEintro} and \eqref{eq:boundonPEintro} in a rather loose way here for the sake of readability, and refer the reader to Theorem~\ref{the:FPPnointro} for a more precise statement, see in particular below \eqref{eq:defgzZd} for a precise definition of "typical" in \eqref{eq:defEintro}.

On $\Z^3$, \eqref{eq:boundonPEintro} can be loosely interpreted as a version of Theorem~\ref{the:limitVuintro2} associated with the percolation of the complement $B(\I^u,R)^c$ of the "sausage" around $\I^u$  instead of the complement $\V^u$ of $\I^u$. The intuitive reason why $u_*$ appears in \eqref{eq:limitVuintro} but not in \eqref{eq:boundonPEintro} is that the critical parameter associated to the percolation of $B(\I^u,R)^c$ can be made arbitrarily small when $uR^{d-2}$ is large enough, and thus absorbed in the factor $\eta$ in \eqref{eq:boundonPEintro}.  The bounds in \eqref{eq:boundonPEintro} are moreover much stronger than the result \eqref{eq:limitVuintro} since they hold for $Nu$ large enough, instead of $N\rightarrow\infty$, and can thus be interpreted as a result on the critical window of the random interlacement set.  In addition, on $\Z^d$, $d\geq4$, the bounds in \eqref{eq:boundonPEintro} are much stronger than the ones from \eqref{eq:previousbounddu}, as the dependency of the constants in the exponential is explicit in $u$. Theorems~\ref{the:localuniquenessintro} and~\ref{the:boundcapacityintro} are applications of the bound \eqref{eq:boundonPEintro}, which are obtained when considering in \eqref{eq:defEintro} that $\I^u\cap \big(x+(-\tfrac{R}{2},\tfrac{R}2]^d\big)$ is typical when its capacity is roughly comparable to that of $x+(-\tfrac{R}{2},\tfrac{R}2]^d$. Using \cite[Lemma~3]{MR2819660}, one can easily see that the expected value of the capacity of a random walk (that one can see as being part of $\I^u$) in a box is in fact much smaller than the capacity of this box if  $d\geq5$. The reason we restrict ourselves to dimensions three and four in Theorems~\ref{the:localuniquenessintro} and~\ref{the:boundcapacityintro} is thus to make sure that the previous event is indeed typical.

The bounds \eqref{eq:boundonPEintro} moreover imply that the time constant  for the FPP associated to the percolation of $B(\I^u,R)^c$ is proportional to $u^{\frac1{d-2}}$ for $\zeta=uR^{d-2}$ large enough, whereas the dependency of the time constant in $u$ was not clear in \eqref{eq:limitVuintro}.  Proving this result alone would however be much easier than obtaining the sharp bounds from \eqref{eq:boundonPEintro}, see Proposition~\ref{pro:apriori}, and could actually be deduced on $\Z^d$ from \cite{AndPre} (but not on more general graphs), see Remark~\ref{rk:apriori},\ref{rk:previousFPPbounds}). 

Finally, our proof of Theorems~\ref{the:limitVuintro2} and~\eqref{eq:boundonPEintro} is in fact not restricted to the FPP problem associated to the percolation of $B(\I^u,R)^c$, $R\geq0$, and one can in fact choose any other FPP distance which depend on interlacements in a monotonic fashion, see Theorem~\ref{the:FPP}. Moreover, our method can be extended when considering FPP distances between two points $x$ and $y$ at distance $N$, instead of between $0$ and $B(N)^c$ as in \eqref{eq:defduN}, see Remark~\ref{rk:endthmintro},\ref{rk:2ptfunction}), or alternatively between $B(N)$ and $B(2N)^c$, see Remark~\ref{rk:endthmintro},\ref{rk:annuluscrossing}). It is also likely that our strategy can be applied to other processes such as the Gaussian free field, see Remark~\ref{rk:endthm},\ref{rk:extensiontoGFF}).

\subsection{Sketch of the proofs}

Let us now briefly comment on the proof of our results. We refer to the paragraphs above Section~\ref{sec:coarsegraining} for some explanation on the proof of our FPP results Theorem~\ref{the:limitVuintro2} and \eqref{eq:boundonPEintro}, which is based on ideas from \cite{MR3602841,GRS21} suitably extended from percolation to first passage percolation, and that we will additionally generalize from $\Z^d$ to the more general class of graphs we consider in this article. On the way, we improve and partially simplify the previous techniques, and draw for instance the attention of the reader to Proposition~\ref{pro:entropy} (see Remark~\ref{rk:decayVu},\ref{rk:noapriori}) for details on its interest), which is an improvement of the coarsening of crossing paths result from \cite[Proposition~4.3]{GRS21}, and Proposition~\ref{pro:softlocaltimes} (see the paragraph below this proposition as to why it simplifies our proof), which is an improved version of the decoupling of interlacements result from \cite[Proposition~5.1]{MR3602841}.

We are now going to describe how the upper bounds in Theorems~\ref{the:localuniquenessintro} and \ref{the:boundcapacityintro} can be deduced from \eqref{eq:boundonPEintro}, as well as how to prove the corresponding lower bounds, and we focus on the case $d=3$ for simplicity, the case $d=4$ being very similar with some additional logarithmic corrections. The main step in the proof of both these results is the following asymptotic for the probability that a random walk avoids the interlacement set before exiting a ball of size $N$ around its starting point: on $\Z^3$ and when $Nu$ is large enough,
\begin{equation}
    \label{eq:interetmarcheintro}
         \P\otimes\P_x(\mathcal{I}^u\cap \overline{X}_{B(x,N)}=\varnothing)
        \approx\exp\left(-\frac{\pi Nu}{3\log(Nu)}\right),
\end{equation}
where $\approx$ roughly means that one can prove bounds similar to the ones from \eqref{eq:boundonPEintro}, and we refer to Lemma~\ref{lem:hittinginter} for a more precise statement. Note that a similar probability but for the intersection of two independent interlacements has been studied in \cite[Theorem~1.2]{li2022large}. In order to prove \eqref{eq:interetmarcheintro}, we use \eqref{eq:boundonPEintro}, where $\I^u\cap \big(x+(-\tfrac{R}{2},\tfrac{R}2]^d$ is said to be typical in \eqref{eq:defEintro} if its capacity is larger than $cR$. This event is indeed typical since the capacity of a random walk is larger than $cR$ with high probability by \cite[Lemma~5.3]{DrePreRod5}. This implies that, except on an event with probability of the same order as the right-hand side of \eqref{eq:interetmarcheintro},  any path from $0$ to $B(N)^c$  must hit $x+(-\tfrac{R}{2},\tfrac{R}2]^d$ for at least $cNu$ different $x\in{R\Z^d}$ such that $x+(-\tfrac{R}{2},\tfrac{R}2]^d$ contains a cluster of interlacements with capacity at least $cR$. In other words, except on the same event as before, a random walk started in $x$ will be at distance less than $R$ from a random interlacement cluster with capacity larger than $CR$ at least $cNu$ times before exiting $B(x,N)$, and since each time it has independently a constant probability to intersect the interlacement cluster and $cNu$ is much smaller than $Nu/\log(Nu)$, this yields the upper bound in \eqref{eq:interetmarcheintro}.

To show that the left-hand side of \eqref{eq:interetmarcheintro} can also be lower bounded by its right-hand side, one simply asks that the walk remains confined in the tube $\mathcal{R}_{N,p},$ see \eqref{eq:RNp} and that the interlacement set $\I^u$ does not hit this tube, and one can conclude after optimizing the choice of $p=p(N,u)$. In view of \eqref{eq:defIuintro}, this optimal choice of $p$ depends on the capacity of this tube, which is computed in Lemma~\ref{lem:capatube} and is of order $N/\log(N/p)$. We refer to \cite[Proposition~6.1]{DrePreRod5} for a proof using similar ideas in a slightly different context.  

Let us now explain the link between Theorem~\ref{the:localuniquenessintro} and \eqref{eq:interetmarcheintro}. The lower bound in \eqref{eq:boundlocaluniqintro} can be intuitively deduced from \eqref{eq:interetmarcheintro} as follows: if a random walk can avoid the interlacement set in $B(x,\lambda N)$ with probability at least the right-hand side of \eqref{eq:interetmarcheintro}, then any trajectory of random interlacements (which consist of doubly-infinite random walks) hitting $B(x,N)$ should be able to also avoid all the other trajectories of random interlacements in $B(x,\lambda N)$, which contradicts local uniqueness, with the same probability. Note that actually for technical reasons, we do not deduce the lower bound in \eqref{eq:boundlocaluniqintro} from \eqref{eq:interetmarcheintro}, but the proof ideas are very similar. However, the upper bound in \eqref{eq:boundlocaluniqintro} cannot be directly deduced from the upper bound in \eqref{eq:interetmarcheintro} since it is possible that every trajectory of random interlacements hitting $B(x,N)$ intersect some other trajectory of random interlacements in $B(x,\lambda N)$, without having local uniqueness (this is for instance the case if there are two big clusters of random interlacements in $B(x,\lambda N)$, each containing many trajectories).

In order to be able to connect any two trajectories of random interlacements together, we will actually first prove the upper bounds in Theorem~\ref{the:boundcapacityintro}, which are direct consequences of \eqref{eq:interetmarcheintro}. Indeed, by  \eqref{eq:defIuintro} we have for all $x\in{G}$, $u>0$ and $N\geq1$
\begin{equation}
\label{eq:linkcapRI}
    \E_x\big[\!\exp(-u\mathrm{cap}(\overline{X}_{B(x,N)})\big]=\P\otimes\P_x(\mathcal{I}^u\cap \overline{X}_{B(x,N)}=\varnothing),
\end{equation}
and so \eqref{eq:interetmarcheintro} gives bounds on the Laplace transform of $\mathrm{cap}(\overline{X}_{B(x,N)})$. Using Chernov bounds, the upper bound in \eqref{eq:illustrationcapacityintro} follows easily. For the corresponding lower bound, one simply asks that the random walk remains confined in $\mathcal{R}_{N,p}$, which occurs with probability $\exp(-cN/p)$, see Lemma~\ref{lem:exitviatubes} and below \eqref{eq:illustrationcapacitynointro}.

It remains to explain how to deduce the upper bound in \eqref{eq:boundlocaluniqintro} from Theorem~\ref{the:boundcapacityintro}, which uses a new strategy different from \cite{DrePreRod5}, see Remark~\ref{rk:endlocuniq},\ref{rk:differentlocaluniq}). One essentially considers any two trajectories $Y$ and $Z$ of random interlacements hitting $B(x,N)$, and computes the probability that some trajectory of random interlacements hitting $Y$ in $B(x,(1-\eta)^2 \lambda N)$ will intersect in $B(x,\lambda N)$ another trajectory of random interlacements hitting $Z$ in $B(x,(1-\eta)^2\lambda N)$. The number of trajectories of random interlacements hitting $Y$ or $Z$ in $B(x,(1-\eta)^2\lambda N)$ is of order $u$ times their capacity as explained above \eqref{eq:defRI}, which is larger than $u((1-\eta)^2\lambda-1)N/t$ with a probability which can be deduced from Theorem~\ref{the:boundcapacityintro}. Moreover, one can easily show that any two of these $cuN/t$ trajectories started in $B(x,(1-\eta)^2\lambda N)$ will intersect each other with constant probability in $B(x,\lambda N)$, see \eqref{eq:interRW}, which essentially finishes the proof of Theorem~\ref{the:localuniquenessintro} up to constants after optimizing on $t$. One actually needs to use additional trajectories in order to obtain the exact constant $\pi/3$ in the exponential on the right-hand side of \eqref{eq:boundlocaluniqintro}, and we refer to Remark~\ref{rk:endlocuniq},\ref{rk:differentlocaluniq}) for details.

\subsection{Outline of the rest of the article}

We now describe how this article is organized. Section 2 introduces the precise
framework, including the set of standing assumptions we will make on the graph $\G$, which we list here for the reader's orientation:
\begin{equation}\label{eq:standingassumptionintro}
\eqref{eq:intro_sizeball},\eqref{eq:intro_Green},\eqref{eq:intro_Green_asymp},\eqref{p0},\eqref{eq:condnualpha}\text{ and }\eqref{eq:intro_shortgeodesic}.
\end{equation} 
It further defines random interlacements, and derives in Lemma~\ref{lem:capatube} useful bounds on the capacity of tubes.

Section~\ref{sec:FPP} is devoted to the proof of Theorem~\ref{the:FPP}, which provides a general bound on first passage percolation for random interlacements. We gather the main pieces of the proof in Section~\ref{sec:coarsegraining}, see in particular Lemma~\ref{lem:coarsegraining} therein, which together with the a priori estimate from Section~\ref{sec:apriori} are assembled in Section~\ref{sec:prooftheFPP}. The main building blocks used in Section~\ref{sec:coarsegraining} are Propositions~\ref{pro:thm4.2disco}, \ref{pro:softlocaltimes} and~\ref{pro:entropy}, which are in turn proved in Section~\ref{sec:proofof3prop}. 

The purpose of Section~\ref{sec:proofofFPPintro} is to quickly explain how to deduce the upper bounds in Theorems~\ref{the:limitVuintro2} and~\eqref{eq:boundonPEintro} from the more general Theorem~\ref{the:FPP}, and to prove the matching lower bounds. It also contains their generalization to graphs satisfying \eqref{eq:standingassumptionintro}, see Theorems~\ref{the:limitVunointro} and~\ref{the:FPPnointro}. Finally, Section~\ref{sec:localuniqueness} is centered around the proof of our applications of \eqref{eq:boundonPEintro}, which starts with proving \eqref{eq:interetmarcheintro}, see Lemma~\ref{lem:hittinginter}. Then, following the strategy described above, we deduce in Theorems~\ref{the:boundcapacitynointro} and~\ref{the:localuniquenessnointro} the respective generalization of Theorems~\ref{the:localuniquenessintro} and \ref{the:boundcapacityintro} to graphs satisfying \eqref{eq:standingassumptionintro}.

We conclude this introduction with our convention regarding constants. Throughout $c$, $c'$, $C$, $C'$, \dots\ denote positive constants changing from place
to place. Numbered constants $c_1$, $C_1$, $c_2$, $C_2$, \dots\ are fixed when they first appear and do
so in increasing numerical order. All constants depend implicitly on the choice of the graph $\G$ endowed with a distance $d$
through the conditions from \eqref{eq:standingassumptionintro}, and in particular they often depend on $\alpha$ and
$\nu$. Their dependence on any other quantity will be made explicit.

\medskip

{\bf Acknowledgment:}
This article was written while the author was supported by the Swiss NSF. The author thanks Sebastian Andres for pointing out the reference~\cite[Proposition~3.8]{andres2016harnack}, Subhajit Goswami and Pierre-François Rodriguez for several useful discussions, especially on the use of the renormalization scheme illustrated in Figure~\ref{fig:renscheme}, Artem Sapozhnikov for providing the idea behind the proof of Proposition~\ref{pro:apriori}, and an anonymous reviewer for a very thorough revision.

\section{Preliminaries}
\label{sec:preliminaries}
In this section we first introduce the standing assumptions on the graphs that we will assume to hold throughout the article, then recall some useful potential theory and facts about random interlacements, and finally prove sharp bounds on the capacity of tubes, see Lemma~\ref{lem:capatube}.

Let us denote by $\G=(G,\lambda)$ the weighted graph mentioned in the introduction, where the weights $\lambda$ satisfy $\lambda_{x,x}=0$ and $\lambda_{x,y}=\lambda_{y,x}$ for all $x,y\in{G}$. We call $\{x,y\}\in{G^2}$ an edge of $\G$ if and only if $\lambda_{x,y}>0$, and we write $x\sim y$ if and only if $\{x,y\}$ is an edge of $\G$. We also assume that $\G$ endowed with the previous graph structure is locally finite, that is $\{y\in{G}:\lambda_{x,y}>0\}$ is finite for each $x\in{G}$. Furthermore, we endow $\G$ with a distance $d$, which is not necessarily the graph distance, that will also sometimes play a role and is denoted by $d_{\text{gr}}$. Let us write $B(x,R)=\{y\in{G}:\,d(x,y)\leq R\}$ for the closed ball centered at $x$ and with radius $R$, and $\partial B(x,R)=\{y\in{B(x,R)},\exists\,z\notin{B(x,R)}:\,y\sim z\}$ for its internal boundary.

Let us start with some definitions about the random walk on $\G$, which will be useful to state the conditions from \eqref{eq:standingassumptionintro}. We denote by $\P_x$ the law of the random walk $(X_k)_{k\in{\N_0}}$ on the weighted graph $\G$ starting in $x$, that is the random walk on $G$ with transition probability 
\begin{equation}
    \label{eq:pxy}
    p_{y,z}\stackrel{\textnormal{def.}}{=}\frac{\lambda_{y,z}}{\lambda_y},\text{ where }\lambda_y\stackrel{\textnormal{def.}}{=}\sum_{v\in{G}}\lambda_{y,v},
\end{equation}
from $y$ to $z$. The associated Green function is defined via 
\begin{equation*}
    g(x,y)\stackrel{\textnormal{def.}}{=}\frac{1}{\lambda_y}\E_x\Big[\sum_{k=0}^{\infty}1\{X_k=y\}\Big]\text{ for all }x,y\in{G}.
\end{equation*}

We are now ready to introduce the conditions \eqref{eq:standingassumptionintro} on the graph $\G=(G,\lambda)$, endowed with the distance $d$, which essentially correspond to the setup from \cite{DrePreRod2,DrePreRod5}, see however \eqref{eq:intro_shortgeodesic} and below for a small difference. We first assume that the graph $\G$ has regular volume growth of
degree $\alpha$ with respect to $d$, that is there exist constants  $0<\Cl[c]{csizeball}\leq \Cl{Csizeball}<\infty$ such that
\begin{equation}
\label{eq:intro_sizeball} \tag{$V_{\alpha}$}
\Cr{csizeball}r^{\alpha}\leq \lambda(B(x,r))\leq \Cr{Csizeball}r^{\alpha}\quad \text{ for all }x\in{G}\text{ and }r\geq1,
\end{equation}
where $\lambda(A)=\sum_{x\in{A}}\lambda_x$ for each $x\in{A}$ and $A\subset\subset G$. Here the notation $A\subset\subset G$ means that $A$ is a finite subset of $G$. Our second assumption is that the Green function decays polynomially fast with exponent $\nu>0$, that is there exist $\Cl[c]{cGreen}>0$ and $\Cl{CGreen}<\infty$ such that
\begin{equation}
\label{eq:intro_Green}\tag{$G_{\nu}$}
\begin{split}
&\Cr{cGreen}\leq g(x,x)\leq \Cr{CGreen}
\text{ and }  
\Cr{cGreen} d(x,y)^{-\nu}\leq g(x,y)\leq \Cr{CGreen} d(x,y)^{-\nu} \quad \text{ for all }x \neq y\in{G}.
\end{split}
\end{equation}
In particular, the graph $\G$ is transient.
We moreover assume that there are constants $\Cl[c]{cGreenasymp}>0$ and $\Cl{CGreenasymp}<\infty$ such that
\begin{equation}
\label{eq:intro_Green_asymp}
\forall\,\eta>0,\,\exists\, N>0,\,\forall\,x,y\in{G},\ d(x,y)\geq N\Longrightarrow (1-\eta)\Cr{cGreenasymp}\leq g(x,y)d(x,y)^{\nu}\leq (1+\eta)\Cr{CGreenasymp}.
\end{equation}
It is clear that \eqref{eq:intro_Green_asymp} is satisfied if $\Cr{cGreenasymp}=\Cr{cGreen}$ and $\Cr{CGreenasymp}=\Cr{CGreen}$ and so \eqref{eq:intro_Green_asymp} is not really a new assumption on $\G$ but rather a definition of the constants $\Cr{cGreenasymp}$ and $\Cr{CGreenasymp}$, which will eventually play an important role in the proof of Theorem~\ref{the:limitVuintro2} and \eqref{eq:boundonPEintro}, see also Lemma~\ref{lem:capatube} below. Denoting by $B(x,y)$ the Beta function, we further introduce constants $\Cl{Cbeta}$ and $\Cl[c]{cbeta}$ defined via
\begin{equation}
\label{eq:cbetanointro}
\Cr{Cbeta}\Cr{cGreenasymp}=\Cr{cbeta}\Cr{CGreenasymp}\stackrel{\textnormal{def.}}{=}\begin{cases}
    \frac{1}{\pi} B\left(\frac{1+\nu}{2},\frac{1+\nu}{2}\right)\cos\left(\frac{\pi\nu}{2}\right)&\text{ if }\nu<1,
    \\\frac{1}{2}&\text{ if }\nu=1.
\end{cases}
\end{equation}
These constants essentially correspond to the constant $c_{\alpha}$ from \cite[(1.9)]{MuiSev}, and will play a similar role in our main results. Note that our results will be more interesting when one can take $\Cr{Cbeta}=\Cr{cbeta}$, that is when $g(x,y)d(x,y)^{\nu}$ converges as $d(x,y)\rightarrow\infty$ uniformly in $x$ and $y$, see for instance \eqref{eq:bounddtvnointro} below, which is for instance the case on $\Z^d$ endowed with the Euclidean distance as we will explain below.

Our next assumption on $\G$ is that the weights are controlled, that is there exists $c>0$ such that 
\begin{equation}
\tag{$p_0$}
\label{p0}
    p_{x,y}\geq c\text{ for all }x\sim y\in{G}.
\end{equation}
Together, \eqref{eq:intro_sizeball}, \eqref{eq:intro_Green} and \eqref{p0} form a classical framework under which one can prove upper and lower bound heat kernel estimates when $d$ is the graph distance, see \cite{MR1853353}, and we refer to \cite[Section~3]{DrePreRod2} for extensions to general distances $d$. When $d=d_{\text{gr}}$, as explained around \cite[(1.18)]{DrePreRod5}, it follows from \cite{MR2076770} that
\begin{equation}
\label{eq:condnualpha}
    0<\nu\leq \alpha-2,
\end{equation}
and we will in fact always assume that \eqref{eq:condnualpha} is satisfied, even for general distances $d$.  Finally, our last condition is new, and assumes that there exist constants $\Cl[c]{cgeo}\in{(0,\infty)}$ and $\Cl{Cgeo}<\infty$ such that
\begin{equation}
\label{eq:intro_shortgeodesic}
\begin{array}{l}
\text{for all }x\in{G},\text{ there exists an infinite path }(x=x_0,x_1,x_2,\dots) \\[0.3em]
\text{such that }\Cr{cgeo}|k-p|-\Cr{Cgeo}\leq d(x_k,x_p)\leq \Cr{cgeo}|k-p|+\Cr{Cgeo}\text{ for all }k,p\in{\N}.
\end{array}
\end{equation}
Condition~\eqref{eq:intro_shortgeodesic} is in fact only necessary to obtain the lower bounds in our main results, and we refer to Remark~\ref{rk:whyintroshortgeodesic} for details. It essentially plays the same role as \cite[(1.16)]{DrePreRod5}, and in fact the results from \cite{DrePreRod5} would still be satisfied if one replaces \cite[(1.16)]{DrePreRod5} by \eqref{eq:intro_shortgeodesic} (one essentially needs to replace the argument around \cite[(6.12)]{DrePreRod5} by Lemma~\ref{lem:exitviatubes} below). The condition \eqref{eq:intro_shortgeodesic} would be implied by \cite[(1.16)]{DrePreRod5} if one would actually require that $d_{\mathrm{gr}}(y_k,y_p)=c_2d(y_k,y_p)$ therein, where $d_{\mathrm{gr}}$ denotes the graph distance on $\G$. In other words \eqref{eq:intro_shortgeodesic} is satisfied if $(x=x_0,x_1,x_2,\dots)$ is a geodesic for $d_{\mathrm{gr}}$ starting in $x$, which always exists by \cite[Theorem~3.1]{MR864581}, and $d=\Cr{cgeo}d_{\mathrm{gr}}$ along this geodesic, which is for instance the case on $\Z^d$ when $d$ is the Euclidean distance. In fact the previous requirement is fulfilled for all the usual examples of graphs satisfying the other conditions in \eqref{eq:standingassumptionintro}, see for instance \cite[(1.4)]{DrePreRod2} or \cite[Theorem~2]{MR2076770}, and is thus essentially no loss of generality. The reason why we require \eqref{eq:intro_shortgeodesic} instead of \cite[(1.16)]{DrePreRod5} is essentially because we want more precise control on the capacity of tubes (i.e.\ we want that the capacity of the set $\mathcal{L}_x^{P,N}$ from Lemma~\ref{lem:exitviatubes} below can be deduced from Lemma~\ref{lem:capatube}) to obtain the exact constant $\pi/3$ appearing in Theorem~\ref{the:limitVuintro2}, whereas in \cite[Theorem~1.4]{DrePreRod5} we were not paying attention to this kind of constants. We also refer to \eqref{eq:intro_shortgeodesic2} below for a stronger version of \eqref{eq:intro_shortgeodesic} that can be useful when considering the FPP distance from $0$ to a given point at distance $N$, instead of the FPP distance from $0$ to $B(N)^c$ as in Theorem~\ref{the:limitVuintro2}.

From now on, we will always implicitly assume that all the previous conditions on $\G$ listed in \eqref{eq:standingassumptionintro} are satisfied. Arguably the most interesting example is the square lattice $\Z^d$, $d\geq3$, with unit weights. Endowing $\Z^d$ with the Euclidean distance $d(x,y)=|x-y|_2$, one further has $\nu=d-2$ and one can take $\Cr{cGreenasymp}=\Cr{CGreenasymp}=\frac{d}{2}\Gamma(\frac{d}{2}-1)\pi^{-d/2}(=\frac{3}{2\pi}$ when $d=3$) in \eqref{eq:intro_Green_asymp} by \cite[Theorem~1.5.4]{MR1117680} and so by \eqref{eq:cbetanointro}
\begin{equation}
\label{eq:assumptionZd}
   \Z^d\text{ satisfies }\eqref{eq:standingassumptionintro}\text{ with } \alpha=d, \nu=d-2,\text{ and one has }\Cr{cbeta}=\Cr{Cbeta}=\frac{\pi}{3}\text{ when }d=3.
\end{equation}
The constants $\Cr{cbeta}=\Cr{Cbeta}$ are exactly the one which eventually appear in \eqref{eq:limitVuintro} on $\Z^3$, and explain the interest of the definition \eqref{eq:cbetanointro}. There are many other interesting graphs which satisfy \eqref{eq:standingassumptionintro}, see for instance \cite[(1.4)]{DrePreRod2} for concrete examples, which include Cayley graphs or fractal graphs with polynomial volume growth. In fact, by \cite[Theorem~2]{MR2076770}, see below \cite[(1.18)]{DrePreRod5} as to why, for each choice of $\alpha,\nu$ satisfying \eqref{eq:condnualpha}, there exists a graph satisfying \eqref{eq:standingassumptionintro}. It is particularly interesting that our results also apply to values of $\nu\in{(1,\alpha/2)}$ (which is never the case on $\Z^d$) for our application to critical exponents for the Gaussian free field on metric graphs in \cite{DrePreRod8}, see for instance below Corollary~1.3 therein.
 
Let us now gather some interesting consequences of our standing assumptions \eqref{eq:standingassumptionintro}. By \cite[Lemma~6.1]{DrePreRod2} there exists a family of sets $\Lambda(L)\subset G$, $L>1$, and a constant $\Cl{CLambda}<\infty$ such that for all $x\in{G}$, $L>1$ and $N\geq1$ 
\begin{equation}
\label{eq:defLambda}
    \bigcup_{y\in{\Lambda(L)}}B(y,L)=G\quad\text{ and }\quad|\Lambda(L)\cap B(x,LN)|\leq \Cr{CLambda}N^{\alpha}.
\end{equation}
For $L=1,$ we simply choose $\Lambda(L)=G,$ which also satisfies \eqref{eq:defLambda} up to changing $\Cr{CLambda}$. For instance on $\Z^d$, $d\geq 3$, one can take $\Lambda(L)=(L/\sqrt{d})\Z^d$. The sets $\Lambda(L)$ serve as a renormalized version by $L$ of the graph $G$, and will sometimes be the graph on which we study first passage percolation, similarly as in \eqref{eq:boundonPEintro}. A path $\gamma=\{\gamma_1,\dots,\gamma_N\}\subset\Lambda(L)^N$ is said to be $L$-nearest neighbor if for each $i\leq N-1$, either $L> 1$ and  there exist $x\in{B(\gamma_i,L)}$ and $y\in{B(\gamma_{i+1},L)}$ with $x\sim y,$ or $L=1$ and $\gamma_i\sim \gamma_{i+1}$. 

By \cite[Lemma~2.3]{DrePreRod2}, the conditions \eqref{eq:standingassumptionintro} moreover imply that the weights $\lambda$ are bounded, that is 
there exist constants $c>0$ and $C<\infty$ such that
\begin{equation}
\label{eq:boundonlambda}
    c\leq \lambda_{x,y}\leq \lambda_x\leq C\text{ for all }x\sim y\in{G},
\end{equation}
and that $d$ is dominated by the graph distance $d_{\rm{gr}}$ on $\G$, that is there exists $\Cl[c]{Cdvsdgr}>0$ such that
\begin{equation}
    \label{eq:dvsdgr}
    d_{\rm{gr}}(x,y)\geq \Cr{Cdvsdgr}d(x,y)\text{ for all }x,y\in{G}.
\end{equation}
As explained above \cite[(3.3)]{DrePreRod2}, one can moreover deduce from \cite[Lemma~A.2]{MR2778797} an elliptic Harnack inequality on $G$. For this purpose, recall that a function $f$ on $G$  is called harmonic in $U\subset\subset G$ if $\E_x[f(X_1)] = f(x)$ for all $x\in{U}$. There exist constants $c\in(0,1)$ and $C\in{(1,\infty)}$ such that for all $x\in{G}$, $L\geq1$, $K\geq C,$ and every non-negative functions $f$ on $G$ which are harmonic in \nolinebreak$B(x,KL)$, 
\begin{equation}
\label{EHI}
    \inf_{y\in{B(x,L)}}{f(y)}\geq c\sup_{y\in{B(x,L)}}{f(y)}.
\end{equation}
Using a chaining argument and a similar reasoning as in \cite[Proposition~3.8]{andres2016harnack} (see the arXiv version for a full proof), which is inspired by \cite[p.50]{saloff2002aspects}, one can write more explicitly  the dependency on $K$ of the constant $c$ in \eqref{EHI} as follows: there exist constants $\Cl[c]{cHarnackprecise}\in{(0,1]}$ and $\Cl{CHarnackprecise}<\infty$ such that for all $x\in{G}$, $L\geq1$, $K\geq 1$  and every non-negative functions $f$ on $G$ which are harmonic in \nolinebreak$B(x,KL)$, 
\begin{equation}
\label{EHIK}
\sup_{y\in{B(x,L)}}f(y)\leq (1+\Cr{CHarnackprecise}K^{-\Cr{cHarnackprecise}})\inf_{y\in{B(x,L)}}f(y).
\end{equation}
We now recall some results from potential theory, and first introduce some useful notation on hitting and exit times for the random walk $X$. For $A\subset\subset G$, we denote by $H_A\stackrel{\textnormal{def.}}{=}\inf\{n\geq0:X_n\in{A}\}$ the first hitting time of $A$, by $\tilde{H}_A\stackrel{\textnormal{def.}}{=}\inf\{n\geq1:\,X_n\in{A}\}$ the first return time to $A$, by $L_A\stackrel{\textnormal{def.}}{=}\sup\{k\geq0:X_k\in{A}\}$ the last exit time of $A$, and by $T_A\stackrel{\textnormal{def.}}{=}H_{A^c}$ the first exit time of $A$, where we use the convention $\inf\varnothing=+\infty$ and $\sup\varnothing=-\infty$. Let us now define the equilibrium measure and capacity of a set $A\subset\subset G$ via
\begin{equation}
    \label{eq:defequicap}
    e_A(x)\stackrel{\textnormal{def.}}{=}\lambda_x\P_x(\tilde{H}_A=\infty)1\{x\in{A}\}\text{ for all }x\in{G}\text{ and }\mathrm{cap}(A)\stackrel{\textnormal{def.}}{=}\sum_{x\in{A}}e_A(x).
\end{equation}
We also denote by $\bar{e}_A(x)=e_A(x)/\mathrm{cap}(A)$ the normalized equilibrium measure of $A$, and abbreviate $e_A(A')=\sum_{x\in{A'}}e_A(x)$ for all $A'\subset A$. Another useful definition of the capacity is via the following variational formula, see \cite[(1.61)]{MR2932978},
\begin{equation}
\label{variational}
    \mathrm{cap}(A)=\Big(\inf_{\mu}\sum_{x,y\in{A}}g(x,y)\mu(x)\mu(y)\Big)^{-1}, \quad \text{ for all } A \subset \subset G,
\end{equation}
where the infimum is among all probability measures $\mu$ supported on $A$, and is reached at $\mu=\bar{e}_A$. The main interest of the equilibrium measure is that it can be used to compute hitting probabilities via the following formula
\begin{equation}
\label{entrancegreenequi}
     \P_x(H_{A}<\infty)=\sum_{y\in{A}}g(x,y)e_{A}(y)\text{ for all }x\in{G},
\end{equation}
see for instance \cite[(1.57)]{MR2932978}. As explained above \cite[(3.11)]{DrePreRod2}, one can easily deduce from \eqref{eq:intro_Green} and \eqref{entrancegreenequi}  that there exist constants $\Cl[c]{ccapball}>0$ and $\Cl{Ccapball}<\infty$ such that
\begin{equation}
\label{eq:capball}
    \Cr{ccapball}R^{\nu}\leq \mathrm{cap}(B(x,R))\leq \Cr{Ccapball} R^{\nu}\text{ for all }x\in{G}\text{ and }R\geq1.
\end{equation}
In particular, combining \eqref{eq:intro_Green}, \eqref{entrancegreenequi} and \eqref{eq:capball},  one readily deduces that for any $R,K\geq2$ and $x\in{G}$
\begin{equation}
\label{eq:boundentrance}
\P_y(H_{B(x,R)}<\infty)\leq \frac{2^{\nu}\Cr{CGreen}\Cr{Ccapball}}{K^{\nu}}\text{ for all }y\in{B(x,KR)^c}.
\end{equation}
Moreover, for the lower bound, one notes that for any $x\in{G}$, $R,K\geq1$, $z\in{B(x,R)}$ and set $A\subset B(x,R)$  
\begin{equation*}
    \begin{split}
        \P_z(H_{A}<T_{B(x,KR)})&\geq \P_z(H_{A}<\infty)-\sup_{z'\in{\partial B(x,KR)^c}}\P_{z'}(H_{A}<\infty)
        \\&\geq \Cr{cGreen}(2R)^{-\nu}\mathrm{cap}(A)-\Cr{CGreen}((K-1)R)^{-\nu}\mathrm{cap}(A),
    \end{split}
\end{equation*}
where we used \eqref{entrancegreenequi} and \eqref{eq:intro_Green} twice in the last inequality, and so there exists a constant $\Cl{Chittinglower}<\infty$ so that uniformly in $x,R,z$ and $A$ as before
\begin{equation}
        \label{eq:probahittinglower}
        \P_z(H_{A}<T_{B(x,\Cr{Chittinglower}R)})\geq \frac{\Cr{cGreen}\mathrm{cap}(A)}{4^{\nu}R^{\nu}}.
\end{equation}

Let us now recall the definition of random interlacements on transient graphs. We denote by $W^*$  the space of doubly infinite trajectories on $G$ modulo time-shift, that is the quotient of the space $W$ of nearest neighbor paths $(w(k))_{k\in{\Z}}$ in $G$ by the equivalence relation $w\sim w'$ if and only if there exists $p\in{\Z}$ such that $w(k)=w'(k+p)$ for all $k\in{\Z}$.  Under some probability measure denoted by $\P$, the random interlacement process $\omega=\sum_{i\in{\N}}\delta_{(w_i^*,u_i)}$ is a Poisson point process on $W^*\times[0,\infty)$ with intensity measure $\nu\times\lambda$, where $\nu$ is the interlacement measure as defined in \cite[Theorem~2.1]{MR2525105} and $\lambda$ is the Lebesgue measure on $[0,\infty)$. The random interlacement process at level $u$ is then the associated point process $\omega_u\stackrel{\textnormal{def.}}{=}\sum_{i\in{\N}:u_i\leq u}\delta_{w_i^*}$ of trajectories with label at most $u$. We will not formally define $\nu$ here for simplicity, and instead we just recall how to characterize the law of the restriction of $\omega$ to trajectories hitting a finite set $A$. For $A\subset\subset G$, each trajectory in the support of $\omega$ hit $A$ only a finite number of times a.s, and we denote by $X^1,X^2,\dots$ the unique trajectories in $W$ that hit $A$ at time $0$ for the first time, and whose projection on $W^*$ are exactly the trajectories modulo time-shift in the support of $\omega$ that hit $A$, ordered by increasing label. We also denote by $N_u^A$ the number of trajectories $w_i^*$ in $\omega$ hitting $A$ and with label $u_i\leq u$. In other words, $X^1,\dots,X^{N_u^A}$ correspond modulo time-shift to the trajectories in the support of $\omega_u$ which hit $A$ and started on their first hitting time of $A$. Their joint law can be described as follows: $N_u^A$ is a Poisson random variable with parameter $u\mathrm{cap}(A)$, and $(X^k)_{k\in{\N}}$ is an independent and i.i.d.\ family of random variables such that for each $k\in{\N}$
\begin{equation}
\label{eq:defRI}
\begin{gathered}
    X_0^k\text{ has law }\bar{e}_A\text{ and, conditionally on }X_0^k,
    \\((X^k_n)_{n\in{\N_0}},(X^k_{-n})_{n\in{\N_0}})\text{ has law }\P_{X_0^k}\otimes(\P_{X_0^k}(\cdot\,|\,\tilde{H}_A=\infty)).
\end{gathered}
\end{equation}
Note that $(X^k)_{k\in{\N}}$ depends implicitly on the choice of the set $A$, which should always be clear from context. The total time spent by random interlacements in $x\in{G}$ before, or strictly before, level $u>0$ are defined by
\begin{equation}
\label{eq:defLxu}
    L_{x,u}\stackrel{\textnormal{def.}}{=}\sum_{i\in{\N}:\,u_i\leq u}\sum_{n\in{\Z}}1\{w_i^*(n)=x\}\text{ and }  L_{x,u}^-\stackrel{\textnormal{def.}}{=}\sum_{i\in{\N}:\,u_i< u}\sum_{n\in{\Z}}1\{w_i^*(n)=x\}
\end{equation}
and the local times of random interlacements are then defined by
\begin{equation}
    \label{eq:localtimes}
    \ell_{x,u}\stackrel{\textnormal{def.}}{=}\frac{1}{\lambda_x}\sum_{k=1}^{L_{x,u}}\mathcal{E}_{x}^{k}\text{ and }\ell_{x,u}^-\stackrel{\textnormal{def.}}{=}\frac{1}{\lambda_x}\sum_{k=1}^{L_{x,u}^-}\mathcal{E}_{x}^{k},
\end{equation}
where $(\mathcal{E}_{x}^{k})_{x\in{G},k\in{\N_0}}$ is an i.i.d.\ family of Exp($1$) random variables. Note that the sums in $n\in{\Z}$ in \eqref{eq:defLxu} are well-defined since their value do not depend on the choice of the representative in the equivalent class of $w_i^*$, and that for any $A\subset\subset G$, $(\ell_{x,u})_{x\in{A}}$ have the same law as the total time in $x$, $x\in{A}$, spent by the trajectories $(X^k)_{1\leq k\leq N_u^A}$ from above \eqref{eq:defRI} but with additional exponential holding time in $x$ with parameter $\lambda_x$, $x\in{A}$. Moreover, one clearly has that $L_{x,u}=L_{x,u}^-$, and so $\ell_{x,u}=\ell_{x,u}^-$, occurs with probability one for any fixed choice of $u>0$, but we will sometimes take a random choice for $u$ so that there is a trajectory in $\omega$ with label $u$ hitting $x$   with positive probability, and then $L_{x,u}>L_{x,u}^-$. We refer to \eqref{eq:boundVuz} as to where we actually need to use the local times $\ell^-_{\cdot}$. We finally define the random interlacement set $\I^u$ as the set of points visited by a trajectory in $\omega_u$, that is $\I^u\stackrel{\textnormal{def.}}{=}\{x\in{G}:\ell_{x,u}>0\}$, and recall that the vacant set of interlacements is defined by $\V^u=(\I^u)^c$. Note that the characterization \eqref{eq:defIuintro} follows readily from the law of $N_u^A$. One can then introduce the critical parameter $u_*$ as the smallest parameter above which $\V^u$ does not percolate anymore, as well as 
\begin{equation}
\label{eq:defu**}
    u_{**}\stackrel{\textnormal{def.}}{=}\inf\big\{u>0:\,\limsup_{L\rightarrow\infty}\sup_{x\in{\Lambda(L)}}\P(B(x,L)\leftrightarrow B(x,2L)^c\text{ in }\mathcal{V}^u)= 0\big\}
\end{equation}
the critical parameter above which $\V^u$ is strongly non-percolating. Here we use the notation $A\leftrightarrow B$ in $C$, or sometimes $A\stackrel{C}{\longleftrightarrow}B$, to say that there is a connected path in $C$ starting in $A$ and ending in $B$. Note that the equality $u_*=u_{**}$ has been proved on $\Z^d$, $d\geq 3$ in \cite{sharpnessRI1,sharpnessRI2,sharpnessRI3}. On general graphs satisfying \eqref{eq:standingassumptionintro} one still knows that $u_*\leq u_{**}<\infty$  by \cite[Corollary~7.3]{DrePreRod2} and, under the additional condition (WSI) from \cite[p.12]{DrePreRod2}, we moreover have by \cite[Theorem~1.2]{DrePreRod2} that $u_{*}>0$.

\medskip

We finish this section with some useful bounds on the capacity of porous tubes. We roughly follow and simplify the strategy from \cite[Lemma~4.5]{GRS21} when $\nu=1$, from \cite[Proposition~2.11]{MuiSev} when $\nu<1$, although some additional work is required as we consider a discrete Gaussian field contrary to \cite{MuiSev}, and from \cite[Lemma~6.3]{DrePreRod2} when $\nu>1$. Let us first introduce the function
 \begin{equation}
\label{eq:defGnu}
    F_{\nu}(x,y)\stackrel{\textnormal{def.}}{=}\begin{cases}
    x^{\nu}&\text{ if }\nu<1,\\
    \frac{x}{1\vee \log(x/y)}&\text{ if }\nu=1,\\
    xy^{\nu-1}&\text{ if }\nu>1,
    \end{cases}
\end{equation}
and we abbreviate $F_{\nu}(x)=F_{\nu}(x,1)$. For clarity let us mention that the notation $\Cr{ccapunionball}=\Cr{ccapunionball}(\eta,\delta)$ used in the next lemma means that $\Cr{ccapunionball}$ is some constant that only depends only $\eta$ and $\delta$ (as well as implicitly the choice of the graph $\G$), and similarly for other constants in the rest of the article.

\begin{Lemme}
\label{lem:capatube}
    Fix $\eta,\delta\in{(0,1)}$. There exist constants $\Cl[c]{ccardAP}\in{(0,1)}$, $\Cl[c]{cconstantabovekappa}>0$ and $\Cl{CboundP}<\infty$, depending on $\eta$ and $\delta$, such that for all $x\in{G}$, $N\geq\Cr{CboundP}$, $\kappa>0$, integers $P\in{[\Cr{CboundP},N]}$, $A\subset\{1,\dots,P\}$ with $n=|A|$ satisfying $\Cr{ccardAP} P\leq n\leq P$, sets $S_i$ with $\mathrm{cap}(S_i)\geq \kappa$ for each $i\in{A}$, we have if $d(S_i,S_j)\geq (|i-j|-\delta)N/P$ for all $i\neq j\in{A}$,
    \begin{equation}
    \label{eq:capunionbound}
         \mathrm{cap}\left(\bigcup_{i\in{A}} S_i\right)\geq \left(\frac{1}{\Cr{cconstantabovekappa}\kappa P}+\frac{(1+\eta)}{\Cl[c]{ccapunionball}F_{\nu}(N,N/P)}\right)^{-1},
    \end{equation}
where $\Cr{ccapunionball}=\Cr{cbeta}$ if $\nu\leq 1$ and $\Cr{ccapunionball}=\Cr{ccapunionball}(\eta,\delta)>0$ if $\nu>1$;
 whereas if $\delta|i-j|-1/\delta\leq d(y,z)\leq (|i-j|+1/\delta)N/P$ for all $i,j\in{A}$, $y\in{S_i}$ and $z\in{S_j}$, 
 \begin{equation}
    \label{eq:capunionbound2}
        \mathrm{cap}\left(\bigcup_{i\in{A}} S_i\right)\leq \Cl{Ccapunionball}(1+\eta)F_{\nu}(N,N/P),
    \end{equation}
where $\Cr{Ccapunionball}=\Cr{Cbeta}$ if $\nu\leq 1$ and $\Cr{Ccapunionball}=\Cr{Ccapunionball}(\eta,\delta)<\infty$ if $\nu>1$.
\end{Lemme}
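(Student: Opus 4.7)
My plan is to base both inequalities on the variational characterization \eqref{variational}, $\mathrm{cap}(A)^{-1} = \inf_\mu E(\mu)$ with $E(\mu) = \sum_{x,y}g(x,y)\mu(x)\mu(y)$ and $\mu$ ranging over probability measures on $A$. I would treat the three regimes prescribed by $F_\nu$ in \eqref{eq:defGnu} separately. The case $\nu > 1$ is the easiest: the Green kernel is summable along the chain, so one should recover near-additive behaviour as in \cite[Lemma~6.3]{DrePreRod2} after absorbing sub-leading cross terms into $(1+\eta)$. I concentrate below on the more subtle cases $\nu \leq 1$, where the sharp constants $\Cr{cbeta}$ and $\Cr{Cbeta}$ must emerge from the one-dimensional Riesz energy on $[0,1]$ via \eqref{eq:cbetanointro} and the refined Green asymptotic \eqref{eq:intro_Green_asymp}.

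\textbf{Lower bound \eqref{eq:capunionbound}.} I would use the test measure $\mu = \sum_{i\in A} a_i\,\bar{e}_{S_i}$ with weights $a_i \geq 0$ summing to one, tuned to approximate the minimiser of the continuum Riesz energy on $[0,1]$ with kernel $|s-t|^{-\nu}$. The diagonal part of $E(\mu)$ equals $\sum_i a_i^2/\mathrm{cap}(S_i) \leq \sum_i a_i^2/\kappa$, which is bounded by $C/(n\kappa) \leq 1/(\Cr{cconstantabovekappa}\kappa P)$ using $n \geq \Cr{ccardAP} P$ and the $L^2$-integrability of the continuum minimiser. For the off-diagonal part, the separation hypothesis $d(S_i,S_j) \geq (|i-j|-\delta)N/P$ combined with \eqref{eq:intro_Green_asymp} yields
\begin{equation*}
\sum_{i\ne j} a_i a_j \sum_{y,z} \bar{e}_{S_i}(y)\bar{e}_{S_j}(z)\, g(y,z) \;\leq\; (1+\eta)\Cr{CGreenasymp}(N/P)^{-\nu}\sum_{i\ne j} a_i a_j\,(|i-j|-\delta)^{-\nu},
\end{equation*}
after absorbing the short-range pairs via \eqref{eq:intro_Green}. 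A Riemann-sum passage then identifies the last bilinear form with the continuum Riesz energy of the scaled minimiser, whose minimum value equals $1/(\Cr{ccapunionball} F_\nu(N,N/P))$ by \eqref{eq:cbetanointro}; when $\nu = 1$ the logarithmic factor in $F_\nu$ comes out of the divergence $\sum_{k=1}^{P} k^{-1} \sim \log(N/(N/P))$. Summing the two contributions gives $E(\mu) \leq 1/(\Cr{cconstantabovekappa}\kappa P) + (1+\eta)/(\Cr{ccapunionball}F_\nu(N,N/P))$, which inverts to \eqref{eq:capunionbound}.

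\textbf{Upper bound \eqref{eq:capunionbound2}.} Here I would establish a uniform lower bound on $E(\mu)$ for every probability measure on $\bigcup_i S_i$. Writing $m_i = \mu(S_i)$ and combining the tube hypothesis $d(y,z) \leq (|i-j| + 1/\delta)N/P$ with the lower half of \eqref{eq:intro_Green_asymp}, one obtains
\begin{equation*}
E(\mu) \;\geq\; (1-\eta)\Cr{cGreenasymp}(N/P)^{-\nu}\sum_{i,j} m_i m_j\,(|i-j|+1/\delta)^{-\nu}.
\end{equation*}
A convexity argument along the lines of \cite[Proposition~2.11]{MuiSev} then lower bounds this discrete Riesz energy by its continuum counterpart on $[0,1]$, giving $E(\mu) \geq 1/((1+\eta)\Cr{Ccapunionball}F_\nu(N,N/P))$ and hence \eqref{eq:capunionbound2}.

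\textbf{Main obstacle.} The hard part is extracting the \emph{exact} leading constants $\Cr{cbeta}$ and $\Cr{Cbeta}$ rather than generic positive constants. This forces using the refined asymptotic \eqref{eq:intro_Green_asymp} (as opposed to the cruder two-sided estimate \eqref{eq:intro_Green}) and a quantitative discrete-to-continuum control of the one-dimensional Riesz energy: the missing indices $\{1,\ldots,P\}\setminus A$ (of relative size at most $1-\Cr{ccardAP}$), the near-boundary pairs $|i-j|\lesssim 1/\delta$, and the short-range error in \eqref{eq:intro_Green_asymp} must all be absorbed into the single correction factor $(1+\eta)$. The hypotheses $n \geq \Cr{ccardAP} P$ with $\Cr{ccardAP} = \Cr{ccardAP}(\eta,\delta)$ small and $N \geq \Cr{CboundP}$ with $\Cr{CboundP} = \Cr{CboundP}(\eta,\delta)$ large are calibrated precisely so that each of these errors contributes only $O(\eta)$ to the final bilinear form.
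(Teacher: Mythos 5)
Your lower bound is essentially the paper's argument: the paper also takes the test measure $\mu_y=\frac1n\sum_i h_{\nu,s}(i/n)\bar e_{\tilde S_i}(y)$, splits the energy into diagonal, short-range ($|i-j|<tn$) and long-range pieces, bounds the diagonal by $1/(\Cr{cconstantabovekappa}\kappa P)$ via \eqref{variational} applied to each $S_i$, and identifies the long-range piece with the continuum Riesz energy via \eqref{eq:intro_Green_asymp} and a Riemann-sum passage. One detail you gloss over: the continuum minimiser $h_\nu(a)=c(a(1-a))^{(\nu-1)/2}$ is \emph{unbounded} at the endpoints, so the Riemann-sum convergence in \eqref{eq:riemann1} and the short-range estimate \eqref{eq:riemann2} do not go through for $h_\nu$ itself; the paper first replaces it by the compressed version $h_{\nu,s}(a)=h_\nu((1-2s)a+s)$, which is bounded and whose energy exceeds the minimum by only a factor $1+\eta$. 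Your phrase ``tuned to approximate the minimiser'' needs to be cashed out in exactly this way.

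For the upper bound the routes genuinely diverge, and this is where your proposal has a gap. You reduce to showing that $\sum_{i,j}m_im_j(|i-j|+1/\delta)^{-\nu}\geq(1-\eta)P^{-\nu}$ times the continuum minimum, uniformly over probability vectors $(m_i)$, and attribute this to ``a convexity argument''. That discrete-to-continuum lower bound is precisely the nontrivial content: one cannot simply view $\sum_i m_i\delta_{i/P}$ as a continuum measure (its Riesz energy is infinite on the diagonal), and \cite[Proposition~2.11]{MuiSev} works with measures on $\R^d$, not with atomic approximations. The paper resolves this for $\nu<1$ by a compactness/contradiction argument: assuming a sequence of counterexamples, it truncates the kernel at $|a-b|>t$ (where it is bounded and continuous), extracts a weak limit $\tilde\mu^{(\infty)}$ of the pushforward measures, and contradicts the known value of the continuum minimum. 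For $\nu=1$ the compactness route is unavailable (the continuum energy is infinite for every measure), and the paper instead argues directly with equilibrium measures, using $e_S(y)\leq e_{S_i}(y)$ and summing \eqref{entrancegreenequi} over reference points $z_i$ to get $\mathrm{cap}(S)\leq P\sup_y(\sum_ig(y,z_i))^{-1}$ plus a boundary correction; the harmonic sum $\sum_i g(y,z_i)\gtrsim P\log(P)/N$ then produces the sharp constant. You should either supply the compactness argument (for $\nu<1$) or prove the asserted uniform discrete energy bound; for $\nu=1$ your scheme as stated needs the additional fact that the uniform vector asymptotically minimises $\sum_{i\neq j}m_im_j|i-j|^{-1}$, which is true but is itself the crux. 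Note also that the lower separation hypothesis $\delta|i-j|-1/\delta\leq d(y,z)$ is what guarantees that \eqref{eq:intro_Green_asymp} (which only holds beyond a large fixed distance) is applicable to the pairs you keep; you use only the upper bound on $d(y,z)$, so make the role of the truncation of small $|i-j|$ explicit.
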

\begin{proof}
Abbreviate $S=\cup_{i\in{A}}S_i$, and first assume that $\nu<1$. As explained in \cite[Section~1.2.1]{MuiSev} it follows from \cite[Section II.3.13
p.163–164 and Appendix p.399–400]{landkof1972foundations} that the minimum over all probability measure $\mu$ on $[0,1]$ of $\int_0^1\int_0^1|a-b|^{-\nu}\mu(\mathrm{d}a)\mu(\mathrm{d}b)$ is reached at some probability measure with continuous density $h$ on $(0,1)$  with respect to the Lebesgue measure (in fact $h_{\nu}(a)=\Cl{chnu}(a(1-a))^{(\nu-1)/2}$ for some normalization constant $\Cr{chnu}$ but we will not need this fact), and the value of this minimum is $(\Cr{CGreenasymp}\Cr{cbeta})^{-1}=(\Cr{cGreenasymp}\Cr{Cbeta})^{-1}$, see \eqref{eq:cbetanointro}. Since the function $h_{\nu}$ is unbounded on the boundary of the interval $[0,1]$, we need to modify it slightly close to zero and one. More precisely, by a change of variable, one can fix $s=s(\eta)>0$ such that abbreviating $h_{\nu,s}(a)=h_{\nu}((1-2s)a+s)$,
\begin{equation}
\label{eq:riemann0}
\int_0^1\int_0^1|a-b|^{-\nu}h_{\nu,s}(a)h_{\nu,s}(b)\mathrm{d}a\mathrm{d}b\leq \frac{1+\eta}{\Cr{CGreenasymp}\Cr{cbeta}}\text{ and }
\int_0^1h_{\nu,s}(a)\mathrm{d}a\leq 1+\eta.
\end{equation}
Moreover, if $n$ is large enough, then since $h_{\nu,s}$ is bounded on $[0,1]$,
\begin{equation}
\label{eq:riemann2}
\begin{split}
\frac1{n^2}\sum_{\substack{i\neq j=1\\|i-j|< tn}}^n|i-j|^{-\nu}h_{\nu,s}\left(\frac{i}{n}\right)h_{\nu,s}\left(\frac{j}{n}\right)&\leq \frac{4}{1-\nu}\sup_{a\in{[0,1]}}h_{\nu,s}(a)^2t^{1-\nu}n^{-\nu}
\leq  \frac{\eta(1-\delta)^{\nu}}{\Cr{CGreen}\Cr{cbeta}n^{\nu}},
\end{split}
\end{equation}
where the last inequality hold upon fixing $t=t(\eta,s,\delta)$ small enough. Since the function $(a,b)\mapsto|a-b|^{-\nu}h_{\nu,s}(a)h_{\nu,s}(b)1_{\{|a-b|\geq t\}}$ is continuous and bounded, it follows from \eqref{eq:riemann0} that if $n$ is large enough,
\begin{equation}
\label{eq:riemann1}
 \frac1{n^2}\sum_{\substack{i,j=1\\|i-j|\geq tn}}^n|i-j|^{-\nu}h_{\nu,s}\left(\frac{i}{n}\right)h_{\nu,s}\left(\frac{j}{n}\right)\leq \frac{(1+\eta)^2}{\Cr{CGreenasymp}\Cr{cbeta}n^{\nu}}.
\end{equation}
Let us denote by $\tilde{S}_i$, $1\leq i \leq n$ an enumeration of the sets $S_i$, $i\in{A}$, which preserves the initial ordering, and let us define the measure 
$$\mu_y\stackrel{\textnormal{def.}}{=}\frac{1}{n}\sum_{i=1}^{n}h_{\nu,s}\left(\frac{i}{n}\right)\times\bar{e}_{\tilde{S}_i}(y)1\{y\in{\tilde{S}_i}\}\text{ for all }y\in{G}$$
 then by \eqref{eq:riemann0} we have that $\mu(G)\leq(1+\eta)^2$ if $n$ is large enough. For each $1\leq i\neq j\leq n$, $y\in{\tilde{S}_i}$ and $z\in{\tilde{S}_j}$, we have $d(y,z)\geq (|i-j|-\delta)N/P\geq (1-\delta)|i-j|N/P$, and in view of \eqref{eq:intro_Green_asymp} if additionally $|i-j|\geq tn$  we have $g(y,z)\leq (1+\eta)^2\Cr{CGreenasymp}(|i-j|N/P)^{-\nu}$ if $n$ is large enough (depending on $\eta$, $\delta$). Combining this with \eqref{eq:intro_Green} we obtain that if $n$ is large enough then
\begin{equation}
\label{eq:sumgmu}
\begin{split}
\sum_{y,z\in{S}}g(y,z)\mu_y\mu_z&\leq (1+\eta)^2\Cr{CGreenasymp}\Big(\frac{N}{P}\Big)^{-\nu}\frac1{n^2}\sum_{\substack{i,j=1\\|i-j|\geq tn}}^{n}|i-j|^{-\nu}h_{\nu,s}\left(\frac{i}{n}\right)h_{\nu,s}\left(\frac{j}{n}\right)
\\&+\Cr{CGreen}\Big(\frac{(1-\delta)N}{P}\Big)^{-\nu}\frac1{n^2}\sum_{\substack{i,j=1\\0<|i-j|< tn}}^{n}|i-j|^{-\nu}h_{\nu,s}\left(\frac{i}{n}\right)h_{\nu,s}\left(\frac{j}{n}\right)
\\&+\frac{1}{n^2}\sum_{i=1}^{n}h_{\nu,s}\left(\frac{i}{n}\right)^2\sum_{x,y\in{S_i}}\bar{e}_{S_i}(x)\bar{e}_{S_i}(y)g(x,y).
\end{split}
\end{equation}
In the previous equation, if $n$ is large enough, then by \eqref{eq:riemann2}, \eqref{eq:riemann1}, the sum of the two first lines can be bounded by $(1+\eta)^5P^{\nu}/(\Cr{cbeta}N^{\nu}n^{\nu})\leq (1+\eta)^6/(\Cr{cbeta}N^{\nu})$ if $n\geq \Cr{ccardAP} P$ for some constant $\Cr{ccardAP}=\Cr{ccardAP}(\eta,\delta)\in{(0,1)}$ large enough. Since $n\geq \Cr{ccardAP}P$, $h_{\nu,s}$ is bounded on $[0,1]$ and the minimum in \eqref{variational} for $A=S_i$ is reached at $\bar{e}_{S_i}$, the last line of \eqref{eq:sumgmu} is smaller than $1/(\kappa\Cr{cconstantabovekappa}P)$ for some positive constant $\Cr{cconstantabovekappa}$, and we obtain \eqref{eq:capunionbound} by \eqref{variational} up to a change of variable in $\eta$ (note also that $n$ can indeed be taken large by choosing $\Cr{CboundP}$ large enough since $n\geq \Cr{ccardAP}P\geq \Cr{ccardAP}\Cr{CboundP}$). 

We now turn to the proof of the upper bound \eqref{eq:capunionbound2}, still in the case $\nu<1$, and note that without loss of generality one can assume that $n=P$ and $A=\{1,\dots,P\}$ by monotonicity of the capacity, and that we can take $\kappa=0$ since it does not appear in \eqref{eq:capunionbound2}. We proceed by contradiction, that is we assume that for all $M\geq 1$ there exist $x_M\in{G}$, $N_M\geq M$, an integer $P_M\in{[M,N]}$, and a sequence $(S_i^{(M)})_{1\leq i\leq P^{(M)}}$ of sets such that $\delta|i-j|-1/\delta\leq d(y,z)\leq (|i-j|+1/\delta)N_M/P_M$ for all $1\leq i\neq j\leq P$, $y\in{S_i^{(M)}}$ and $z\in{S_j^{(M)}}$, not satisfying \eqref{eq:capunionbound2}.  By \eqref{variational}, there exists a probability measure $\mu^{(M)}$ on $\cup_{i=1}^{N_M}S_i^{(M)}$ so that 
\begin{equation}
\label{eq:sumSiSj}
    \sum_{i,j=1}^{P_M}\sum_{y\in{S_i^{(M)}}}\sum_{z\in{S_j^{(M)}}}g(y,z)\mu^{(M)}(y)\mu^{(M)}(z)\leq \frac{1}{\Cr{Cbeta}(1+\eta)N_M^{\nu}}.
\end{equation}
Let $\tilde{\mu}^{(M)}(a)=\sum_{i=1}^{P_M}(\sum_{y\in{S_i^{(M)}}}\mu^{(M)}(y))\delta_{i/P_M}(a)$, which is a probability measure of $[0,1]$, and hence converges weakly along a subsequence as $M\rightarrow\infty$ to some probability measure $\tilde{\mu}^{(\infty)}$. By monotone convergence, there exists $t=t(\eta)\in{(0,1)}$ such that
\begin{equation}
\label{eq:splittingforupper}
\begin{split}
    \int_0^1\int_0^1|a-b|^{-\nu}1_{|a-b|>t}\tilde{\mu}^{(\infty)}(\mathrm{d}a)\tilde{\mu}^{(\infty)}(\mathrm{d}b)&\geq     \int_0^1\int_0^1|a-b|^{-\nu}\tilde{\mu}^{(\infty)}(\mathrm{d}a)\tilde{\mu}^{(\infty)}(\mathrm{d}b)-\frac{\eta/5}{\Cr{cGreenasymp}\Cr{Cbeta}}
    \\&\geq \frac{1-\eta/5}{\Cr{cGreenasymp}\Cr{Cbeta}},
\end{split}
\end{equation}
where the last inequality follows from the discussion above \eqref{eq:riemann0}. The condition $\delta|i-j|-1/\delta\leq d(S_i^{(M)},S_j^{(M)})$ for all $i,j$ ensures that by \eqref{eq:intro_Green_asymp} if $M=M(\eta,\delta)$ is large enough, we have $g(y,z)\geq \Cr{cGreenasymp}(1-\eta/5)((|i-j|+1/\delta)N_M/P_M)^{-\nu}\geq \Cr{cGreenasymp}(1-\eta/5)^2((|i-j|N_M/P_M)^{-\nu}$ for all $1\leq i,j\leq P_M$ with $|i-j|\geq tP_M$, $y\in{S_i^{(M)}}$ and $z\in{S_j^{(M)}}$. Therefore, \eqref{eq:sumSiSj} implies that  
\begin{equation}
\label{eq:summutildeM}
        {\Cr{cGreenasymp}(1-\eta/5)^2}\sum_{\substack{i,j=1\\|i-j|\geq tP_M}}^{P_M}\Big|\frac{i}{P_M}-\frac{j}{P_M}\Big|^{-\nu}\tilde{\mu}^{(M)}\left(\left\{\frac{i}{P_M}\right\}\right)\tilde{\mu}^{(M)}\left(\left\{\frac{j}{P_M}\right\}\right)\leq \frac{1}{\Cr{Cbeta}(1+\eta)}.
\end{equation}
Since the function $|a-b|^{-\nu}1\{|a-b|>t\}$ is bounded and continuous on $[0,1]^2$, the sum on the left-hand side of \eqref{eq:summutildeM} converges along a subsequence as $M\rightarrow\infty$ to the left-hand side of \eqref{eq:splittingforupper}, which is a contradiction by the inequality $(1-\eta/5)^3\geq 1/(1+\eta)$.

Assume now that $\nu=1$. The proof of \eqref{eq:capunionbound} is similar to the case $\nu<1$, except one now takes $h_{1}(x)=1_{\{x\in{[0,1]}\}}$, $s=0$, and \eqref{eq:riemann1} is replaced by
\begin{equation*}
\frac1{n^2}\sum_{i\neq j=1}^n|i-j|^{-1}h_{1}\left(\frac{i}{n}\right)h_{1}\left(\frac{j}{n}\right)\leq \frac{2}{n^2}\sum_{i=1}^n\sum_{p=1}^np^{-1}\leq\frac{(1+\eta)\log(n)}{\Cr{CGreenasymp}\Cr{cbeta}n}
\end{equation*}
if $n$ is large enough, where we used that $\Cr{CGreenasymp}\Cr{cbeta}=1/2$ when $\nu=1$, see \eqref{eq:cbetanointro}. The bound \eqref{eq:riemann2} is replaced by
\begin{equation*}
\frac1{n^2}\sum_{\substack{i\neq j=1\\|i-j|\leq n^t}}^n|i-j|^{-1}h_{1}\left(\frac{i}{n}\right)h_{1}\left(\frac{j}{n}\right)\leq \frac{2}{n^2}\sum_{i=1}^{n}\sum_{p=1}^{n^t}p^{-1}\leq\frac{\eta(1-\delta)\log(n)}{\Cr{cbeta}\Cr{CGreen}n},
\end{equation*}
where the last inequality hold for $t=t(\eta,\delta)$ small enough. Moreover, \eqref{eq:sumgmu} still holds when replacing $s$ by $0$ and $tn$ by $n^t$ and if $n$ is large enough, then the first two lines of \eqref{eq:sumgmu} can now be upper bounded by $(1+\eta)^6\log(P)/(\Cr{cbeta}N)$ when $\Cr{ccardAP}\in{(0,1)}$ is large enough, and the third line of \eqref{eq:sumgmu} is still bounded by $1/(\kappa\Cr{cconstantabovekappa}P)$ for some positive constant $\Cr{cconstantabovekappa}$, and we can conclude by \eqref{variational}. For \eqref{eq:capunionbound2} we can take $n=P$ 
by monotonicity of the capacity. Let us fix some $z_i\in{S_i}$ for each $1\leq i\leq P$ and abbreviate $S^-=\cup_{i=P^{1-\eta}}^{P-P^{1-\eta}}S_i$. Since $e_{S}(y)\leq e_{S_i}(y)$ for each $y\in{S}$ by \eqref{eq:defequicap}, we have by \eqref{eq:capball}
\begin{equation}
\label{eq:boundcapSnu=1}
\begin{split}
    \mathrm{cap}(S)&\leq 2\Cr{Ccapball}P^{1-\eta}N/(P\delta)+\sum_{y\in{S^-}}e_{S}(y)
    \\&\leq 2\Cr{Ccapball}P^{-\eta}N/\delta+P\sup_{y\in{S^-}}\left(\sum_{i=1}^Pg(y,z_i)\right)^{-1},
\end{split}
\end{equation}
where the last inequality can be easily proven by summing \eqref{entrancegreenequi} with $A=S$ and $x=z_i$ therein over $i\in{\{1,\dots,P\}}$. Moreover, for each $y\in{S^-}$ since $\delta P^{\eta}-1/\delta\leq d(y,z_{i+P-P^{1-\eta}})\leq i(1+\eta)N/P$ and $\delta P^{\eta}-1/\delta\leq d(y,z_{P^{1-\eta}-i})\leq i(1+\eta)N/P$ if $P^{\eta}\leq i\leq P^{1-\eta}$ and $P$ is large enough (depending on $\delta$ and $\eta$), we have by \eqref{eq:intro_Green_asymp} 
\begin{equation*}
\sum_{i=1}^Pg(y,z_i)\geq \frac{2(1-\eta)(1+\eta)^{-\nu}\Cr{cGreenasymp}P}{N}\sum_{k=P^{\eta}}^{P^{1-\eta}}k^{-1}\geq\frac{(1-2\eta)(1-\eta)(1+\eta)^{-\nu}P\log(P)}{\Cr{Cbeta}N}
\end{equation*}
for $P$ large enough, where we used that $\Cr{cGreenasymp}\Cr{Cbeta}=1/2$. Moreover, on the second line of \eqref{eq:boundcapSnu=1} the first term is smaller than $\eta$ times the second term, up to taking $P\geq \Cr{CboundP}$ for some constant $\Cr{CboundP}=\Cr{CboundP}(\eta,\delta)$ large enough, and we can easily conclude after a change of variable for $\eta$.

Finally, let us assume that $\nu>1$. We simply now take $\mu_y=\tfrac1n\sum_{i=1}^n\bar{e}_{S_i}(y)$ for all $y\in{G}$ and then
\begin{equation*}
\begin{split}
    \sum_{y,z\in{G}}\mu_y\mu_zg(y,z)&\leq \frac1{n^2}\sum_{i=1}^n\sum_{y,z\in{S_i}}\bar{e}_{S_i}(y)\bar{e}_{S_i}(z)g(y,z)+\frac{\Cr{CGreen}}{n^2}\sum_{\substack{i,j=1\\i\neq j}}^n(|i-j|-\delta)^{-\nu}\left(\frac{N}{P}\right)^{-\nu}
    \\&\leq \frac{1}{\kappa n}+\frac{\Cr{CGreen}2^{\nu+1}P^{\nu}}{n^2N^{\nu}(1-\delta)^{\nu}}\sum_{i=1}^n\sum_{p=1}^np^{-\nu},
\end{split}
\end{equation*}
and since $\sum_{i=1}^{\infty}p^{-\nu}<\infty$ and $n\geq \Cr{ccardAP}P$, we easily obtain \eqref{eq:capunionbound} by \eqref{variational} for any choice of $\Cr{ccardAP}\in{(0,1)}$. For the upper bound \eqref{eq:capunionbound2}, one simply notes that $\mathrm{cap}(S_i)\leq \Cr{Ccapball}(N/(\delta P))^{\nu}$ for each $1\leq i\leq P$ by \eqref{eq:capball}, and we can conclude by subadditivity of capacity.
\end{proof}

\begin{Rk}
\label{rk:capRnp}
Lemma~\ref{lem:capatube} directly implies bounds on the capacity of the set  $\mathcal{R}_{N,p}$ from \eqref{eq:RNp} on $\Z^d$, $d\geq3$. Indeed, $\mathcal{R}_{N,p}$ is included in the union of the balls $B(kpe_1,p\sqrt{d})$, $0\leq k\leq \lceil N/p\rceil$, where $e_1=(1,0,\dots)$, and contains the union of the balls $B(kpe_1,p/4)$, $1\leq k\leq \lfloor N/p\rfloor-1.$ Therefore using Lemma~\ref{lem:capatube} with $P=\lceil N/p\rceil+1$, $\delta=(2\sqrt{d})^{-1}$, replacing $N$ by $p\lceil N/p\rceil+p$ for the upper bound, and with $P=\lfloor N/p\rfloor-1$, $\delta=1/2$, replacing $N$ by $p\lfloor N/p\rfloor-p$ for the lower bound, one obtains by monotonicity of capacity, \eqref{eq:assumptionZd} and \eqref{eq:capball} that for any $\eta\in{(0,1)}$, if $N\geq Cp$ for some $C=C(\eta)$, then
    \begin{equation}
    \label{eq:capRnp}
    \begin{split}
         \frac{(1-\eta)\pi N}{3\log(N/p)}\leq& \mathrm{cap}(\mathcal{R}_{N,p})\leq \frac{(1+\eta)\pi N}{3\log(N/p)}\text{ if }G=\Z^3\text{ and}
         \\cNp^{d-3}\leq &\mathrm{cap}(\mathcal{R}_{N,p})\leq CNp^{d-3}\text{ if }G=\Z^d,d\geq4.
         \end{split}
    \end{equation}
Note that \eqref{eq:capRnp} for $d\geq4$ is still valid when $p\leq N\leq Cp$ by \eqref{eq:capball}, subbaditivity and monotonicity of capacity, as well as when $d=3$ up to replacing $(1-\eta)$ and $(1+\eta)$ by some constants $c>0$ and $C<\infty$, and $\log(N/p)$ by $\log(N/p)\vee1$. The bounds in \eqref{eq:capRnp} were essentially already proved in \cite[Lemma~2.1]{dembo2022capacity} when $d=3$, or in \cite[Lemma~2.2]{GRS21} when $p=1$.
\end{Rk}

\section{First passage percolation}
\label{sec:FPP}
This section contains our main first passage percolation result, Theorem~\ref{the:FPP}, from which the upper bounds in Theorem~\ref{the:limitVuintro2} and~\eqref{eq:boundonPEintro} will eventually be deduced. Our main tool will be Lemma~\ref{lem:coarsegraining} below, which already contains the correct large deviation bound if one is only interested in the probability that the FPP distance is positive, see Remark~\ref{rk:decayVu},\ref{rk:easierproofc1=0}). We then show in Proposition~\ref{pro:apriori} that the FPP distance grows linearly with high (but non-explicit) probability. Combining these two results one can then easily deduce that the FPP distance grows linearly with the correct large deviation probability, see Section~\ref{sec:prooftheFPP}. 

We consider a general setup in this section for our choice of weights in the definition of the FPP distance which will let us treat all applications of interest at once without making the proof much more difficult, and which essentially includes the setup from \cite{AndPre} when considering random interlacements therein. Let us endow $[0,\infty)^K,$ $K\subset G,$ with the partial order $L=(L_x)_{x\in{K}}\leq L'=(L'_x)_{x\in{K}}$ if and only if $L_x\leq L'_x$ for each $x\in{K},$ and we will always implicitly refer to the monotonicity of functions $g:[0,\infty)^{K}\rightarrow[0,\infty)$ with respect to this partial order. Recalling the definition of $\Lambda(R)$ from \eqref{eq:defLambda}, let us fix $R\geq1$ and a family of weights
\begin{equation}
\label{eq:defbft}
\begin{gathered}
    \mathbf{t}=(t_z^u)_{z\in{\Lambda(R)},u>0}\text{ such that }t_z^u\stackrel{\textnormal{def.}}{=}g_z((\ell_{x,u})_{x\in{B(z,3R)}})\text{ for some measurable}
    \\\text{function }g_z:[0,\infty)^{B(z,3R)}\rightarrow[0,\infty)\text{ for each }z\in{\Lambda(R)}\text{ and }u>0,
    \\\text{and }(g_z)_{z\in{\Lambda(R)}}\text{ are either all increasing or all decreasing.}
\end{gathered}
\end{equation}
We then define the FPP distance $d_{\mathbf{t},u}(x,y)$ associated to $\mathbf{t}$ at level $u$ as 
\begin{equation}
\label{eq:defdistance}
    d_{\mathbf{t},u}(x,y)\stackrel{\textnormal{def.}}{=}\inf_{\gamma\subset\Lambda(R):x\leftrightarrow y}\sum_{z\in{\gamma}}t_z^u\text{ for all }x,y\in{\Lambda(R)}
\end{equation}
where the infimum is taken over all $R$-nearest neighbor paths $\gamma\subset \Lambda(R)$ from $x$ to $y.$ Note that the dependency of $d_{\mathbf{t},u}$ on $R$ is implicit, but is in practice always clear from the choice of $\mathbf{t}$. When $R=1$, $d_{\mathbf{t},u}$ is simply the usual FPP distance on $G$ associated to the weights $(t_z^u)_{z\in{G}}$, whereas if $R>1$ it is an FPP distance on the renormalized graph $\Lambda(R)$. Let us further define for each $x\in{\Lambda(R)}$ and $N\geq L\geq0$
\begin{equation}
\label{eq:defdzL}
d_{\mathbf{t},u}(x;L,N)\stackrel{\textnormal{def.}}{=}\inf_{\substack{y,z\in{\Lambda(R)}\\y\in{B(x,L)},z\in{B(x,N)^c}}}d_{\mathbf{t},u}(y,z)
\end{equation}
the length of the shortest path between $B(x,L)$ and $B(x,N)^c.$ We also abbreviate $d_{\mathbf{t},u}(x;N)=d_{\mathbf{t},u}(x;0,N)$ the length of the shortest path between $x$ and $B(x,N)^c$. In particular, the distance $d_u(N)$ from \eqref{eq:defduN} is equal to $d_{\mathbf{t},u}(0;N)$ when $R=1$ and $g_z(\ell)=1\{\ell>0\}$ in \eqref{eq:defbft}.

Moreover, throughout this section we will always assume that the levels $u,v>0$ are chosen for some $\Theta>1$ in the set
\begin{equation}
\label{eq:defcalL}
\mathcal{L}_{\Theta}\stackrel{\textnormal{def.}}{=}\begin{cases}
\{(u,v)\in{(0,\infty)}:u<v<\Theta u\}&\text{if the functions $g_z$ in \eqref{eq:defbft} are increasing,}
\\\{(u,v)\in{(0,\infty)}:v<u< \Theta v\}&\text{ otherwise}.
\end{cases}
\end{equation}
We abbreviate $\mathcal{L}=\bigcup_{\Theta=1}^{\infty}\mathcal{L}_{\Theta}$, and recall the function $F_{\nu}$ from \eqref{eq:defGnu} and below.

\begin{The}
\label{the:FPP}
Fix $\eta,\xi,\zeta>0$ and $\Theta>1$. There exist constants $\Cl[c]{ccondFPP}>0$, depending only on $\eta$, $\xi$ and $\Theta$, as well as $\Cl{cboundN}<\infty$ and $\Cl[c]{cFPP}>0,$ depending on $\eta,\xi,\zeta$ and $\Theta$, such that for all $s>0,$ $(u,v)\in{\mathcal{L}_{\Theta}}$ with $\xi\sqrt{u}\leq |\sqrt{v}-\sqrt{u}|$,  $R\in{[1,\zeta |\sqrt{v}-\sqrt{u}|^{-\frac2\nu}]}$, families of weights $\mathbf{t}$ as in \eqref{eq:defbft}  satisfying
\begin{equation}
\label{eq:assumptiontzu}
    \P(t_z^u\leq s)\leq \Cr{ccondFPP}(1\wedge u^{\frac1\nu}R)^\alpha\text{ for all }z\in{\Lambda(R)},
\end{equation}
and for all $x\in{\Lambda(R)}$ and $N\geq R$, we have
\begin{equation}
\label{eq:bounddtvfinal}
    \P\left(d_{\mathbf{t},v}(x;N)\leq \frac{\Cr{cFPP}sN}{u^{-\frac1\nu}\vee R}\right)\leq 
    \Cr{cboundN}\exp\left(-\frac{\Cl[c]{cFPP2}}{1+\eta}F_{\nu}\big(N|\sqrt{v}-\sqrt{u}|^{\frac2\nu}\big)\right),
\end{equation}
where $\Cr{cFPP2}=\Cr{cbeta}$ if $\nu\leq 1$ and $\Cr{cFPP2}=\Cr{cFPP2}(\xi,\zeta,\Theta)<\infty$ if $\nu>1$.
\end{The}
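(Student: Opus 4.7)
The plan is to combine the two main ingredients highlighted at the start of Section~\ref{sec:FPP}: the coarse-graining result Lemma~\ref{lem:coarsegraining}, which already contains the sharp large-deviation bound for the event that the FPP distance is essentially zero, and the a priori estimate Proposition~\ref{pro:apriori}, which ensures that $d_{\mathbf{t},v}(x;N)$ grows linearly in $N/(u^{-1/\nu}\vee R)$ with high but non-explicit probability. My expectation is that the a priori estimate reduces the statement to a small-$s$ regime where the coarse-graining is sharp. Concretely, on the complement of the event controlled by Proposition~\ref{pro:apriori}, any path achieving weight $\leq sN/(u^{-1/\nu}\vee R)$ with $s$ small must traverse an annulus of width comparable to $N$ in which essentially no weight is accumulated, which is precisely the event Lemma~\ref{lem:coarsegraining} handles. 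Slicing $B(x,N)$ into a geometrically increasing sequence of annuli and applying the coarse-graining on one of them yields \eqref{eq:bounddtvfinal}.

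The heart of the argument is a two-level coupling: given an $R$-nearest neighbor path $\gamma$ on $\Lambda(R)$ with small total weight, a positive fraction of the boxes $z\in\gamma$ must satisfy $t_z^v\leq s$ by a pigeonhole argument, and by the monotonicity of $g_z$ in the local times of interlacements this is a monotonic event in $\ell_{\cdot,v}$. One then invokes Proposition~\ref{pro:softlocaltimes} to decouple these events at distant boxes and to transfer the joint event from level $v$ to the nearby level $u$, paying an entropy cost of roughly $(\sqrt{v}-\sqrt{u})^2$ times the capacity of the union of bad boxes. Combined with the Poisson identity \eqref{eq:defIuintro} and the capacity lower bound supplied by Lemma~\ref{lem:capatube}, this produces the announced factor $\exp(-\Cr{cFPP2}F_{\nu}(N|\sqrt v-\sqrt u|^{2/\nu})/(1+\eta))$, with the sharp constant $\Cr{cbeta}$ for $\nu\leq 1$ coming directly from the optimal tube-capacity constant identified in \eqref{eq:cbetanointro}.

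The final step is to sum over all possible skeletons of bad boxes along $\gamma$, which is controlled by the refined entropy estimate Proposition~\ref{pro:entropy}. The hypothesis \eqref{eq:assumptiontzu} enters at this point: each ``bad'' box costs a factor at most $\Cr{ccondFPP}(1\wedge u^{1/\nu}R)^\alpha$, and in the regime $R\leq u^{-1/\nu}$ this smallness is needed to absorb the combinatorial cost of choosing which boxes are bad; in the opposite regime $R\gtrsim u^{-1/\nu}$ the boxes themselves are so large that only $O(N/R)$ of them fit along the path, and the bound $\P(t_z^u\leq s)\leq \Cr{ccondFPP}$ suffices. One should then adjust $R$ in Lemma~\ref{lem:capatube} so that the bad skeleton looks like the tubes treated by \eqref{eq:capunionbound}, delivering $F_\nu(N,N/P)$ as the capacity scale.

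The main obstacle I foresee is preserving the exact constant $\Cr{cbeta}$ rather than some worse $c<\Cr{cbeta}$. A naive union bound over paths loses the sharpness, and recovering it requires simultaneously the improved coarsening of Proposition~\ref{pro:entropy} (taming the entropy of near-straight skeletons without eroding the constant) and Lemma~\ref{lem:capatube} applied in the nearly-linear geometry that makes \eqref{eq:capunionbound} tight. A secondary difficulty is that $F_\nu$ has three qualitatively different forms depending on whether $\nu<1$, $\nu=1$, or $\nu>1$, so the proof must unify these regimes through the interplay between the scales $R$, $u^{-1/\nu}$ and $|\sqrt v-\sqrt u|^{-2/\nu}$ appearing in the hypotheses, and the bootstrapping from Proposition~\ref{pro:apriori} into Lemma~\ref{lem:coarsegraining} has to be carried out uniformly in these regimes.
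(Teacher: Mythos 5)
Your proposal matches the paper's proof in its essential architecture: Proposition~\ref{pro:apriori} is used to verify the hypothesis \eqref{eq:assumptiontzu2} of Lemma~\ref{lem:coarsegraining} at a single well-chosen intermediate scale $L$ (with $r\approx sL/(u^{-1/\nu}\vee R)$, after a sprinkling from $u$ to an intermediate level permitted by $\xi\sqrt u\leq|\sqrt v-\sqrt u|$), and Lemma~\ref{lem:coarsegraining} — whose internals are exactly the pigeonhole/soft-local-times/capacity/entropy mechanism you describe — then yields \eqref{eq:bounddtvfinal} directly for the crossing from $x$ to $B(x,N)^c$, with no need to slice $B(x,N)$ into concentric annuli. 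Your account of where \eqref{eq:assumptiontzu} enters and why the constant $\Cr{cbeta}$ survives for $\nu\leq1$ is likewise consistent with the paper.
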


 When $\nu>1$, one can in fact remove the factor $1+\eta$ in \eqref{eq:bounddtvfinal} up to changing the constant $\Cr{cFPP2}$. The interest of the variable $\eta$ comes from the exact value of the constants $\Cr{cFPP2}=\Cr{cbeta}$ when $\nu\leq 1$, as defined in \eqref{eq:cbetanointro}. The only reason we keep the variable $\eta$ when $\nu>1$ is to be able to state \eqref{eq:bounddtvfinal} for any $\nu>0$ in a more compact form (compared for instance to \eqref{eq:boundonPEintro}), and we will actually keep using this more compact form throughout the article. We stress that the fact that the constant $\Cr{ccondFPP}$ appearing in \eqref{eq:assumptiontzu} does not depend on the choice of $\zeta$ will actually be essential in the proof of Theorems~\ref{the:localuniquenessintro}, \ref{the:boundcapacityintro} and \ref{the:limitVuintro2}. Some of the conditions of Theorem~\ref{the:FPP}  might seem superfluous at first read, such as the bound $R\leq \zeta|\sqrt{v}-\sqrt{u}|^{-\frac2\nu}$, but a condition of this type is in fact necessary, whereas it would be possible to remove the condition $\xi\sqrt{u}\leq|\sqrt{v}-\sqrt{u}|$  at the cost of additional logarithmic factors in \eqref{eq:bounddtvfinal}, see  Remark~\ref{rk:endthm},\ref{rk:removingdependenceonzeta}) for details. Note however that in \eqref{eq:boundonPEintro}, which is the main application of Theorem~\ref{the:FPP}, one will eventually take $u=\eta v,$ and so the condition $\xi\sqrt{u}\leq |\sqrt{v}-\sqrt{u}|$ is automatically satisfied when taking $\xi=(1-\sqrt{\eta})/\sqrt{\eta}$.

The condition \eqref{eq:assumptiontzu} may also seem a priori surprising, as one may expect that a condition of the type $\P(t_z^u\leq s)\leq \Cr{ccondFPP}$, for some small constant $\Cr{ccondFPP}$ not depending on $u$ and $R$, is enough to start our renormalization procedure (see \eqref{eq:bounda0} below as to where exactly condition \eqref{eq:assumptiontzu} is used in the proof). It turns out that this is not the case, as the following counterexample shows: take $s=0$, $R=1$ and $g_z(\ell_{z,u})=1\{\ell_{z,u}=0\}$, $z\in{G}$. Then $d_{t,v}(x;N)=0$ if $x\leftrightarrow B(x,N)^c$ in $\mathcal{I}^v$, which always happens when $x\in{\mathcal{I}^v}$. One can therefore lower bound the probability in \eqref{eq:bounddtvfinal} by $1-\exp(-cv)$ in view of \eqref{eq:defIuintro} and \eqref{eq:intro_Green}. In particular, \eqref{eq:bounddtvfinal} is not satisfied when $(\sqrt{v}-\sqrt{u})^2N^{\nu}$ is large enough. Note that $\P(t_z^u\leq s)=1-\exp(-u/g(z,z))\leq \Cr{ccondFPP}$ is indeed satisfied uniformly in $z\in{G}$ for $u$ small enough by \eqref{eq:intro_Green}, but that \eqref{eq:assumptiontzu} is not satisfied by \eqref{eq:condnualpha}.

Let us now explain the ideas behind the proof of Theorem~\ref{the:FPP}, which uses techniques introduced in \cite{MR3602841,GRS21}, and we focus on the case $R\approx u^{-\frac1\nu}$ as in \eqref{eq:boundonPEintro} for simplicity. The main idea is to notice that $d_{\mathbf{t},v}(x;N)\geq cc'N/R$ if any $L$-nearest neighbor path $\calC\subset\Lambda(L)$ from $x$ to $B(x,N)^c$ contains at least $c'N/L$ vertices $y\in{\calC}$ such that $d_{\mathbf{t},v}(y;L;2L)\geq cL/R$, for some constants $c,c'>0$, and then to show that the probability of this last event is larger than one minus the right-hand side of \eqref{eq:bounddtvfinal}, where $L\in[R,N]$ will be an intermediate scale suitably chosen, see \eqref{eq:choiceL}. The main difficulty to implement this strategy comes from the strong dependency between the events $d_{\mathbf{t},v}(y;L;2L)\geq cL/R$, $y\in{\calC}$, and in order to weaken this dependency we will actually assume that $d(y,y')\geq 16KL$ for any $y\neq y'\in{\calC}$, for some large enough constant $K$ suitably chosen. 

There will be two main steps to finish removing this dependency with high probability. One first bounds the total time (suitably weighted) spent by interlacements in $B(y,2L)$ for any $y\in{\calC}$, see Proposition~\ref{pro:thm4.2disco}, which is a generalization of \cite[Theorem~4.2]{MR3602841}, and which together with Lemma~\ref{lem:capatube} explains the form of the bound \eqref{eq:bounddtvfinal}. Once this total time spent by interlacements in $B(y,2L)$ for any $y\in{\calC}$ is fixed, one can replace the local times of interlacements in each of these boxes by independent copies of these local times, on some events which are independent in $y\in{\calC}$ and occur with high enough probability, at the cost of decreasing, or increasing depending on the monotonicity of the function $g_z$ in \eqref{eq:defbft}, the parameter $v$ by some sprinkling parameter, see Proposition~\ref{pro:softlocaltimes} which relies on the soft local times technique from \cite{MR3420516}. If $d_{\mathbf{t},u}(y;L;2L)\geq cL/R$ occurs with high probability and our choice of sprinkling parameter is essentially smaller than $|v-u|$, see \eqref{eq:defepsu'}, using the previous reasoning together with large deviations bounds on the sum of independent random variables, see \eqref{eq:boundonhatt}, one obtains for each path $\calC$ as before the desired bound on the probability that $d_{\mathbf{t},v}(y;L;2L)\geq cL/R$ for at least $c'N/L$ different $y\in{\calC}$.

A major obstacle to then finish the proof of Theorem~\ref{the:FPP} is that one needs to make sure that the entropy term coming from summing over all possible such paths $\calC$ from $x$ to $B(x,N)^c$ does not dominate the previous probability, that is the right-hand side of \eqref{eq:bounddtvfinal}, for a suitable choice of $L$. Precise bounds on this entropy term are derived in Proposition~\ref{pro:entropy}, which improves the bounds from \cite[Proposition~4.3]{GRS21} especially in the case $\nu\leq 1$, and we refer to Remark~\ref{rk:decayVu} for more details on its interest. It only remains to prove that $d_{\mathbf{t},u}(y;L;2L)\geq cL/R$ indeed occurs with high probability under condition \eqref{eq:assumptiontzu} (up to additional sprinkling), or in other words to prove that the FPP distance grows at least linearly with high probability for $L$ large enough (without the need for precise control on this probability contrary to \eqref{eq:bounddtvfinal}). This is done in Proposition~\ref{pro:apriori} using the perforated lattice renormalization scheme introduced in \cite[Section~2]{MR3650417}, and we refer to \cite[Remark~2.3,(iii)]{AndPre} for a description of the ideas behind this proof.

\subsection{Decoupling of interlacements and coarse-graining}
\label{sec:coarsegraining}

For each $L,K\geq 1$ and $z\in{\Lambda(L)}$ we abbreviate 
\begin{equation}
\label{eq:defCU}
    C_z^L\stackrel{\textnormal{def.}}{=}B(z,L),\ \tilde{C}_z^L\stackrel{\textnormal{def.}}{=}B(z,2L)\text{ and } \hat{C}_z^L\stackrel{\textnormal{def.}}{=}B(z,8L).
\end{equation}
We will write $C_z,$ $\tilde{C}_z$ and $\hat{C}_z$ instead of $C_z^L,$ $\tilde{C}_z^L$ and $\hat{C}_z^L$ whenever the choice of $L$ is clear from context.
Moreover, throughout the article we will consider for $L,K\geq1$
\begin{equation}
    \label{eq:defcalC}
    \mathcal{C}\subset\Lambda(L)\text{ a non-empty collection of sites with mutual distance at least }16KL,
\end{equation}
as well as
\begin{equation}
\label{eq:defSigma}
    \Sigma(\mathcal{C})\stackrel{\textnormal{def.}}{=}\bigcup_{z\in{\mathcal{C}}}\hat{C}_z^L.
\end{equation}
 For each $z\in{\Lambda(L)}$ and $u>0$ we also define 
\begin{equation}
\label{eq:defTz}
    T^z(u)\stackrel{\textnormal{def.}}{=}
    \inf\Big\{v>0:\sum_{x\in{\hat{C}_z}}\bar{e}_{\hat{C}_z}(x)\ell_{x,v}\geq u\Big\}.
\end{equation} 
Let us start with the following result, which is an adaptation of \cite[Theorem~4.2]{MR3602841} in our context and will be proved in Section~\ref{sec:proofpropthm4.2}.

\begin{Prop}
\label{pro:thm4.2disco}
Fix $\eta\in{(0,1)}.$ There exists $\Cl{cforK}=\Cr{cforK}(\eta)<\infty$ such that for all $K\geq \Cr{cforK},$ $L\geq1,$ $u,v>0$ and $\mathcal{C}$ as in \eqref{eq:defcalC}
\begin{equation}
\label{eq:thm4.2disco}
\begin{split}
    \P&\left(E_{\mathcal{C}}^{u,v}\right)
    \geq 1-\exp\left(-\frac{1}{1+\eta}\left(\sqrt{v}-\sqrt{u}\right)^2\mathrm{cap}(\Sigma(\mathcal{C}))\right),
\end{split}
\end{equation}
where
\begin{equation}
\label{eq:defeventE}
    E_{\mathcal{C}}^{u,v}\stackrel{\textnormal{def.}}{=}
    \begin{cases}
    \bigcup_{z\in{\mathcal{C}}}\left\{T^z(u)< v \right\}&\text{ if }u<v
    \\\bigcup_{z\in{\mathcal{C}}}\left\{T^z(u)> v\right\}&\text{ otherwise.}
    \end{cases}
\end{equation}
\end{Prop}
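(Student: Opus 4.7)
I would adapt \cite[Theorem~4.2]{MR3602841}. Focus on $u < v$; the opposite case is symmetric (swap the roles of the sprinkling below). Write $\Sigma := \Sigma(\mathcal{C})$ and $Y_z(v) := \sum_{x \in \hat{C}_z} \bar{e}_{\hat{C}_z}(x)\,\ell_{x,v}$, so that $(E_{\mathcal{C}}^{u,v})^c = \bigcap_{z \in \mathcal{C}} \{Y_z(v) < u\}$. The plan is to bound this intersection via an exponential Chebyshev inequality applied to the weighted sum $Z(v) := \sum_z \mathrm{cap}(\hat{C}_z)\,Y_z(v)$, noting that $(E_{\mathcal{C}}^{u,v})^c$ implies $Z(v) < u\sum_z \mathrm{cap}(\hat{C}_z)$.

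The first ingredient is a single-box Laplace identity. Using the trajectory decomposition \eqref{eq:defRI} with $A = \hat{C}_z$, one represents $Y_z(v)$ as a compound Poisson sum with Poisson parameter $v\,\mathrm{cap}(\hat{C}_z)$. A Markov-renewal computation using \eqref{entrancegreenequi} then shows that each jump (the equilibrium-weighted local time contributed by a single interlacement trajectory) is $\mathrm{Exp}(\mathrm{cap}(\hat{C}_z))$-distributed: this is transparent on a single point, where the jump is a geometric sum of i.i.d.\ $\mathrm{Exp}(\lambda_x)$ holding times with parameter $1/(\lambda_x g(x,x))$, and it extends to general sets via the trace chain of the walk on $\hat{C}_z$ together with the identity $\sum_y g(y,x) e_{\hat C_z}(y) = 1$ for $x \in \hat{C}_z$. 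Consequently,
$$\E\!\left[\exp(-\theta Y_z(v))\right] \;=\; \exp\!\left(-\frac{v\theta}{1+\theta/\mathrm{cap}(\hat{C}_z)}\right), \qquad \theta \geq 0,$$
and in particular $\E[Y_z(v)] = v$.

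Next I would use the large-$K$ hypothesis of \eqref{eq:defcalC} to upper bound the joint Laplace transform of $Z(v)$. By \eqref{eq:boundentrance}, a trajectory hitting one $\hat{C}_z$ subsequently enters a different $\hat{C}_{z'}$ with probability at most $CK^{-\nu}$, so that the joint Laplace transform approximately factorises across the (pairwise disjoint, for $K \geq 2$) boxes:
$$\E\!\left[\exp(-\theta Z(v))\right] \;\leq\; \exp\!\left(-\frac{(1-\delta_K)\,v\theta\sum_z \mathrm{cap}(\hat{C}_z)}{1+\theta}\right),$$
with $\delta_K = O(K^{-\nu}) \to 0$ as $K \to \infty$. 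Applying Chernoff with $\theta$ optimised at $\sqrt{(1-\delta_K)v/u}-1 > 0$ yields an exponent equal to $-\sum_z\mathrm{cap}(\hat{C}_z)\,(\sqrt{(1-\delta_K)v}-\sqrt{u})^2$, which by subadditivity $\sum_z \mathrm{cap}(\hat{C}_z) \geq \mathrm{cap}(\Sigma)$ is at most $-\mathrm{cap}(\Sigma)(\sqrt{v}-\sqrt{u})^2/(1+\eta)$ provided $K \geq \Cr{cforK}(\eta)$ is large enough (one uses $(\sqrt{(1-\delta_K)v}-\sqrt{u})^2 \geq (\sqrt{v}-\sqrt{u})^2/(1+\eta)$ in the non-trivial regime where $\sqrt{u}/\sqrt{v}$ is bounded away from $1$; in the opposite regime the claimed lower bound on $\P(E_{\mathcal{C}}^{u,v})$ is itself essentially trivial).

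The main obstacle I anticipate is the approximate Laplace factorisation in the second display: the exact Laplace transform of $Z(v)$ does not factor because a single interlacement trajectory can a priori meander across several of the $\hat{C}_z$'s and thereby correlate $Y_z$ and $Y_{z'}$. However, the separation $16KL$ imposed in \eqref{eq:defcalC} is calibrated precisely so that \eqref{eq:boundentrance} forces these cross-box interactions to be an $O(K^{-\nu})$ perturbation, which can be absorbed into the $(1+\eta)^{-1}$ prefactor by choosing $K \geq \Cr{cforK}(\eta)$ large enough.
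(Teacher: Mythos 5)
Your reduction of $(E_{\mathcal{C}}^{u,v})^c$ to a deviation of an equilibrium-weighted local-time functional, and your single-box Laplace identity $\E[\exp(-\theta Y_z(v))]=\exp(-v\theta/(1+\theta/\mathrm{cap}(\hat{C}_z)))$, are both correct (the latter is the $|\calC|=1$ case of the master formula \eqref{eq:lap2} with $V=-\theta\bar e_{\hat C_z}$, using $\sum_y g(x,y)e_{\hat C_z}(y)=1$ on $\hat C_z$). The gap is the approximate factorisation in your second display. For $y\in\hat C_z$ the cross-box interaction is not "$\P_y(H_{\hat C_{z'}}<\infty)\le CK^{-\nu}$ for one other box", but the \emph{sum} $\sum_{z'\in\calC\setminus\{z\}}\P_y(H_{\hat C_{z'}}<\infty)$ over all of $\calC$, and this sum is of leading order, not a perturbation: under \eqref{eq:intro_sizeball} the number of $z'\in\calC$ at distance of order $16KLj$ from $z$ can be as large as $cj^{\alpha-1}$, so the sum behaves like $K^{-\nu}\sum_j j^{\alpha-1-\nu}$, which diverges with the diameter of $\calC$ since $\nu\le\alpha-2$ by \eqref{eq:condnualpha}. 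There is also a structural reason the factorised bound must fail: it would put $\sum_z\mathrm{cap}(\hat C_z)$ rather than $\mathrm{cap}(\Sigma(\calC))$ in the exponent, and for $\nu\le1$ (e.g.\ $\calC$ along a tube) one has $\mathrm{cap}(\Sigma(\calC))\ll\sum_z\mathrm{cap}(\hat C_z)$, so the factorised estimate would contradict the matching lower bounds of the type \eqref{eq:prooflowerFPPintro}. The positive correlations between the $Y_z$'s carried by single trajectories visiting many boxes are precisely what reduces the rate from $\sum_z\mathrm{cap}(\hat C_z)$ to $\mathrm{cap}(\Sigma(\calC))$, so they cannot be absorbed into a $(1+\eta)$ factor.

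The paper's proof handles this by choosing the test function $V(x)=\sum_{z}e_{\Sigma(\calC)}(\hat C_z)\,\bar e_{\hat C_z}(x)$, i.e.\ weighting box $z$ by the equilibrium measure of the \emph{union} restricted to $\hat C_z$ rather than by $\mathrm{cap}(\hat C_z)$; these screened weights satisfy $\langle V,1\rangle=\mathrm{cap}(\Sigma(\calC))$ automatically, and Lemma~\ref{lem:gVcloseto1} (via the Harnack inequality \eqref{EHIK}, which is where $K\ge\Cr{cforK}(\eta)$ enters) shows $(\mathbf{GV})1(y)=(1\pm\eta)\P_y(H_{\Sigma(\calC)}<\infty)\le1+\eta$, so the resolvent in \eqref{eq:lap2} is controlled by $\mathrm{cap}(\Sigma(\calC))/(1-\theta(1+\eta))$ and the Chernoff optimisation produces exactly the stated exponent. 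One further point: the two cases are not symmetric. For $u<v$ the optimal $\theta=(\sqrt v-\sqrt u)/((1+\eta)\sqrt u)$ is unbounded as $u\to0$, which exceeds the radius of convergence of \eqref{eq:lap2}; the paper needs the analytically continued identity \eqref{eq:laplacetheta}, valid for all $\theta>0$, to treat this case.
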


The next tool we need to prove Theorem~\ref{the:FPP} is the soft local times technique from \cite{MR3420516} to decouple the excursions of random interlacements on the sets $\hat{C}_z,$ $z\in{\Lambda(L)}.$ Recall the definition of $\ell^-_{\cdot}$ in \eqref{eq:localtimes}, and note that $(\ell^{-}_{x,T^z(u)})_{x\in{\hat{C}_z}}\neq (\ell_{x,T^z(u)})_{x\in{\hat{C}_z}}$ by \eqref{eq:defTz}. 

\begin{Prop}
\label{pro:softlocaltimes}
     Upon extending the underlying probability space, there exist $\Cl{CsoftlocaltimesK},\Cl{Csoftlocaltimes},\Cl[c]{csoftlocaltimesK}<\infty$ and $\Cl[c]{csoftlocaltimes}>0$ such that for all  $\eps\in{(0,1/2]}$, $L\geq1,$ $K\geq \Cr{csoftlocaltimesK}\eps^{-\Cr{CsoftlocaltimesK}}$ and $u_0>0$,  there exists a family of processes $(\hat{\ell}^z_{x,u})_{x\in{G},u>0}$, $z\in{G}$, each with the same marginals as $(\ell_{x,u})_{x\in{G},u>0}$, and  a family of events $(F_{z}^{u,\eps})_{u>0,z\in{G}}$, such that $((F_{z}^{u,\eps})_{u>0},\hat{\ell}^z_{\cdot})$, $z\in{\calC}$, are independent for each $\calC$ as in \eqref{eq:defcalC},  and for all $u\in{(0,u_0]}$ and $z\in{G}$
    \begin{equation}
    \label{eq:boundprobaFzueps}
        \P\left(F_{z}^{u,\eps}\right)\geq1-\Cr{Csoftlocaltimes}\exp(-\Cr{csoftlocaltimes}u\eps^2L^{\nu}),
    \end{equation}
    and
    \begin{equation}
    \label{eq:defeventF}
        F_{z}^{u,\eps}\subset
            \Big\{\hat{\ell}^z_{x,u(1-\eps)}\leq \ell_{x,T^z(u)}\text{ and }\ell_{x,T^z(u)}^-\leq \hat{\ell}^z_{x,u(1+\eps)}\text{ for all } x\in{\hat{C}_z^L}\Big\}.
    \end{equation}
\end{Prop}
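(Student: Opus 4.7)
The plan is to adapt and strengthen the construction of \cite[Proposition~5.1]{MR3602841}, which combines the soft local times technique of \cite{MR3420516} with a Poissonization argument. Fix $z\in{G}$. Recall from \eqref{eq:defRI} that, conditionally on their number $N_u^{\hat{C}_z}$ (Poisson with mean $u\mathrm{cap}(\hat{C}_z)\asymp uL^{\nu}$ by \eqref{eq:capball}), the trajectories of $\omega$ hitting $\hat{C}_z^L$ form an i.i.d.\ family with entry law $\bar{e}_{\hat{C}_z}$. The quantity $\sum_x \bar{e}_{\hat{C}_z}(x)\ell_{x,v}$ driving $T^z(u)$ in \eqref{eq:defTz} is, up to i.i.d.\ exponential holding times, a monotone function of how many such trajectories have entered $\hat{C}_z$ by level $v$, so Bennett/Poisson concentration for a sum of $\Theta(uL^\nu)$ independent positive random variables yields $T^z(u)\in[u(1-\eps/2),u(1+\eps/2)]$ with probability at least $1-C\exp(-cu\eps^2 L^\nu)$.

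On an enlarged probability space I would then introduce, for each $z\in{G}$, an independent copy $\hat{\omega}^z$ of $\omega$ and define $\hat{\ell}^z$ as its local times; by construction $\hat{\ell}^z$ has the correct marginal law. I would couple $\omega$ and $\hat{\omega}^z$ via soft local times, following the scheme of \cite[Section~4]{MR3420516} applied in the set-up close to \cite[Section~5]{MR3602841}: with probability at least $1-C\exp(-cu\eps^2L^\nu)$, the first $\lfloor u(1-\eps)\mathrm{cap}(\hat{C}_z)\rfloor$ excursions into $\hat{C}_z^L$ in one of the two processes are contained in the first $\lceil u(1+\eps)\mathrm{cap}(\hat{C}_z)\rceil$ of the other, and conversely. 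Intersecting this coupling event with the concentration event from the previous paragraph produces, on $F_z^{u,\eps}$, the sandwich \eqref{eq:defeventF}.

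The main obstacle is upgrading marginal independence of $\hat{\omega}^z$ from $\omega$ to genuine independence of the pairs $((F_z^{u,\eps})_{u>0},\hat{\ell}^z)$ across $z\in\calC$. My plan is to ensure the construction for each $z$ depends only on (i) the restrictions of the trajectories of $\omega$ that hit $\hat{C}_z^L$ to a personal region $B(z,8KL)$, and (ii) an independent source of auxiliary randomness $\mathcal{U}^z$ (used both to run the soft local times algorithm and to sample the remaining parts of $\hat{\omega}^z$). Since the balls $B(z,8KL)$, $z\in\calC$, are pairwise disjoint by \eqref{eq:defcalC} and the $\mathcal{U}^z$'s are taken i.i.d., the resulting pairs are automatically independent in $z\in\calC$. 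The price is the possibility that some interlacement trajectory hits both $\hat{C}_z^L$ and $\hat{C}_{z'}^L$ for $z\neq z'\in\calC$, or enters $\hat{C}_z^L$ and exits $B(z,8KL)$; the probability that a trajectory starting in $\hat{C}_z^L$ subsequently reaches any of these "forbidden" regions is, by \eqref{eq:boundentrance}, bounded polynomially in $K^{-1}$. Incorporating this rare event into $(F_z^{u,\eps})^c$ is what forces the quantitative requirement $K\geq \Cr{csoftlocaltimesK}\eps^{-\Cr{CsoftlocaltimesK}}$: the polynomial degree $\Cr{CsoftlocaltimesK}$ must be chosen so that, after summing over $z'\in\calC\cap B(z,\cdot)$ via \eqref{eq:defLambda} and using the Poisson mean $uL^\nu$ of the number of trajectories involved, the remaining error is absorbed into the target bound \eqref{eq:boundprobaFzueps} for all $u\leq u_0$.
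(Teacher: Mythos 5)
Your outline gets the quantitative ingredients right (an $\eps$-sprinkling in the number of excursions, concentration at rate $\exp(-cu\eps^2L^{\nu})$ for the $\Theta(u\,\mathrm{cap}(\hat{C}_z))$ excursion counts, and a requirement $K^{-c}\lesssim\eps$ coming from a Harnack-type comparison of excursion densities), but the third paragraph --- which is the heart of the proposition --- contains a genuine gap. You claim that because the construction for each $z$ uses only (i) the restriction to $B(z,8KL)$ of the trajectories of $\omega$ hitting $\hat{C}_z^L$ and (ii) i.i.d.\ auxiliary randomness, the pairs $((F_z^{u,\eps})_{u>0},\hat{\ell}^z)$ are "automatically independent in $z\in{\calC}$" by disjointness of the regions. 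This is false: random interlacements are long-range correlated, so the data in (i) are themselves dependent across $z$, and no independent auxiliary randomness can remove that dependence from functionals of them. Concretely, already the counts $N_u^{\hat{C}_z}$ and $N_u^{\hat{C}_{z'}}$, which are measurable with respect to your data (i), satisfy $\P(N_u^{\hat{C}_z}=0,\,N_u^{\hat{C}_{z'}}=0)=\exp(-u\,\mathrm{cap}(\hat{C}_z\cup\hat{C}_{z'}))$, which strictly exceeds the product of the marginals by subadditivity of capacity; this correlation is of order $uL^{\nu}K^{-\nu}$, nonzero for every finite $K$, whereas the proposition demands \emph{exact} independence (it is used as such in \eqref{eq:boundonhatt} via Bennett's inequality). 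The event that some trajectory hits two of the $\hat{C}_z$'s or escapes $B(z,8KL)$, which you propose to discard, is neither the only nor the main source of dependence. The resolution in the paper is structurally different: one does \emph{not} make $\hat{\ell}^z$ a function of $\omega$ restricted to region $z$; instead, via the soft local times of \cite{MR3420516,AlvesPopov,PreRodSou}, one builds auxiliary Poisson point processes $\eta^z$ whose mutual independence for $z$ at mutual distance $\geq16KL$ is itself a nontrivial fact, and then both $\hat{\ell}^z$ and $F_z^{u,\eps}$ are made measurable with respect to $(\eta^z,\hat{\lambda}^z)$ only, where $\hat{\lambda}^z$ is an independent copy of the clothesline; the true excursions of $\omega$ are recovered from $\eta^z$ together with the \emph{dependent} clothesline $\lambda^z$, which the event is engineered never to see.

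A second, related issue is the role of $T^z(u)$. Your sandwich compares excursion counts at deterministic levels $u(1\pm\eps)$, whereas \eqref{eq:defeventF} compares $\hat{\ell}^z$ to $\ell_{\cdot,T^z(u)}$ at the random level $T^z(u)$; the proposition is in fact false with $u$ in place of $T^z(u)$ (see Remark~\ref{rk:softlocaltimes}). Moreover, your concentration event for $T^z(u)$ is a functional of $\omega$ in region $z$, hence suffers from the same dependence problem and cannot be part of $F_z^{u,\eps}$. The paper instead bounds the number of true excursions up to level $T^z(u)$ by quantities of the form $\hat{M}^z_{u(1\pm\eps)^6}$ depending only on the independent data, using the defining property \eqref{eq:defTz} of $T^z(u)$ together with the excursion sandwich. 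Without these two repairs the construction does not deliver the required joint independence, which is the entire content of the statement.
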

Together with Proposition~\ref{pro:thm4.2disco}, Proposition~\ref{pro:softlocaltimes} lets us decouple the local times of random interlacements on $\hat{C}_z$, $z\in{\calC}$ as in \eqref{eq:defcalC}, with a probability which will be essentially dominated by the right-hand side of \eqref{eq:thm4.2disco}, see for instance \eqref{eq:boundprobaH}, and we refer to Section~\ref{sec:softlocaltimes} for a proof. It thus essentially plays the same role as \cite[Proposition~5.1]{MR3602841}, with two main differences: the bound \eqref{eq:boundprobaFzueps} is more precise than \cite[(5.15)]{MR3602841}, which will later be useful, see \eqref{eq:boundprobaH}, and instead of decoupling excursions of interlacements as in \cite{MR3602841} one directly decouples the soft local times at level $T^z(u)$. This simplifies the strategy from \cite{MR3602841} by avoiding to define different type of excursions depending on the event one considers, see \cite[(5.9)]{MR3602841} and above, allows us to write our "good" events without having to refer to excursions contrary to \cite[(3.11)-(3.13)]{MR3602841} (although considering excursions is still locally necessary for us in the proof of Proposition~\ref{pro:softlocaltimes}), and makes the parallel with the Gaussian free field proof from \cite{GRS21,MR3417515} more apparent, see Remark~\ref{rk:endthm},\ref{rk:extensiontoGFF}) for details. 

Note that if $\calC=\{z,z'\}$ with $d(z,z')\geq 16KL$, decoupling inequalities in the same spirit as Proposition~\ref{pro:softlocaltimes} have been proved in \cite[Theorem~2.4]{DrePreRod2}. The main difference between these two results is that in \cite[(2.21)]{DrePreRod2} there is an additional polynomial term in front of the exponential compared to \eqref{eq:boundprobaFzueps} (but $K$ now has to depend on $\eps$ for \eqref{eq:boundprobaFzueps} to be fulfilled), that is the bound \cite[(2.21)]{DrePreRod2} is only relevant when $u\eps^2L^{\nu}\geq c\log(L)$. The fact that the bound \eqref{eq:boundprobaFzueps} is already relevant when $u\eps^2L^{\nu}\geq C$ will actually be essential to obtain \eqref{eq:boundonPEintro} and Theorems~\ref{the:localuniquenessintro} and \ref{the:boundcapacityintro}, see Remark~\ref{rk:endthm},\ref{rk:removingdependenceonzeta}) as to why, and see Remark~\ref{rk:softlocaltimes},\ref{rk:othersoftlocaltimes}) for a version of Proposition~\ref{pro:softlocaltimes} in the spirit of \cite[Theorem~2.4]{DrePreRod2}. Another difference is that in \eqref{eq:defeventF} one considers interlacements at level $T^z(u)$, instead of $u$ in \cite[(2.21)]{DrePreRod2}, which is essential to ensure that the events $F_z^{u,\eps}$, $z\in{\calC}$, are independent, see Remark~\ref{rk:softlocaltimes},\ref{rk:roleTzu}). 

 Recalling the functions $g_{z'},$ $z'\in{\Lambda(L)},$ from \eqref{eq:defbft}, and the process $\hat{\ell}^z$ from Proposition~\ref{pro:softlocaltimes} let
\begin{equation}
\label{eq:defweighthat}
    \hat{t}_{z'}^{\,u,z}\stackrel{\textnormal{def.}}{=}g_{z'}\big((\hat{\ell}^z_{x,u})_{x\in{B(z',3R)}}\big)\text{ for all }z\in{G},z'\in{\Lambda(R)}\text{ and }u>0.
\end{equation}
We also denote by $\hat{d}_{\mathbf{t},u}^z(x,y)$ and $\hat{d}_{\mathbf{t},u}^z(x;L,2L)$ the FPP distances defined as in \eqref{eq:defdistance} and \eqref{eq:defdzL} but for $\hat{t}^{\,u,z}$ instead of $t^{u}.$ Note that the event $F_z^{u,\eps}$ from \eqref{eq:defeventF}, the weights $\hat{t}_{z'}^{u,z}$ from \eqref{eq:defweighthat} and the distance $\hat{d}_{\mathbf{t},u}^z(x,y)$ all depend on the choice of $u_0$ in Proposition~\ref{pro:softlocaltimes}, as the coupling between $\hat{\ell}^z$ and $\ell$ is only valid for $u\leq u_0$, which comes from our use of \cite[Proposition~4.4]{PreRodSou} in the proof. To simplify notation, we did not write this dependency explicitly, which shouldn't lead to any confusion as none of the constants depend on $u_0$, and so one can take $u_0$ arbitrarily large.

As explained below Theorem~\ref{the:FPP}, we will also need some control on the entropy coming from considering all the possible sets $\calC$ as in \eqref{eq:defcalC} that are possibly hit by a path from $x$ to $B(x,N)$. The following coarse-graining scheme for paths is an improved version of \cite[Proposition~4.3]{GRS21}, and will be proved in Section~\ref{sec:entropy}. Recall the constant $\Cr{cbeta}$ introduced in \eqref{eq:cbetanointro} and the function $F_{\nu}$ introduced in \eqref{eq:defGnu}.

\begin{Prop}
\label{pro:entropy}
    Fix $\eta\in{(0,1)}$. There exist $\Cl[c]{crho},\Cl[c]{CNKLp},\Cl[c]{ccapcalC}>0$,and $\Cl{CKL},\Cl{CNKL},\Cl{CcardA}<\infty$, depending only on $\eta$ and satisfying $\Cr{CNKLp}<\Cr{CNKL}$ if $\nu>1$,  such that for all $x\in{G}$, $\rho\in{(0,\Cr{crho}]}$, $L\geq1$, $K\geq2$, $N\geq \Cr{CKL}KL$ and $p\in{\mathbb{N}}$ such that
    \begin{equation}
    \label{eq:choicep}
        \begin{cases}
             \Cr{CNKLp}(N/(KL))^{\nu}\log(N/(KL))^{1-\nu}\leq p\leq \Cr{CNKL}N/(KL) &\text{ if }\nu<1,
            \\ \Cr{CNKLp}(N/(KL))\log\log(N/(KL))\log(N/(KL))^{-1}\leq p\leq \Cr{CNKL}N/(KL)&\text{ if }\nu=1,
            \\ \Cr{CNKLp}N/(KL)\leq p\leq \Cr{CNKL}N/(KL)&\text{ if }\nu>1,
        \end{cases}
    \end{equation}
    there exists a family $\mathcal{A}=\mathcal{A}_{x,\rho,p}^{L,K,N}$ of collections $\mathcal{C}$ as in \eqref{eq:defcalC} such that 
    \begin{align}
    \label{eq:cardC}
         &\text{all }\mathcal{C}\in{\mathcal{A}}\text{ have cardinality }p,
\\&\label{eq:capC}\begin{aligned}
         &\mathrm{cap}(\Sigma(\tilde{\mathcal{C}}))\geq F_{\nu}(N,KL)H_{\nu}(\tfrac{N}{KL},K,\eta)
         \text{ for all }\mathcal{C}\in{\mathcal{A}}\text{ and }\tilde{\mathcal{C}}\subset\mathcal{C}\text{ with }|\tilde{\mathcal{C}}|\geq p(1-\rho),
         \\&\text{where }H_{\nu}(x,y,\eta)\geq\Cr{ccapcalC}y^{-\nu}
         \text{ satisfies } \lim\limits_{x\rightarrow\infty}H_{\nu}(x,y,\eta)=\frac{\Cr{cbeta}}{1+\eta}\,\forall\,y\geq1,\eta>0\text{ and }\nu\leq 1,
         \end{aligned}
\\&\label{eq:gammaincludedC}   \begin{aligned}
        &\text{for any $L$-nearest neighbor path }\gamma\text{ from }C_x^L\text{ to }B(x,N-2L)^c,\,\exists\,\mathcal{C}\in{\mathcal{A}}\text{ s.t.\ }\mathcal{C}\subset\gamma,
    \end{aligned}
\\& \label{eq:cardA}      |\mathcal{A}|\leq \exp\left({\Cr{CcardA}p\log(K)}\right).
    \end{align}
\end{Prop}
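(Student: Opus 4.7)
The plan is to construct $\mathcal{A}$ via a coarse-graining procedure applied to all $L$-nearest-neighbor paths from $C_x^L$ to $B(x,N-2L)^c$, and then verify the four assertions. Given such a path $\gamma$, I would extract $p$ representative points $z_1,\dots,z_p\in\gamma\cap\Lambda(L)$ via the layered prescription: let $z_k$ be the first vertex of $\gamma$ at distance at least $kN/p$ from $x$. The upper bound on $p$ in \eqref{eq:choicep} guarantees $N/p\geq 16KL$ up to constants, so the mutual-distance condition \eqref{eq:defcalC} is automatic and \eqref{eq:cardC} and \eqref{eq:gammaincludedC} follow by construction. Moreover $d(x,z_k)\in[kN/p,kN/p+CL]$, so the triangle inequality gives
\begin{equation*}
d(z_i,z_j)\geq\bigl|d(x,z_i)-d(x,z_j)\bigr|\geq\bigl(|i-j|-\delta\bigr)\,N/p
\end{equation*}
for $\delta>0$ as small as desired once $K$ is large enough.

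For the capacity lower bound \eqref{eq:capC}, I would apply Lemma~\ref{lem:capatube} with $S_i=\hat{C}_{z_i}^L$, $P=p$, scale $N$, and $\kappa=\mathrm{cap}(\hat{C}_{z_i})\geq cL^{\nu}$ (from \eqref{eq:capball}); the separation hypothesis is the inequality just derived, and on any $\tilde{\mathcal{C}}\subset\mathcal{C}$ with $|\tilde{\mathcal{C}}|\geq(1-\rho)p$ the cardinality condition $|A|\geq \Cr{ccardAP}P$ is satisfied as soon as $\rho\leq 1-\Cr{ccardAP}$. The specific range of $p$ in \eqref{eq:choicep} is precisely calibrated so that the first term $(\Cr{cconstantabovekappa}\kappa p)^{-1}$ on the right-hand side of \eqref{eq:capunionbound} is absorbed into the second one $(1+\eta/2)/(\Cr{ccapunionball}F_\nu(N,N/p))$; dividing out by $F_\nu(N,KL)$ and using $N/p\asymp KL$ identifies $H_\nu(N/(KL),K,\eta)$, with $H_\nu\geq\Cr{ccapcalC}K^{-\nu}$ immediate from the scaling of $F_\nu$, and, in the regime $\nu\leq 1$, the sharp limit $H_\nu\to\Cr{cbeta}/(1+\eta)$ as $N/(KL)\to\infty$ coming from the identity $\Cr{ccapunionball}=\Cr{cbeta}$ in Lemma~\ref{lem:capatube}.

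The entropy estimate \eqref{eq:cardA} is the main obstacle. A naive enumeration of $(z_1,\dots,z_p)$ layer by layer gives a count on the order of $\prod_k|\partial B(x,kN/p)\cap\Lambda(L)|$, which is vastly larger than $\exp(Cp\log K)$. To resolve this I would follow the renormalization strategy of \cite[Proposition~4.3]{GRS21} and recover the sequence $(z_k)$ as (part of) a walk in the renormalized lattice $\Lambda(KL)$: since $N/p\asymp KL$ and $|B(z,CKL)\cap\Lambda(L)|\leq CK^{\alpha}$ by \eqref{eq:defLambda}, each step of such a walk has at most $CK^{\alpha}$ successors, giving $|\mathcal{A}|\leq(CK^{\alpha})^{p+O(1)}\leq\exp(\Cr{CcardA}p\log K)$ once multiplied by the $O(L^\alpha)$ choices for the initial vertex in $C_x^L\cap\Lambda(L)$. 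The genuine difficulty is that consecutive $z_k$'s produced by the radial layered construction need not be $\Lambda(KL)$-close, since the subpath of $\gamma$ between layers $k$ and $k+1$ may backtrack macroscopically before emerging into the next layer; I would handle this by encoding each inter-layer subpath as a short excursion in $\Lambda(KL)$ whose additional entropy is absorbed into a slight inflation of $\Cr{CcardA}$, while preserving the radial ordering needed in Step~2. The refinement required over \cite[Proposition~4.3]{GRS21} concerns only the sharp asymptotic constant of $H_\nu$ for $\nu\leq 1$, and is supplied directly by Lemma~\ref{lem:capatube}.
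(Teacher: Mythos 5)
The construction of $\mathcal{C}$ by radial layers (first vertex of $\gamma$ at distance $\geq kN/p$ from $x$) together with Lemma~\ref{lem:capatube} does give \eqref{eq:cardC}, \eqref{eq:gammaincludedC} and \eqref{eq:capC}; this is precisely the route the paper describes (below Remark~\ref{rk:otherschemenu<1}) as "the simplest way to obtain \eqref{eq:gammaincludedC}", and it is explicitly rejected there because it forces $|\mathcal{A}|\leq (N/L)^{CN/L}$ rather than \eqref{eq:cardA}. Your proposal correctly identifies the entropy bound as the main obstacle, but the proposed fix does not close it. The point $z_{k+1}$ is \emph{not} determined by $z_k$ up to $CK^{\alpha}$ choices: between two consecutive layers the path may wander through all of $B(x,(k+1)N/p)$ before first reaching distance $(k+1)N/p$, so $z_{k+1}$ can sit anywhere on a sphere containing $\asymp (N/L)^{\alpha-1}$ points of $\Lambda(L)$, and the tangential displacement $d(z_k,z_{k+1})$ can be of order $N$. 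The "inter-layer subpath" you propose to encode as a short excursion in $\Lambda(KL)$ is therefore not short — its range alone can cover order $(kN/(pKL))^{\alpha}$ boxes of $\Lambda(KL)$ — so its encoding cost is not $O(\log K)$ per layer and cannot be "absorbed into a slight inflation of $\Cr{CcardA}$". Summing over layers this yields at best $\exp(Cp\log(N/L))$, which is exactly the loss that \eqref{eq:cardA} is designed to avoid (see Remark~\ref{rk:decayVu}).

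What is missing is the dyadic tree renormalization that the paper develops in Section~\ref{sec:entropy}: one builds a hierarchy of scales $L_k\asymp 2^k\kappa KL$ and shows (Lemma~\ref{lem:entropy}) that any crossing path generates a "$k$-proper embedding" of the binary tree $T_k$, i.e.\ $2^k$ well-separated points on $\gamma$ obtained by repeatedly splitting the crossing into two sub-crossings at each scale. Because each node of the tree contributes only $O((\kappa K k^2)^{\alpha})$ choices and the product over the tree telescopes (see \eqref{eq:recursiveentropy}), the total number of such skeletons is $\exp(Cp\log K)$ \emph{independently of the length of $\gamma$} — this is the idea your layered construction cannot reproduce. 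For $\nu\leq 1$ the paper then combines the two constructions: radial layers of mesoscopic width $\asymp N/P$ with the tree scheme run inside each layer at scale $M=KL\log(N/(KL))$ (so the layer count $P\asymp p/\log(N/(KL))$ keeps the entropy under control), and Lemma~\ref{lem:capatube} is applied with $\kappa$ equal to the capacity produced by Corollary~\ref{cor:entropy}, not to $\mathrm{cap}(\hat{C}_z)\asymp L^{\nu}$; this hybrid is what simultaneously delivers \eqref{eq:cardA} and the sharp constant $\Cr{cbeta}/(1+\eta)$ in \eqref{eq:capC}.
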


Note that the entropy bound \eqref{eq:cardA} is better than the one from \cite[(4.13),(4.14)]{GRS21} on $\Z^3$ (for which $\nu=1$), or the one used below \cite[(5.3)]{MuiSev} on $\Z^d$, $d\geq2$, by a $\log$ factor. On $\Z^d$, $d\geq4$, it corresponds to the one from \cite[Proposition~4.3,ii)]{GRS21}, but Proposition~\ref{pro:entropy} has the advantage to provide a unified framework for any $d$, and in fact on the larger class of graphs that we consider here. We refer to the beginning of Section~\ref{sec:entropy} on how this is achieved. Moreover, it turns out that these improvements will simplify the implementation of our renormalization scheme, for instance by requiring weaker a priori bounds, see Remark~\ref{rk:decayVu},\ref{rk:noapriori}) for details.

Let us now explain how to combine the three previous propositions to obtain bounds on $d_{t,v}(x;N)$. For any $\eps\in{(0,1)}$ and $u>0$ we abbreviate
\begin{equation}
    \label{eq:choicevu}
    u_{\eps}\stackrel{\textnormal{def.}}{=}
    \begin{cases}
    \frac{u}{1-\eps}&\text{ if the functions $g_z$ in \eqref{eq:defbft} are increasing}
    \\\frac{u}{1+\eps}&\text{ otherwise},
    \end{cases}
\end{equation}
and recall the events $E_{\calC}^{u,v}$ from \eqref{eq:defeventE} and $F_{z}^{u,\eps}$ from \eqref{eq:defeventF}. The interest of the distances $\hat{d}_{\mathbf{t},u}^z$ from below \eqref{eq:defweighthat} is highlighted in the following result.

\begin{Lemme}
\label{lem:replacetbyhatt}
For each $L\geq R\geq1,$ $\mathbf{t}$ as in \eqref{eq:defbft}, $K\geq 100,$ $N\geq10KL$, $x\in{\Lambda(R)}$, $u_0>0$, $\eps\in{(0,1/2]},$ $(u,v)\in{\L\cap (0,u_0]}$ and $u_{\eps}$ as in \eqref{eq:choicevu} such that $(u_{\eps},v)\in{\mathcal{L}},$ $p$ satisfying \eqref{eq:choicep} and $\rho\in{(0,1)},$ letting $\mathcal{A}=\mathcal{A}_{x,\rho,p}^{L,K,N}$ from Proposition~\ref{pro:entropy},  on the event
\begin{equation}
\label{eq:goodevent}
    G^{u_{\eps},v,\eps}_{\mathcal{A}}\stackrel{\textnormal{def.}}{=}\bigcap_{\calC\in{\mathcal{A}}}\bigg(\bigcap_{\substack{\tilde{\calC}\subset \calC\\|\tilde{\calC}|\geq (1-\rho)|\calC|}}E_{\tilde{\calC}}^{u_{\eps},v}\cap \bigcap_{\substack{\tilde{\calC}\subset \calC\\|\tilde{\calC}|\geq \rho|\calC|}}\tilde{F}_{\tilde{\calC}}^{u_{\eps},\eps}\bigg),
\end{equation}
where
\begin{equation}
    \label{eq:defeventFhat}
    \tilde{F}^{u_{\eps},\eps}_{\tilde{\calC}}\stackrel{\textnormal{def.}}{=}\left\{\sum_{z\in{\tilde{\calC}}}1\{F_{z}^{u_{\eps},\eps}\}\geq \frac{|\tilde{\calC}|}{2}\right\},
\end{equation}
we have
\begin{equation}
\label{eq:replaceFPPbyiid}
    d_{\mathbf{t},v}(x;N)\geq\inf_{\mathcal{C}\in{\mathcal{A}}}\inf_{\substack{\tilde{\calC}\subset \calC\\|\tilde{\calC}|\geq \frac{\rho}{2}|\calC|}} \sum_{z\in{\tilde{\calC}}}\hat{d}_{\mathbf{t},u}^z(z;L;2L).
\end{equation}
\end{Lemme}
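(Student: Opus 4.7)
The plan is to take a near-optimal $R$-nearest neighbor path $\gamma$ on $\Lambda(R)$ from $x$ to $B(x,N)^c$, coarsen it at scale $L$ to an $L$-nearest neighbor path on $\Lambda(L)$ from $C_x^L$ to $B(x,N-2L)^c$, use \eqref{eq:gammaincludedC} to extract some $\calC\in\mathcal{A}$ contained in that coarse path, and then identify inside $\calC$ a large sub-collection $\tilde{\calC}$ of \emph{very good} vertices along which the good event $G^{u_\eps,v,\eps}_{\mathcal{A}}$ allows us to replace the local times $\ell_{\cdot,v}$ by the decoupled copies $\hat{\ell}^{z}_{\cdot,u}$ on each ball $\hat{C}_z^L$.

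First I would decode $G^{u_\eps,v,\eps}_{\mathcal{A}}$ for a fixed $\calC\in\mathcal{A}$. In the increasing case, writing $A_{\mathrm{bad}}(\calC)=\{z\in\calC:T^{z}(u_\eps)\geq v\}$ (and symmetrically in the decreasing case), the intersection of the events $E_{\tilde{\calC}}^{u_\eps,v}$ over all $\tilde{\calC}\subset\calC$ with $|\tilde{\calC}|\geq(1-\rho)|\calC|$ forces $|A_{\mathrm{bad}}(\calC)|<(1-\rho)|\calC|$, so $A_{\mathrm{good}}(\calC):=\calC\setminus A_{\mathrm{bad}}(\calC)$ has cardinality strictly greater than $\rho|\calC|$. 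Applying the event $\tilde{F}_{A_{\mathrm{good}}(\calC)}^{u_\eps,\eps}$, at least half of $A_{\mathrm{good}}(\calC)$ additionally lies in $\{F_{z}^{u_\eps,\eps}\}$, so there exists $\tilde{\calC}(\calC)\subset A_{\mathrm{good}}(\calC)$ with $|\tilde{\calC}(\calC)|\geq\tfrac{\rho}{2}|\calC|$ on which both $T^{z}(u_\eps)$ lies on the right side of $v$ and $F_{z}^{u_\eps,\eps}$ holds. For such a $z$, combining the bound on $T^{z}(u_\eps)$ with \eqref{eq:defeventF} and monotonicity of $\ell_{x,\cdot}$ in the level gives, in the increasing case,
\[
\hat{\ell}^{z}_{x,u}=\hat{\ell}^{z}_{x,u_\eps(1-\eps)}\leq \ell_{x,T^{z}(u_\eps)}\leq \ell_{x,v}\qquad \text{for all } x\in \hat{C}_z^L,
\]
and the reverse inequality in the decreasing case (now using $\ell^{-}$ on the right); the monotonicity of $g_{z'}$ then upgrades this to the pointwise comparison $\hat{t}^{\,u,z}_{z'}\leq t^{\,v}_{z'}$ for every $z'\in\Lambda(R)$ such that $B(z',3R)\subset\hat{C}_z^L$, which covers the full set $\tilde{C}_z^L\cap\Lambda(R)$ since $L\geq R$.

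For the geometric half I would associate with each $\gamma_i$ a canonical $\tilde{\gamma}_i\in\Lambda(L)$ with $\gamma_i\in B(\tilde{\gamma}_i,L)$; after inserting a bounded number of interpolating $\Lambda(L)$-vertices between consecutive projections (a quantitative consequence of \eqref{eq:dvsdgr} applied to a nearest neighbor path on $G$ realising each $\Lambda(R)$-edge of $\gamma$) and suppressing consecutive repetitions, the result is an $L$-nearest neighbor path on $\Lambda(L)$ joining some $\tilde{x}\in C_x^L$ to a vertex in $B(x,N-2L)^c$, so by \eqref{eq:gammaincludedC} it contains some $\calC\in\mathcal{A}$. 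Each $z\in\tilde{\calC}(\calC)$ is visited by the coarse path, so $\gamma\cap B(z,L)\neq\varnothing$; moreover, the neighbors of $z$ along the coarse path, as well as the endpoints, lie outside $B(z,2L)$ because the points of $\calC$ are at mutual $d$-distance $\geq 16KL\gg 2L$ (for $K\geq 100$), so $\gamma$ also exits $B(z,2L)$. The corresponding portion of $\gamma$ therefore contributes $\mathbf{t}^{v}$-weight at least $d_{\mathbf{t},v}(z;L,2L)$, and by the previous paragraph this is $\geq \hat{d}^{\,z}_{\mathbf{t},u}(z;L,2L)$. As the balls $\tilde{C}_z^L$, $z\in\tilde{\calC}(\calC)$, are pairwise disjoint, these contributions add up to yield
\[
\sum_{z'\in\gamma}t^{\,v}_{z'}\;\geq\;\sum_{z\in\tilde{\calC}(\calC)}\hat{d}^{\,z}_{\mathbf{t},u}(z;L,2L),
\]
and passing to the infimum over $\gamma$ together with $|\tilde{\calC}(\calC)|\geq\tfrac{\rho}{2}|\calC|$ gives \eqref{eq:replaceFPPbyiid}.

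The main obstacle I anticipate is the coarsening step: ensuring that the induced path on $\Lambda(L)$ is genuinely $L$-nearest neighbor with the endpoints demanded by \eqref{eq:gammaincludedC}, and that each $z\in\tilde{\calC}(\calC)$ corresponds to a full $B(z,L)\to B(z,2L)^c$ crossing of the original path $\gamma$. Both of these are facilitated by the mutual $16KL$-separation encoded in \eqref{eq:defcalC} and by the bound $d(\gamma_i,\gamma_{i+1})\leq CR\leq CL$ coming from $\Lambda(R)$-adjacency together with \eqref{eq:dvsdgr}; everything else is a matter of unpacking the combinatorics of $G^{u_\eps,v,\eps}_{\mathcal{A}}$ and applying the coupling from Proposition~\ref{pro:softlocaltimes}.
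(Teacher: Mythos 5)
Your proposal is correct and follows essentially the same route as the paper's proof: decode the intersection of the events $E_{\tilde{\calC}}^{u_\eps,v}$ to get a fraction $\rho$ of vertices with $T^z(u_\eps)$ on the right side of $v$, halve it using $\tilde F^{u_\eps,\eps}_{\tilde\calC}$, combine $T^z(u_\eps)\lessgtr v$ with \eqref{eq:defeventF} and the choice $u_\eps(1\mp\eps)=u$ to compare $\ell_{\cdot,v}$ with $\hat\ell^z_{\cdot,u}$ on $\hat C_z^L$ (hence $t^v$ with $\hat t^{u,z}$ by monotonicity of $g_{z'}$), and sum the disjoint $B(z,L)\to B(z,2L)^c$ crossings of the minimizing path extracted via \eqref{eq:gammaincludedC}. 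The only cosmetic slip is the justification that $\gamma$ exits $B(z,2L)$ (adjacent vertices of the coarse path need not lie outside $B(z,2L)$; the crossing exists simply because $\gamma$ meets $B(z,L)$ and terminates in $B(x,N)^c$, which is why the coarse path is only required to reach $B(x,N-2L)^c$), but this does not affect the argument.
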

\begin{proof}
Let $\gamma$ be the $R$-nearest neighbor path minimizing the distance $d_{\mathbf{t},v}(x;N)$ from below \eqref{eq:defdzL}. The path $\gamma$ induces a  $L$-nearest neighbor path $\tilde{\gamma}$ starting in $B(x,L)$ and ending in $B(x,N-2L)^c$ such that for each $z\in{\tilde{\gamma}},$ $\gamma$ crosses from $C_z^L$ to $(\tilde{C}_z^L)^c$.   Let $\mathcal{C}\in{\mathcal{A}}$ be such that $\mathcal{C}\subset\tilde{\gamma},$ which exists by \eqref{eq:gammaincludedC}. First assume that $v<u$, then $v<u_{\eps}$ under our assumptions. Let $\tilde{\mathcal{C}}$ be the set of $z\in{\mathcal{C}}$ such that $T^z(u_{\eps})> v,$ then $E_{\calC\setminus \tilde{\calC}}^{u_{\eps},v}$ is not satisfied by \eqref{eq:defeventE}, and thus $|\tilde{\calC}|\geq \rho|\calC|$ on the event $G^{u_{\eps},v,\eps}_{\mathcal{A}}.$ Let also $\tilde{\calC}'\stackrel{\textnormal{def.}}{=}\{z\in{\tilde{\calC}}:F_{z}^{u_{\eps},\eps}\text{ occurs}\},$ then by \eqref{eq:goodevent} and \eqref{eq:defeventFhat} on the event $G^{u_{\eps},v,\eps}_{\mathcal{A}}$ we have $|\tilde{\calC}'|\geq |\tilde{\calC}|/2\geq (\rho/2)|\calC|.$  Moreover, by \eqref{eq:localtimes}, \eqref{eq:choicevu} and \eqref{eq:defeventF}, we have for all $z\in{\tilde{\mathcal{C}}'}$ and $x\in{\hat{C}_z}$
\begin{equation}
\label{eq:upperboundlocaltimes}
\ell_{x,v}\leq \ell_{x,T^z(u_{\eps})}^-\leq \hat{\ell}^z_{x,u_{\eps}(1+\eps)}=\hat{\ell}^z_{x,u}.
\end{equation}
Since $(g_z)_{z\in{\Lambda(L)}}$ are decreasing by \eqref{eq:defcalL}, and $d_{\mathbf{t},v}(x;L;2L)$ is measurable with respect to  $(\ell_{x,v})_{x\in{\hat{C}_z}}$ by \eqref{eq:defdzL}, \eqref{eq:defbft} and \eqref{eq:defCU} (which justifies our definition of $\hat{C}_z$), the inequality \eqref{eq:replaceFPPbyiid} follows readily from \eqref{eq:defbft},  \eqref{eq:defweighthat} and our choice of $\tilde{\gamma}$. 
If on the other hand $v>u$, then $v>u_{\eps},$ and letting $\tilde{\mathcal{C}}$ be the set of $z\in{\calC}$ such that $T^{z}(u_{\eps})< v$, we have $|\tilde{\calC}|\geq \rho|\calC|$ on the event $G^{u_{\eps},v,\eps}_{\mathcal{A}}$ similarly as before. Therefore, defining $\tilde{\calC}'$ similarly as before, we have $|\tilde{\calC}'|\geq (\rho/2)|\calC|$, and for all $z\in{\tilde{\mathcal{C}}'}$ and $x\in{\hat{C}_z}$ 
\begin{equation*}
    \ell_{x,v}\geq \ell_{x,T^z(u_{\eps})}\geq \hat{\ell}^z_{x,u_{\eps}(1-\eps)}= \hat{\ell}^z_{x,u},
\end{equation*}
and \eqref{eq:replaceFPPbyiid} follows readily since  $(g_z)_{z\in{\Lambda(L)}}$ are now increasing by \eqref{eq:defcalL}.
\end{proof}

We are now ready to state the main step in the proof of Theorem~\ref{the:FPP}. Essentially, \eqref{eq:replaceFPPbyiid} lets us replace the dependent family of weights $t^v$ by the independent family of weights $\hat{t}^{u,z},$ whose sum can easily be lower bounded, see \eqref{eq:boundonhatt}. By Lemma~\ref{lem:replacetbyhatt} this can be done on the event $G^{u_{\eps},v,\eps}_{\mathcal{A}},$ which we will show happens with high probability by combining Propositions~\ref{pro:thm4.2disco},  \ref{pro:softlocaltimes} and \ref{pro:entropy}.

\begin{Lemme}
\label{lem:coarsegraining}
Fix $\eps,\eta\in{(0,1/2]}$ and $\Theta>1$. There exist constants $\Cl[c]{ccondFPP2},\Cl[c]{CcondLN},\Cl[c]{cFPP3}>0$ and $\Cl{CbounduL},\Cl{ccondLN}<\infty,$ depending only on $\eps$, $\eta$ and $\Theta$, such that for all $r\geq 0,$ $\delta\in{(0,\Cr{ccondFPP2}]},$ $L\geq R\geq 1,$ families of weights $\mathbf{t}$ as in \eqref{eq:defbft}, $(u,v)\in{\L_{\Theta}},$  $u_{\eps}$ as in \eqref{eq:choicevu} so that $(u_{\eps},v)\in{\L_{\Theta}}$ and $u_{\eps}L^{\nu}\geq \Cr{CbounduL}$, satisfying
\begin{equation}
\label{eq:assumptiontzu2}
    \P(d_{\mathbf{t},u}(x;L;2L)\leq r)\leq\delta\text{ for all }x\in{\Lambda(L)},
\end{equation}
    and for all $N\geq L$ such that, if $\nu<1,$ 
    \begin{equation}
    \label{eq:condLNnu<1}
        \Cr{ccondLN}N^{1-\nu}|\sqrt{v}-\sqrt{u_{\eps}}|^{-2}\leq L\leq \Cr{CcondLN}\big(N^{1-\nu}|\sqrt{v}-\sqrt{u_{\eps}}|^{-2}\log(1/\delta)\big)\wedge N\big),
    \end{equation}
    if $\nu=1$
    \begin{equation}
    \label{eq:condLNnu=1}
        \Cr{ccondLN}\log(N/L)|\sqrt{v}-\sqrt{u_{\eps}}|^{-2}\leq L\leq \Cr{CcondLN}\big(\big(\log(N/L)|\sqrt{v}-\sqrt{u_{\eps}}|^{-2}\log(1/\delta)\big)\wedge N\big),
    \end{equation}
    and if $\nu>1,$
    \begin{equation}
    \label{eq:condLNnu>1}
         \zeta\stackrel{\textnormal{def.}}{=}L|\sqrt{v}-\sqrt{u_{\eps}}|^{\frac{2}{\nu}}\geq \Cr{ccondLN},
    \end{equation}
    we have for all $x\in{\Lambda(R)}$
    \begin{equation}
    \label{eq:boundFPPcond}
    \P\left(d_{\mathbf{t},v}(x;N)\leq \frac{\Cr{cFPP3}rN}{L}\right)\leq 
    3\exp\left(-\Cl[c]{cFPP4}(\sqrt{v}-\sqrt{u_{\eps}})^2F_{\nu}(N,L)\right),
\end{equation}
where $\Cr{cFPP4}(\eta)=\Cr{cbeta}/(1+\eta)$  if $\nu\leq1$ and $\Cr{cFPP4}=\Cr{cFPP4}(\eps,\Theta,\zeta)>0$ if $\nu>1$.
\end{Lemme}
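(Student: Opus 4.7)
The strategy is to combine Lemma~\ref{lem:replacetbyhatt} with Propositions~\ref{pro:thm4.2disco}, \ref{pro:softlocaltimes} and~\ref{pro:entropy} in a carefully tuned renormalization. First, choose $K = K(\eta, \Theta) \geq \Cr{cforK} \vee \Cr{csoftlocaltimesK}\eps^{-\Cr{CsoftlocaltimesK}}$ large enough so that $H_{\nu}(N/(KL), K, \eta) \geq \Cr{cbeta}/(1+\eta/2)^2$ when $\nu \leq 1$ (this uses the pointwise limit in \eqref{eq:capC} together with $N/(KL)$ being large under \eqref{eq:condLNnu<1}--\eqref{eq:condLNnu>1}), and choose $\rho = \rho(\eta) \leq \Cr{crho}$ small. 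Pick $p$ at the upper end of the window \eqref{eq:choicep}, so that $p$ is of order $N/(KL)$ and in particular the entropy bound \eqref{eq:cardA} reads $|\mathcal{A}| \leq \exp(C p \log K)$, which for fixed $K$ is an absolute exponential in $N/L$.

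Now decompose the event $\{d_{\mathbf{t},v}(x;N) \leq \Cr{cFPP3} rN/L\}$ according to whether $G^{u_\eps, v, \eps}_{\mathcal{A}}$ holds. On this good event Lemma~\ref{lem:replacetbyhatt} gives
\[
d_{\mathbf{t},v}(x;N) \geq \inf_{\calC \in \mathcal{A}} \inf_{\tilde{\calC} \subset \calC,\; |\tilde{\calC}| \geq (\rho/2)p} \sum_{z \in \tilde{\calC}} \hat{d}^z_{\mathbf{t},u}(z;L;2L),
\]
and the $\hat{d}^z_{\mathbf{t},u}(z;L;2L)$, $z \in \tilde{\calC}$, are independent random variables each with the same law as $d_{\mathbf{t},u}(z;L;2L)$. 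Since \eqref{eq:assumptiontzu2} bounds the lower tail of each summand by $\delta$, Markov's inequality shows that $\sum_{z \in \tilde{\calC}} \hat{d}^z \leq \Cr{cFPP3} r |\tilde{\calC}|$ forces at least $(1-4\Cr{cFPP3})|\tilde{\calC}|$ of the summands to be $\leq r$, and a binomial tail bound gives probability $\leq \exp(-c_\eps |\tilde{\calC}|)$ once $\delta \leq \Cr{ccondFPP2}(\eps, \eta)$ and $\Cr{cFPP3} = \Cr{cFPP3}(\eps,\eta)$ are small enough (here we use $|\tilde{\calC}| \geq (\rho/2)p \geq c p$). Summing over $\calC \in \mathcal{A}$ and $\tilde{\calC} \subset \calC$ costs $e^{C p \log K} \cdot 2^p$, which is absorbed by choosing $\delta$ sufficiently small depending on $K$ (hence on $\eta$, $\eps$, $\Theta$); this yields a contribution to \eqref{eq:boundFPPcond} of the form $\exp(-c p) \leq \exp(-c' F_{\nu}(N,L) (\sqrt{v}-\sqrt{u_\eps})^2)$ once $L$ meets the lower bound in \eqref{eq:condLNnu<1}--\eqref{eq:condLNnu>1}.

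For the bad event $(G^{u_\eps, v, \eps}_{\mathcal{A}})^c$, union-bound over $\calC \in \mathcal{A}$ and subsets $\tilde{\calC} \subset \calC$. By Proposition~\ref{pro:thm4.2disco} and \eqref{eq:capC}, each $\P((E^{u_\eps, v}_{\tilde{\calC}})^c) \leq \exp(-\frac{1}{1+\eta/2}(\sqrt{v}-\sqrt{u_\eps})^2 F_{\nu}(N, KL) H_{\nu})$, and by the choice of $K$ this becomes $\leq \exp(-\frac{\Cr{cbeta}}{(1+\eta/2)^3} (\sqrt{v}-\sqrt{u_\eps})^2 F_{\nu}(N, L))$ when $\nu \leq 1$ (noting $F_\nu(N, KL) \geq F_\nu(N, L) \cdot c K^{-\nu}$ is compensated exactly by $H_\nu$). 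By independence of the $F^{u_\eps, \eps}_z$ across $z \in \tilde{\calC}$ (Proposition~\ref{pro:softlocaltimes}) together with \eqref{eq:boundprobaFzueps}, a standard binomial tail bound gives $\P((\tilde{F}^{u_\eps, \eps}_{\tilde{\calC}})^c) \leq (4\Cr{Csoftlocaltimes} \exp(-\Cr{csoftlocaltimes} u_\eps \eps^2 L^{\nu}))^{|\tilde{\calC}|/2}$; under the hypothesis $u_\eps L^\nu \geq \Cr{CbounduL}$ with $\Cr{CbounduL}$ large, this is dominated by $\exp(-c u_\eps \eps^2 L^\nu p)$, which via $\xi \sqrt{u} \leq |\sqrt{v}-\sqrt{u}|$-type reasoning (carried in the applications) is no worse than the $F_\nu$-term. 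Together with the entropy factor $e^{Cp \log K}$ from \eqref{eq:cardA} and $2^p$ choices of $\tilde{\calC}$, the total contribution is $\leq 2\exp(-\frac{\Cr{cbeta}}{1+\eta}(\sqrt{v}-\sqrt{u_\eps})^2 F_\nu(N, L))$ after a final rescaling $\eta \mapsto \eta/C$.

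The main obstacle is coordinating the constants: $K$ must be chosen large (for decoupling and for the $H_\nu$ factor to approach $\Cr{cbeta}/(1+\eta)$) while keeping $p \log K$ controllable, which forces $\delta$ small as a function of $K$ (hence of $\eta$, $\eps$, $\Theta$); simultaneously the lower bounds \eqref{eq:condLNnu<1}--\eqref{eq:condLNnu>1} on $L$ are exactly what is needed so that the $e^{Cp\log K}$ entropy term is dominated by $\exp(-c(\sqrt{v}-\sqrt{u_\eps})^2 F_\nu(N,L))$, since $p \asymp N/(KL)$ and $(\sqrt{v}-\sqrt{u_\eps})^2 F_\nu(N,L)/p \asymp (\sqrt{v}-\sqrt{u_\eps})^2 L F_\nu(N,L)/N$ becomes large precisely under these hypotheses.
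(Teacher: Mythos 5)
Your plan follows the paper's proof almost exactly: define the good event $H^{u_\eps,v,\eps}_{\mathcal{A}}=G^{u_\eps,v,\eps}_{\mathcal{A}}\cap\{\text{all sums }\sum_{z\in\tilde{\calC}}\hat{d}^z_{\mathbf{t},u}(z;L;2L)\text{ large}\}$, apply Lemma~\ref{lem:replacetbyhatt} on it, and bound its complement by combining Propositions~\ref{pro:thm4.2disco}, \ref{pro:softlocaltimes} and~\ref{pro:entropy} with the entropy bound \eqref{eq:cardA} (the paper invokes Bennett's inequality where you use a binomial tail; this is immaterial). The architecture is sound and all the needed ingredients are present.

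One warning on the bookkeeping, which you yourself flag as the delicate point. You attribute the inequality $\exp(-c\log(1/\delta)\rho p)\leq\exp(-c'(\sqrt v-\sqrt{u_\eps})^2F_\nu(N,L))$ to the \emph{lower} bounds on $L$ in \eqref{eq:condLNnu<1}--\eqref{eq:condLNnu=1}; this is backwards. The lower bound on $L$ makes $p\asymp N/(KL)$ \emph{small} relative to $(\sqrt v-\sqrt{u_\eps})^2F_\nu(N,L)$ --- that is precisely its purpose, namely to let the entropy $e^{Cp\log K}$ be absorbed by the Proposition~\ref{pro:thm4.2disco} term --- so under the lower bound alone $\log(1/\delta)\rho p$ need not dominate the target exponent. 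What makes it dominate is the \emph{upper} bound on $L$, specifically the $\log(1/\delta)$ factor in $L\leq\Cr{CcondLN}N^{1-\nu}|\sqrt v-\sqrt{u_\eps}|^{-2}\log(1/\delta)$ (compare the display preceding \eqref{eq:condareenoughtoboundprobaH}). Similarly, the domination of the $\tilde F$-term is not obtained "via $\xi\sqrt u\leq|\sqrt v-\sqrt u|$-type reasoning carried in the applications" --- that condition belongs to Theorem~\ref{the:FPP}, not to this lemma; it follows internally from $(u_\eps,v)\in\L_\Theta$, which gives $u_\eps\geq(\sqrt v-\sqrt{u_\eps})^2/\Theta$, combined with the $L\leq\Cr{CcondLN}N$ part of the upper bound. (A smaller imprecision: for $\nu\leq1$ one has $F_\nu(N,KL)\approx F_\nu(N,L)$ rather than $F_\nu(N,KL)\geq cK^{-\nu}F_\nu(N,L)$, and the convergence $H_\nu(N/(KL),K,\eta)\to\Cr{cbeta}/(1+\eta)$ is driven by $N/(KL)$ being large, not by choosing $K$ large.) Once these attributions are corrected the plan goes through as in the paper.
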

\begin{proof}
    Let $u_0=u\vee v$, $\rho\in{(0,1)}$, $p$ as in \eqref{eq:choicep}, $\mathcal{A}=\mathcal{A}_{x,\rho,p}^{L,K,N}$ from Proposition~\ref{pro:entropy}, $\hat{d}_{\mathbf{t},u}^z$ as below \eqref{eq:defweighthat}, and
    \begin{equation}
    \label{eq:defeventH}
        H_{\mathcal{A}}^{u_{\eps},v,\eps}\stackrel{\textnormal{def.}}{=}G_{\A}^{u_{\eps},v,\eps}\cap\bigcap_{\calC\in{\A}}\bigcap_{\substack{\tilde{\calC}\subset\calC\\|\tilde{\calC}|\geq\frac{\rho}{2}|\calC|}}\left\{\sum_{z\in{\tilde{\calC}}}\hat{d}^z_{\mathbf{t},u}(z;L;2L)\geq\frac{\rho rp }{4}\right\}.
    \end{equation}
    Then by Lemma~\ref{lem:replacetbyhatt}  
    \begin{equation}
    \label{eq:eventHisenough}
        d_{\mathbf{t},v}(x;N)\geq \frac{\rho rp}{4}\text{ on the event }H_{\mathcal{A}}^{u_{\eps},v,\eps}.
    \end{equation}
    We now bound the probability of the event $H_{\mathcal{A}}^{u_{\eps},v,\eps},$ and start with the event appearing on the right-hand side of \eqref{eq:defeventH}.  For all $\calC\in{\A}$ and $\tilde{\calC}\subset\calC$ with $|\tilde{\calC}|\geq (\rho/2)|\calC|,$  using that $\hat{d}^z_{\mathbf{t},u}(z;L,2L),$ $z\in{\tilde{\mathcal{C}}}$ are independent random variables with the same marginals as $d_{\mathbf{t},u}(z;L,2L),$ $z\in{\tilde{\mathcal{C}}}$, it follows from \eqref{eq:cardC}, \eqref{eq:assumptiontzu2} and Bennett's inequality, see for instance \cite[Theorem~2.9]{MR3185193} with $t\geq|\tilde{\calC}|/4,$ $v\leq\delta|\tilde{\calC}|$ and $b=1$ therein, we have if $\delta\leq 1/100$ that
    \begin{equation}
    \label{eq:boundonhatt}
    \begin{split}
        \P\left(\sum_{z\in{\tilde{\calC}}}\hat{d}_{\mathbf{t},u}^z(z;L;2L)\geq\frac{\rho rp }{4}\right)
        &\geq 1-\exp\left(-\frac{\log(1/\delta)\rho p}{40}\right).
        \end{split}
    \end{equation}
Using Proposition~\ref{pro:softlocaltimes} combined with Bennets's inequality similarly as in \eqref{eq:boundonhatt} we have by \eqref{eq:cardC} and \eqref{eq:defeventFhat} that there exist constants $\Cr{CbounduL}=\Cr{CbounduL}(\eps)<\infty$ and $\Cl[c]{cboundhatF}>0$ such that if $K\geq \Cr{csoftlocaltimesK}\eps^{-\Cr{CsoftlocaltimesK}}$ and $u_{\eps}L^{\nu}\geq \Cr{CbounduL},$ then for all $\calC\in{\A}$ and $\tilde{\calC}\subset\calC$ with $|\tilde{\calC}|\geq \rho|\calC|=\rho p$
\begin{equation}
\label{eq:boundprobahatF}
    \P\big(\tilde{F}_{\tilde{\calC}}^{u_{\eps},\eps}\big)\geq 1-\exp\left(-\Cr{cboundhatF}u_{\eps}\eps^2L^{\nu}\rho p\right).
\end{equation}     
Moreover, it follows from \eqref{eq:cardA} and \eqref{eq:cardC} that there exists $\Cl{Cboundentropy}=\Cr{Cboundentropy}(K)<\infty$ such that
for all $\rho\in{(0,1)}$
\begin{equation}
    \label{eq:boundentropy}
    \sum_{\calC\in{\A}}\left|\tilde{\calC}\subset\calC:\,|\tilde{\calC}|\geq \rho|\calC| \right|\leq \exp\left({\Cr{CcardA}p\log(K)}\right)2^{p}\leq \exp\left({\Cr{Cboundentropy}p}\right).
\end{equation}
 We can finally combine \eqref{eq:thm4.2disco}, \eqref{eq:capC}, \eqref{eq:boundonhatt} and \eqref{eq:boundprobahatF} to bound the probability from \eqref{eq:defeventH}: if $\delta\leq 1/100$, $\rho\leq \Cr{crho}$, $N\geq \Cr{CKL}KL$, $K\geq \Cr{cforK}(\eta)\vee \Cr{csoftlocaltimesK}\eps^{-\Cr{CsoftlocaltimesK}}$ and $u_{\eps}L^{\nu}\geq \Cr{CbounduL}$
\begin{equation}
\label{eq:boundprobaH}
\begin{split}
\P\left((H_{\mathcal{A}}^{u_{\eps},v,\eps})^c\right)\leq \exp\left(\Cr{Cboundentropy}p\right)&\bigg(\exp\left(-\frac{1}{1+\eta}\left(\sqrt{v}-\sqrt{u_{\eps}}\right)^2F_{\nu}(N,KL)H_{\nu}(\tfrac{N}{LK},K,\eta)\right)
\\&+\exp\left(-{\Cr{cboundhatF} u_{\eps}\eps^2\rho pL^{\nu}}\right)+\exp\left(-\frac{\log(1/\delta)\rho p }{40}\right)\bigg).
\end{split}
\end{equation}
Taking $p$ the largest integer satisfying \eqref{eq:choicep},  by the lower bounds in \eqref{eq:condLNnu<1} and \eqref{eq:condLNnu=1} we have for all $\nu\leq 1$ and $\delta$, $N$ and $L$ as before
\begin{equation*}
\begin{split}
    \frac{\Cr{Cboundentropy}p}{\eta}\leq \frac{\Cr{Cboundentropy}\Cr{CNKL}N}{KL\eta}\leq \frac{\Cr{Cboundentropy}N^{\nu}(\sqrt{v}-\sqrt{u_\eps})^2}{K\eta\log(N/L)^{1\{\nu=1\}}\Cr{ccondLN}}\leq \frac{\Cr{Cboundentropy}}{K\eta\Cr{ccondLN}}(\sqrt{v}-\sqrt{u_\eps})^2F_{\nu}(N,L),
\end{split}
\end{equation*}
where we used \eqref{eq:defGnu} in the last inequality.
Taking $\rho=\Cr{crho}$ as in Proposition~\ref{pro:entropy} and $p$ as before, using the upper bounds in \eqref{eq:condLNnu<1} and \eqref{eq:condLNnu=1}, which imply in particular $p\geq \frac{\Cr{CNKL}N}{2K}\geq1$ upon taking $\Cr{CcondLN}=\Cr{CcondLN}(K)<1$ small enough, we further have if $\nu\leq1$
\begin{equation*}
\begin{split}
    \rho p\left(\Cr{cboundhatF}u_{\eps}\eps^2L^{\nu}\wedge \frac{\log(1/\delta)}{40}\right)&\geq \frac{\Cr{crho}\Cr{CNKL}N}{2K}\left(\Cr{cboundhatF}u_{\eps}\eps^2L^{\nu-1}\wedge \frac{\log(1/\delta)}{40 L}\right)
    \\&\geq \frac{\Cr{crho}\Cr{CNKL}N}{2K}\left(\Cr{cboundhatF}u_{\eps}\eps^2\Cr{CcondLN}^{\nu-1}N^{\nu-1}\wedge \frac{(\sqrt{v}-\sqrt{u_\eps})^2N^{\nu-1}}{40 \Cr{CcondLN}\log(N/L)^{1\{\nu=1\}}}\right)
    \\&\geq \frac{\Cr{crho}\Cr{CNKL}N}{2K\Cr{CcondLN}^{1-\nu}}\left(\frac{\Cr{cboundhatF}\eps^2}{\Theta}\wedge \frac{1}{40}\right)(\sqrt{v}-\sqrt{u_\eps})^2F_{\nu}(N,L),
\end{split}
\end{equation*}
where we used $u_{\eps}\geq (\sqrt{u_{\eps}}-\sqrt{v})^2/\Theta$ and \eqref{eq:defGnu} in the last inequality. Note also that $H_{\nu}(\tfrac{N}{LK},K,\eta)F_{\nu}(N,KL)\geq \Cr{cbeta}F_{\nu}(N,L)/(1+\eta)^2$ if $N/L$ is large enough (depending on $K$ and $\eta$), see \eqref{eq:defGnu} and \eqref{eq:capC}, which is the case when taking  $\Cr{CcondLN}$ large enough in \eqref{eq:condLNnu<1} and \eqref{eq:condLNnu=1}. Therefore, taking $K=K(\eps,\eta)=\Cr{cforK}(\eta)\vee \Cr{csoftlocaltimesK}\eps^{-\Cr{CsoftlocaltimesK}}$ and $p$, $\rho$ as before, one can choose $\Cr{ccondLN}=\Cr{ccondLN}(\eps,\eta)>0$ large enough and $\Cr{CcondLN}=\Cr{CcondLN}(\eps,\eta,\Theta)<\infty$ small enough, so that for $\nu\leq1$,
\begin{equation}
\label{eq:condareenoughtoboundprobaH}
\begin{split}
    \frac{\Cr{Cboundentropy}p}{\eta}&\leq \frac{\Cr{cbeta}}{(1+\eta)^3}\left(\sqrt{v}-\sqrt{u_{\eps}}\right)^2F_{\nu}(N,L)
    \\&\leq \frac{1}{1+\eta}\left(\sqrt{v}-\sqrt{u_{\eps}}\right)^2F_{\nu}(N,KL)H_{\nu}(\tfrac{N}{LK},K,\eta)\wedge \rho p\left(\Cr{cboundhatF}u_{\eps}\eps^2L^{\nu}\wedge \frac{\log(1/\delta)}{40}\right).
\end{split}
\end{equation}
 Defining $\Cr{cFPP3}=\frac{\Cr{crho} \Cr{CNKL}}{8K}$ for the previous choice of $K$, we can easily conclude when $\nu\leq 1$ by combining \eqref{eq:eventHisenough}, \eqref{eq:boundprobaH} and \eqref{eq:condareenoughtoboundprobaH}, and a change of variable for $\eta$. 

When $\nu>1,$ the second inequality in \eqref{eq:condareenoughtoboundprobaH} is no longer true in view of \eqref{eq:defGnu}. However, proceeding similarly as above \eqref{eq:condareenoughtoboundprobaH}, and for the same choices of $K$, $p$ and $\rho$, one can choose $\Cr{cFPP4}$ such that $\Cr{cFPP4}\zeta^{\nu}$ is a large enough constant, depending on $\eta$, and constants $\Cr{ccondLN}$ and $\Cr{ccondFPP2}$, depending only on $\eps$, $\eta$ and $\Theta$, so that using \eqref{eq:condLNnu>1}, \eqref{eq:defGnu} and the inequalities $\delta\leq \Cr{ccondFPP2}$, $F_{\nu}(N,KL)H_{\nu}(\tfrac{N}{LK},K,\eta)\geq cF_{\nu}(N,L)$ for some $c=c(\eps,\eta)>0$ and  $u_{\eps}\geq (\sqrt{u_{\eps}}-\sqrt{v})^2/\Theta$ we have  
\begin{equation}
\label{eq:condareenoughtoboundprobaH2}
\begin{split}
    \frac{\Cr{Cboundentropy}p}{\eta }
    &\leq \frac{\Cr{cFPP4}\left(\sqrt{v}-\sqrt{u_{\eps}}\right)^2F_{\nu}(N,L)}{1-\eta}
    \\&\leq \frac{\left(\sqrt{v}-\sqrt{u_{\eps}}\right)^2}{1+\eta}F_{\nu}(N,KL)H_{\nu}(\tfrac{N}{LK},K,\eta)\wedge\Cr{crho} p\left(\Cr{cboundhatF}u_{\eps}\eps^2L^{\nu}\wedge \frac{\log(1/\delta)}{40}\right),
     \end{split}
\end{equation}
and we can also conclude similarly as before.
\end{proof}

\begin{Rk}
\phantomsection\label{rk:decayVu}
\begin{enumerate}
    \item\label{rk:easierproofc1=0}  One can directly deduce from Lemma~\ref{lem:coarsegraining} the upper bound on the probability to connect $z$ to $B(z,N)^c$ in $\V^u$ mentioned below Theorem~\ref{the:limitVuintro2}, that is a bound similar to \eqref{eq:limitVuintro}  but for $s=0$, and in fact also on general graphs satisfying \eqref{eq:standingassumptionintro} for $\nu\leq1$. To prove this, one takes $R=1$ and $g_z(L)=1\{L>0\}$ for each $L\in{[0,\infty)}$ and $z\in{G}$. Then choose $\delta$ small enough so that for $N$ large enough, there exists $L=L(N),$ with $L(N)\rightarrow\infty$ as $N\rightarrow\infty,$ satisfying \eqref{eq:condLNnu<1} if $\nu<1$ or \eqref{eq:condLNnu=1} if $\nu=1$. Moreover, $d_{\mathbf{t},u}(z;L;2L)=0$ if and only if $B(z,L)\leftrightarrow B(z,2L)^c\text{ in }\mathcal{V}^u$, and so \eqref{eq:assumptiontzu2} for $r=0$ is satisfied for any $u>u_{**}$ and $L$ large enough, see \eqref{eq:defu**}. Therefore, \eqref{eq:boundFPPcond} (for the choice $\Theta=v/u$) is satisfied for any $v>u>u_{**}$, $\eps$ small enough, and if $N=N(u,v)$ is large enough, and so the function appearing on the left-hand side of \eqref{eq:limitVuintro} (for $s=0$ and $v$ instead of $u$) can be upper bounded by $-\Cr{cbeta}(\sqrt{v}-\sqrt{u_{\eps}})^2/(1+\eta)$. Letting $\eps,\eta\rightarrow 0$ and $u\rightarrow u_{**},$ we can conclude.
    \item \label{rk:noapriori} 
    The previous proof of the upper bound in \eqref{eq:limitVuintro} but for $s=0$ is slightly different from the corresponding ones from \cite{GRS21,MuiSev} as we do not need any a priori bound on the decay of $\P(C_z^L\leftrightarrow (\tilde{C}_z^L)^c\text{ in }\mathcal{V}^u)$ as $L\rightarrow\infty,$ only that it converges to $0,$ see in particular \cite[Proposition~5.2]{GRS21} or \cite[Proposition~5.1]{MuiSev} as to why it was necessary.  The reason we do not require this a priori is  due to our improvement by a logarithmic factor in the entropy bound from \eqref{eq:cardA}, compared to the one from \cite[(4.13)]{GRS21}, or the one used below \cite[(5.3)]{MuiSev}, and we refer to \eqref{eq:boundprobaH} as to where this improvement is used. This also means that in comparison with the method from \cite[Proposition~5.2]{GRS21}, we will prove Theorem~\ref{the:FPP} using the renormalization scheme from Lemma~\ref{lem:coarsegraining} once instead of twice (or in fact even more due to our weak a priori, see Remark~\ref{rk:apriori},\ref{rk:aprioridecay})).
    \item\label{rk:whyapriori} In Theorem~\ref{the:FPP}, we want to show that $d_{\mathbf{t},v}(x;N)$ typically grows linearly in $N$, and not only that it is positive as in Remark~\ref{rk:decayVu},\ref{rk:easierproofc1=0}). When $\nu\leq 1,$ under the hypothesis \eqref{eq:assumptiontzu2}, \eqref{eq:boundFPPcond} implies that $d_{\mathbf{t},v}(x;N)$ is of order $rN/L$ for some function $L=L(N)$ chosen so that \eqref{eq:condLNnu<1} or \eqref{eq:condLNnu=1} are satisfied. If for instance $R\geq u^{-\frac1\nu}$, one wants in \eqref{eq:bounddtvfinal} that $d_{\mathbf{t},v}$ is of order $N/R$ instead. In particular when $\nu\leq1$, one cannot directly deduce from \eqref{eq:boundFPPcond} generalizations of Theorem~\ref{the:limitVuintro2} and~\eqref{eq:boundonPEintro} to graphs satisfying \eqref{eq:standingassumptionintro}, and more generally one cannot directly deduce that the time constant associated to $d_{\mathbf{t},v}(x;N)$ is positive (when it exists). In order to conclude, one first needs to show that \eqref{eq:assumptiontzu2} is satisfied for $r\approx L/R,$ that is one needs an a priori estimate on the probability that the FPP distance grows linearly, which is the content of Section~\ref{sec:apriori}. When $\nu>1$, one only needs the a priori estimate from Section~\ref{sec:apriori} if $R$ is not of order $|\sqrt{v}-\sqrt{u}|^{-\frac2\nu}$.
    \item\label{rk:othercardinality} In the proof of Lemma~\ref{lem:coarsegraining} when $\nu\leq1$, we chose $p$ maximal so that \eqref{eq:choicep} is satisfied. In fact, one could also take $p$ minimal so that \eqref{eq:choicep} is satisfied, and obtain a new version of Lemma~\ref{lem:coarsegraining} where one  replaces the factors $N^{1-\nu}|\sqrt{v}-\sqrt{u_{\eps}}|^{-2}$ by $\log(N/L)^{\frac1\nu-1}|\sqrt{v}-\sqrt{u_{\eps}}|^{-\frac2\nu}$ in \eqref{eq:condLNnu<1}, $\log(N/L)$ by  $\log\log(N/L)$ in \eqref{eq:condLNnu=1}, and $N/L$ in the probability in \eqref{eq:boundFPPcond} by the minimal choice of $p$ in \eqref{eq:choicep}. Actually the bound on $d_{\mathbf{t},v}(x;N)$ in \eqref{eq:boundFPPcond} will be of the same order $rN^{\nu}|\sqrt{v}-\sqrt{u_{\eps}}|^2$ for a typical choice of $L$ in both version of Lemma~\ref{lem:coarsegraining}, but taking $p$ maximal in \eqref{eq:choicep} makes $L$ larger, and hence will eventually make our choice of $r$ that satisfies \eqref{eq:assumptiontzu2} larger, see \eqref{eq:choicerdelta}, which is why we did this choice in the proof of Lemma~\ref{lem:coarsegraining}. When one is not interested in the FPP distance but only in connection probabilities as in Remark~\ref{rk:decayVu},\ref{rk:easierproofc1=0}) the bounds we obtain for any choice of $p$ in \eqref{eq:choicep} are thus the same, but taking $p$ minimal allows one to take $L$ of order $|\sqrt{v}-\sqrt{u_{\eps}}|^{-\frac2\nu}$ when $\nu\leq 1$, with log corrections, which when $u_{\eps}\approx u_{**}$ is expected to correspond to the correlation length associated to $\V^u$. In other words, having information at the correlation length, with logarithmic corrections, would be enough to directly deduce the bound \eqref{eq:boundFPPcond} without requiring any a priori as in Section~\ref{sec:apriori}, which is what one intuitively expects. It would be interesting to be able to remove this logarithmic correction completely, and Remark~\ref{rk:lowercardinality} could be a first step in that direction.
\end{enumerate} 
\end{Rk}

\subsection{A priori estimate}
\label{sec:apriori}
As explained in Remark~\ref{rk:decayVu},\ref{rk:whyapriori}), in order to deduce Theorem~\ref{the:FPP} from Lemma~\ref{lem:coarsegraining}, we first need some a priori bounds on the probability that the FPP distance grows linearly. Some bounds on the FPP distance of random interlacements were obtained on $\Z^d,$ $d\geq3,$ in \cite[Theorem~2.2 and Proposition~4.9]{AndPre}, but it turns out that the methods used therein are not adapted for more general graph, see Remark~\ref{rk:apriori},\ref{rk:previousFPPbounds}) for details.  Instead, we follow the general strategy described shortly in \cite[Remark~2.3,(iii)]{AndPre}.

\begin{Prop}
\label{pro:apriori}
    Fix $\eps\in{(0,1/2]}$. There exist positive constants $\Cl[c]{ccondFPP3},\Cl[c]{cFPPpriori3}$ and $\Cl[c]{cFPPpriori},$ depending only on $\eps$, such that for all $s\geq0,$ $\delta\in{(0,\Cr{ccondFPP3}]}$, $R\geq1$, $L\geq R$, families of weights $\mathbf{t}$ as in \eqref{eq:defbft} and $(u,v)\in{\L}$  so that $|u-v|\geq \eps v,$ $uL^{\nu}\geq \Cr{cFPPpriori3}\log(1/\delta)$ and 
    \begin{equation}
\label{eq:assumptiontzu3}
    \P(t_z^u\leq s)\leq \delta (1\wedge u^{\frac1\nu}R)^{\alpha}\text{ for all }z\in{\Lambda(R)},
\end{equation}
 is satisfied,  we have for all $x\in{\Lambda(L)}$
\begin{equation}
\label{eq:apriori}
    \P\left(d_{\mathbf{t},v}(x;L;2L)\leq \frac{\Cr{cFPPpriori}sL}{(u^{-1}\log(1/\delta))^{\frac1\nu}\vee R}\right)\leq 
     f_{\eps}(\delta)
\end{equation}
for some function $f_{\eps}:[0,1)\rightarrow [0,1]$, depending only on $\eps$, and such that $f_{\eps}(\delta)\rightarrow 0$ as $\delta\rightarrow0$.

\end{Prop}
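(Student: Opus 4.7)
By the monotonicity of $g_z$ along $\mathcal{L}$ (cf.~\eqref{eq:defcalL}), one has $t_z^v\geq t_z^u$ for every $z\in\Lambda(R)$, so it suffices to lower bound $d_{\mathbf{t},u}(x;L,2L)$; the hypothesis $|u-v|\geq \epsilon v$ then provides a sprinkling budget of order $\epsilon$ for decoupling. The strategy follows the perforated-lattice renormalization from \cite{MR3650417} suggested in \cite[Remark~2.3,(iii)]{AndPre}, combined with the decoupling Proposition~\ref{pro:softlocaltimes}. The natural coarsening scale is $L^*\stackrel{\textnormal{def.}}{=}(u^{-1}\log(1/\delta))^{1/\nu}\vee R$, and under the hypothesis $uL^\nu\geq \Cr{cFPPpriori3}\log(1/\delta)$ one has $L\gtrsim L^*$.

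I will first establish the cascade. Take $L_0\stackrel{\textnormal{def.}}{=} u^{-1/\nu}\vee R$, a large ratio $K=K(\epsilon)$, scales $L_n=K^nL_0$, and stop at $L_N\asymp L^*$ (so $N\asymp\log\log(1/\delta)/\log K$). Declare $z\in\Lambda(L_0)$ \emph{scale-$0$ bad at level} $u'$ if $B(z,L_0)$ contains some $z'\in\Lambda(R)$ with $t_{z'}^{u'}\leq s$. A union bound over the $\lesssim(L_0/R)^\alpha$ sub-vertices, combined with \eqref{eq:assumptiontzu3} and the identity $(L_0/R)^\alpha(1\wedge u^{1/\nu}R)^\alpha=1$ (which is the very reason for the factor $(1\wedge u^{1/\nu}R)^\alpha$ in \eqref{eq:assumptiontzu3}), yields $p_0(u)\leq C\delta$. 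Then $z\in\Lambda(L_{n+1})$ is \emph{scale-$(n+1)$ bad} if it contains two scale-$n$ bad sub-boxes at mutual distance $\geq 16KL_n$. Applying Proposition~\ref{pro:softlocaltimes} with sprinkling $\eta_n=\epsilon\,2^{-n-2}$ to decouple the two events, and using monotonicity of the bad event in $u'$, one derives the two-scale recursion
\begin{equation*}
p_{n+1}(u_{n+1})\leq CK^{2\alpha}p_n(u_n)^2+C\exp\bigl(-c\,u\,\eta_n^2\,L_n^\nu\bigr),
\end{equation*}
with $u_n=u(1\mp\sum_{k<n}\eta_k)\in\mathcal{L}_\Theta$ (total shift $\leq\epsilon/2$). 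Choosing $K$ large in terms of $\alpha$ and iterating, the condition $uL_0^\nu\geq 1$ together with $uL^\nu\geq \Cr{cFPPpriori3}\log(1/\delta)$ ensures the cumulative sprinkling error stays below $p_0^{2^N}$, and gives $p_N(u_N)\leq q_0\stackrel{\textnormal{def.}}{=} f_\epsilon(\delta)$ with $f_\epsilon(\delta)\to0$ as $\delta\to 0$. A repeated-halving argument shows that any $R$-nearest-neighbor chain of bad $R$-vertices of length $\geq cL^*/R$ in a set of diameter $\leq CL^*$ forces a scale-$N$ bad box nearby, so for every fixed $y\in G$, $\P\bigl(\exists\, R\text{-chain of bad vertices of length}\geq cL^*/R\text{ in }B(y,3L^*)\bigr)\leq Cq_0$.

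To obtain the uniform-in-$L$ bound, I coarsen to $\Lambda(L^*)$: call $y\in\Lambda(L^*)$ \emph{good} if no such chain exists in $B(y,3L^*)$. Applying Proposition~\ref{pro:softlocaltimes} once more at scale $L^*$ with an additional sprinkling of $\epsilon/4$, the goodness events of $y\in\mathcal{C}$ for any $\mathcal{C}\subset\Lambda(L^*)$ at pairwise distance $\geq 16\tilde KL^*$ become independent (absorbing the extra sprinkling into the remaining budget). Any $R$-path $\gamma$ from $B(x,L)$ to $B(x,2L)^c$ induces a coarse $L^*$-nearest-neighbor path $\tilde\gamma\subset\Lambda(L^*)$ of length $k\geq cL/L^*$; on each good $y\in\tilde\gamma$ the path $\gamma$ must visit a good $R$-vertex (otherwise its restriction to $B(y,3L^*)$ would be a bad chain of length $\geq cL^*/R$), contributing weight $\geq s$. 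A standard Peierls-type bound on $\Lambda(L^*)$ — thinning $\tilde\gamma$ to a sub-family at mutual distance $\geq 16\tilde KL^*$ to apply the decoupling, so that the probability that more than half of the thinned $y$'s are bad is $\leq (Cq_0)^{ck/\tilde K}$, and summing against the path-entropy $\leq e^{Ck}$ — gives a total probability $\leq\sum_{k\geq cL/L^*} e^{Ck}(Cq_0)^{ck/\tilde K}$, which is exponentially small in $L/L^*$ once $\delta\leq \Cr{ccondFPP3}$ is small enough that $Cq_0< e^{-C\tilde K/c}$. This yields $d_{\mathbf{t},v}(x;L,2L)\geq \Cr{cFPPpriori} sL/L^*$ outside an event of probability $\tilde f_\epsilon(\delta)\to 0$, uniformly in $L$.

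The main obstacle is the bookkeeping: verifying that the sprinkled sequence $(u_n)$ never leaves $\mathcal{L}_\Theta$, that the combinatorial factor $CK^{2\alpha}$ in the cascade is absorbed by the squaring of $p_n$ (requiring $K=K(\epsilon)$ large in terms of $\alpha$), and that the sum $\sum_n C\exp(-cu\eta_n^2L_n^\nu)$ is dominated by $p_0^{2^N}$ at the top scale — which is exactly where the hypothesis $uL^\nu\geq \Cr{cFPPpriori3}\log(1/\delta)$ is used. The uniformity in $L$ comes from the Peierls step at scale $L^*$ and not from a crude union bound over $L^*$-boxes, which would be insufficient since the target probability depends only on $\delta$.
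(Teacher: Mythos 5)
Your overall architecture (perforated-lattice cascade plus decoupling via Proposition~\ref{pro:softlocaltimes}) is the right one, but your first stage collapses at its very first step. You initialize the hierarchy at $L_0=u^{-1/\nu}\vee R$, so that $uL_0^{\nu}$ is only guaranteed to be $\geq 1$. The additive error produced by the decoupling when passing from scale $0$ to scale $1$ is then $C\exp(-c\,u\,\eta_0^2L_0^{\nu})\geq C\exp(-c\eps^2/16)$, a constant depending only on $\eps$ and bounded away from $0$. Since this error enters the two-scale recursion additively, $p_1\gtrsim K^{2\alpha}\exp(-c\eps^2/16)$ is itself a constant, and no amount of iteration can bring $p_N$ down to a quantity $f_{\eps}(\delta)\to0$; your claim that the cumulative sprinkling error "stays below $p_0^{2^N}$" is false already at $n=0$. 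This is precisely why the paper starts its cascade at $L_0=R\vee(\Cr{cFPPpriori3}u^{-1}\log(1/\delta))^{1/\nu}$: with that choice $u\eta_n^2L_n^{\nu}\gtrsim\eps^2\log(1/\delta)$ already at the bottom scale, so every decoupling error is a large power of $\delta$ and can be absorbed into the target $\delta\exp(-2^{n+1})$, see \eqref{eq:bounderroran+1}. The factor $\log(1/\delta)^{1/\nu}$ in the denominator of \eqref{eq:apriori} is not cosmetic — it is forced by this decoupling cost. The initialization then uses \eqref{eq:assumptiontzu3} exactly as you do, but over the $\lesssim(L_0/R)^{\alpha}\lesssim\log(1/\delta)^{\alpha/\nu}(1\wedge u^{1/\nu}R)^{-\alpha}$ vertices of this larger ball, giving $a_0\leq C\delta\log(1/\delta)^{\alpha/\nu}$, which is still $o(1)$, see \eqref{eq:bounda0}.

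Two secondary problems. First, a fixed $K=K(\eps)$ is incompatible with Proposition~\ref{pro:softlocaltimes}, which requires $K\geq\Cr{csoftlocaltimesK}\eta_n^{-\Cr{CsoftlocaltimesK}}$ for a sprinkling of size $\eta_n$; with $\eta_n=\eps 2^{-n}$ the required separation grows geometrically in $n$. The paper takes $\eta_n\asymp\eps/(n+1)^2$ and lets $K_n$ grow polynomially in $n$, which is why the scale ratios in \eqref{eq:defun} and below are not constant. Second, your deterministic "repeated-halving" claim breaks at the top scales: inside $B(y,3L^*)$ with $L_N\asymp L^*$ one cannot find two scale-$(N-1)$ bad boxes at mutual distance $16KL_{N-1}\asymp 16L^*$. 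The paper sidesteps both your two-stage structure and this geometric obstruction by running a single cascade from $L^*$ all the way up to $L$, with annuli of width $L'_n\asymp p(n+1)^2L_n$ leaving ample room for the separations, controlling the entropy through the doubly exponential decay $\exp(-2^n)$ of $a_n$ in \eqref{eq:boundan} rather than through a Peierls sum, and converting "small FPP distance" into "occurrence of some $G_M^z$" via the deterministic pigeonhole \eqref{eq:smallFPPimpliesGnx}.
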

\begin{proof}
Throughout the proof, we abbreviate 
\begin{equation}
\label{eq:defun}
u_n\stackrel{\textnormal{def.}}{=}u\prod_{i=1}^n\Big(1\pm\frac{\Cl[c]{cun}\eps}{(n+1)^2}\Big)^{-1}\text{ and }K_n\stackrel{\textnormal{def.}}{=}\Cr{csoftlocaltimesK}\Big(\frac{(n+1)^2}{\eps\Cr{cun}}\Big)^{\Cr{CsoftlocaltimesK}}
\text{ for all }n\geq0,
\end{equation}
where $\Cr{csoftlocaltimesK}$ and $\Cr{CsoftlocaltimesK}$ are the constants introduced in Proposition~\ref{pro:softlocaltimes}; the plus sign corresponds to the case where the functions $g_z$ in \eqref{eq:defbft} are decreasing, and we then choose the constant $\Cr{cun}\leq1$ small enough so that $\lim_{n\rightarrow\infty}u_n\geq v$; and the minus sign to the case where they are decreasing, and  we then choose $\Cr{cun}$ so that $\lim_{n\rightarrow\infty}u_n\leq v$, which is possible in view of \eqref{eq:defcalL} and since $\eps\leq |v-u|/v$. Let $L_0=R\vee \big(\Cr{cFPPpriori3}u^{-1}\log(1/\delta)\big)^{\frac1\nu}$, where we will fix the constant $\Cr{cFPPpriori3}\geq 1$ later. For some $p=p(\eps)\in{\N}$ that we will fix later, define recursively $L_{n}=p^{2\alpha}(n+1)^{2+2\alpha}K_{n}^{\alpha}L_{n-1}$ for each $n$ such that $p^{4\alpha}(n+2)^{4+2\alpha}K_{n+1}^{\alpha}L_{n}\leq L/2$. We call $M-1$ the last $n$ as before, and let $L_{M}=L/(2p^{2\alpha}(M+1)^{2})$, which satisfies $p^{2\alpha}(M+1)^{2+2\alpha}K_M^{\alpha}L_{M-1}\leq L_M\leq p^{4\alpha}(M+1)^{4+4\alpha}K_M^{2\alpha}L_{M-1}$. We also abbreviate $L'_n=p(n+1)^{2}L_n$ for each $n\in{\{1,\dots,M\}}$. For each $u'>0,$ we also let $G_0^{z,u'}=\{d_{\mathbf{t},u'}(z;L_0;2L_0)\leq s\}$ for all $z\in{\Lambda(L_0)}$ and define recursively for all $1\leq n\leq M$
\begin{equation}
\label{eq:defGnz}
    G_{n}^{z,u'}\stackrel{\textnormal{def.}}{=}\bigcup_{\substack{x,y\in{\Lambda(L_{n-1})\cap B(z,L'_{n})}\\d(x,y)\geq 32K_{n}L'_{n-1}}}G_{n-1}^{x,u'}\cap G_{n-1}^{y,u'}\text{ for all }z\in{\Lambda(L_{n})},
\end{equation}
and
\begin{equation}\label{eq:defan}
a_n\stackrel{\textnormal{def.}}{=}\sup_{z\in{\Lambda(L_n)}}\P\big(G_{n}^{z,u_n}\big).
\end{equation}
One can show recursively on $n$ that $G_n^{z,u'}$ is measurable with respect to $B(z,2L'_{n})$ for each $n\in{\N}$ if $p\geq2$. We then have by combining a union bound and \eqref{eq:defLambda} with Proposition~\ref{pro:softlocaltimes} for $\mathcal{C}=\{x,y\},$ which is as in \eqref{eq:defcalC} for $2L'_{n}$ in place of $L$ and  $K_{n+1}$ in place of $K$ if $d(x,y)\geq 32K_{n+1}L'_{n},$ that 
\begin{equation}
\label{eq:an+1an}
    a_{n+1}\leq \Cr{CLambda}^2p^{8\alpha^2}(n+1)^{14\alpha^2}K_n^{4\alpha^2}\left(a_n^2+\Cr{Csoftlocaltimes}\exp\Big(-\frac{\Cr{csoftlocaltimes}\Cr{cun}^2u_{n+1}\eps^2L_n^{\nu}}{(n+1)^4}\Big)\right)
\end{equation}
for all $n\leq M-1$. As we now explain, this implies that for a suitable choice of the parameters
\begin{equation}
\label{eq:boundan}
    a_n\leq \frac{\Cl{cFPPpriori4}\delta\log(1/\delta)^{\frac{\alpha}{\nu}}}{\Cr{CLambda}^2p^{8\alpha^2}(n+1)^{14\alpha^2}K_n^{4\alpha^2}}\exp(-2^n)\text{ for all }0\leq n\leq M
\end{equation}
for a constant $\Cr{cFPPpriori4}\geq1$ to be chosen later. Indeed first consider the case $n=0,$  by a union bound, \eqref{eq:assumptiontzu3}, our choice of $L_0$ and \eqref{eq:defLambda}
\begin{equation}
\label{eq:bounda0}
a_0\leq \sup_{z\in{\Lambda(L_0)}}|B(z,L_0)\cap\Lambda(R)|\delta (1\wedge u^{\frac{1}{\nu}}R)^{\alpha}\leq  \Cr{CLambda}\Cr{cFPPpriori3}^{\frac{\alpha}{\nu}}\log(1/\delta)^{\frac{\alpha}{\nu}}\delta,
\end{equation}
and so \eqref{eq:boundan} is clear upon fixing $\Cr{cFPPpriori4}=\Cr{cFPPpriori4}(\eps,p,\Cr{cFPPpriori3})\geq1$ large enough. Using the bounds $L_n/L_0\geq p^{n},$ $L_0^{\nu}u\geq \Cr{cFPPpriori3}\log(1/\delta),$ $u_{n+1}\geq u/4$ by \eqref{eq:defcalL}, $\delta\leq 1/2$ and the definition of $K_n,$ we moreover have 
\begin{equation}
\label{eq:bounderroran+1}
    \Cr{Csoftlocaltimes}\exp\Big(-\frac{\Cr{csoftlocaltimes}\Cr{cun}^2u_{n+1}\eps^2L_n^{\nu}}{(n+1)^4}\Big)\leq \frac{\delta}{2\Cr{CLambda}^4p^{16\alpha^2}(n+2)^{28\alpha^2}K_{n+1}^{8\alpha^2}}\exp(-2^{n+1})
\end{equation}
upon taking $p=p(\eps)$ and $\Cr{cFPPpriori3}=\Cr{cFPPpriori3}(\eps,p)$ large enough small enough, uniformly in $n,R,u,v$ and $\delta$. Combining \eqref{eq:an+1an} and \eqref{eq:bounderroran+1} yields \eqref{eq:boundan} recursively on $n$ upon choosing $\delta\leq \Cr{ccondFPP3}$ for some constant $\Cr{ccondFPP3}=\Cr{ccondFPP3}(\eps,p)$ small enough since both $K_{n+1}/K_n$ and $(n+2)/(n+1)$ are bounded uniformly in $n$.

Let us finally show that when the FPP distance between $x$ and $y$ is small enough, then some event $G_n^z$ occurs. More precisely recalling \eqref{eq:defdzL}, we prove recursively on $n\in{\{1,\dots,M+1\}}$ that there exists a constant $\Cl{cFPPpriori2}>0$ independent of our choice of $p$ such that for all $z\in{\Lambda(L_{n-1})}$
\begin{equation}
\label{eq:smallFPPimpliesGnx}
    \left\{d_{\mathbf{t},v}(z;L_{n-1};L'_{n-1})\leq s\prod_{k=1}^{n-1}\frac{L'_{k}}{L'_{k-1}}\left(1-\frac{\Cr{cFPPpriori2}}{pk^2}\right)\right\}\subset G_{n-1}^z.
\end{equation}
When $n=1,$ \eqref{eq:smallFPPimpliesGnx} is obvious by definition of $G_0^z$ if $p\geq2$. Assume now that \eqref{eq:smallFPPimpliesGnx} is fulfilled for some $n\in{\{1,\dots,M\}}$, and that for some $z\in{\Lambda(L_{n})}$ there exist $x\in{B(z,L_{n})\cap\Lambda(R)}$ and $y\in{B(z,L'_{n})^c\cap\Lambda(R)}$ with $d_{\mathbf{t},v}(x,y)\leq s\prod_{k=1}^{n}\frac{L'_{k}}{L'_{k-1}}\left(1-\frac{\Cr{cFPPpriori2}}{pk^2}\right).$ Let $\gamma$ be the $R$-nearest neighbor path minimizing the distance $d_{\mathbf{t},v}(x,y)$ in \eqref{eq:defdistance}, and let us construct a sequence $z_0,z_1,\dots$ recursively as follows: first define $z_0\in{\Lambda(L_{n-1})}$ such that $x\in{B(z_0,L_{n-1})},$ which exists by \eqref{eq:defLambda}. Then for each $i\geq0,$ let $z_{i+1}$ be the first vertex in $\Lambda(L_{n-1})$ visited by $B(\gamma,L_{n-1}),$ and such that 
\begin{equation*}
\begin{split} 
B(z_{i+1},L'_{n-1}+R+\Cr{Cdvsdgr}^{-1})\cap B(z_{j},L'_{n-1}+R+\Cr{Cdvsdgr}^{-1})=\varnothing\text{ for all }j\leq i.
\end{split}
\end{equation*}
We stop this procedure the first time $N$ that $B(z_{N+1},L'_{n-1}+R+\Cr{Cdvsdgr}^{-1})\cap B(z,L'_{n})^c\neq\varnothing,$ and note that $d(z_{i+1},\{z_j,j\leq i\})\leq 2L'_{n-1}+2L_{n-1}+2R+3\Cr{Cdvsdgr}^{-1}$ for each $i\leq N$ by  \eqref{eq:defLambda} and \eqref{eq:dvsdgr}. In particular $$L'_{n}-(2+\Cr{Cdvsdgr}^{-1})L'_{n-1}\leq d(z_0,z_{N+1})\leq (N+2)(2L'_{n-1}+(4+3\Cr{Cdvsdgr}^{-1})L_{n-1}),$$ and so $N\geq L'_n/(2L'_{n-1}+(4+3\Cr{Cdvsdgr}^{-1})L_{n-1})-C$ for some constant $C<\infty$. Moreover, by \eqref{eq:dvsdgr}, $\gamma$ can be decomposed into disjoint $R$-nearest neighbor paths $\gamma_{2i-1},\gamma_{2i}$ starting or ending on $B(z_i,L'_{n-1})^c$ and visiting $B(z_i,L_{n-1}),$ for $1\leq i\leq N$. By our assumption on $d_{\mathbf{t},v}(x,y)$, there are at least $$2N-\frac{L'_{n}}{L'_{n-1}}(1-\frac{\Cr{cFPPpriori2}}{pn^2})\geq \frac{(\Cr{cFPPpriori2}-C)L'_{n}}{(pn^2L'_{n-1})}$$ paths $\gamma_i$ such that $t_{\gamma_i}^v\leq s\prod_{k=1}^{n-1}\frac{L'_{k}}{L'_{k-1}}\left(1-\frac{\Cr{cFPPpriori2}}{pk^2}\right),$ for some constant $C<\infty$, where we used our previous bound on $N$ and our definition of $L'_n$. In particular by \eqref{eq:smallFPPimpliesGnx} the event $G_{n-1}^{z_i}$ occurs for at least $(\Cr{cFPPpriori2}-C)L'_{n}/(2pn^2L'_{n-1})$ different $i\in{\{1,\dots,N\}}$. Since for each $x\in{\Lambda(L_{n-1})},$ by \eqref{eq:defLambda} and our choice of $L_n$ and $L'_n$,  $$|B(x,32K_{n}L'_{n-1})\cap \Lambda(L_{n-1})|\leq \Cr{CLambda}(32pK_{n}n^2)^{\alpha}\leq \frac{32^{\alpha}\Cr{CLambda}L'_{n}}{(pn^2L'_{n-1})},$$  we deduce from \eqref{eq:defGnz} that $G_{n}^z$ occurs if we choose $\Cr{cFPPpriori2}$ large enough, which  finishes the proof of \eqref{eq:smallFPPimpliesGnx}.

Let us now choose $p$ large enough, independently of $M$, so that \eqref{eq:bounderroran+1} is satisfied and $\prod_{k=1}^{M}\frac{L'_{k}}{L'_{k-1}}\big(1-\frac{\Cr{cFPPpriori2}}{pk^2}\big)\geq \Cr{cFPPpriori}L/((u^{-1}\log(1/\delta))^{\frac1\nu}\vee R)$ for some positive constant $\Cr{cFPPpriori}$, which is possible by our choice of $L_0$, $L'_0$, $L_M$ and $L'_M.$ For each $x\in{\Lambda(L)}$, if $d_{\mathbf{t},v}(x;L;2L)\leq \Cr{cFPPpriori}sL/((u^{-1}\log(1/\delta))^{\frac1\nu}\vee R)$ then by \eqref{eq:defdzL} there exist $x'\in{C_x^L\cap\Lambda(R)}$ and $y'\in{(\tilde{C}_x^L)^c\cap\Lambda(R)}$ such that $d_{\mathbf{t},v}(x',y')\leq \prod_{k=1}^{M}\frac{L'_{k}}{L'_{k-1}}\big(1-\frac{\Cr{cFPPpriori2}}{pk^2}\big)$. Letting $z\in{\Lambda(L_{M})}$ be such that $x'\in{B(z,L_{M})},$ we notice that $B(z,L'_{M})\subset B(x,L'_{M}+L_{M}+L)\subset\tilde{C}_x^L$ by \eqref{eq:defCU} and our choice of $L'_{M}=L/2$. Therefore, $d_{\mathbf{t},v}(x;L;2L)\leq \Cr{cFPPpriori}sL/((u^{-1}\log(1/\delta))^{\frac1\nu}\vee R)$ implies that $G_{M}^z$ occurs for some $z\in{\Lambda(L_{M})\cap B(x,2L)}$ by \eqref{eq:smallFPPimpliesGnx} for $n=M+1.$ We can conclude by \eqref{eq:defLambda}, a union bound on $z$, \eqref{eq:defun}, \eqref{eq:boundan}, our choice of $u_n$ and monotonicity of $u\mapsto d_{\mathbf{t},u}(x;L;2L)$.
\end{proof}

\begin{Rk}
\phantomsection\label{rk:apriori}
\begin{enumerate}
    \item \label{rk:aprioridecay} Inspecting the proof of Proposition~\ref{pro:apriori}, noting in particular that $L_n\leq L_0(p(n+1)/\eps)^{Cn}$ and using \eqref{eq:boundan} therein, one can actually bound the left-hand side of \eqref{eq:apriori} by
\begin{equation}
\label{eq:feps}
C\delta\log(1/\delta)^{\frac{\alpha}{\nu}}\exp\Big(-e^{\frac{c\log(x)}{\log\log(x)}}\Big),
\end{equation}
for some constants $C>0$  and $c=c(\eps)>0,$ where $x=L/(R\vee (u^{-1}\log(1/\delta))^{\frac1\nu})$, which decays superpolynomially fast to $0$ as $x\rightarrow\infty.$ This bound could be enough to start an iterative renormalization scheme as indicated in \cite[Remark~5.6]{GRS21}, but it turns out that thanks to our improved entropy bound \eqref{eq:cardA} we will actually never need \eqref{eq:feps}, contrary to \cite{GRS21,MuiSev}, and we refer to Remark~\ref{rk:decayVu},\ref{rk:noapriori}) for details.
\item The proof of Proposition~\ref{pro:apriori} is inspired by the perforated lattice renormalization scheme introduced in \cite[Section~2]{MR3650417}. However, the bounds on the probability of events similar to $G_n^{z,u'}$ in \eqref{eq:defGnz} used therein, which are derived in \cite[Theorem~4.1]{MR3390739}, are not explicit in their dependency on $u$ and $v$, and thus we could not directly use them here.
\item \label{rk:previousFPPbounds} On $\Z^d,$ $d\geq3,$ one could also deduce Proposition~\ref{pro:apriori} from \cite[Theorem~2.2]{AndPre} with the following setup therein: consider the renormalized lattice $R\Z^d$ instead of $\Z^d$, and take $\omega_x\stackrel{\textnormal{law.}}{=}(\ell_{y,u})_{y\in{B(x,3R)}}$ for all $x\in{R\mathbb{Z}^d}$($=\Lambda(R)$) under $\P^u$, $u\in{I\stackrel{\textnormal{def.}}{=}[R^{-(d-2)},\infty)}$. Indeed, one can use the decoupling inequality from \cite{MR3420516}  similarly as in \cite[Proposition~3.9]{AndPre}, to obtain that condition ${\bf P3}$ from \cite[Section~2.1]{AndPre} is satisfied (on $R\Z^d$ instead of $\Z^d$), and that the constants appearing therein satisfy $a_P=\chi_P=d-2$, $\eps_P=1$, and the other constants do not depend on $R$ by our previous choice of $I$. Therefore, \cite[Theorem~2.2]{AndPre} is in force,  see also \cite[Remark~2.3,(i)]{AndPre}, which implies Proposition~\ref{pro:apriori} with the additional constraint $R^{d-2}u\geq1$ therein. The actual bound from \cite[(2.11) and (2.13)]{AndPre}  lets us replace \eqref{eq:feps} by  $\exp(-cx\log(x)^{-4\cdot1\{d=3\}})$, for some constant $c>0$  which only depend on $\eps$ and $\delta$ as in Proposition~\ref{pro:apriori} by inspecting the proof. Note that this last function is much better as $x\rightarrow\infty$ than the one obtained in \eqref{eq:feps}, which is due to a choice of slowly growing renormalization scheme in \cite[Section~2.2]{AndPre}. In particular, the decay is exponential when $d\geq4$, which is enough to obtain Theorem~\ref{the:FPP} directly in this case, without using the more advanced method from Lemma~\ref{lem:coarsegraining}. On general graphs satisfying \eqref{eq:standingassumptionintro}, it is however not clear whether condition {\bf P3} from \cite[Section~2.1]{AndPre} holds or not, since the decoupling inequalities in this context from Proposition~\ref{pro:softlocaltimes} or \cite[Theorem~2.4]{DrePreRod2} require to consider two sets at distance larger than their maximal radius, which is not required on $\Z^d$ in \cite{MR3420516}. To summarize, the method from \cite{AndPre} can be applied to prove Proposition~\ref{pro:apriori} on $\Z^d$, $d\geq3$, and in fact even Theorem~\ref{the:FPP} directly if $d\geq4$, under the additional constraint $uR^{d-2}\geq1$, but general graphs require a different proof given in Proposition~\ref{pro:apriori}, which yields worst bounds on the decay of the left-hand side of \eqref{eq:apriori}, but these bounds will eventually still be enough to deduce Theorem~\ref{the:FPP} from Lemma~\ref{lem:coarsegraining}.  
\end{enumerate}
\end{Rk}

\subsection{Proof of Theorem~\ref{the:FPP}}
\label{sec:prooftheFPP}
We now combine Lemma~\ref{lem:coarsegraining} and Proposition~\ref{pro:apriori} to finish the
\begin{proof}[Proof of Theorem~\ref{the:FPP}]
Fix $\eta,\xi,\zeta>0$, $\Theta>1$ and $(u,v)\in{\L_{\Theta}}$. We also assume that $N\geq \Cr{cboundN2}|\sqrt{v}-\sqrt{u}|^{-\frac2\nu}$, for a constant $\Cl[c]{cboundN2}=\Cr{cboundN2}(\eta,\xi,\zeta,\Theta)$ to be fixed later, which can be assumed w.l.o.g.\ upon increasing the constant $\Cr{cboundN}$ in \eqref{eq:bounddtvfinal}. We let 
\begin{equation}
\label{eq:defepsu'}
\eps=\frac{\xi\eta}{6}\left(\leq \frac{|\sqrt{v}-\sqrt{u}|}{\sqrt{u}}\times\frac{\eta}{6}\right)\text{ and } u'=u(1+2\text{sign}(v-u)\eps). 
\end{equation}
Recalling the definition of $u'_{\eps}$ from \eqref{eq:choicevu}, one can easily check that the above choices imply that $|u'-u|\geq \eps u'$, $(u'_{\eps},v)\in{\L_{\Theta}}$ and $|\sqrt{u'_{\eps}}-\sqrt{v}|\geq|\sqrt{u}-\sqrt{v}|(1-\eta)$ upon assuming w.l.o.g.\ that $\eta$ is small enough (depending on $\xi$ and $\Theta$). For any $\Cr{ccondFPP}\in{(0,\Cr{ccondFPP3}]}$ to be chosen later, by Proposition~\ref{pro:apriori} (applied for $\Cr{ccondFPP}$ instead of $\delta$ and $u'$ instead of $v$), under the conditions from Theorem~\ref{the:FPP}, for any $L\geq R$ with $L^{\nu}u\geq \Cr{cFPPpriori3}\log(1/\Cr{ccondFPP})$, with $\Cr{cFPPpriori3}=\Cr{cFPPpriori3}(\eps)$ as in Proposition~\ref{pro:apriori}, we have that \eqref{eq:assumptiontzu2} is satisfied with $u'$ instead of $u$,
\begin{equation}
\label{eq:choicerdelta}
    r=\frac{\Cr{cFPPpriori}sL}{(u^{-1}\log(1/\Cr{ccondFPP}))^{\frac1\nu}\vee R}\text{ and }\delta=f_{\eps}(\Cr{ccondFPP}).
\end{equation}
We now choose 
\begin{equation}
\label{eq:choiceL}
    L=\begin{cases}
        \Cr{ccondLN}N^{1-\nu}|\sqrt{v}-\sqrt{u'_{\eps}}|^{-2}\log\big(N|\sqrt{v}-\sqrt{u'_\eps}|^{2}\big)^{1\{\nu=1\}}&\text{ if }\nu\leq  1,
        \\(\Cr{cFPPpriori3}\log(1/\Cr{ccondFPP})\Theta)^{\frac1\nu}\vee(\Theta\Cr{CbounduL})^{\frac1\nu}\vee\Cr{ccondLN}\vee\zeta)|\sqrt{v}-\sqrt{u'_\eps}|^{-\frac2\nu}&\text{ if }\nu> 1.
    \end{cases}
\end{equation}
Note that $N/L$ is larger than a function of $N|\sqrt{v}-\sqrt{u}|^{\frac2\nu}$ diverging to infinity, as well as  $L|\sqrt{v}-\sqrt{u}|^{\frac2\nu}$ when $\nu\leq1$. Therefore, if $N|\sqrt{v}-\sqrt{u}|^{\frac2\nu}$ is large enough (depending on $\eta$, $\Theta$, $\zeta$ and $\xi$), then $L$ is as assumed above \eqref{eq:choicerdelta}: $R\leq \zeta |\sqrt{v}-\sqrt{u}|^{-\frac2\nu}\leq L$ by \eqref{eq:choiceL}, and $uL^{\nu}\geq \Cr{cFPPpriori3}\log(1/\Cr{ccondFPP})\Theta|\sqrt{v}-\sqrt{u'_\eps}|^{-2}u\geq \Cr{cFPPpriori3}\log(1/\Cr{ccondFPP})$. Using \eqref{eq:choicerdelta}, one can also easily check that \eqref{eq:condLNnu<1} if $\nu<1$ and \eqref{eq:condLNnu=1} if $\nu=1$  (for $u'$ instead of $u$) are satisfied uniformly in $N\geq \Cr{cboundN2}|\sqrt{v}-\sqrt{u}|^{-\frac2\nu}$ upon taking $\Cr{cboundN2}=\Cr{cboundN2}(\xi,\eta,\zeta,\Theta)$ large enough, and $\Cr{ccondFPP}=\Cr{ccondFPP}(\xi,\eta,\Theta)\leq \Cr{ccondFPP3}$ small enough. Note also that \eqref{eq:condLNnu>1} is automatically satisfied by our choice of $L$. We also choose $\Cr{ccondFPP}=\Cr{ccondFPP}(\xi,\eta,\Theta)\leq \Cr{ccondFPP3}$ small enough so that $\delta \leq \Cr{ccondFPP2}$ in \eqref{eq:choicerdelta}. Note that we can indeed choose $\Cr{ccondFPP}$ as before not depending on $\zeta$ for all $\nu>0$ since  $\Cr{ccondFPP2},\Cr{ccondLN}$ and $\Cr{CcondLN}$  do not depend on $\zeta$ in Lemma~\ref{lem:coarsegraining}. Since $u'_{\eps}L^{\nu}\geq \Cr{CbounduL}$ and $L\leq N$ if $\Cr{cboundN2}$ is large enough, we deduce that Lemma~\ref{lem:coarsegraining} is in force. Moreover, in view of \eqref{eq:defGnu} and below we have $(\sqrt{u'_{\eps}}-\sqrt{v}\big)^2F_{\nu}(N,L)\geq (1-\eta)^2F_{\nu}\big(N|\sqrt{v}-\sqrt{u}|^{\frac2\nu})$ if $\nu<1$, $(\sqrt{u'_{\eps}}-\sqrt{v}\big)^2F_{\nu}(N,L)\geq (1-\eta)^3F_{\nu}\big(N|\sqrt{v}-\sqrt{u}|^{\frac2\nu}\big)$ if $\nu=1$ (note that $\log(N/L)\leq \log\big(N(\sqrt{v}-\sqrt{u})^{2}\big)/(1-\eta)$ upon taking $\Cr{cboundN2}$ large enough), and $(\sqrt{u'_{\eps}}-\sqrt{v}\big)^2F_{\nu}(N,L)\geq cF_{\nu}\big(N|\sqrt{v}-\sqrt{u}|^{\frac2\nu}\big)$ if $\nu>1$, for some constant $c=c(\xi,\eta,\zeta,\Theta)$. Note also that 
\begin{equation*}
    \frac{\Cr{cFPP3}rN}{L}=\frac{\Cr{cFPP3}\Cr{cFPPpriori}sN}{(u^{-1}\log(1/\Cr{ccondFPP}))^{\frac1\nu}\vee R}\geq \frac{\Cr{cFPP}sN}{u^{-\frac1\nu}\vee R}
\end{equation*}
if $\Cr{cFPP}=\Cr{cFPP}(\xi,\eta,\zeta,\Theta)>0$ is chosen small enough.
Therefore \eqref{eq:boundFPPcond} (for $u'$ instead of $u$) implies \eqref{eq:bounddtvfinal} upon the change of variable from $(1-\eta)^3/(1+\eta)$ to $1/(1+\eta)$.
\end{proof}

\begin{Rk}
\phantomsection
\label{rk:endthm}
\begin{enumerate}
\item\label{rk:extensiontoGFF} Our proof of Theorem~\ref{the:FPP} can probably be adapted when replacing local times of random interlacements by the Gaussian free field, similarly as in \cite{GRS21,MR3417515}. More precisely, denoting by $(\phi_x)_{x\in{G}}$ the Gaussian free field on $G$, let $\eta_x^z=\sum_{y\in{\partial B(z,KL)}}\P_x(X_{T_{\partial B(z,KL)}}=y)\phi_y$ and $\psi_x^z=\phi_x-\eta_x^z$ for all $x,z\in{G}$. Then one essentially wants to replace the role of $\ell_{\cdot,u}$ and $\ell_{\cdot,T^z(u)}$ (or $\ell^-_{\cdot,T^z(u)}$ depending on the monotonicity of the functions $g_z$ in \eqref{eq:defbft}) in the proof of Theorem~\ref{the:FPP} by $\phi$ and $\psi^z$ respectively. Proposition~\ref{pro:thm4.2disco} is then replaced by \cite[Corollary~4.4]{MR3417515}, which is proved therein on $\Z^d$ but whose proof could probably be adapted to graphs satisfying \eqref{eq:standingassumptionintro}.  One does not need to use Proposition~\ref{pro:softlocaltimes} in the proof of Lemma~\ref{lem:coarsegraining} anymore as $\psi^z$, $z\in{\calC}$, are independent for any $\calC$ as in \eqref{eq:defcalC}, instead of being approximated by independent random variables as in \eqref{eq:defeventF}. However, in the proof of Proposition~\ref{pro:apriori} one still needs a version of Proposition~\ref{pro:softlocaltimes} but only for sets $\calC$ as in \eqref{eq:defcalC} with $|\calC|=2$. This is proved in \cite[Theorem~2.4]{DrePreRod2} but with an additional polynomial factor in front of the exponential, which one can probably remove using similar ideas as in the proof of Proposition~\ref{pro:softlocaltimes}, but this goes beyond the scope of this article. More generally, our technique could be extended to general fields $f$ (playing the role of $\ell$ or $\phi$) which can be decomposed as the sum of a global field $g$ (playing the role of $\ell_{\cdot}-\ell_{\cdot,T^z(\cdot)}$ or $\eta^z$), and a local field $h$ (playing the role of $\ell_{\cdot,T^z(\cdot)}$ or $\psi^z$), as long as a version of Proposition~\ref{pro:thm4.2disco} for $g$ and a version of Proposition~\ref{pro:softlocaltimes} for $h$ are satisfied, which is for instance the case of the class of Gaussian fields on $\R^d$ considered in \cite{MuiSev} (although one would then additionally need to extend our method when replacing $G$ by $\R^d$).

\item \label{rk:removingdependenceonzeta}
The constants $\Cr{ccondFPP}$, $\Cr{cFPP}$ and $\Cr{cboundN}$, as well as $\Cr{cFPP2}$ if $\nu>1$, appearing in \eqref{eq:assumptiontzu} and \eqref{eq:bounddtvfinal} all depend on $\xi$ and $\zeta$, and hence possibly indirectly on $u$ and $v$, and it would be very interesting to be able to remove this dependency (note that the dependency of these constants on $\Theta$ is not a problem as long as $u$ and $v$ are of the same order, and the dependency on $\eta$ is only relevant to obtain the exact constant $\Cr{cFPP2}=\Cr{cbeta}$ if $\nu\leq 1$). Indeed, it would for instance imply a version of the upper bound in \eqref{eq:limitVuintro} valid for $N\geq c(\sqrt{v}-\sqrt{u_{**}})^{2/\nu}$, instead of a limit as $N\rightarrow\infty$, and would thus provide an upper bound for the correlation length associated to the percolation of $\V^u$. The current proof only gives that these constants increase (or decrease) at most polynomially in $\xi$ and $\zeta$, which for $\xi$ can be traced back to \eqref{eq:defepsu'}, \eqref{eq:condareenoughtoboundprobaH}, \eqref{eq:condareenoughtoboundprobaH2} and the polynomial dependency of $K$ on $\eps$ in Proposition~\ref{pro:softlocaltimes}, and for $\zeta$ on the verification of the conditions for $L$ below \eqref{eq:choiceL}.

It does not seem to be possible to remove the dependency on $\zeta$,  as if for instance $R= (\sqrt{v}-\sqrt{u})^{-\frac2\nu-a}$ for some $a>0$, then taking $N=R$, the bound \eqref{eq:bounddtvfinal} would clearly be false as $\sqrt{v}-\sqrt{u}\rightarrow0$. Note that when $\nu<1$, one could replace the condition $R\leq \zeta |\sqrt{v}-\sqrt{u}|^{-\frac2\nu}$ by the weaker condition $R\leq L$, with $L$ as in \eqref{eq:choiceL} (which is also not satisfied in the previous example), as proving $R\leq L$ was the only reason we assumed $R\leq \zeta |\sqrt{v}-\sqrt{u}|^{-\frac2\nu}$, see below \eqref{eq:choiceL}. This dependency on $\zeta$ is however intuitively not a problem as one expects for $\nu$ small enough that \eqref{eq:assumptiontzu} is satisfied when $R$ is of order $\zeta(\sqrt{v}-\sqrt{u})^{-\frac2\nu}$ for a certain constant $\zeta$ (independently of $u$ and $v$ in $\L_{\Theta}$ with $u_{**}<u<v$), when for instance taking $t_z^u=1\{B(z,R)\not\leftrightarrow B(z,2R)\text{ in }\V^u\}$ as in the proof of Theorem~\ref{the:limitVuintro2} in Section~\ref{sec:proofofFPPintro}. 

Finally, the dependency on $\xi$ of the constants in Theorem~\ref{the:FPP} comes from the choice of the sprinkling parameter $\eps$ in \eqref{eq:defepsu'}, which should not depend on $u$ and $v$ since the constants in Lemma~\ref{lem:coarsegraining} and Proposition~\ref{pro:apriori} depend on this choice of $\eps$. In both cases, the dependency of the constants on $\eps$ can be traced back to the dependency of $K$ on $\eps$ in Proposition~\ref{pro:softlocaltimes}. As explained in Remark~\ref{rk:softlocaltimes},\ref{rk:othersoftlocaltimes}), one can actually prove that \eqref{eq:boundprobaFzueps} remains valid when taking $K$ large enough (independently of $\eps$), but under the additional constraint $L^{\nu}\geq C(u\eps^2)^{-1}\log(1/(u\eps^2))$. Plugging this into the proof, one could obtain a new version of Theorem~\ref{the:FPP}  such that the constants therein would not depend on $\xi$ anymore, but with various detrimental additional logarithm terms appearing in \eqref{eq:assumptiontzu} and \eqref{eq:bounddtvfinal} due to this additional constraint on $L$ in the proof. This new version would typically be better than \eqref{eq:bounddtvfinal} when we have to take $\xi$ depending on $u,v$, that is when $(\sqrt{v}-\sqrt{u})^2$ is much smaller than $u$. This is the case in Theorem~\ref{the:limitVuintro2} but its statement would actually not be improved since the constants therein anyway already depend on $u$ via the choice of $\zeta$, see its proof in Section~\ref{sec:proofofFPPintro}. However, when $u$, $v$ and $(\sqrt{v}-\sqrt{u})^2$ are of the same order, which will be the case in the proof of \eqref{eq:boundonPEintro}, this new version of Theorem~\ref{the:FPP} would be worse by logarithmic factors, and so one would not be able to deduce Theorems~\ref{the:localuniquenessintro} and~\ref{the:boundcapacityintro} anymore, hence our choice to leave the dependency on $\xi$ in Theorem~\ref{the:FPP}.
\end{enumerate}
\end{Rk}

\section{Proof of the three intermediate propositions}
\label{sec:proofof3prop}
\subsection{Proof of Proposition~\ref{pro:thm4.2disco}}
\label{sec:proofpropthm4.2}
We mainly follow the proof of \cite[Theorem~4.2]{MR3602841}. Since we need to generalize this proof from $\Z^d$ to the class of graphs from \eqref{eq:standingassumptionintro}, and only the case $u>v$ is treated in \cite{MR3602841} (which is actually slightly simpler than the case $u<v$, see \eqref{eq:laplacetheta} and below), we present for the reader convenience a full and simplified proof here.  The main tool in this proof is an explicit formula for the generating function of the local times of random interlacements from \cite[Remark 2.4.4]{MR3050507}, see also \cite[(1.9)--(1.11)]{MR2892408}: for all $V: G \to \mathbb{R}$ with finite support and satisfying
\begin{equation}
\label{eq:lap1}
\Vert \mathbf{GV} \Vert_{\infty} < 1,  \text{ where } (\mathbf{GV})f(x) \stackrel{\textnormal{def.}}{=} \sum_{y \in \Z^d}g(x,y) V(y)f(y),
\end{equation}
and $\|\cdot\|_{\infty}$ denotes the operator norm on $\ell^{\infty}(G)\mapsto \ell^{\infty}(G),$ one has
\begin{equation}
\label{eq:lap2}
\begin{split}
{\E} \Big[ \exp \Big( \sum_{x\in G} V(x) {\ell}_{x,u} \Big) \Big] 
&\ \ = \exp\left( u \langle V, (I- \mathbf{GV})^{-1} 1 \rangle_{\ell^2(G)}\right)\\
&\Big(\stackrel{\textnormal{def.}}{=} \exp \left( u \sum_{x \in G} V(x)  \sum_{ n \geqslant 0} (\mathbf{GV})^n1(x) \right) \Big).
\end{split}
\end{equation}
We now take a suitable choice for $V,$ and show that it almost satisfies \eqref{eq:lap1}.

\begin{Lemme}
\label{lem:gVcloseto1}
    For any $\eta\in{(0,1)},$ there exists $\Cl{cchoiceV}=\Cr{cchoiceV}(\eta)<\infty$ such that for any $L\geq1,$ $K\geq \Cr{cchoiceV}$ and $\mathcal{C}$ as in \eqref{eq:defcalC}, if
    \begin{equation}
     \label{eq:defV}
        V(x)=\sum_{z\in{\calC}}e_{\Sigma(\calC)}(\hat{C}_z)\bar{e}_{\hat{C}_z}(x)\text{ for all }x\in{G},
    \end{equation}
    then for all $y\in{G}$
    \begin{equation}
    \label{eq:boundgV}
        |(\mathbf{GV}) 1(y)-\P_y(H_{\Sigma(\mathcal{C})}<\infty)|\leq \eta \P_y(H_{\Sigma(\mathcal{C})}<\infty).
    \end{equation}
\end{Lemme}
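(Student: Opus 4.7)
The plan is to compare the two measures $V$ and $e_{\Sigma(\calC)}$ integrated against $g(y,\cdot)$, with \eqref{EHIK} providing quantitative control via the large parameter $K$. First, expanding $V$ using \eqref{eq:defV} and the fact that the balls $\hat C_z$, $z\in \calC$, are pairwise disjoint (by the $16KL$-separation in \eqref{eq:defcalC}), \eqref{entrancegreenequi} applied to both $\hat C_z$ and $\Sigma(\calC)$ yields
$$(\mathbf{GV})\mathbf{1}(y)=\sum_{z\in\calC}\frac{e_{\Sigma(\calC)}(\hat C_z)}{\mathrm{cap}(\hat C_z)}\,\P_y(H_{\hat C_z}<\infty),\quad \P_y(H_{\Sigma(\calC)}<\infty)=\sum_{z\in\calC}\sum_{x\in\hat C_z}g(y,x)\,e_{\Sigma(\calC)}(x),$$
so it suffices to show, uniformly in $z\in\calC$ and $y\in G$, that
$$\sum_{x\in\hat C_z} g(y,x)e_{\Sigma(\calC)}(x) = \frac{e_{\Sigma(\calC)}(\hat C_z)}{\mathrm{cap}(\hat C_z)}\P_y(H_{\hat C_z}<\infty)\bigl(1+O(K^{-\Cr{cHarnackprecise}})\bigr),$$
from which \eqref{eq:boundgV} will follow upon summation over $z$ and choosing $\Cr{cchoiceV}(\eta)$ large enough that $O(K^{-\Cr{cHarnackprecise}})\leq \eta/2$.

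For each $z$, the key observation is that both $V|_{\hat C_z}=(e_{\Sigma(\calC)}(\hat C_z)/\mathrm{cap}(\hat C_z))\,e_{\hat C_z}$ and $e_{\Sigma(\calC)}|_{\hat C_z}$ are nonnegative measures on $\hat C_z$ with the same total mass $e_{\Sigma(\calC)}(\hat C_z)$. If $d(y,z)\geq KL$, then $g(y,\cdot)$ is harmonic on $B(z,KL)$, and \eqref{EHIK} yields $g(y,x)=g(y,z)(1+O(K^{-\Cr{cHarnackprecise}}))$ uniformly in $x\in \hat C_z\subset B(z,8L)$. Both Green-weighted integrals then collapse to $g(y,z)\,e_{\Sigma(\calC)}(\hat C_z)\bigl(1+O(K^{-\Cr{cHarnackprecise}})\bigr)$, giving the desired comparison. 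Since the $16KL$-separation forces at most one $z_0\in\calC$ to satisfy $d(y,z_0)<KL$, this argument handles every $z\neq z_0$ simultaneously.

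For the remaining contribution from $z=z_0$, the Green function $g(y,\cdot)$ varies too much across $\hat C_{z_0}$ for the previous bound to apply, and one must instead establish that the two measures are themselves approximately proportional:
$$e_{\Sigma(\calC)}(x)=\frac{e_{\Sigma(\calC)}(\hat C_{z_0})}{\mathrm{cap}(\hat C_{z_0})}\,e_{\hat C_{z_0}}(x)\bigl(1+O(K^{-\Cr{cHarnackprecise}})\bigr),\qquad x\in \hat C_{z_0}.$$
By strong Markov at the first exit time $T_{\hat C_{z_0}}$, both sides rewrite as $\lambda_x\E_x[\Psi(X_{T_{\hat C_{z_0}}})]$ and $\lambda_x\E_x[\psi_0(X_{T_{\hat C_{z_0}}})]$ with $\Psi(w)=\P_w(H_{\Sigma(\calC)}=\infty)$ and $\psi_0(w)=\P_w(H_{\hat C_{z_0}}=\infty)$, so the task reduces to showing that the ratio $\Psi/\psi_0$ is nearly constant on the support of the exit distribution from $\hat C_{z_0}$. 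Both $\Psi$ and $\psi_0$ are nonnegative and harmonic on the wide annulus $B(z_0,16KL-8L)\setminus \hat C_{z_0}$, since $\Sigma(\calC)\setminus \hat C_{z_0}\subset B(z_0,16KL-8L)^c$, and a chaining of \eqref{EHIK} along overlapping balls in this annulus transfers the ratio comparison from the outer edge (where both functions are dictated by the far-field behavior) down to the inner boundary of $\hat C_{z_0}$.

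The main obstacle is this last proportionality step. A naive union bound $\sum_{z'\neq z_0}\P_w(H_{\hat C_{z'}}<\infty)$ does not suffice, since it may diverge when $\calC$ is infinite or dense, so the probability of hitting some other $\hat C_{z'}$ cannot be estimated term-by-term. The resolution is to avoid any such union bound and rely solely on the harmonicity of $\Psi$ and $\psi_0$ throughout the annular buffer produced by the $16KL$-separation, exploiting the wide geometric gap to chain \eqref{EHIK} rather than to control the far-field interactions individually.
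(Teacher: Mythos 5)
Your overall decomposition is correct and genuinely different from the paper's. Both proofs start from the identity $(\mathbf{GV})1(y)=\sum_{z\in\calC}\tfrac{e_{\Sigma(\calC)}(\hat C_z)}{\mathrm{cap}(\hat C_z)}\P_y(H_{\hat C_z}<\infty)$, but the paper then shows that $\tfrac{e_{\Sigma(\calC)}(\hat C_z)}{\mathrm{cap}(\hat C_z)}$ is within a factor $1+O(K^{-c})$ of $\P_x(X_{L_{\Sigma(\calC)}}\in\hat C_z)$ uniformly in $x\in\hat C_z$, and resums using the last-exit decomposition $\sum_z\sum_{x\in\hat C_z}\P_y(X_{H_{\hat C_z}}=x)\P_x(X_{L_{\Sigma(\calC)}}\in\hat C_z)=\P_y(H_{\Sigma(\calC)}<\infty)$, which treats all boxes at once and never invokes Harnack for $g(y,\cdot)$. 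Your term-by-term comparison with $\sum_z\sum_{x\in\hat C_z}g(y,x)e_{\Sigma(\calC)}(x)$, using \eqref{EHIK} on $g(y,\cdot)$ for the boxes with $y\notin B(z,KL)$ (there is indeed at most one exceptional $z_0$) and a measure-proportionality statement for that single near box, is a viable alternative architecture; the proportionality you need is exactly the content of the paper's \eqref{eq:esigmavsec}. The gap is in your justification of that proportionality.

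You propose to show that $\Psi(w)=\P_w(H_{\Sigma(\calC)}=\infty)$ and $\psi_0(w)=\P_w(H_{\hat C_{z_0}}=\infty)$ have nearly constant ratio at the exit points of $\hat C_{z_0}$ by chaining \eqref{EHIK} through the annulus $B(z_0,16KL-8L)\setminus\hat C_{z_0}$. This step fails. First, \eqref{EHIK} on a ball $B(x',L')$ requires harmonicity on $B(x',K'L')$, so near $\partial\hat C_{z_0}$ the admissible balls have radius $L'\lesssim \mathrm{dist}(x',\hat C_{z_0})/K'$; any chain encircling the hole at radius of order $L$ uses at least order $K'$ balls, and the accumulated factor $(1+\Cr{CHarnackprecise}(K')^{-\Cr{cHarnackprecise}})^{K'}$ is not $1+o(1)$ since $\Cr{cHarnackprecise}\le 1$. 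Second, and more fundamentally, neither $\Psi$ nor $\psi_0$ is individually close to constant on the exit set of $\hat C_{z_0}$: each oscillates by a constant factor reflecting the local escape probability from $\hat C_{z_0}$, and only their \emph{ratio} is nearly constant. Interior Harnack applied to each function separately cannot see this cancellation; it is a boundary Harnack phenomenon, which \eqref{EHIK} does not supply. The correct argument is the paper's proof of \eqref{eq:esigmavsec}: decompose the escape from $x\in\hat C_{z_0}$ at the intermediate sphere $\partial B(z_0,\sqrt K L)$, where by \eqref{eq:boundentrance} the probability of returning to $\hat C_{z_0}$ is $O(K^{-\nu/2})$, so that up to this error $\Psi$ may be replaced on that sphere by $w\mapsto\P_w(H_{\Sigma(\calC)\setminus\hat C_{z_0}}=\infty)$, which is harmonic on all of $B(z_0,cKL)$ (no hole) and hence has oscillation $1+O(K^{-\Cr{cHarnackprecise}/2})$ there by a single application of \eqref{EHIK}. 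With \eqref{eq:esigmavsec} established this way, the rest of your term-by-term comparison goes through.
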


 On $\Z^d$, a version of \eqref{eq:boundgV} without the absolute value is proved in \cite[Proposition~4.1]{MR3602841}, but we will follow a different and simpler strategy. Before starting the proof of Lemma~\ref{lem:gVcloseto1}, let us collect the following useful consequences of the Harnack inequality \eqref{EHIK}. Recall the definition of $H_A$ and $L_A$ for $A\subset\subset G$ from below \eqref{EHIK}, and since these times can be infinite to simplify notation we will take $X_{-\infty}=X_{\infty}=\Delta$ for some cemetery point $\Delta\notin{G}$.

\begin{Lemme}
There exist constants $\Cl{cK4.2},\Cl{C4.2}<\infty$ such that for any $L\geq1$, $K\geq \Cr{cK4.2}$, $\mathcal{C}$ as in \eqref{eq:defcalC}, $z\in{\calC}$ and $x\in{\hat{C}_z}$
    \begin{equation}
    \label{eq:esigmavsec}
        \Big|\frac{e_{\Sigma(\calC)}(x)}{e_{\hat{C}_z}(x)}-\P_z(H_{\Sigma(\calC)\setminus \hat{C}_z}=\infty)\Big|\leq \frac{\Cr{C4.2}}{K^{(\Cr{cHarnackprecise}\wedge\nu)/2}}\P_z(H_{\Sigma(\calC)\setminus \hat{C}_z}=\infty)
    \end{equation}
and
    \begin{equation}
    \label{eq:esigmavsec2}
        \big|\P_x(X_{L_{\Sigma(\mathcal{C})}}\in{\hat{C}_z})-\P_z(H_{\Sigma(\calC)\setminus \hat{C}_z}=\infty)\big|\leq \frac{\Cr{C4.2}}{K^{\Cr{cHarnackprecise}\wedge\nu}}\P_z(H_{\Sigma(\calC)\setminus \hat{C}_z}=\infty).
    \end{equation}
\end{Lemme}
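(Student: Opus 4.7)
The plan rests on two ingredients. First, the function $\phi(y):=\P_y(H_{\Sigma'}=\infty)$, with $\Sigma':=\Sigma(\calC)\setminus\hat{C}_z$, is nonnegative and harmonic on the ball $B(z,16KL-8L-1)$ since $\Sigma'$ lies at distance $\geq 16KL-8L$ from $z$ by \eqref{eq:defcalC}. Applying the Harnack inequality \eqref{EHIK} at two different scales yields $\phi(y)=\phi(z)(1+O(K^{-\Cr{cHarnackprecise}}))$ uniformly in $y\in\hat{C}_z$ (inner radius $8L$, outer radius $\sim KL$, Harnack ratio $\sim K$) and $\phi(y)=\phi(z)(1+O(K^{-\Cr{cHarnackprecise}/2}))$ uniformly in $y\in B(z,R)$ with $R:=\lfloor\sqrt{K}L\rfloor$ (Harnack ratio $\sim\sqrt{K}$). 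Second, an ``escape estimate'' $\P_w(H_{\hat{C}_z}<\infty)\leq CK^{-\nu}$ for $w\in\Sigma'$ follows directly from \eqref{eq:boundentrance} together with the mutual distance assumption in \eqref{eq:defcalC}.

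For \eqref{eq:esigmavsec}, I would rewrite the ratio as $e_{\Sigma(\calC)}(x)/e_{\hat{C}_z}(x)=\P_x(H_{\Sigma'}=\infty\mid\tilde H_{\hat{C}_z}=\infty)$ and apply the strong Markov property at $T:=T_{B(z,R)}$. Since $\Sigma'\cap B(z,R)=\emptyset$ for $K$ large, both $\P_x(\tilde H_{\Sigma(\calC)}=\infty)$ and $\P_x(\tilde H_{\hat{C}_z}=\infty)$ can be written as $\E_x[\mathbf{1}_{B_x}F(X_T)]$ with $B_x:=\{X_1,\dots,X_{T-1}\notin\hat{C}_z\}$ and $F\in\{\P_\cdot(H_{\Sigma(\calC)}=\infty),\,\P_\cdot(H_{\hat{C}_z}=\infty)\}$. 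This reduces \eqref{eq:esigmavsec} to the pointwise factorisation
\begin{equation*}
\P_y(H_{\Sigma(\calC)}=\infty)=\phi(z)\,\P_y(H_{\hat{C}_z}=\infty)\bigl(1+O(K^{-(\Cr{cHarnackprecise}\wedge\nu)/2})\bigr)\qquad\text{for }y\in\partial B(z,R)^c,
\end{equation*}
which I would obtain by writing $\P_y(H_{\Sigma(\calC)}=\infty)=\P_y(H_{\hat{C}_z}=\infty)-\P_y(H_{\hat{C}_z}=\infty,H_{\Sigma'}<\infty)$ and handling the subtracted term by strong Markov at $H_{\Sigma'}$: the trajectory then must not return to $\hat{C}_z$, which by the escape estimate holds with probability $1-O(K^{-\nu})$, while conditioning on $\{H_{\hat{C}_z}<H_{\Sigma'}\}$ gives a factor $1-\phi(z)$ up to the Harnack error $O(K^{-\Cr{cHarnackprecise}})$. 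The intermediate-scale Harnack on $\phi$ then converts $\phi(y)$ into $\phi(z)$ and taking the ratio yields \eqref{eq:esigmavsec}.

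For \eqref{eq:esigmavsec2} I would use the last-exit identity $\P_x(X_{L_{\Sigma(\calC)}}\in\hat{C}_z)=1-\sum_{y\in\Sigma'}g(x,y)e_{\Sigma(\calC)}(y)$, obtained from $\sum_{y\in\Sigma(\calC)}g(x,y)e_{\Sigma(\calC)}(y)=1$ for $x\in\Sigma(\calC)$, together with $1-\phi(z)=\sum_{y\in\Sigma'}g(z,y)e_{\Sigma'}(y)$ from \eqref{entrancegreenequi}, which reduces the problem to a pointwise comparison of $g(x,y)e_{\Sigma(\calC)}(y)$ with $g(z,y)e_{\Sigma'}(y)$ for $y\in\Sigma'$. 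For $y\in\hat{C}_{z'}$ with $z'\neq z$, Harnack applied to $g(\cdot,y)$ on $\hat{C}_z$ now has outer radius $\sim KL$ and hence ratio $\sim K$, yielding $g(x,y)=g(z,y)(1+O(K^{-\Cr{cHarnackprecise}}))$, while the symmetric version of \eqref{eq:esigmavsec} (swapping the roles of $\hat{C}_z$ and $\hat{C}_{z'}$) combined with the escape estimate at $z'$ gives $e_{\Sigma(\calC)}(y)=e_{\Sigma'}(y)(1+O(K^{-\nu}))$; these direct ratios of order $K$ rather than $\sqrt{K}$ explain the improved exponent $\Cr{cHarnackprecise}\wedge\nu$ in \eqref{eq:esigmavsec2}. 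The main obstacle, shared between both bounds, is ensuring that the error from the ``back-travel'' event $\{H_{\Sigma'}<\tilde H_{\hat{C}_z}<\infty\}$ is truly proportional to $\phi(z)$ rather than additive of size $K^{-\nu}$; this is achieved by carefully pairing upper and lower bounds so that the $K^{-\nu}$ factor from the escape estimate combines multiplicatively with $\phi(z)$ through the Harnack comparison, rather than additively.
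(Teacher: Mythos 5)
Your architecture matches the paper's: the intermediate sphere at radius $\sqrt{K}L$, the Harnack inequality \eqref{EHIK} applied at the two scales $\sqrt{K}$ and $K$, the escape estimate from \eqref{eq:boundentrance}, and the reduction of \eqref{eq:esigmavsec} via the strong Markov property at $T_{B(z,\sqrt{K}L)}$ to a pointwise estimate on the exit boundary (this last step is a correct reformulation of the paper's first display). The gap is in how you prove that pointwise estimate. Writing, with $\Sigma'=\Sigma(\calC)\setminus\hat{C}_z$ and $\phi(\cdot)=\P_{\cdot}(H_{\Sigma'}=\infty)$, the decomposition $\P_y(H_{\Sigma(\calC)}=\infty)=\P_y(H_{\hat{C}_z}=\infty)-\P_y(H_{\hat{C}_z}=\infty,H_{\Sigma'}<\infty)$ and applying the strong Markov property at $H_{\Sigma'}$ produces corrections of size $O(K^{-\nu})\,\P_y(H_{\Sigma'}<H_{\hat{C}_z})$ and Harnack errors on $w\mapsto\P_w(H_{\Sigma'}<\infty)=1-\phi(w)$; both are proportional to $1-\phi(z)$ (or simply bounded by $K^{-\nu}$), not to $\phi(z)$. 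Since $\calC$ is an arbitrary $16KL$-separated collection, $\phi(z)$ can be far smaller than $K^{-\nu}$ for $K$ fixed, so an additive error of that size does not yield the multiplicative bound \eqref{eq:esigmavsec} — and the multiplicative form is exactly what the proof of Lemma~\ref{lem:gVcloseto1} requires, since $e_{\Sigma(\calC)}(\hat{C}_z)$ is there effectively divided by quantities of order $\phi(z)$. Your closing sentence correctly names this obstacle, but ``carefully pairing upper and lower bounds'' is not an argument: the two $O(K^{-\nu})$-sized errors do cancel, but your decomposition does not exhibit the cancellation.

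The fix is to peel the sets off in the opposite order, which is what the paper does: subtract the visits to $\hat{C}_z$ from the event $\{H_{\Sigma'}=\infty\}$ rather than the visits to $\Sigma'$ from $\{H_{\hat{C}_z}=\infty\}$, i.e.\ use the exact identity
\begin{equation*}
\P_y(H_{\Sigma(\calC)}=\infty)=\phi(y)-\E_y\big[1\{H_{\hat{C}_z}<H_{\Sigma'}\}\,\phi(X_{H_{\hat{C}_z}})\big].
\end{equation*}
Every correction then automatically carries the factor $\sup_{w\in\hat{C}_z}\phi(w)\leq(1+CK^{-\Cr{cHarnackprecise}})\phi(z)$ from \eqref{EHIK} multiplied by $\P_y(H_{\hat{C}_z}<\infty)\leq CK^{-\nu/2}$ from \eqref{eq:boundentrance}, while $|\phi(y)-\phi(z)|\leq CK^{-\Cr{cHarnackprecise}/2}\phi(z)$ by Harnack at the intermediate scale, so the total error is multiplicative in $\phi(z)$ as required. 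The same issue recurs in your route to \eqref{eq:esigmavsec2}: the last-exit identity and the Green-function Harnack step are fine, but the termwise claim $e_{\Sigma(\calC)}(y)=e_{\Sigma'}(y)(1+O(K^{-\nu}))$ is itself a multiplicative decoupling of the same type and needs the same care; the paper sidesteps it by using the inclusion $\{H_{\Sigma'}=\infty\}\subset\{X_{L_{\Sigma(\calC)}}\in\hat{C}_z\}$ for $x\in\hat{C}_z$ and bounding $\P_w(X_{L_{\Sigma(\calC)}}\in\hat{C}_z)$ for $w\in\Sigma'$ by $CK^{-\nu}$ times the same quantity evaluated at $x$.
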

\begin{proof}
Let us abbreviate $\mathcal{B}_z=\partial B(z,\sqrt{K}L)$. By \eqref{eq:defequicap},  \eqref{eq:defcalC} and the strong Markov property we have for all $x\in{\hat{C}_z}$
\begin{equation*}
    e_{\Sigma(\calC)}(x)=\lambda_x\E_x[1\{H_{\mathcal{B}_z}<\tilde{H}_{\hat{C}_z}\}\P_{X_{H_{\mathcal{B}_z}}}({H}_{\Sigma(\calC)}=\infty)].
\end{equation*}
For any $y\in{\B_z}$ it follows from the strong Markov property that
\begin{equation*}
\begin{split}
    &|\P_{y}({H}_{\Sigma(\calC)}=\infty)-\P_z(H_{\Sigma(\calC)\setminus \hat{C}_z}=\infty)|
    \\&\leq |\P_{y}({H}_{\Sigma(\calC)\setminus \hat{C}_z}=\infty)-\P_z(H_{\Sigma(\calC)\setminus \hat{C}_z}=\infty)|+\E_y[H_{\hat{C}_z<\infty}\P_{X_{H_{\hat{C}_z}}}(H_{\Sigma(\calC)\setminus \hat{C}_z}=\infty)]
    \\&\leq CK^{-(\Cr{cHarnackprecise}\wedge\nu)/2}\P_z(H_{\Sigma(\calC)\setminus \hat{C}_z}=\infty),
    \end{split}
\end{equation*}
where we used \eqref{eq:boundentrance}, \eqref{EHIK} and twice \eqref{eq:defcalC} in the last inequality. Moreover, using \eqref{eq:boundentrance} again we have
\begin{equation*}
\begin{split}
    |\P_x(\tilde{H}_{\hat{C}_z}=\infty)-\P_x(H_{\mathcal{B}_z}<\tilde{H}_{\hat{C}_z})|&=\E_x\big[1\{H_{\mathcal{B}_z}<\tilde{H}_{\hat{C}_z}\}\P_{X_{H_{\mathcal{B}_z}}}(H_{\hat{C}_z}<\infty)\big]
    \\&\leq\frac{C}{\sqrt{K}^{\nu}}\P_x(H_{\mathcal{B}_z}<\tilde{H}_{\hat{C}_z})\leq \frac{C'}{\sqrt{K^{\nu}}}\P_x(\tilde{H}_{\hat{C}_z}=\infty)
\end{split}
\end{equation*}
for $K$ large enough. Combining the last three displays with \eqref{eq:defequicap} yields \eqref{eq:esigmavsec}. We now turn to the proof of \eqref{eq:esigmavsec2}, and first note that if $H_{\Sigma(\mathcal{C})\setminus \hat{C}_z=\infty}$ then $X_{L_{\Sigma(\mathcal{C})}}\in{\hat{C}_z}$. Therefore by the strong Markov property abbreviating $\Sigma_z(\calC)=\Sigma(\calC)\setminus \hat{C}_z$ we have for all $x\in{\hat{C}_z}$
\begin{equation*}
    \big|\P_x(X_{L_{\Sigma(\mathcal{C})}}\in{\hat{C}_z})-\P_x(H_{\Sigma_z(\calC)}=\infty)\big|=\E_x\big[1\{H_{\Sigma_z(\calC)}<\infty\}\P_{X_{H_{\Sigma_z(\mathcal{C})}}}(X_{L_{\Sigma(\calC)}}\in{\hat{C}_z})\big].
\end{equation*}
Moreover, for all $y\in{\Sigma(\mathcal{C})\setminus \hat{C}_z}$ by the strong Markov property
\begin{equation*}
    \P_y(X_{L_{\Sigma(\calC)}}\in{\hat{C}_z})=\E_y\big[1\{H_{\hat{C}_z}<\infty\}\P_{X_{H_{\hat{C}_z}}}(X_{L_{\Sigma(\calC)}}\in{\hat{C}_z})\big]\leq \frac{C}{{K^{\nu}}}\P_x(X_{L_{\Sigma(\calC)}}\in{\hat{C}_z}),
\end{equation*}
where we used \eqref{eq:boundentrance} and \eqref{EHI} in the last inequality. Combining the last two displays with \eqref{EHIK}, we can easily conclude upon taking $K$ large enough.
\end{proof}
\begin{proof}[Proof of Lemma~\ref{lem:gVcloseto1}]
    By \eqref{eq:lap1} and \eqref{eq:defV} we have for all $y\in{G}$
    \begin{equation*}
        (\mathbf{GV})1(y)=\sum_{x\in{\Sigma(\mathcal{C})}}\sum_{z\in{\calC}}g(x,y)e_{\Sigma(\mathcal{C})}(\hat{C}_z)\bar{e}_{\hat{C}_z}(x)=\sum_{z\in{\calC}}\frac{e_{\Sigma(\calC)}(\hat{C}_z)}{\mathrm{cap}(\hat{C}_z)}\P_y(H_{\hat{C}_z}<\infty),
    \end{equation*}
where we used \eqref{entrancegreenequi} and the fact that $\bar{e}_{\hat{C}_z}$ is supported on $\hat{C}_z$ in the last equality. Moreover, for any $z\in{\mathcal{C}}$ and $x\in{\hat{C}_z}$ by \eqref{eq:esigmavsec} and \eqref{eq:esigmavsec2}
\begin{equation*}
\begin{split}
    e_{\Sigma(\calC)}(\hat{C}_z)&\leq\Big(1+\frac{\Cr{C4.2}}{K^{(\Cr{cHarnackprecise}\wedge\nu)/2}}\Big)\P_z(H_{\Sigma(\calC)\setminus \hat{C}_z}=\infty)\mathrm{cap}(\hat{C}_z)
    \\&\leq \frac{1+\Cr{C4.2}{K^{-(\Cr{cHarnackprecise}\wedge\nu)/2}}}{1-\Cr{C4.2}{K^{-\Cr{cHarnackprecise}\wedge\nu}}}\P_x(X_{L_{\Sigma(\mathcal{C})}}\in{\hat{C}_z})\mathrm{cap}(\hat{C}_z)
\end{split}
\end{equation*}
for $K$ large enough. Therefore there exists $\Cr{cchoiceV}=\Cr{cchoiceV}(\eta)\in{[\Cr{cK4.2},\infty)}$ such that if $K\geq \Cr{cchoiceV}$ then
\begin{equation*}
    (\mathbf{GV})1(y)\leq (1+\eta)\sum_{z\in{\calC}}\sum_{x\in{\hat{C}_z}}\P_y(X_{H_{\hat{C}_z}}=x)\P_x(X_{L_{\Sigma(\mathcal{C})}}\in{\hat{C}_z})=(1+\eta)\P_y(H_{\Sigma(\mathcal{C})}<\infty)
\end{equation*}
by the strong Markov property. The lower bound can be proved similarly.
\end{proof}

Proposition~\ref{pro:thm4.2disco} now follows from \eqref{eq:lap2} and Lemma~\ref{lem:gVcloseto1}.
\begin{proof}[Proof of Proposition~\ref{pro:thm4.2disco}]
Let $V$ as in \eqref{eq:defV}, and let us first assume that $v<u,$ then for any $\theta>0$ such that $\|\theta\mathbf{GV}\|_{\infty}<1$
\begin{equation}
\label{eq:boundprobaE}
\begin{split}
    \P((E_{\mathcal{C}}^{u,v})^c)&\leq \P\left(\sum_{z\in{\calC}}{e_{\Sigma(\calC)}(\hat{C}_z)}\sum_{x\in{\hat{C}_z}}\bar{e}_{\hat{C}_z}(x)\ell_{x,v}\geq u\mathrm{cap}(\Sigma(\calC))\right)
    \\&\leq \P\left(\sum_{x\in{\Sigma(\calC)}}V(x)\ell_{x,v}\geq u\mathrm{cap}(\Sigma(\calC))\right)
    \\&\leq\exp\left( \theta v \langle V, (I- \theta\mathbf{GV})^{-1} 1 \rangle_{\ell^2(G)}-\theta u\mathrm{cap}(\Sigma(\calC))\right),
\end{split}
\end{equation}
where in the last inequality we used a Chernov bound combined with \eqref{eq:lap2}. Now take $\theta=\frac{\sqrt{u}-\sqrt{v}}{(1+\eta)\sqrt{u}},$ then $\theta<1/(1+\eta)$ and by \eqref{eq:boundgV}
 we thus have $\|\theta \mathbf{GV}\|_{\infty}\leq \theta(1+\eta)<1$ when $K\geq \Cr{cchoiceV}(\eta).$ Moreover, 
 \begin{equation*}
     \langle V, (I- \theta\mathbf{GV})^{-1} 1 \rangle_{\ell^2(G)}\leq \frac{1}{1-\theta (1+\eta)} \langle V, 1 \rangle_{\ell^2(G)}=\frac{\mathrm{cap}(\Sigma(\calC))}{1-\theta (1+\eta)}.
 \end{equation*}
 Plugging this in \eqref{eq:boundprobaE} and replacing $\theta$ by its value, \eqref{eq:thm4.2disco} follows readily.
 
 We now turn to the case $v>u$, for which we will actually need a version of \eqref{eq:lap2} valid for $\theta$ large. As we now explain, following ideas similar to the loop soup case, see for instance the proof of \cite[(3.45)]{MR2932978}, for all $\theta>0$ (and actually for any choice of $V\geq0$ with finite support) we have
 \begin{equation}
 \label{eq:laplacetheta}
     {\E} \Big[ \exp \Big( -\theta\sum_{x\in G}  V(x) {\ell}_{x,v} \Big) \Big] 
 = \exp\left( -\theta v \langle \sqrt{V}, (I+ \theta\mathbf{\sqrt{V}G\sqrt{V}})^{-1} \sqrt{V} \rangle_{\ell^2(\text{supp}(V))}\right).
 \end{equation}
In the previous equation we denote by $\text{supp}(V)$ the set $\{x\in{G}:V(x)>0\}$, by $\sqrt{V}$ the vector $(\sqrt{V(x)})_{x\in{\text{supp}(V)}}$, by $\mathbf{\sqrt{V}G\sqrt{V}}$ the matrix $(\sqrt{V(x)}g(x,y)\sqrt{V(y)})_{x,y\in{\text{supp}(V)}}$, and by $I$ as the identity on $\text{supp}(V)$. Since $\mathbf{G}$ has only positive eigenvalues by \cite[Proposition~1.3]{MR2932978}, and so $\mathbf{\sqrt{V}G\sqrt{V}}$ as well, the matrix $I+ \theta\mathbf{\sqrt{V}G\sqrt{V}}$ is invertible and so the right-hand side of \eqref{eq:laplacetheta} is well-defined. One can easily check that $\langle V,(\mathbf{GV})^n1\rangle_{\ell^2(G)}=\langle \sqrt{V}, (\mathbf{\sqrt{V}G\sqrt{V}})^{n} \sqrt{V} \rangle_{\ell^2(\text{supp}(V))}$ for all $n\geq0$, and thus if $\theta$ is small enough, \eqref{eq:laplacetheta} follows from \eqref{eq:lap2}. For $\theta\in{\mathbb{C}}$ with $\text{Re}(\theta)>0$, the left-hand side of \eqref{eq:laplacetheta} is analytic by dominated convergence theorem on compact subdomains, and the right-hand side can also easily be shown to be analytic by writing it in terms of the eigenvalues of $\mathbf{\sqrt{V}G\sqrt{V}}$, which  is symmetric and hence diagonalizable, and with positive eigenvalues. One then readily deduces \eqref{eq:laplacetheta} by uniqueness of analytic continuation.

The rest of the proof is now similar to the case $v<u$: \eqref{eq:boundprobaE} becomes by \eqref{eq:laplacetheta}
 \begin{equation}
 \label{eq:boundprobaE2}
     \P((E_{\mathcal{C}}^{u,v})^c)
    \leq\exp\left( -\theta v \langle \sqrt{V}, (I+ \theta\mathbf{\sqrt{V}G\sqrt{V}})^{-1} \sqrt{V} \rangle_{\ell^2(\text{supp}(V))}+\theta u\mathrm{cap}(\Sigma(\calC))\right),
 \end{equation}
 for any $\theta>0$. We now take  $\theta=\frac{\sqrt{v}-\sqrt{u}}{(1+\eta)\sqrt{u}}$ (which can be much larger than one, see for instance the proof of Theorem~\ref{the:FPPnointro} below), then by \eqref{eq:boundgV} the matrix $\theta \mathbf{GV}$ restricted to $\text{supp}(V)$ has eingenvalues smaller than $\theta(1+\eta)$ for all $K\geq \Cr{cchoiceV}(\eta).$ Hence, the matrix $\theta\mathbf{\sqrt{V}G\sqrt{V}}$ also has eigenvalues smaller than $\theta(1+\eta)$, and using that it is diagonalizable one can easily deduce that
  \begin{equation*}
     \langle \sqrt{V}, (I+ \theta\mathbf{\sqrt{V}G\sqrt{V}})^{-1} \sqrt{V} \rangle_{\ell^2(\text{supp}(V))}\geq \frac{1}{1+\theta (1+\eta)} \langle V, 1 \rangle_{\ell^2(\text{supp}(V))}=\frac{\mathrm{cap}(\Sigma(\calC))}{1+\theta (1+\eta)},
 \end{equation*}
and \eqref{eq:thm4.2disco} follows readily.
\end{proof}

\subsection{Proof of Proposition~\ref{pro:softlocaltimes}}
\label{sec:softlocaltimes}
The main result from this section will be Lemma~\ref{lem:inversesoftlocaltimes}, which corresponds to a version of Proposition~\ref{pro:softlocaltimes} in terms of excursions of random interlacements, instead of local times, and is thus a generalization of \cite[Proposition~5.1]{MR3602841} from $\Z^d$, $d\geq3$, to the class of graphs from \eqref{eq:standingassumptionintro}. The main tool in the proof of \cite[Proposition~5.1]{MR3602841} was the  soft local times method introduced in \cite{MR3420516,MR3126579}, but  it turns out that it will be faster to use the conditional soft local times method from \cite{AlvesPopov}, or more precisely the recent inverse soft local time technique from \cite{PreRodSou}, see Remark~\ref{rk:softlocaltimes},\ref{rk:whyinversesoftlocaltimes}) as to why. Once Lemma~\ref{lem:inversesoftlocaltimes} is proved, one can easily finish the proof of Proposition~\ref{pro:softlocaltimes} by rebuilding the interlacements trajectories from their excursions, combined with Lemma~\ref{lem:inversesoftlocaltimes} and some concentration bounds on the number of excursions from \cite{PreRodSou}.

We start by briefly recalling the setup to apply the soft local times method, which is similar (but not identical) to \cite[Section~5]{PreRodSou}. Fix some $L,K\geq1$, and introducing
\begin{equation*}
   \tilde{U}_z\stackrel{\textnormal{def.}}{=}B(z,8\sqrt{K}L)\text{ and }U_z\stackrel{\textnormal{def.}}{=}B(z,8KL)
\end{equation*}
additionally to the sets in \eqref{eq:defCU}, let us denote by $X^1,X^2,\dots$ the successive trajectories of interlacements hitting $\tilde{U}_z$, started at their first hitting time of $\tilde{U}_z$, as defined above \eqref{eq:defRI} when $A=\tilde{U}_z$. We then let $Y^z=(Y^z_k)_{k\in{\mathbb{N}}}$ be the Markov process obtained by concatenating the trajectories $(X^i_k)_{0\leq k\leq L_{T_{U_z}}^i}$ in increasing order, where $L_{T_{U_z}}^i$ is the last exit time of $U_z$ by $X^i$, which is finite by transience. We define for all $i\in{\N},$ $L\geq1$ and $z\in{\Lambda(L)}$ the successive times for $Y$ of return to $\tilde{U}_z$ and departure from $U_z$ by $R_0^z\leq D_0^z\leq\dots\leq R_k^z\leq D_k^z\leq \dots,$ that is $R_0^z=0$,  and for each $k\geq0,$ $D_{k}^z$ is the first exit time of $U_z$ after time $R_k^z$ for $Y$ and $R_{k+1}^z$ is the first return time to $\tilde{U}_z$ after time $D_k^z$ for $Y$, see \cite[(5.4)]{PreRodSou} with $x=Y$, $B_2=\tilde{U}_z$ and $B_3=U_z$ therein for a more detailed definition. 

Let $H_i^{z}=\inf\{k\in\{R_i^z,\dots,D_{i}^z\}:\,Y_k^z\in{\hat{C}_z}\}$ be the first hitting time of $\hat{C}_z$ after time $R_i^z$, with the convention $\inf\varnothing=\infty$. We define the sequence of excursions hitting $\hat{C}_z$ by letting $i_0=-1$ and $i_{j+1}=\inf\{k> i_j:\,H_k^{z}<\infty\}$ for all $j\geq0$ (depending implicitly on $z$), and taking
\begin{equation}
\label{eq:defWiz}
    W_{j}^{z}=\big\{Y_k^z:H_{i_j}^{z}\leq k\leq L_{i_j}^{z}\big\}\text{ for all }j\in{\mathbb{N}},
\end{equation}
where $L_i^{z}=\sup\{k\in{\{R_i^{z},\dots,D_i^z\}}:\,Y_k^z\in{\tilde{U}_{z}}\}$ is the last hitting time of $\tilde{U}_z$ before $D_i^z$ for each $i\in{\N}$. We also define the clothesline process associated to those excursions hitting $\hat{C}_z$ via
\begin{equation}
\label{eq:clothesline}
    \lambda_j^{z}=(Y_{R_{i_j}}^{z},Y_{D_{i_j}}^{z})\text{ for all }j\in{\mathbb{N}}.
\end{equation}
In other words, $W^{z}$ and $\lambda^{z}$ correspond to the processes introduced in \cite[(5.11)]{PreRodSou} for $B_1=\hat{C}_{z}$, $B_2=\tilde{U}_z$ and $B_3=U_z$, but restricted to the excursions hitting $\hat{C}_z$. We refer to Remark~\ref{rk:softlocaltimes},\ref{rk:whyinversesoftlocaltimes}) as to why this restriction is useful in our proof. The crux of the proof of Proposition~\ref{pro:softlocaltimes} lies in the following lemma.

\begin{Lemme}
\label{lem:inversesoftlocaltimes}
Upon extending the underlying probability space, there exist constants $C,\Cr{CsoftlocaltimesK},\Cl[c]{csoftlocaltimesK2}<\infty$ and $c>0$ such that for any $\eps\in{(0,1/2]}$, $k_0\in{\mathbb{N}}$, $L\geq1$ and $K\geq \Cr{csoftlocaltimesK2}\eps^{-\Cr{CsoftlocaltimesK}}$,  there exist a sequence $(\hat{\lambda}^z,\hat{W}^{z})_{z\in{G}}$ with the same marginals as $(\lambda^z,W^{z})_{z\in{G}}$, and a sequence of events $(\tilde{F}_{z}^{k,\eps})_{k\in{\mathbb{N}},z\in{G}}$ such that $(\hat{\lambda}^z,\hat{W}^{z},(\tilde{F}_{z}^{k,\eps})_{k\in{\mathbb{N}}})$, $z\in{\calC}$, are independent for each $\calC$ as in \eqref{eq:defcalC}, $(\tilde{F}_{z}^{k,\eps})_{k\in{\mathbb{N}}}$ is independent of $\hat{\lambda}^z$ for each $z\in{G}$, and for any $z\in{G}$ and $k\in{\{1,\dots,k_0\}}$, 
    \begin{equation}
        \label{eq:probasoftlocaltimes}
        \P\left(\tilde{F}_{z}^{k,\eps}\right)
        \geq 1-C\exp(-c\eps^2k)
    \end{equation}
    and, letting $k_\pm=\lceil k(1\pm\eps)\rceil$,
    \begin{equation}
    \label{eq:deftildeF}
        \tilde{F}_{z}^{k,\eps}\subset \{\hat{W}_1^z,\dots,\hat{W}_{k_-}^z\}\subset \left\{W_1^z,\dots,W_{k}^z\right\}\subset \{\hat{W}_1^z,\dots,\hat{W}_{k_+}^z\}.
    \end{equation}
\end{Lemme}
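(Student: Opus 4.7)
The plan is to apply the inverse soft local times method of \cite[Section~4]{PreRodSou} to each excursion sequence $W^z$ separately, with independence across $z \in \calC$ enforced via a sequential coupling that exploits the separation $d(z,z') \geq 16KL$.

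Fix $z$. Conditionally on the entry points $Y^z_{R_{i_j}} \in \partial \tilde U_z$ contained in $\lambda^z$, the excursions $W^z_j$ are independent random walks started at these entry points, restricted to the time interval from their first hit of $\hat C_z$ to their last visit of $\tilde U_z$ before exiting $U_z$. Each $W^z_j$ has density $f^z_j$ with respect to the reference excursion law $\mu^z$ obtained by starting the walk from the normalized equilibrium measure $\bar e_{\hat C_z}$; this density is essentially the Radon--Nikodym derivative of the entrance distribution of $\hat C_z$ from $Y^z_{R_{i_j}}$ against $\bar e_{\hat C_z}$. Since $Y^z_{R_{i_j}} \in \partial \tilde U_z$ lies at distance of order $\sqrt{K}L$ from $\hat C_z$, the Harnack inequality \eqref{EHIK} yields $\|f^z_j - 1\|_\infty \leq C K^{-c}$, which for $K \geq \Cr{csoftlocaltimesK2}\eps^{-\Cr{CsoftlocaltimesK}}$ is $\leq \eps/10$.

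Enlarging the probability space, introduce independent Poisson point processes $\Pi^z$ on $[0,\infty) \times \W^z$ with intensity $\mathrm{d}t \otimes \mu^z$, one per $z \in G$, and sample $\hat \lambda^z$ from the marginal law of $\lambda^z$ independently across $z$. The inverse soft local time construction of \cite{PreRodSou} simultaneously extracts from $\Pi^z$ (i) the i.i.d.\ sequence $\hat W^z_j$ as the spatial marks of the first $j$ points of $\Pi^z$ ordered by time, and (ii) the true sequence $W^z_j$ (conditionally on $\hat\lambda^z$) via a soft local time process $G^z_k$ that satisfies $G^z_k \approx k$ when the densities $f^z_j$ are close to $1$. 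Defining
\[
\tilde F^{k,\eps}_z \stackrel{\textnormal{def.}}{=} \{k_- \leq G^z_k \leq k_+\},
\]
the sandwich \eqref{eq:deftildeF} is immediate from the definition of inverse soft local times, while \eqref{eq:probasoftlocaltimes} follows from a Chernoff bound on the Gamma-distributed sum of the i.i.d.\ $\mathrm{Exp}(1)$ variables underlying the construction, using that the $f^z_j$ are within $\eps/10$ of $1$. By construction, $\tilde F^{k,\eps}_z$ depends only on the marks of $\Pi^z$ and hence is independent of $\hat\lambda^z$, while the triples $(\hat\lambda^z, \hat W^z, (\tilde F^{k,\eps}_z)_k)$ are independent across $z \in G$ by the independence of the $\Pi^z$ and of the $\hat\lambda^z$.

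The main obstacle is that the original $(\lambda^z, W^z)_{z \in \calC}$ are jointly dependent, since a single interlacement trajectory may hit several $\tilde U_z$'s, and this dependency must be handled while simultaneously carrying out the per-$z$ soft local time coupling. The separation $d(z,z') \geq 16KL$ is exploited via \eqref{eq:boundentrance}: the expected number of trajectories hitting both $\tilde U_z$ and $\tilde U_{z'}$ is $O(u\,\mathrm{cap}(\tilde U_z) K^{-\nu})$, which is negligible for $K$ large. Revealing the excursions at the points of $\calC$ sequentially and applying the inverse soft local time coupling at each step conditionally on what has been revealed at previous points produces a coupling in which the $\hat W^z$ remain i.i.d.\ across $z$ (inherited from the independent $\Pi^z$) while the $W^z$ retain their true joint law, delivering the stated independence and sandwich properties.
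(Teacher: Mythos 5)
There is a genuine gap, and it is precisely the pitfall the paper flags immediately after its own proof. You define $\tilde F_z^{k,\eps}=\{k_-\leq G^z_k\leq k_+\}$ and claim the probability bound via a Chernoff bound on the Gamma-distributed sum $\sum_{i\leq k}\xi_i^z$ of the exponential variables underlying the soft local time. But these $\xi_i^z$ are extracted from the driving Poisson process \emph{using the densities indexed by the clothesline} $\lambda^z$, and the clotheslines are correlated across $z$; one can show that $(\xi_i^z)_i$ is independent across well-separated $z$, and likewise $(\hat\xi_i^z)_i$, but it is not clear that they are \emph{jointly} independent across $z$, nor that your event is independent of $\hat\lambda^z$. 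So the required independence of the triples $(\hat\lambda^z,\hat W^z,(\tilde F_z^{k,\eps})_k)$ does not follow. The paper's fix is to define $\tilde F_z^{k,\eps}$ purely in terms of the Poisson counts $N^z(t)=\eta^z(\{(\sigma,v):v\leq t\tilde{\mathbf{g}}^z(\sigma)\})$ of the underlying point process $\eta^z$ below the \emph{reference} density $\tilde{\mathbf{g}}^z$ (which does not involve $\lambda^z$); such an event is $\eta^z$-measurable, hence exactly independent across separated $z$ and of the resampled $\hat\lambda^z$, and the implications \eqref{eq:Nzbig}--\eqref{eq:Nzsmall} convert Poisson large deviations for $N^z$ into the needed control of the soft local times. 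Relatedly, a two-sided bound on $G^z_k$ alone does not yield the sandwich \eqref{eq:deftildeF}: you must simultaneously control $\hat G^z_{k_-}$ and $\hat G^z_{k_+}$ built from $\hat\xi^z,\hat\lambda^z$, which is why the paper's event imposes four conditions on $N^z$ at the thresholds $k(1\pm\eps/10)$, $k_+(1-\eps/10)$ and $k_-(1+\eps/10)$.

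Your treatment of the cross-$z$ dependence is also off target. The lemma asserts \emph{exact} independence of the triples for \emph{every} $\calC$ as in \eqref{eq:defcalC} simultaneously, so neither an "expected number of shared trajectories is $O(K^{-\nu})$" estimate (which only gives approximate decoupling) nor a sequential revelation tied to an ordering of one particular $\calC$ can deliver the statement. The correct mechanism is structural: conditionally on the clotheslines, the excursions in boxes at mutual distance $\geq 16KL$ are exactly independent by the Markov property (as in \cite[Lemma~5.3 and Proposition~5.4]{PreRodSou}), so the driving processes $\eta^z$ are independent for all such pairs at once, and a single global construction works for all admissible $\calC$. Finally, a minor point: the densities that must be shown close to $1$ are the conditional laws \eqref{eq:defbfg} given \emph{both} endpoints of the clothesline (entry to $\tilde U_z$ and exit from $U_z$), not just the entrance law of $\hat C_z$ from $\partial\tilde U_z$; this is the content of Lemma~\ref{lem:condsoftlocaltimes}, and your Harnack-based bound $CK^{-c}\leq\eps/10$ is the right estimate once applied to that quantity.
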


An important step to apply the soft local time technique from \cite{MR3420516}  is to verify that the density of the Markov chain considered therein does not fluctuate too much. We first collect this result, and will resume the proof of Lemma~\ref{lem:inversesoftlocaltimes} afterward. Let us define for all $L,K\geq1$, $z\in{G}$,  $x,w\in{\partial\tilde{U}_z}$, $v\in{\partial \hat{C}_z}$  and $y\in{\partial U_z^c}$ such that $\P_x(H_{\hat{C}_z}<T_{U_z},X_{T_{U_z}}=y)>0$ 
\begin{equation}
\label{eq:defbfg}
\mathbf{g}_{(x,y)}^{z}(v,w)\stackrel{\textnormal{def.}}{=}\P_x(X_{H_{\hat{C}_z}\wedge T_{U_z}}=v,X_{L_{\tilde{U}_z}(T_{U_z})}=w\,|\,H_{\hat{C}_z}<T_{U_z},X_{T_{U_z}}=y)
\end{equation}
and
\begin{equation}\label{eq:defbftildeg}
\mathbf{\tilde{g}}^{z}(v,w)\stackrel{\textnormal{def.}}{=}\bar{e}_{\hat{C}_z,\tilde{U}_z}(v)g_{U_z}(v,w)e_{\tilde{U}_z,U_z}(w)\text{ for all }v\in{\partial \hat{C}_z}\text{ and }w\in{\partial\tilde{U}_z},
\end{equation}
where for $A\subset B\subset\subset G$, we write $g_{B}(v,w)\stackrel{\textnormal{def.}}{=}\frac1{\lambda_w}\E_v[\sum_{k=0}^{T_{B}-1}1\{X_k=w\}]$  and $e_{A,B}(w)\stackrel{\textnormal{def.}}{=}\lambda_w\P_{w}(\tilde{H}_{A}>T_{B})$ for the Green function and equilibrium measure associated to the graph with infinite killing on $B^c$, and ${L}_{A}(n)=\sup\{k\in{\{0,\dots,n\}}:X_k\in{A}\}$ is the last exit time of $A$ before time $n$ for each $n\in{\N}$. We abbreviate $\Sigma^{(z)}=\partial \hat{C}_z\times \partial\tilde{U}_z$ for the space on which $\mathbf{g}^{z}_{(x,y)}$ and $\tilde{\mathbf{g}}^{z}$ are defined. To simplify notation, we will use the convention $x/0=\infty$ if $x\neq 0$ and $0/0=1$ in the rest of this section. Let us start with the following consequence of Harnack's inequality, which improves the bound one could deduce from \cite[Lemma~6.5]{DrePreRod2} taking advantage of \eqref{EHIK} and \cite[(2.21),(2.25)]{MR3602841}.
\begin{Lemme}
\label{lem:condsoftlocaltimes}
There exist constants $\Cl{CgzA},\Cl{cgzA}<\infty$ such that for all $L\geq1$, $K\geq \Cr{cgzA}$, $z\in{G}$, for all $x\in{\partial\tilde{U}_z}$, $\sigma\in{\Sigma^{(z)}}$ and $y\in{\partial U_z^c}$ such that $\P_x(H_{\hat{C}_{z}}<T_{U_z},X_{T_{U_z}}=y)>0$,
\begin{equation}
\label{eq:condsoftlocaltimes}
   \Big|\frac{\mathbf{g}_{(x,y)}^{z}(\sigma)}{\mathbf{\tilde{g}}^{z}(\sigma)}-1\Big|\leq \frac{\Cr{CgzA}}{K^{\Cr{cHarnackprecise}/2}}.
\end{equation}
\end{Lemme}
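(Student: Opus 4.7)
The plan is to decompose $\mathbf{g}^z_{(x,y)}(v,w)$ by strong Markov at $H_{\hat{C}_z}$ followed by a last-exit decomposition at $L_{\tilde{U}_z}(T_{U_z})$, giving
\begin{equation*}
\P_x\big(X_{H_{\hat{C}_z}}=v,\,X_{L_{\tilde U_z}(T_{U_z})}=w,\,H_{\hat{C}_z}<T_{U_z},\,X_{T_{U_z}}=y\big)=h_v(x)\,g_{U_z}(v,w)\,q_y(w),
\end{equation*}
where $h_v(x):=\P_x(X_{H_{\hat{C}_z}}=v,H_{\hat{C}_z}<T_{U_z})$ and $q_y(w):=\lambda_w\P_w(\tilde H_{\tilde U_z}>T_{U_z},X_{T_{U_z}}=y)$. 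Dividing by the conditioning probability $\P_x(H_{\hat{C}_z}<T_{U_z},X_{T_{U_z}}=y)$ and comparing with the definition \eqref{eq:defbftildeg} of $\tilde{\mathbf g}^z$, the middle factor $g_{U_z}(v,w)$ cancels, so the ratio $\mathbf g^z_{(x,y)}/\tilde{\mathbf g}^z$ splits cleanly into an entrance contribution in $(x,v)$ and an exit contribution in $(w,y)$.

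The heart of the argument is that the harmonic function $\psi(u,y):=\P_u(X_{T_{U_z}}=y)$ on $U_z=B(z,8KL)$ can be compared to its value at the centre $z$ at two different scales via \eqref{EHIK}: applying the inequality with ratio $K$ on $\hat{C}_z=B(z,8L)$ yields $\psi(v,y)=(1+\mathcal O(K^{-\Cr{cHarnackprecise}}))\psi(z,y)$ for all $v\in\hat{C}_z$, while applying it with ratio $\sqrt K$ on $\tilde U_z=B(z,8\sqrt KL)$ yields $\psi(w',y)=(1+\mathcal O(K^{-\Cr{cHarnackprecise}/2}))\psi(z,y)$ for all $w'\in\tilde U_z$. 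The first estimate, substituted into the decomposition $\P_x(H_{\hat{C}_z}<T_{U_z},X_{T_{U_z}}=y)=\sum_{v'}h_{v'}(x)\psi(v',y)$, factors $\psi(z,y)$ out of the denominator. A Markov step at the time the walk first leaves $\tilde U_z$, combined with the second estimate applied to $\psi(\cdot,y)$ on a ball just outside $\partial\tilde U_z$, gives $q_y(w)=(1+\mathcal O(K^{-\Cr{cHarnackprecise}/2}))\,e_{\tilde U_z,U_z}(w)\,\psi(z,y)$, so that $\psi(z,y)$ cancels out of the exit ratio.

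For the remaining entrance ratio, I will use reversibility of the random walk with respect to $\lambda$ to rewrite $\lambda_xh_v(x)=\lambda_v\sum_{v_1\sim v,\,v_1\notin\hat{C}_z}p_{v,v_1}\tilde g(v_1,x)$, where $\tilde g$ is the Green function of the walk killed outside $U_z\setminus\hat{C}_z$; for fixed $v$ this is harmonic in $x$ on $U_z\setminus\hat{C}_z$, and a Harnack-chain argument using \eqref{EHIK} with ratio $\sqrt K$ along a cover of $\partial\tilde U_z$ by balls of radius of order $\sqrt KL$ shows it is essentially constant on $\partial\tilde U_z$ up to a factor $1+\mathcal O(K^{-\Cr{cHarnackprecise}/2})$. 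Summing over $v$ identifies the common constant with $\P_x(H_{\hat{C}_z}<T_{U_z})$, while matching the $v$-dependent prefactor with $\bar e_{\hat{C}_z,\tilde U_z}(v)$ finishes the proof. The main technical obstacle will be this last identification: the reference measure in \eqref{eq:defbftildeg} involves killing on $\tilde U_z^c$, whereas the original walk is killed on $U_z^c$, so one must further compare the Green functions $g_{U_z}$ and $g_{\tilde U_z}$ restricted to $\hat{C}_z$; this reduces by \eqref{eq:boundentrance} to controlling the probability that a walk started in $\hat{C}_z$ exits $\tilde U_z$ via a detour through $U_z\setminus\tilde U_z$, which is $\mathcal O(K^{-\nu/2})$, hence compatible with the required exponent $\Cr{cHarnackprecise}/2$ for $\Cr{cgzA}$ large enough.
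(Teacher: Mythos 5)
Your factorization of $\mathbf{g}^z_{(x,y)}$ into an entrance term, the Green function $g_{U_z}(v,w)$ (which cancels exactly against \eqref{eq:defbftildeg}) and an exit term, followed by the reduction to two Harnack-type comparisons — the exit law $\P_\cdot(X_{T_{U_z}}=y)$ controlled via \eqref{EHIK} at scale $\sqrt K$, and the conditional entrance law identified with $\bar e_{\hat C_z,\tilde U_z}$ via reversibility plus \eqref{eq:boundentrance} — is exactly the paper's proof, which carries out the same decomposition and then defers these two estimates to the corresponding arguments of \cite[Lemma~2.3 and (2.25)--(2.28)]{MR3602841}. The proposal is correct and takes essentially the same route.
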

\begin{proof}
By the strong Markov property, one can decompose for all $v\in{\hat{C}_z}$ and $w\in{\partial \tilde{U}_z}$
\begin{equation*}
    \begin{split}
        &\P_x(X_{H_{\hat{C}_z}\wedge T_{U_z}}=v,X_{L_{\tilde{U}_z}(T_{U_z})}=w,H_{\hat{C}_z}<T_{U_z},X_{T_{U_z}}=y)
        \\&=\P_x(H_{\hat{C}_z}<T_{U_z})\P_x(X_{H_{\hat{C}_z}}=v\,|\,H_{\hat{C}_z}<T_{U_z})\P_v(X_{L_{\tilde{U}_z}(T_{U_z})}=w,X_{T_{U_z}}=y)
    \end{split}
\end{equation*}
and
\begin{equation*}
\begin{split}
    \P_v(X_{L_{\tilde{U}_z}(T_{U_z})}=w,X_{T_{U_z}}=y)&=\sum_{k=0}^{\infty}\P_v(L_{\tilde{U}_z}(T_{U_z})=k,X_k=w,X_{T_{U_z}}=y)
    \\&=\lambda_{w}{g}_{U_z}(v,w)\P_w(T_{U_z}<\tilde{H}_{\tilde{U}_z},X_{T_{U_z}}=y),
\end{split}
\end{equation*}
where we used the Markov property at time $k$ in last equality. Moreover, we have
\begin{equation*}
    \P_x(H_{\hat{C}_z}<T_{U_z},X_{T_{U_z}}=y)=\sum_{v'\in{\hat{C}_z}}\P_x(H_{\hat{C}_z}<T_{U_z},X_{H_{\hat{C}_z}}=v')\P_{v'}(X_{T_{{U}_z}}=y)
\end{equation*}
Combining the last three equations, in order to prove \eqref{eq:condsoftlocaltimes} it is sufficient to prove that there exists $\Cr{CgzA},\Cr{cgzA}<\infty$ such that for all $K\geq \Cr{cgzA}$, $v'\in{\hat{C}_z}$, $w\in{\partial \tilde{U}_z}$ and $y\in{\partial U_z^c}$
\begin{equation}
\label{eq:resttoproveharnack1}
    \Big|\frac{\lambda_{w}\P_w(T_{U_z}<\tilde{H}_{\tilde{U}_z},X_{T_{U_z}}=y)}{\P_{v'}(X_{T_{U_z}}=y)e_{\tilde{U}_z,U_z}(w)}-1\Big|\leq \frac{\Cr{CgzA}}{K^{\Cr{cHarnackprecise}/2}}
\end{equation}
and for all $x\in{\partial\tilde{U}_z}$ and $v\in{\partial \hat{C}_z}$
\begin{equation}
\label{eq:resttoproveharnack2}
    \Big|\frac{\P_x(X_{H_{\hat{C}_z}}=v\,|\,H_{\hat{C}_z}<T_{U_z})}{\bar{e}_{\hat{C}_z,\tilde{U}_z}(v)}-1\Big|\leq \frac{\Cr{CgzA}}{K^{\Cr{cHarnackprecise}/2}}.
\end{equation}
Note that $\tilde{U}_z\subset B(v',10\sqrt{K}L)$ and $B(v',KL)\subset U_z$. Therefore, on $\Z^d$, $d\geq3$, \eqref{eq:resttoproveharnack1} corresponds to \cite[Lemma~2.3]{MR3602841} (with $0$, $A$, $U$, $L$ and $K$ replaced by $v'$, $\tilde{U}_z$, $U_z$, $10\sqrt{K}L$ and $\sqrt{K}/10$ respectively therein),  whose proof can easily be adapted to graphs satisfying \eqref{eq:standingassumptionintro}. Indeed, in this proof \cite[(A.2)]{MR3602841} can be replaced by \eqref{EHIK} (and hence we need to replace $|x|/(KL)$ by $(d(x,v')/(KL))^{\Cr{cHarnackprecise}}$ therein), and \cite[(A.7)]{MR3602841} still holds by \eqref{eq:boundentrance} as long as $K\geq \Cr{cgzA}$ for $\Cr{cgzA}$ large enough. Note that the equilibrium measures that appear in \eqref{eq:resttoproveharnack1} and \cite[(2.21)]{MR3602841} are different, but the measure $e_{\tilde{U}_z,U_z}(w)$ is actually what the proof of \cite[Lemma~2.3]{MR3602841} naturally gives, see \cite[(A.11)]{MR3602841}.

Finally, \eqref{eq:resttoproveharnack2} corresponds to \cite[(2.25)]{MR3602841} (with $0$, $A$, $B$, $L$ and $K$ replaced by $z$, $\hat{C}_z$, $\hat{C}_z\cup U_z^c$, $8L$ and $\sqrt{K}/2$ respectively therein) on $\Z^d$, $d\geq3$. This proof can also be easily adapted to graphs satisfying \eqref{eq:standingassumptionintro} since it is a consequence of \cite[Lemma~2.3]{MR3602841} combined with the formula \cite[(2.28)]{MR3602841} which is valid on any transient graphs. The exact bound on the right-hand side of \eqref{eq:resttoproveharnack2} can be found below \cite[(2.28)]{MR3602841}, replacing $K$ by $(\sqrt{K}/2)^{\Cr{cHarnackprecise}}$ therein and assuming $K\geq \Cr{cgzA}$ for $\Cr{cgzA}$ large enough. The equilibrium measure appearing in \eqref{eq:resttoproveharnack2} and \cite[(2.25)]{MR3602841} for similar reason as in \eqref{eq:resttoproveharnack1}.  We leave the details to the reader.
\end{proof} 

\begin{proof}[Proof of Lemma~\ref{lem:inversesoftlocaltimes}]
Let us finish to introduce the setup we will use to apply the soft local times strategy, which is similar to \cite[Section~5.1]{PreRodSou} with $B_1=\hat{C}_z$, $B_2=\tilde{U}_z$, $B_3=U_z$, but with additional conditioning on excursions hitting $\hat{C}_z$. For $z\in{\calC}$, conditionally on $\lambda^{z}$, one can easily show that the process $(W_j^{z})_{j\geq1}$ is an independent process with densities $(\mathbf{g}^{z}_{\lambda_j^{z}})_{j\geq1}$ with respect to $\mu^{z},$ where
\begin{equation*}
    \mu^{z}(A)\stackrel{\textnormal{def.}}{=}\sum_{v\in{\partial \hat{C}_z},w\in{\partial\tilde{U}_z}}\P_v((X_k)_{0\leq k\leq L_{\tilde{U}_z}(T_{U_z})}\in{A}\,|\,X_{L_{\tilde{U}_z}(T_{U_z})}=w)
\end{equation*}
is a measure on the space $S^z$ of finite paths from $\partial \hat{C}_z$ to $\partial\tilde{U}_z$. We refer to below \cite[(5.16)]{PreRodSou} for a very similar computation. Alternatively, one can see $(W_j^{z})_{j\geq1}$ as an inhomogeneous Markov process with transition densities $(\mathbf{g}^{z}_{\lambda_j^{z}})_{j\geq1}$. Here with a slight abuse of notation we identify $\mathbf{g}^{z}_{\lambda}((x_i)_{i\leq k})$ with $\mathbf{g}_{\lambda}^{z}(x_1,x_k)$ for any nearest neighbor path $x_0,\dots,x_k$ in $S^z$, as well as $\mathbf{g}^{z}_{\lambda}((x'_i)_{1\leq i\leq k'},(x_i)_{1\leq i\leq k})$ with $\mathbf{g}_{\lambda}^{z}(x_1,x_k)$ for any other path $x'_0,\dots,x'_k$ in $S^z$ depending on the context. Under $\P(\cdot\,|\,\lambda^{z})$, and up to extending the probability space $\P$ to carry an independent family $({\xi}^z_i)_{i\in{\mathbb{N}},z\in{G}}$ of i.i.d.\ Exp($1$) random variables and additional independent Poisson point processes as in \cite[(4.10)]{PreRodSou}, let us denote by $\eta^z$ the Poisson point process on $\Sigma^{(z)}\otimes [0,\infty)$ obtained in \cite[(4.11)]{PreRodSou} with $T$, $\hat{\xi}_i$, $Z_i$ and $g_i$ replaced by $2k_0$, $\xi_i^z$, $W_i^z$ and $\mathbf{g}_{\lambda_i^{z}}^{z}$ respectively therein. One can prove similarly as in the proof of \cite[Lemma~5.3 and~Proposition~5.4]{PreRodSou} that for each $z$ as above, $\eta^{z}$ is independent of $(\eta^{z'})_{z'\notin{B(z,16KL)}}$, which directly implies that $(\eta^z)_{z\in{\calC}}$ are independent for each $\calC$ as in \eqref{eq:defcalC}. Furthermore, applying the soft local time technique under $\P(\cdot\,|\,\lambda^{z})$ for the densities $g_{\lambda}^z$, one obtains recursively in $k$ the random variables $\xi_k^z\in{[0,\infty)}$ and $(W_k^z,v_k^z)\in{\text{supp}(\eta)}$, and we refer to \cite[Proposition~4.3]{MR3420516} for the precise procedure (with $W_k^z$ playing the role of $z_k$ therein). Note that the process $(W_k^z)_{k\in{\mathbb{N}}}$ obtained via this procedure is exactly the process from \eqref{eq:defWiz} by our definition of $\eta^z$ and \cite[Lemma~4.2]{PreRodSou}, and similarly for $\xi$. The only fact from this construction we will need here is that letting
\begin{equation}
    \label{eq:softlocaltimes}
        G_{n,z}^{\text{RI}}(\sigma)\stackrel{\textnormal{def.}}{=}\sum_{i=1}^n\xi_i^z\mathbf{g}^z_{\lambda_i^z}(\sigma)\text{ for all }n\in{\mathbb{N}},z\in{\calC}\text{ and }\sigma\in{S^{z}},
\end{equation}
one has for any $n\geq1$ and $(\sigma,v)\in{\text{supp}(\eta^z)}$ that $(\sigma,v)\in{\{(W_1^z,v_1^z),\dots,(W_n^z,v_n^z)\}}$ if and only if $G_{n,z}^{\text{RI}}(\sigma)\geq v$. In particular for any $t>0$ and $n\in{\mathbb{N}}$, letting $N^z(t)\stackrel{\textnormal{def.}}{=}\eta^z(\{(\sigma,v):v\leq t\tilde{g}^z(\sigma)\})$ one has by contraposition
\begin{equation}
\label{eq:Nzbig}
    N^z(t)> n\Longrightarrow \inf_{\sigma\in{S^z}} G_{n,z}^{\text{RI}}(\sigma)\leq t\tilde{g}^z(\sigma)\Longrightarrow \sum_{i=1}^n\xi_i^z\leq \frac{t}{1-\tfrac{\eps}{10}},
\end{equation}
where the last inequality follows from \eqref{eq:condsoftlocaltimes} if $\Cr{CsoftlocaltimesK}=2/\Cr{cHarnackprecise}$ and $\Cr{csoftlocaltimesK2}$ is large enough so that $K\geq \Cr{csoftlocaltimesK2}\eps^{-\Cr{CsoftlocaltimesK}}$ implies $K\geq \Cr{cgzA}$ and $\Cr{CgzA}K^{-\Cr{cHarnackprecise}/2}\leq \eps/10$.
Similarly,
\begin{equation}
\label{eq:Nzsmall}
    N^z(t)< n\Longrightarrow \sup_{\sigma\in{S^z}} G_{n,z}^{\text{RI}}(\sigma)\geq t\tilde{g}^z(\sigma)\Longrightarrow \sum_{i=1}^n\xi_i^z\geq \frac{t}{1+\tfrac{\eps}{10}}.
\end{equation}
Upon extending the underlying probability space, for each $z\in{G}$, let $\hat{\lambda}^z\stackrel{\textnormal{def.}}{=}(\hat{\lambda}_i^{z})_{i\geq1}$ be a copy of $\lambda^z\stackrel{\textnormal{def.}}{=}(\lambda_i^{z})_{i\geq1},$ which is independent in $z\in{G}$, and independent of $(\eta^z)_{z\in{G}}$. Let us then denote by $(\hat{W}_j^z)_{j\geq1}$ the Markov chain obtained from applying \cite[Proposition~4.4]{PreRodSou} under $\P(\cdot\,|\,\lambda^{z},\hat{\lambda}^z)$ with $T$, $Z_i$ $g_i$ and $\tilde{g}_i$ replaced by $2k_0$, $W_i^z,$ $\mathbf{g}_{\lambda_i^{z}}^{z}$ and ${\mathbf{g}}_{\hat{\lambda}_i^{z}}^{z}$ respectively therein. Then under $\P(\cdot\,|\,\lambda^{z},\hat{\lambda}^z)$, there exists another family of i.i.d.\ Exp($1$) random variables $(\hat{\xi}^z_i)_{i\in{\mathbb{N}},z\in{G}}$, such that defining $\hat{G}_{n,z}^{\text{RI}}$ as in \eqref{eq:softlocaltimes} but replacing $\xi$ and $\lambda$ by $\hat{\xi}$ and $\hat{\lambda}$ respectively, the event
\begin{equation}
    \label{eq:probasoftlocaltimes2}
        \hat{G}_{k_-,z}^{\text{RI}}(\sigma)\leq G_{k,z}^{\text{RI}}(\sigma)\leq \hat{G}_{k_+,z}^{\text{RI}}(\sigma)\text{ for all }\sigma\in{S^z}
\end{equation}
is included in the event on the right hand side of \eqref{eq:deftildeF}. Moreover, since $\hat{W}^z$ and $\hat{\xi}^z$ are also obtained by applying the soft local time method to $\eta^z$ but for $g_{\hat{\lambda}_i^z}^z$ instead of $g_{\lambda_i^z}^z$, see the proof of \cite[Proposition~4.4]{PreRodSou}, the implications \eqref{eq:Nzbig} and \eqref{eq:Nzsmall} are still valid when replacing $G^{\text{RI}}_{\cdot}$ by $\hat{G}^{\text{RI}}_{\cdot}$ and $\xi^z$ by $\hat{\xi}^z$. We now define the event 
\begin{equation*}
     \tilde{F}_{z}^{k,\eps}\stackrel{\textnormal{def.}}{=}\left\{
     \begin{gathered}
     N^z\big(k\big(1-\tfrac{\eps}{10}\big)\big)<k<N^z\big(k\big(1+\tfrac{\eps}{10}\big)\big),
     \\ N^z\big(k_+\big(1-\tfrac{\eps}{10}\big)\big)<k_+,N^z\big(k_-\big(1+\tfrac{\eps}{10}\big)\big)>k_-
     \end{gathered}
     \right\}.
\end{equation*}
Note that since $\tilde{F}_{z}^{k,\eps}$ is $\eta^z$-measurable and $\hat{W}^z$ is measurable with respect to $\eta^z$ and $\hat{\lambda}^z$, we have that $(\hat{\lambda}^z,\hat{W}^{z},(\tilde{F}_{z}^{k,\eps})_{k\in{\mathbb{N}}})_{z\in{\calC}}$ are independent for each $\calC$ as in \eqref{eq:defcalC}, and $(\tilde{F}_{z}^{k,\eps})_{k\in{\mathbb{N}}}$ is independent of $\hat{\lambda}^z$ for each $z\in{G}$. Furthermore, noting that $N^z(t)$ is Poi($t$) distributed, one can easily deduce \eqref{eq:probasoftlocaltimes} from large deviations bounds on Poisson random variables, see for instance \cite[p.21-23]{MR3185193}. Finally, on the event $\tilde{F}_z^{k,\eps}$ by \eqref{eq:condsoftlocaltimes} (if $\Cr{csoftlocaltimesK2}$ is large enough), \eqref{eq:Nzbig} and \eqref{eq:Nzsmall} we have for all $\sigma\in{S^z}$
\begin{equation}
\label{eq:boundsoftlocaltimes}
    G_{k,z}^{\text{RI}}(\sigma)\leq \big(1+\tfrac{\eps}{10}\big)\tilde{g}^z(\sigma)\sum_{i=1}^k\xi_i^z\leq \frac{(1+\tfrac{\eps}{10})^2}{1-\tfrac{\eps}{10}}k\tilde{g}^z(\sigma)\leq \big(1-\tfrac{\eps}{10}\big)\tilde{g}^z(\sigma)\sum_{i=1}^{k_+}\xi_i^z\leq \hat{G}_{k_+,z}^{\text{RI}}(\sigma),
\end{equation}
where in the third inequality we additionally used that $k(1+\tfrac\eps{10})^3\leq k_+(1-\tfrac{\eps}{10})^3$ for $k$ large enough. Using similar lower bounds, one deduces that $\tilde{F}_z^{k,\eps}$ implies \eqref{eq:probasoftlocaltimes2}, and so \eqref{eq:deftildeF} is satisfied.
\end{proof}

The hastened reader might be tempted to directly deduce \eqref{eq:deftildeF} from \eqref{eq:condsoftlocaltimes} and \eqref{eq:probasoftlocaltimes2} for an event $\tilde{F}_z^{k,\eps}$ consisting of bounds of the type $(1-\tfrac\eps{10})\leq \sum_{i=1}^k\xi_i^z\leq(1+\tfrac\eps{10})$, as well as similar bounds on $\sum_i\hat{\xi}^z_i$. However, although one can easily prove that both $(\xi_i^z)_{i\in{\mathbb{N}}}$ and $(\hat{\xi}_i^z)_{i\in{\mathbb{N}}}$ are independent in $z\in{\calC}$ for any $\calC$ as in \eqref{eq:defcalC}, it is not clear if they are jointly independent in $z\in{\calC}$, and \eqref{eq:Nzbig} and \eqref{eq:Nzsmall} solve this problem by replacing these bounds on $\xi$ and $\hat{\xi}$ by bounds on $N^z$, which is independent in $z\in{\calC}$, similarly as in \cite[Lemma~2.1]{MR3126579}.

Lemma~\ref{lem:inversesoftlocaltimes} lets us decouple the excursions of random interlacements, whereas we want to decouple the local times of interlacements at times of order $T^z(u)$ in Proposition~\ref{pro:softlocaltimes}. As we now explain, one can deduce from \cite[Proposition~A.9]{PreRodSou} and Lemma~\ref{lem:inversesoftlocaltimes} for $\mathcal{C}=\{z\}$ that the two statements are essentially equivalent.

\begin{proof}[Proof of Proposition~\ref{pro:softlocaltimes}]
Using additional randomness independently for each $z\in{G}$, one can construct from $(\hat{W}_i^z,\hat{\lambda}_i^z)_{i\geq1}$ an interlacement process $\hat{\omega}^z$ such that $(\hat{W}_i^z)_{i\geq1}$ are the parts of the excursions of $\hat{\omega}^z$ from $\tilde{U}_z$ to $\partial U_z^c$ from their first hitting time of $\hat{C}_z$ to their last exit time of $\tilde{U}_z$, and $(\hat{\omega}^z)_{z\in{\calC}}$ are independent for each $\calC$ as in \eqref{eq:defcalC}. In other words, we want that $(\hat{W}_i^z,\hat{\lambda}_i^z)_{i\geq1}$ can be obtained from $\hat{\omega}^z$ in exactly the same way as $(W_i^z,\lambda_i^z)_{i\geq1}$ was obtained from $\omega$ in \eqref{eq:defWiz} and \eqref{eq:clothesline}. One can construct $\hat{\omega}^z$ from $(\hat{W}_i^z)_{i\geq1}$ and $\hat{\lambda}^z$ as follows: first define an interlacement process $\tilde{\omega}^z$ such that $\hat{\lambda}^z$ is the clothesline process associated with $\tilde{\omega}^z$, similarly as in \eqref{eq:clothesline}, and then writing $\tilde{X}^z_1,\tilde{X}^z_2,\dots$ for the trajectories of $\tilde{\omega}^z$ hitting $\hat{C}_z$, one can replace the excursions of $\tilde{X}^z_k$ between $\partial\tilde{U}_z$ and $\partial U_z^c$ hitting $\hat{C}_z$ by new excursions for all $k\in{\mathbb{N}}$, so that the successive subexcursions started when hitting $\hat{C}_z$ and stopped when last visiting $\tilde{U}_z$ are exactly $(\hat{W}^z_i)_{i\in{\mathbb{N}}}$. We refer to below \cite[(5.34)]{PreRodSou} for a precise description of this procedure, which only needs to be slightly modified to take into account that each of the excursions $\hat{W}_i^z$ we consider here is non-empty, and thus one only needs to modify the excursions of $\tilde{X}^z_k$ from $\partial \tilde{U}_z$ to $\partial{U}_z^c$ which hit $\hat{C}_z$, instead of all the excursions from $\partial \tilde{U}_z$ to $\partial{U}_z^c$. Note also that in this construction since $(\tilde{F}_{z}^{k,\eps})_{k\in{\mathbb{N}}}$ is independent of $\hat{\lambda}^z$ for each $z\in{G}$, one can also assume that $(\tilde{F}_{z}^{k,\eps})_{k\in{\mathbb{N}}}$ is independent of $\tilde{\omega}^z$ for each $z\in{G}$. We leave the details to the reader.

Let us denote $\hat{\omega}_u^z$ the restriction of the point process $\hat{\omega}^z$ to trajectories with label at most $u$, defined similarly as above \eqref{eq:defRI}, and by $(\hat{\ell}_{x,u}^z)_{x\in{G},u>0}$ the local times defined exactly as in \eqref{eq:defLxu} and \eqref{eq:localtimes} but for the process $\hat{\omega}^z$ instead of $\omega$. For each $z\in{G}$, let ${M}_u^z$ be the number of $k\geq1$ such that the excursion $W_k^z$ corresponds to a trajectory in $\omega_u$, i.e.\ the total number of excursions from $\partial\tilde{U}_z$ to $\partial{U}_z^c$ which hit $\hat{C}_z$ for random interlacements at level $u$, that one can formally define as in \cite[(5.12)]{PreRodSou}.  In other words defining $V_i^z$ as the number of $k\geq1$ such that the excursion $W_k^z$ is an excursion of the $i$-th trajectory of the random interlacement process $\omega$ hitting $\hat{C}_z$,  for each $i\geq1$, then $M_u^z=\sum_{i=1}^{N^{\hat{C}_z}_u}V_i^z$, see above \eqref{eq:defRI}. Define similarly $\hat{M}_u^z$ but replacing $\omega_u$ by $\hat{\omega}^z_u$. By construction $\hat{M}_u^z$ actually depends only on $\tilde{\omega}^z$, and is thus independent of $(\tilde{F}_{z}^{k,\eps})_{k\in{\mathbb{N}}}$ for each $z\in{G}$. Let us also define $\hat{T}^z(u)$ similarly as $T^z(u)$ in \eqref{eq:defTz} but replacing $\ell$ by $\hat{\ell}^z$. Abbreviating $u_z\stackrel{\textnormal{def.}}{=}u\mathrm{cap}(\hat{C}_z)\E[V_1^z]$ and $\hat{M}_{u,\pm}^z\stackrel{\textnormal{def.}}{=}\frac1{1\pm\eps}\hat{M}^z_{u(1\pm\eps)^6}$, we then let for each $z\in{G}$
\begin{equation}
    \label{eq:defFuz}
    F_z^{u,7\eps}\stackrel{\textnormal{def.}}{=}\tilde{F}_{z}^{\hat{M}_{u,+}^z,\eps}\cap \tilde{F}_{z}^{\hat{M}_{u,-}^z,\eps}\cap \left\{\!\!
    \begin{array}{c}
    \hat{M}_{u,+}^z\leq u_z(1+\eps)^6,\hat{M}_{u,-}\geq u_z(1-\eps)^6,\\
    u(1-\eps)< \hat{T}^z(u)\leq u(1+\eps),
    \\(1-\eps)\hat{M}_{u,+}^z\geq \hat{M}_{u(1+\eps)}^z,(1+\eps)\hat{M}_{u,-}^z\leq \hat{M}_{u(1-\eps)}^z
    \end{array}\!\!\right\}.
\end{equation}
It follows from Lemma~\ref{lem:inversesoftlocaltimes} that $((F_{z}^{u,7\eps})_{u>0},\hat{\ell}^z_{\cdot})$, $z\in{\calC}$, are independent for each $\calC$ as in \eqref{eq:defcalC}, and let us now verify that $F_z^{u,7\eps}$ satisfies \eqref{eq:boundprobaFzueps} and \eqref{eq:defeventF} for each $z\in{G}$. Similarly as in the proof of \cite[Lemma~5.6]{PreRodSou}, it follows from \cite[Proposition~A.9]{PreRodSou}, whose proof is valid on any transient graphs, that for all $v>0$, writing $v_z=v\mathrm{cap}(\hat{C}_z)\E[V_1^z]$
\begin{equation}
\label{eq:boundVuz}
\P\big(|M_v^z-v_z|\geq {\eps}v_z\big)\leq C\exp\big(-c\eps^2v\mathrm{cap}(\hat{C}_z)\big)
\leq C\exp(-c'\eps^2uL^{\nu})
\end{equation} 
for some constants $C<\infty$ and $c,c'>0$, where we used \eqref{eq:capball} in the last inequality. Note that condition \cite[(A.39)]{PreRodSou} is indeed satisfied since $V_1^z$ is larger than $1$ and dominated by a geometric random variable with parameter $\inf_{x\in{\partial U_z^c}}P_x(H_{\hat{C}_z}=\infty)\geq  c$ for $K$ large enough in view of \eqref{eq:boundentrance}, which we can assume by increasing the value of the constant $\Cr{csoftlocaltimesK}$ if needed. Using a union bound, Lemma~\ref{lem:inversesoftlocaltimes} twice, the independence of $\hat{M}_u^z$ and $(\tilde{F}_{z}^{k,\eps})_{k\in{\mathbb{N}}}$, the bound $u_z\geq u\Cr{ccapball}(8L)^{\alpha}$ which follows from \eqref{eq:capball}, \eqref{eq:boundVuz} four times for $v=u(1+\eps)^6,u(1+\eps),u(1-\eps)$ and $u(1-\eps)^6$, \eqref{eq:thm4.2disco} (which is in force up to increasing the value of the constant $\Cr{csoftlocaltimesK}$), replacing $u$, $v$ and  $\calC$ therein by $u(1\pm\eps)$, $u$ and $\{z\}$, and the inequalities $(1-\eps)^2(1+\eps)^3\geq1$ and $(1+\eps)^2(1-\eps)^3\leq1$ (for $\eps$ small enough), we obtain \eqref{eq:boundprobaFzueps} for $u\eps^2L^{\nu}$ large enough and $\eps$ small enough, which can be assumed without loss of generality up to increasing the constant $\Cr{Csoftlocaltimes}$. Let us define $L_{x}(W_k^z)$, resp.\ $L_{x}(\hat{W}_k^z)$, as the number of times $W_k^z$, resp.\ $\hat{W}_k^z$ visits $x$, similarly as in \eqref{eq:defLxu}, and let 
$$\ell_{x}(W_k^z)\stackrel{\textnormal{def.}}{=}\frac1{\lambda_x}\sum_{n=1}^{L_x(W_k^z)}\mathcal{E}_x^{n+\sum_{i=1}^{k-1}L_x(W_i^z)}\text{ for all }x\in{\hat{C}_z},$$
and define similarly $\ell_{\cdot}(\hat{W}_k^z)$ but for $L_{\cdot}(\hat{W}_k^z)$ instead of $L_{\cdot}(W_k^z)$. Then in view of \eqref{eq:localtimes},  $\ell_{x,v}$ is equal to the sum of $\ell_x(W_k^z)$ over $k\leq M_v^{z}$ by definition, and similarly $\hat{\ell}^z_{x,v}$ is the sum of $\ell_x(\hat{W}_k^z)$ over $k\leq \hat{M}_v^z$. Therefore on the event ${F}_z^{u,7\eps}$ we have by \eqref{eq:deftildeF} 
\begin{equation*}
\begin{split}
    &\sum_{x\in{\hat{C}_z}}\bar{e}_{\hat{C}_z}(x)\sum_{k=1}^{\hat{M}_{u,+}^z}\ell_x(W_k^z)\geq \sum_{x\in{\hat{C}_z}}\bar{e}_{\hat{C}_z}(x)\sum_{k=1}^{(1-\eps)\hat{M}_{u,+}^z}\ell_x(\hat{W}_k^z)
    \\&\geq \sum_{x\in{\hat{C}_z}}\bar{e}_{\hat{C}_z}(x)\sum_{k=1}^{\hat{M}_{u(1+\eps)}^z}\ell_x(\hat{W}_k^z)=\sum_{x\in{\hat{C}_z}}\bar{e}_{\hat{C}_z}(x)\hat{\ell}^z_{x,u(1+\eps)}\geq u,
\end{split}
\end{equation*}
where we abbreviate $\sum_{k=1}^x=\sum_{k=1}^{\lceil x\rceil}$ for any $x\geq0$, and we used \eqref{eq:defTz} and $\hat{T}^z(u)\leq u(1+\eps)$ in the last inequality. In view of \eqref{eq:defTz}, defining $M_{v}^{z,-}=\sum_{i=1}^{N_v^{\hat{C}_z}-1}V_i^z$ for any $v>0$, one deduces that ${M}_{T^z(u)}^{z,-}\leq \hat{M}_{u,+}^z$.  Moreover, in view of \eqref{eq:localtimes}, if a trajectory of $\omega$ hits $\hat{C}_z$ at level $v$, then $\ell_{x,v}^-$ is equal to the sum of $\ell_x(W_k^z)$ over $k\leq M_v^{z,-}$ by definition. Therefore, on the event ${F}_z^{u,7\eps}$ for any $x\in{\hat{C}_z}$
\begin{equation}
\label{eq:finalbonudsoftlocaltimes}
    \ell^-_{x,T^z(u)}= \sum_{k=1}^{{M}_{T^z(u)}^{z,-}}\ell_x(W_k^z)\leq \sum_{k=1}^{\hat{M}_{u,+}^z}\ell_x(W_k^z)\leq \sum_{k=1}^{(1+\eps)\hat{M}_{u,+}^z}\ell_x(\hat{W}_k^z)\leq \hat{\ell}^z_{x,u(1+\eps)^6}.
\end{equation}
Using the inequality $(1+\eps)^6\leq 1+7\eps$, valid for $\eps$ small enough, and proceeding similarly for the lower bound, we readily deduce \eqref{eq:defeventF} up to a change of variable for $\eps$.
\end{proof}

\begin{Rk}
\phantomsection
\label{rk:softlocaltimes}
    \begin{enumerate}
        \item\label{rk:whyinversesoftlocaltimes} One does not need to use the conditional soft local times method from \cite{AlvesPopov}, and in fact one could instead use the arguments from \cite[Section~2]{MR3126579} similarly as in \cite[Section~5]{MR3602841}. The main difference between these two methods is the random walk quantity which needs to be controlled. Here we need to bound the quantity $\mathbf{g}_{(x,y)}^{z}(v,w)$ from \eqref{eq:defbfg}, and it turns out that on the class of graphs from \eqref{eq:standingassumptionintro}, this is slightly easier than to prove the corresponding bound from \cite[(2.5)]{MR3126579}, see \cite[(5.6)]{MR3602841} for a proof on $\Z^d$, which is the main reason we used conditional soft local times. Furthermore, the bounds on soft local times, see \eqref{eq:boundsoftlocaltimes}, let us directly compare $\ell_{\cdot}$ with a process $\hat{\ell}^z_{\cdot}$ which has the same law as $\ell_{\cdot}$, instead of a process $\hat{\ell}^z$ with constant clothesline as in \cite[Lemma~2.1]{AlvesPopov}.
        
        Similarly, one never really needs to use the inverse soft local times technique from \cite[Section~4]{PreRodSou}. The main reason we use it in this article is that it does not add any complexity to the current proof, and otherwise one would need to use a different coupling between $\ell$ and the variables $\hat{\ell}^z$, $z\in{\calC}$, for each possible choice of the set $\calC$ as in \eqref{eq:defcalC} that we are trying to decouple, which would make equations such as \eqref{eq:replaceFPPbyiid} or \eqref{eq:defeventH} a bit more cumbersome to write. Note that the inverse soft local times we use here slightly differs from the one in \cite[Section~5]{PreRodSou} (or in fact the one from \cite[Section~3]{AlvesPopov}) since in \eqref{eq:defWiz} we restrict the excursion process to excursions hitting $\hat{C}_z$, contrary to \cite[(5.6)]{PreRodSou}. When trying to follow the strategy from \cite{PreRodSou}, one would need to change the definition of the density $\mathbf{g}^z$ in \eqref{eq:defbfg} by removing the conditioning on $H_{\hat{C}_z}<T_{U_z}$, and add a new term of the type $\P_x(H_{\hat{C}_z}>T_{U_z}\,|\,X_{T_{U_z}}=y)$ when $v$ is equal to some cemetery state, see \cite[(5.15),(5.16)]{PreRodSou}, but a bound similar to \eqref{eq:condsoftlocaltimes} for this new definition of \eqref{eq:defbfg} seems out of reach.
        \item\label{rk:othersoftlocaltimes} The main interest of the bound \eqref{eq:condsoftlocaltimes} is that the right-hand side can be made arbitrarily small as $K\rightarrow\infty,$ which implies \eqref{eq:Nzbig} and \eqref{eq:Nzsmall} and thus allows us to bound all the soft local times $G_{n,z}^{\text{RI}}(\sigma)$, $\sigma\in{\Sigma^{(z)}}$, at once, instead of bounding each of them individually. One could still bound each of these $CL^{2\alpha}$ soft local times individually by the exact same reasoning as in \cite[Lemma~5.5]{PreRodSou} (the only time the fact that $G=\Z^d$ therein was used is in the proof of \cite[Lemmas~A.2~and A.3]{PreRodSou}, but these can be replaced by Lemma~\ref{lem:condsoftlocaltimes} for $K= \Cr{cgzA}$ in our context), which would lead to a bound similar to \eqref{eq:boundprobaFzueps} but with an additional term $L^{2\alpha}$ in front of the exponential, and which is now valid for any $K\geq \Cr{csoftlocaltimesK}$, for some constant $\Cr{csoftlocaltimesK}<\infty$ not depending on $\eps$. If  $L^{\nu}\geq C(u\eps^2)^{-1}\log(1/(u\eps^2))$ for some constant $C<\infty$ large enough, this means that \eqref{eq:boundprobaFzueps} remains valid for $K\geq \Cr{csoftlocaltimesK}$. However, the additional polynomial term $L^{2\alpha}$ is actually detrimental when $L$ is of order $(u\eps^2)^{-\frac1\nu}$ and would eventually weaken Theorem~\ref{the:FPP} in certain cases by logarithmic factors, but the fact that the previous bound holds for $K\geq \Cr{csoftlocaltimesK}$, instead of $K\geq \Cr{csoftlocaltimesK}\eps^{-\Cr{CsoftlocaltimesK}}$, could actually strengthen Theorem~\ref{the:FPP} in other cases, see Remark~\ref{rk:endthm},\ref{rk:removingdependenceonzeta}) for details. It is an interesting question whether one can prove a version of \eqref{eq:boundprobaFzueps} when $K$ is of constant order and without any additional polynomial factor in front of the exponential, that is a bound which would still be relevant when $L$ is of order $(u\eps^2)^{-\frac1\nu}$. 
        \item\label{rk:roleTzu} The role of the time $T^z(u)$ in Proposition~\ref{pro:softlocaltimes} is to provide a level at which the local times of $\omega$ can be bounded by the local times of $\hat{\omega}^z$ on an event only depending on $\hat{\omega}^z$, and thus independent in $z\in{\calC}$ for any $\calC$ as in \eqref{eq:defcalC}, see \eqref{eq:finalbonudsoftlocaltimes} and above. Actually, Proposition~\ref{pro:softlocaltimes} is not true when replacing the level $T^z(u)$ by $u$, otherwise one could prove Theorem~\ref{the:FPP} without ever using Proposition~\ref{pro:thm4.2disco}, and would obtain a version of \eqref{eq:bounddtvfinal} with $F_{\nu}(L|\sqrt{v}-\sqrt{u}|^{\frac2\nu})$ replaced by $L(\sqrt{v}-\sqrt{u})^{\frac2{\nu}}$, which is not true at least for $\nu\leq 1$ by the lower bound in \eqref{eq:boundonPEintro} (see also \eqref{eq:bounddtvnointro} below). In other words, the levels $T^z(u)$, $u>0$, carry most of the long-range dependence of $\omega$, in a somewhat similar way that the harmonic average of the Gaussian free field was carrying the long-range dependence of the Gaussian free field in \cite{GRS21,MR3417515}, see Remark~\ref{rk:endthm},\ref{rk:extensiontoGFF}) for details.
    \end{enumerate}
\end{Rk}

\subsection{Proof of Proposition~\ref{pro:entropy}}
\label{sec:entropy}
We use the same general strategy as in the case $G=\Z^d$, $d\geq4$, from \cite[Proposition~4.3]{GRS21}, which we now very roughly describe. Recalling the notation from \eqref{eq:defCU} and following a renormalization scheme inspired by \cite[Section~7]{MR3420516}, define a sequence of dyadic scales $L_n\approx 2^n$, see \cite[(4.29)]{GRS21}, so that each path $\gamma$ from $[0,L_n)^d$ to $\partial [-L_n,2L_n)^d$ can be split into a path from $y+[0,L_{n-1})^d$ to $y+\partial [-L_{n-1},2L_{n-1})^d$ and a path from $z+[0,L_{n-1})^d$ to $z+\partial [-L_{n-1},2L_{n-1})^d$ for some $y\in{[0,L_n)^d}$ and $z\in{[-L_n+L_{n-1},2L_n-L_{n-1})^d}$ suitably chosen so that the parts of $\gamma$ in $y+[-L_{n-1},2L_{n-1})^d$ and $z+[-L_{n-1},2L_{n-1})^d$ are at distance at least $L_{n}/n^2$ from one another. Iterating this procedure gives for a fixed $k$ large enough (depending on $KL$) and all $n\geq k$ a sequence of $2^{n-k}$ points $x_i$ such that $\gamma$ connects $x_i+[0,L_k)^d$ to $x_i+\partial [-L_k,2L_k)^d$, and the sets $x_i+[-L_k,2L_k)^d$ are at distance at least $2KL$ from one another (and in fact even further apart for most of them, see \cite[(4.32)]{GRS21}). We refer to Figure~\ref{fig:renscheme} for an illustration of the construction of these boxes (with Euclidean balls instead of boxes therein). Defining $\mathcal{C}=\{x_i,i\leq 2^{n-k}\}$ eventually yields \cite[Proposition~4.3]{GRS21} on $\Z^d$, $d\geq4$. There are however several small differences between the previously described proof from \cite[Proposition~4.3]{GRS21} and our proof of Proposition~\ref{pro:entropy}, and in order to give insights on the strategy of this rather technical proof, we now motivate these differences.

The main difficulty when considering graphs that satisfy \eqref{eq:intro_sizeball}, \eqref{eq:intro_Green}, $\nu>1$, and \eqref{p0} instead of $\Z^d$, $d\geq4$, is to extend the version of the renormalization scheme from \cite[Section~7]{MR3420516} used in \cite{GRS21} in this context. In particular, one needs to find a replacement for the "square boxes" $z+[0,L_{n-1})^d$ previously used to cover $\partial [-L_n+L_{n-1},2L_n-L_{n-1})^d$ with a finite number of boxes included in $[-L_n,2L_n)^d$, as these square boxes do not exist on general graphs. This can be  achieved by taking "round balls" (balls for $d$, which is the Euclidean distance on $\Z^d$) and only asking to cover using "small" balls of size $L_{n-1}/n^2$, see  \eqref{eq:reasonIkxEkx}. In turn, this means that we need to consider paths from $B(x,L_n/(n+1)^2)$ to $B(x,2L_n)^c$, instead of $[0,L_n)^d$ to $[-L_n,2L_n)^d$. Moreover, the number of boxes needed to cover is of order $n^2$ instead of constant, but this is actually not a problem when computing the entropy cost \eqref{eq:cardA}, see \eqref{eq:recursiveentropy} and below. 

In \cite[(4.11)]{GRS21}, on $\Z^d$, $d\geq4$, there is an additional term $u(KL)^{-1}\stackrel{\textnormal{def.}}{=}\log(KL)^{-2}$ in the number of points in each set $\mathcal{C}\in{\A}$ compared to \eqref{eq:cardC}. This is essentially due to the fact that they take $L_0=1$ and then actually start their renormalization scheme at a scale $k$ such that the typical space $L_{k-1}k^{-2}$ between two boxes at scale $k$ is of order $KL$, see \cite[(4.46)]{GRS21}, and then the number of points $2^{n-k}$ in $\calC$ at scale $n\geq k$ is of order $L_n/(KL\log(LK)^2)$ for $n\geq k$.
Using an idea from \cite[(2.16)]{AndPre}, we will introduce an additional parameter $\kappa$ in the definition of $L_n$ in \eqref{eq:defLkMk}, and start the renormalization at $L_0=\kappa LK$ instead of $L_0=1$, which makes the typical space between two boxes at level $0$ larger than $\kappa L_0$ which is much larger than $16KL$ when $\kappa$ is large enough, and then the number of points $2^n$ in $\calC$ at scale $n$  is of order $L_n/(KL)$. We refer to the proof of \eqref{eq:distancetau1tau0} below to determine where this is necessary.

Finally, Proposition~\ref{pro:entropy} is satisfied for any $\nu>0$, and not only for $\nu>1$ as in the case $d\geq4$ of \cite[Proposition~4.3]{GRS21}, which answers positively a question from \cite[Remark~4.4,2)]{GRS21}. The main reason for this improvement is that we show that there are at most of order $L_i$ points in $\calC$ at distance $L_i$ from some fixed $y\in{\calC}$  for each $i\leq n$, see \eqref{eq:Byi} and below, whereas this was only proved for the number of points at distance $L_i/\log(L_i)^2$ from $y$ in \cite[(4.32)]{GRS21}. This is achieved using \eqref{eq:defproperembedding2} below, which controls more precisely the distance between two boxes at scale $i$, and additionally allows us to avoid the use of "shapes" from \cite[Section~4.2]{GRS21}. This method yields \eqref{eq:capC} but without the correct constant $\tfrac{\Cr{cbeta}}{1+\eta}$ in the limit as $x\rightarrow\infty$ of $H_{\nu}(x,y,\eta)$ when $\nu\leq1$, see Corollary~\ref{cor:entropy}, and we will explain after the proof of Corollary~\ref{cor:entropy} how to recover this constant.

Further differences are that we compute the capacity of $\Sigma(\calC)$ at scale $n$ using the information of all scales $k\leq n$ at once, see \eqref{eq:computecapacity} and above, instead of computing the capacity recursively in $n$ as in \cite[Lemma~4.9]{GRS21}, and that when $\nu>1$ we also bound the capacity of $\Sigma(\calC)$ by $cNL^{\nu-1}$ in \eqref{eq:capC}, for some constant $c=c(K)$, instead of $cN$ in \cite[(4.16)]{GRS21}, but it would actually be possible to adapt the proof of \cite{GRS21} to also obtain $cNL^{\nu-1}$ therein.

Let us now formally start the proof of Proposition~\ref{pro:entropy}, and we first introduce a renormalization scheme similar to the one from \cite[Section~2.2]{AndPre}, which is inspired by \cite[Section~7]{MR3420516}. Define recursively
\begin{equation}
\label{eq:defLkMk}
 L_0\stackrel{\textnormal{def.}}{=}\kappa LK,\ l_k\stackrel{\textnormal{def.}}{=}\frac{L_k}{(k+1)^2}\text{ and }L_{k+1}\stackrel{\textnormal{def.}}{=}2(L_k+\kappa l_k)\text{ for all }k\geq0.
\end{equation}
Here $\kappa\geq1$ is a parameter on which $l_k$ and $L_k$ depend implicitly and that we will fix later. Note that there exists a constant $C=C(\kappa)<\infty$ such that
\begin{equation}
\label{eq:boundLk}
    LK2^k\leq 2^nL_{k-n}\leq L_k\leq CLK2^k\text{ for all }0\leq  n\leq k.
\end{equation}
For all $x\in{G}$ and $k\in{\N_0}$ we abbreviate 
\begin{equation}
\label{eq:defIkx}
\begin{split}
    &I_k(x)
   \stackrel{\textnormal{def.}}{=}\Lambda\big(l_k\big)\cap  B\big(x,l_{k+1}+l_k\big)
\end{split}
\end{equation}
and 
\begin{equation}
\label{eq:defEkx}
\begin{split}
&E_k(x)
\stackrel{\textnormal{def.}}{=}\Lambda\big(l_k\big)\cap B\big(x,2l_k+L_{k+1}\big)\setminus B(x,L_{k+1}-l_k).
\end{split}
\end{equation}
The sets $I_k$ and $E_k$ are chosen so that if $\kappa$ is large enough, then all $k\in{\N_0}$ and $x\in{G}$
\begin{equation}
\begin{gathered}
    \label{eq:reasonIkxEkx}
    B(x,l_{k+1})\subset B\big(I_k(x),l_k\big),\ 
    B(x,L_{k+1}+2L+\Cr{Cdvsdgr}^{-1})\setminus B(x,L_{k+1})\subset B\big(E_k(x),l_k\big)
\end{gathered}
\end{equation}
and
\begin{equation}\label{eq:reasonIkxEkx2}
\begin{gathered}
    B\big(I_k(x),2L_k\big)\subset B(x,L_{k+1}-2L-\Cr{Cdvsdgr}^{-1}),\ 
     B\big(E_k(x),2L_k\big)\subset B(x,2L_{k+1}).
\end{gathered}
\end{equation}
Indeed, since $l_k\geq2L+\Cr{Cdvsdgr}^{-1}$ upon choosing $\kappa$ large enough, \eqref{eq:reasonIkxEkx} follows easily from \eqref{eq:defLambda}, and \eqref{eq:reasonIkxEkx2} is an easy consequence of \eqref{eq:defLkMk} if $\kappa$ is large enough. The properties \eqref{eq:reasonIkxEkx} and \eqref{eq:reasonIkxEkx2} imply that any ($L$-nearest neighbor) path from $B(x,l_{k+1})$ to $B(x,2L_{k+1})^c$ can be decomposed into a path from $B(y,l_{k})$ to $B(y,2L_k)^c$ for some $y\in{I_k(x)}$, and a disjoint path from $B(z,l_k)$ to $B(z,2L_{k})^c$ for some $z\in{E_k(x)}$, which will be the starting block in the proof of Lemma~\ref{lem:entropy} below. We refer to Figure~\ref{fig:renscheme} for an illustration of this construction.

Let us denote by $T_k=\bigcup_{n=0}^kT^{(n)},$ where $T^{(n)}=\{0,1\}^n$, the binary tree of depth $k\in{\mathbb{N}_0}$, with the convention $T^{(0)}=\{\varnothing\}$. For $\sigma\in{T^{(i)}}$, $\sigma'\in{T^{(j)}}$, $i+j\leq k$, we denote by $\sigma\sigma'\in{T_k}$ the concatenation of $\sigma$ and $\sigma'$. We call $k$-proper embedding with base $x\in{G}$ any map $\tau:T_k\rightarrow G$ such that $\tau(\varnothing)=x$, for all $0\leq n\leq k-1$ 
\begin{equation}
\label{eq:defproperembedding1}
    \tau(\sigma0)\in{I_{k-n-1}(\tau(\sigma))},\tau(\sigma1)\in{E_{k-n-1}(\tau(\sigma))}\text{ for all }\sigma\in{T^{(n)}},
\end{equation}
and for all $0\leq n\leq k$
\begin{equation}
\label{eq:defproperembedding2}
d(\tau(\sigma),\tau(\sigma'))\geq 2L_{k-n}+\kappa l_{k-n}\text{ for all }\sigma\neq\sigma'\in{T^{(n)}},
\end{equation}
as well as for all $0\leq n\leq k$
\begin{equation}
\label{eq:defproperembedding3}
    \tilde{C}_{\tau(\sigma)}^{L_{k-n}}\cap \tilde{C}_{\tau(\sigma\sigma')}^{L_0}\neq\varnothing\text{ for all }\sigma\in{T^{(n)}}\text{ and }\sigma'\in{T^{(k-n)}}.
\end{equation}

\begin{figure}
\centering
\includegraphics[scale=0.65]{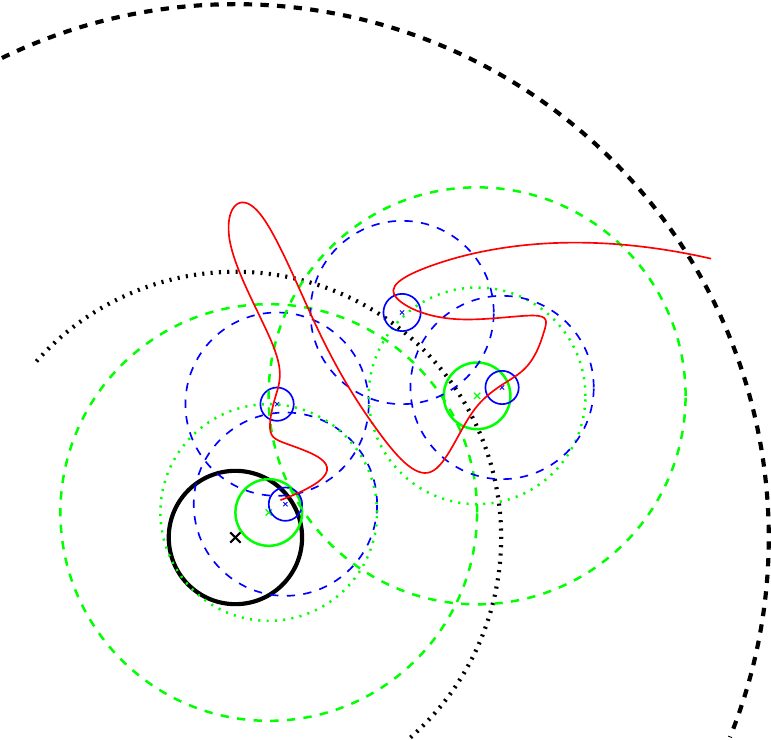}
\caption{Illustration of the proper embedding generated by a path $\gamma$ from $C_x^{l_k}$ to $\tilde{C}_x^{L_k}$ (in red). Normal lines represent $C_{\cdot}^{l_n}$, dashed lines $\tilde{C}_{\cdot}^{L_n}$, and dotted lines ${C}_{\cdot}^{L_n}$, with $n=k$ in black, $n=k-1$ in green, and $n=k-2$ in blue.}
\label{fig:renscheme}
\end{figure}

\begin{Lemme}
\label{lem:entropy}
There exists a constant $\Cl{Ckappa}\in{(1,\infty)}$, such that for all $L\geq1$, $K\geq 2$, $\kappa\geq\Cr{Ckappa}$, $k\in{\mathbb{N}_0}$, $x\in{G}$ and $L$-nearest neighbor paths  $\gamma$ between $C_x^{l_k}$ and $(\tilde{C}_x^{L_k})^c$, there exists a $k$-proper embedding $\tau$ with base $x$ such that 
\begin{equation}
\label{eq:gammahitCtau}
C_{\tau(\sigma)}^{l_{0}}\cap\gamma\neq\varnothing\text{ for all }\sigma\in{T^{(k)}}.
\end{equation} 
\end{Lemme}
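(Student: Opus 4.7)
I proceed by induction on $k$. The base case $k=0$ is immediate: setting $\tau(\varnothing)=x$ makes \eqref{eq:defproperembedding1}--\eqref{eq:defproperembedding3} vacuous, and \eqref{eq:gammahitCtau} is exactly the hypothesis that $\gamma$ starts in $C_x^{l_0}$. For the inductive step I split $\gamma$ into two sub-paths at scale $L_{k-1}$ as follows. By the first inclusion of \eqref{eq:reasonIkxEkx}, $B(x,l_k)$ is covered by $\{B(y,l_{k-1}):y\in I_{k-1}(x)\}$, so I pick $\tau(0)\in I_{k-1}(x)$ such that the starting vertex of $\gamma$ lies in $C_{\tau(0)}^{l_{k-1}}$; by \eqref{eq:reasonIkxEkx2} the box $\tilde C_{\tau(0)}^{L_{k-1}}$ is strictly contained in $\tilde C_x^{L_k}$, so $\gamma$ must exit it, extracting a sub-path $\gamma_0$ from $C_{\tau(0)}^{l_{k-1}}$ to $(\tilde C_{\tau(0)}^{L_{k-1}})^c$. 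The remainder of $\gamma$ must subsequently cross the annulus $B(x,L_k+2L+\Cr{Cdvsdgr}^{-1})\setminus B(x,L_k)$, which by the second inclusion of \eqref{eq:reasonIkxEkx} is covered by $\{B(z,l_{k-1}):z\in E_{k-1}(x)\}$. Choosing $\tau(1)\in E_{k-1}(x)$ to correspond to the first such ball entered after $\gamma$ leaves $\tilde C_{\tau(0)}^{L_{k-1}}$, and using \eqref{eq:reasonIkxEkx2} once more to force a subsequent exit of $\tilde C_{\tau(1)}^{L_{k-1}}$, gives a second sub-path $\gamma_1$. Applying the induction hypothesis to each $\gamma_a$ produces a $(k-1)$-proper embedding $\tau^{(a)}$ based at $\tau(a)$, and I define $\tau$ by concatenation: $\tau(a\sigma)=\tau^{(a)}(\sigma)$ for $a\in\{0,1\}$, $\sigma\in T_{k-1}$.

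The level-$1$ case of \eqref{eq:defproperembedding2} follows from the defining ball inclusions \eqref{eq:defIkx}--\eqref{eq:defEkx}: the triangle inequality gives $d(\tau(0),\tau(1))\geq L_k-l_k-2l_{k-1}$, so after substituting $L_k=2L_{k-1}+2\kappa l_{k-1}$ from \eqref{eq:defLkMk}, the desired bound $\geq 2L_{k-1}+\kappa l_{k-1}$ reduces to $(\kappa-2)l_{k-1}\geq l_k$. Since $l_k/l_{k-1}=2(1+\kappa/k^2)(k/(k+1))^2$ is uniformly bounded in $k\geq 1$ (its maximum being attained at $k=1$), this holds once $\Cr{Ckappa}$ is large enough. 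Condition \eqref{eq:defproperembedding1} at level $0$ is simply the memberships $\tau(0)\in I_{k-1}(x)$ and $\tau(1)\in E_{k-1}(x)$, and at higher levels it is inherited verbatim from the sub-embeddings; \eqref{eq:defproperembedding3} at $n=0$ follows from the geometric-sum bound placing all descendants of $\tau(a)$ inside $\tilde C_{\tau(a)}^{L_{k-1}}$ (up to lower-order additive terms controlled by $\kappa$) and at higher $n$ is again inherited from the $\tau^{(a)}$; and \eqref{eq:gammahitCtau} passes through the induction applied to $\gamma_0,\gamma_1$, whose leaves are exactly the leaves of $\tau$.

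The main obstacle is verifying \eqref{eq:defproperembedding2} at the higher levels $n\geq 2$ for pairs $\sigma\neq\sigma'\in T^{(n)}$ whose first bits differ. For pairs sharing a first bit this is just the sub-embedding's level-$(n-1)$ distance condition, but across the two subtrees the naive triangle inequality combining $d(\tau(0),\tau(1))\geq 2L_{k-1}+\kappa l_{k-1}$ with the iterated $I/E$ worst-case bound $d(\tau(a),\tau(a\rho))\leq\sum_{p=k-n+1}^{k-1}L_p\leq 2L_{k-1}$ leaves only the negative lower bound $\kappa l_{k-1}-2L_{k-1}$. The resolution requires the most-recent-common-ancestor refinement: if $\sigma,\sigma'$ share their first $m$ bits and differ at position $m+1$, one reduces the problem to the sibling separation $\geq 2L_{k-m-1}+\kappa l_{k-m-1}$ inside the sub-embedding at the MRCA, combined with the sharper partial-sum bound on descendant spread and the fact that $d(\tau(0),\tau(1))$ is in fact close to $L_k\approx 2L_{k-1}+2\kappa l_{k-1}$, not merely to the lower bound $2L_{k-1}+\kappa l_{k-1}$ established at level $1$. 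Rigorously closing this requires carefully exploiting that the sub-paths $\gamma_0,\gamma_1$ are disjoint pieces of the single path $\gamma$, so that the $E$-branches of the two sub-embeddings cannot both point toward each other in a way that would bring their descendants close; tracking this directional constraint through the recursion and choosing $\Cr{Ckappa}$ sufficiently large uniformly in $k,n$ should yield the required bound.
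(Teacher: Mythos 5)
Your base case, your choice of $\tau(0)\in I_{k-1}(x)$ and $\tau(1)\in E_{k-1}(x)$, and your verification of \eqref{eq:defproperembedding1}, \eqref{eq:defproperembedding3} and of \eqref{eq:defproperembedding2} at level $n=1$ are all sound and close to the paper's argument. However, the last paragraph is a genuine gap, not a routine detail: as you yourself observe, for $\sigma=0\rho$, $\sigma'=1\rho'$ with $n\geq 2$ the triangle inequality applied to $d(\tau(0),\tau(1))\geq 2L_{k-1}+\kappa l_{k-1}$ together with the spread of the subtrees gives a useless (negative) bound, and the "MRCA refinement" does not help, since for cross-subtree pairs the most recent common ancestor is always the root. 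The proposed fix ("the $E$-branches cannot both point toward each other") is not available with your decomposition: you split $\gamma$ at the \emph{first} entrance into $E_{k-1}(x)$ after leaving $\tilde C_{\tau(0)}^{L_{k-1}}$, and the resulting $\gamma_1$ only has to exit $B(\tau(1),2L_{k-1})$, which it may do by heading back toward $x$; its descendants at lower scales can then come arbitrarily close to those of $\tau(0)$, so no choice of $\Cr{Ckappa}$ rescues the estimate.

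The missing idea is to split $\gamma$ at its \emph{last} visit to $C_x^{L_k}$: then the entire second piece $\gamma_1$ is contained in $B(x,L_k)^c$, and since every leaf box $C_{\tau(1\sigma')}^{l_0}$ must meet $\gamma_1$, \emph{every} vertex of the $1$-subtree satisfies $d(\tau(1\sigma'),x)\geq L_k-l_0$. On the other side, iterating \eqref{eq:defproperembedding1} down the $0$-subtree gives $d(\tau(0\sigma),x)\leq L_{k-1}(2-2^{-n+1})+CL_{k-1}/k^2$ for $\sigma\in T^{(n)}$. The cross-subtree separation then follows radially, from the difference of these two radii, which equals $2^{-n+1}L_{k-1}+2(\kappa-C)L_{k-1}/k^2\geq 2L_{k-1-n}+\kappa l_{k-1-n}$ once $\kappa$ is large; this is exactly where the extra $\kappa l_{k-1}$ built into the recursion $L_k=2(L_{k-1}+\kappa l_{k-1})$ is consumed, uniformly over all levels $n$ at once rather than only at $n=1$. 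Without this last-exit decomposition (or an equivalent mechanism confining all of $\gamma_1$ to the outer annulus), the induction does not close.
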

\begin{proof}
We proceed recursively on $k\in{\N_0}$. If $k=0$, we simply take $\tau(\varnothing)=x$, which is clearly a $0$-proper embedding. Let us now assume that the lemma is true for $k\in{\mathbb{N}_0}$, let $x\in{G}$, and $\gamma$ be an $L$-nearest neighbor path between $C_x^{l_{k+1}}$ and $(\tilde{C}_x^{L_{k+1}})^c$, stopped on hitting $(\tilde{C}_x^{L_{k+1}})^c$. Let $\gamma_0$ be the part of $\gamma$ until its last visit in $C_x^{L_{k+1}}$, and $\gamma_1$ be the part of $\gamma$ after its last visit in $C_x^{L_{k+1}}$. By \eqref{eq:dvsdgr} and our definition of $L$-nearest neighbor paths, see below \eqref{eq:defLambda}, the first vertex of $\gamma_0$ is in $B(x,l_{k+1})$ and the first vertex of $\gamma_1$ is in $B(x,L_{k+1}+2L+\Cr{Cdvsdgr}^{-1})\setminus B(x,L_{k+1})$.  We let $\tau(0)$, resp.\ $\tau(1)$, be the first vertex in $I_k(x)$, resp.\ $E_k(x)$, such that $C_{\tau(0)}^{l_k}\cap \gamma_0(0)\neq\varnothing$, resp.\ $C_{\tau(1)}^{l_k}\cap \gamma_1(0)\neq\varnothing$, which exists by \eqref{eq:reasonIkxEkx} (upon taking $\Cr{Ckappa}$ large enough). Then since $\tilde{C}_{\tau(0)}^{L_k}\subset B(x,L_{k+1}-2L-\Cr{Cdvsdgr}^{-1})$ by \eqref{eq:reasonIkxEkx2}, using \eqref{eq:dvsdgr} again we know that $\gamma_0$ is an $L$-nearest neighbor path from $C_{\tau(0)}^{l_k}$ to $(\tilde{C}_{\tau(0)}^{L_k})^c$, and similarly $\gamma_1$ is an $L$-nearest neighbor path from $C_{\tau(1)}^{L_k}$ to $(\tilde{C}_{\tau(1)}^{L_k})^c$. By our inductive hypothesis, there exists for each $i\in{\{0,1\}}$ a $k$-proper embedding $\sigma\mapsto\tau(i\sigma)$ satisfying \eqref{eq:gammahitCtau} for $\gamma_i$. It is then clear that $\tau$ also verifies \eqref{eq:gammahitCtau} for $\gamma$  by writing $\sigma=i\sigma',$ $i\in{\{0,1\}}$ and $\sigma'\in{T^{(k)}}$.

Let us now verify that $\tau$ is a $k+1$-proper embedding. 
We start with \eqref{eq:defproperembedding1} for $\tau$, which is clear when $n\neq0$ by using \eqref{eq:defproperembedding1} for $\tau(i\sigma)$, $i\in{\{0,1\}}$. When $n=0$, \eqref{eq:defproperembedding1} is also satisfied by definition of $\tau(0)$ and $\tau(1)$. One can similarly show that \eqref{eq:defproperembedding3} is satisfied for $n\neq0$. Moreover, when $n=0$, for each $\sigma'\in{T^{(k+1)}}$ since $C_{\tau(\sigma')}^{l_0}\cap \gamma=\varnothing$ and $\gamma\subset B(x,2L_{k+1}+2L+\Cr{Cdvsdgr}^{-1})$, we have $\tilde{C}_{\tau(\sigma')}^{L_0}\cap \tilde{C}_{x}^{L_{k+1}}\neq\varnothing$ upon taking $\Cr{Ckappa}$ large enough, which finishes the proof of \eqref{eq:defproperembedding3}. It is also easy to verify \eqref{eq:defproperembedding2} when $\sigma$ and $\sigma'$ are both descendant of the same $i\in\{0,1\}$.

It thus remains to prove that $d(\tau(0\sigma),\tau(1\sigma'))\geq 2L_{k-n}+\kappa l_{k-n}$ for all $\sigma\neq\sigma'\in{T^{(n)}}$ and $n\leq k$. By \eqref{eq:defIkx}, \eqref{eq:defEkx} and \eqref{eq:defproperembedding1}, we have for all $0\leq j\leq k-1$ and $\sigma\in{T^{(j)}}$
\begin{equation*}
d(\tau(0\sigma i),\tau(0\sigma))\leq 2l_{k-j-1}+L_{k-j}\leq 2^{-j}L_k\left(1+\frac{1}{(k-j)^2}\right),
\end{equation*}
where we used \eqref{eq:boundLk} in the last inequality. Summing over $j$, we deduce that for all $0\leq n\leq k$ and $\sigma\in{T^{(n)}}$
\begin{align*}
    d(\tau(0\sigma),\tau(0))\leq L_k\sum_{j=0}^{n-1}2^{-j}\left(1+\frac{1}{(k-j)^2}\right)
    \leq L_k\left(2-2^{-n+1}+\frac{C'}{(k+1)^2}\right),
\end{align*}
where in the last inequality we used the bound $\sum_{j=0}^{k-1}2^{-j}/(k-j)^2\leq \frac{C}{(k+1)^2}$ for some absolute constant $C$, which can for instance be proved by changing the index $j$ in the sum to $k-j$, which after replacing the sum by an integral using monotonicity show that the previous sum is smaller than $C'2^{-k}\int_1^{k+1}x^{-2}2^x\mathrm{d}x$. This last integral can then be bounded by $C''2^k/(k+1)^2$ after an integration by parts. Combining with \eqref{eq:defIkx} we deduce
\begin{equation}
\label{distancetau0sigma}
    d(\tau(0\sigma),x)\leq L_k(2-2^{-n+1})+\frac{CL_k}{(k+1)^2}
\end{equation}
for some finite constant $C$.
Moreover, for any $0\leq n\leq k$ and $\sigma'\in{T^{(n)}}$ since $C_{\tau(1\sigma')}^{l_{0}}\cap\gamma_1\neq \varnothing$ and $\gamma_1\subset B(x,L_{k+1})^c$ we have
\begin{equation}
\label{distancetau1sigma}
    d(\tau(1\sigma'),x)\geq L_{k+1}-l_{0}\geq 2L_k+\frac{(2\kappa-C) L_k}{(k+1)^2},
\end{equation}
where we used \eqref{eq:defLkMk} in the last inequality. Combining \eqref{distancetau0sigma} and \eqref{distancetau1sigma} one can take $\Cr{Ckappa}$ large enough so that for all $\kappa\geq \Cr{Ckappa}$, $0\leq n\leq k$ and $\sigma,\sigma'\in{T^{(n)}}$
\begin{equation}
\label{eq:distancetau1tau0}
    d(\tau(1\sigma'),x)-d(\tau(0\sigma),x)\geq 2^{-n+1}L_{k}+\frac{2(\kappa-C) L_k}{(k+1)^2}\geq 2L_{k-n}+\frac{\kappa L_{k-n}}{(k-n+1)^2}
\end{equation}
by \eqref{eq:defLkMk}, and we can easily conclude.
\end{proof}

Let us now define for each $x\in{G}$ and $k\in{\mathbb{N}_0}$
\begin{equation}
\label{eq:defAtilde}
    \tilde{\mathcal{A}}_{x,k}\stackrel{\textnormal{def.}}{=}\left\{(\mathbf{y}_{\sigma})_{\sigma\in{T^{(k)}}}\in{\Lambda(L)}^{T^{(k)}}\!\!\!:\,\mathbf{y}_{\sigma}\in{C_{\tau(\sigma)}^{l_0}}\text{ for some }k\text{-proper embedding }\tau\text{ with base }x\right\}.
\end{equation}

\begin{Lemme}
\label{lem:properembedding}
    For any $\kappa\geq9$, there exist constants $c=c(\kappa)>0$ and $C=C(\kappa)<\infty$ such that  for all $L\geq 1$, $K\geq2$, $k\in{\mathbb{N}}$ and $x\in{G}$
\begin{equation}
\label{eq:numberproperembedding}
    \big|\tilde{\mathcal{A}}_{x,k}\big|
    \leq \exp\left(\frac{CL_k\log(K)}{KL}\right),
\end{equation}
and for $\mathbf{y}\in{\tilde{\mathcal{A}}_{x,k}}$, letting $\mathcal{C}(\mathbf{y})=\{\mathbf{y}_{\sigma},\sigma\in{T^{(k)}}\}$, we have 
\begin{equation}
\label{eq:capproperembedding}
    \frac{1}{|\Sigma(\tilde{\mathcal{C}})|^2}\sum_{x,y\in{\Sigma(\tilde{\mathcal{C}})}}g(x,y)\leq ({cF_{\nu}(L_k,LK)\overline{H}_{\nu}(\tfrac{L_k}{LK},K)})^{-1}\text{ for all }\tilde{\mathcal{C}}\subset\mathcal{C}(\mathbf{y})\text{ with }|\tilde{\mathcal{C}}|\geq 2^{k-1},
\end{equation}
where $\overline{H}_{\nu}(x,y)$ is equal to $1/(1+x^{\nu-1}y^{\nu})$ if $\nu<1$, to $1/(1+y/\log(x))$ if $\nu=1$, and to $y^{-\nu}$ if $\nu>1$.
\end{Lemme}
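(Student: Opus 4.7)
The two parts are essentially independent and I would prove them in order.

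For the entropy bound \eqref{eq:numberproperembedding}, I would count proper embeddings by iterating over the tree depth. The sets $I_j(x)$ and $E_j(x)$ from \eqref{eq:defIkx}--\eqref{eq:defEkx}, combined with \eqref{eq:defLambda}, satisfy $|I_j(x)| \leq C$ (since $l_{j+1}/l_j$ is uniformly bounded) and $|E_j(x)| \leq C(j+2)^{2\alpha}$ (since by \eqref{eq:defLkMk} the ratio $L_{j+1}/l_j$ is of order $(j+2)^2$). Iterating, a $k$-proper embedding $\tau$ with base $x$ is determined by the $2^n$ children choices at each depth $n=0,\dots,k-1$, giving at most $\prod_{n=0}^{k-1}(C(k-n)^{2\alpha})^{2^n}$ embeddings, whose logarithm is bounded by the geometric series $C\sum_{n=0}^{k-1} 2^n(1+\log(k-n)) \leq C' 2^k$. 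For each such $\tau$, the number of admissible $\mathbf{y}$ is at most $\prod_{\sigma \in T^{(k)}}|\Lambda(L)\cap B(\tau(\sigma), l_0)| \leq (\Cr{CLambda}(\kappa K)^\alpha)^{2^k}$ by \eqref{eq:defLambda} and $l_0=\kappa LK/1$. The bound \eqref{eq:numberproperembedding} follows from $2^k \leq C L_k/(LK)$ (see \eqref{eq:boundLk}).

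For the Green-function bound \eqref{eq:capproperembedding}, I would split $\sum_{x,y\in\Sigma(\tilde{\mathcal{C}})} g(x,y)$ into diagonal ($z=z'$) and off-diagonal pairs $(z,z')\in\tilde{\mathcal{C}}^2$. Taking $\kappa\geq \Cr{Ckappa}$ large enough, \eqref{eq:defproperembedding2} at depth $k$ gives $d(\mathbf{y}_\sigma,\mathbf{y}_{\sigma'})\geq 2\kappa LK \gg 16L$, so the balls $\hat{C}_z^L=B(z,8L)$, $z\in\tilde{\mathcal{C}}$, are pairwise disjoint and $|\Sigma(\tilde{\mathcal{C}})|\geq cL^\alpha|\tilde{\mathcal{C}}|\geq cL^\alpha 2^{k-1}$. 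By \eqref{eq:intro_Green} and \eqref{eq:intro_sizeball}, one has $\sum_{x,y\in\hat{C}_z}g(x,y)\leq CL^{2\alpha-\nu}$, so the diagonal terms contribute at most $CKL^{1-\nu}/L_k$ to the normalized sum. The heart of the argument is the off-diagonal estimate, for which the key geometric input is the following: for each $z\in\tilde{\mathcal{C}}$ and integer $n\geq 0$,
\begin{equation*}
|\{z'\in\tilde{\mathcal{C}} : d(z,z')\leq L_n\}| \leq C 2^n.
\end{equation*}
Writing $z=\mathbf{y}_\sigma$, $z'=\mathbf{y}_{\sigma'}$ with most recent common ancestor at depth $n_0$ in $T_k$, condition \eqref{eq:defproperembedding3} applied at this MRCA gives $d(\tau(\sigma),\tau(\sigma'))\leq 4L_{k-n_0}+4L_0$, hence $d(z,z')\leq CL_{k-n_0}$; so $d(z,z')\leq L_n$ forces $n_0\geq k-n-O(1)$, leaving at most $2^{n+O(1)}$ possible leaves $\sigma'$ in the relevant subtree.

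Since $d(z,z')\gg L$ implies $g(x,y)\leq Cd(z,z')^{-\nu}$ for $x\in\hat{C}_z,y\in\hat{C}_{z'}$, summing by dyadic distance scale yields, for each $z$,
\begin{equation*}
\sum_{z'\in\tilde{\mathcal{C}}\setminus\{z\}} d(z,z')^{-\nu} \leq \sum_{n=0}^k \frac{C 2^n}{L_n^\nu} \leq \frac{C}{(LK)^\nu}\sum_{n=0}^k 2^{n(1-\nu)},
\end{equation*}
which, via the usual trichotomy, evaluates (up to constants) to $(LK)^{-\nu}$, $\log(L_k/(LK))/(LK)$, or $L_k^{1-\nu}/(LK)$, for $\nu>1$, $\nu=1$, $\nu<1$ respectively. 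Multiplying by $CL^{2\alpha}|\tilde{\mathcal{C}}|$ and dividing by $|\Sigma(\tilde{\mathcal{C}})|^2$ produces an off-diagonal contribution which, when combined with the diagonal bound $CKL^{1-\nu}/L_k$, matches the three forms of $1/(cF_\nu\overline{H}_\nu)$ in each regime of $\nu$. The main obstacle is precisely the geometric claim boxed above: a direct attempt to derive a lower bound on $d(\tau(\sigma),\tau(\sigma'))$ scaling as $L_{k-n_0}$ by combining the separation from \eqref{eq:defproperembedding2} at depth $n_0+1$ with step-by-step bounds from \eqref{eq:defproperembedding1} fails, as the accumulated descent can reach a constant multiple of $L_{k-n_0-1}$ and cancel the gap. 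The trick is to use \eqref{eq:defproperembedding3} (an \emph{upper} bound on the sub-embedding diameter) to count close-by leaves rather than to lower-bound pair distances, which crucially also delivers the correct tree-based scaling $2^n$ in the counting inequality.
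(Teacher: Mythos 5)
Your overall architecture matches the paper's: the tree-recursive entropy count for \eqref{eq:numberproperembedding} is fine, and for \eqref{eq:capproperembedding} the split into a diagonal term and dyadic off-diagonal annuli, hinging on the counting inequality $|\{z'\in\tilde{\mathcal{C}}:d(z,z')\leq L_n\}|\leq C2^n$, is exactly what is needed and what the paper does. The problem is your justification of that counting inequality. From the diameter upper bound \eqref{eq:defproperembedding3} you deduce $d(z,z')\leq CL_{k-n_0}$, where $n_0$ is the depth of the most recent common ancestor, and then conclude that $d(z,z')\leq L_n$ forces $n_0\geq k-n-O(1)$. That implication runs in the wrong direction: it would require a matching \emph{lower} bound $d(z,z')\geq cL_{k-n_0}$, which is precisely the bound you yourself observe cannot be extracted from \eqref{eq:defproperembedding1}--\eqref{eq:defproperembedding3}. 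Indeed, two leaves descending from distinct depth-$(n_0+1)$ nodes are only guaranteed a separation of $2L_{k-n_0-1}+\kappa l_{k-n_0-1}$ at the ancestor level, while each may wander up to $2L_{k-n_0-1}+2L_0$ from its ancestor by \eqref{eq:defproperembedding3}; since $\kappa l_{k-n_0-1}=\kappa L_{k-n_0-1}/(k-n_0)^2\ll 2L_{k-n_0-1}$, the wander swallows the gap and two leaves with $n_0=0$ can end up within $O(L_0)$ of each other. So ``small distance implies deep MRCA'' is false, and the subtree count does not close as written.

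The correct route — the one the paper takes — does not involve the MRCA at all. For a leaf $\sigma'$ with $d(z,\mathbf{y}_{\sigma'})\leq L_n$, apply \eqref{eq:defproperembedding3} to its $n$-th ancestor $\sigma'_n\in T^{(k-n)}$ to get $d(z,\tau(\sigma'_n))\leq 3L_n+3L_0$; then bound the number of depth-$(k-n)$ nodes whose images lie in $B(z,3L_n+3L_0)$ by a \emph{packing argument}: by \eqref{eq:defproperembedding2} at depth $k-n$ the balls $B(\tau(\rho),L_n)$, $\rho\in T^{(k-n)}$, are pairwise disjoint and all contained in $B(z,7L_n)$, so by \eqref{eq:intro_sizeball} there are at most $C$ of them; finally multiply by the $2^n$ descendants of each such ancestor. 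This packing step, which uses the separation \eqref{eq:defproperembedding2} and the volume growth \eqref{eq:intro_sizeball} jointly, is the missing ingredient: your closing sentence gestures at "using the diameter bound to count close-by leaves," but the count only closes once the constant bound on nearby depth-$(k-n)$ ancestors is in place. With that substitution, the rest of your computation (diagonal contribution $CKL^{1-\nu}/L_k$, off-diagonal trichotomy in $\nu$) goes through as you wrote it.
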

\begin{proof}
    Let us start with \eqref{eq:numberproperembedding}, and denote by $M_k$ the maximum over $x\in{G}$ of $|\tilde{\mathcal{A}}_{x,k}|$. For each $k$-embedding $\tau$ we have that $\sigma\mapsto \tau(0\sigma)$ and $\sigma\mapsto\tau(1\sigma)$ are two $k-1$ proper embedding and so it follows from \eqref{eq:defLambda}, \eqref{eq:defIkx}, \eqref{eq:defEkx} and the inequality $2l_k+L_{k+1}\leq C\kappa (k+1)^2 l_k$ that 
\begin{equation}
\label{eq:recursiveentropy}
 M_0\leq \Cr{CLambda}(\kappa K)^{\alpha}
 \text{ and }M_k\leq (\Cr{CLambda}(C\kappa k^2)^{\alpha}M_{k-1})^2\text{ for all }k\geq1,
\end{equation}
Using that $\prod_{i=1}^{k}(k-i+1)^{\alpha 2^{i+2}}\leq \exp(C2^k)$, \eqref{eq:numberproperembedding} follows readily from \eqref{eq:boundLk}.

We now turn to the proof of \eqref{eq:capproperembedding}. Let $\tau$ be a proper embedding with base $x$, $\mathbf{y}\in{T^{(k)}}$ be such that $\mathbf{y}_{\sigma}\in{C_{\tau(\sigma)}^{l_0}}$ for all $\sigma\in{T^{(k)}}$, 
and let $\tilde{\calC}\subset\calC(\mathbf{y})$ with $|\tilde{\calC}|\geq 2^{k-1}$. For any $y\in{\Sigma(\tilde{\calC})}$ and $1\leq i\leq k$, let us denote by ${A}_i^y$ the set $\tilde{\calC}\cap B(y,L_i)\setminus B(y,L_{i-1})$. Denoting by $B^y_i$ the set of $\sigma\in{T^{(k-i)}}$ such that $\tau(\sigma)\in{B(y,3L_i+3L_0)},$ we see by \eqref{eq:defproperembedding2} that the balls $B(\tau(\sigma),L_i)$, $\sigma\in{B^y_i}$, are disjoint and all included in $B(y,6L_i)$. Therefore by \eqref{eq:intro_sizeball}  
\begin{equation}
\label{eq:Byi}
|B^y_i|\leq \frac{\Cr{Csizeball}(6L_i)^{\alpha}}{\Cr{csizeball}L_i^{\alpha}}\leq \frac{\Cr{Csizeball}6^{\alpha}}{\Cr{csizeball}}.
\end{equation}
For each $\sigma\in{T^{(k)}}$ such that $\mathbf{y}_{\sigma}\in{A_i^y}$, letting $\sigma_i\in{T^{(k-i)}}$ be the $i$-th ancestor of $\sigma$, we have $d(\tau(\sigma),\tau(\sigma_i))\leq 2L_i+2L_0$ by \eqref{eq:defproperembedding3}, $d(\tau(\sigma),\mathbf{y}_{\sigma})\leq L_0$, and so $d(y,\tau(\sigma_i))\leq 3L_i+3L_0$, that is $\sigma_i\in{B_i^y}$. Therefore $|A_i^y|\leq 2^i|B_i^y|\leq 2^i\Cr{Csizeball}6^{\alpha}\Cr{csizeball}^{-1}$. Using \eqref{eq:intro_sizeball}, \eqref{eq:intro_Green} and \eqref{eq:boundonlambda}, we deduce that for each $y\in{\Sigma(\tilde{\calC})}$
\begin{equation}
\label{eq:sumginSigma}
\begin{split}
\sum_{x\in{\Sigma(\tilde{\calC})\setminus C_y^{L_0}}}g(x,y)\leq \sum_{i=1}^k\sum_{x\in{{A}_i^y}}\sum_{z\in{\hat{C}_x^L}}g(z,y)
&\leq C\sum_{i=1}^kL^{\alpha}2^i(L_{i-1}-8L)^{-\nu}
\\&\leq C'L^{\alpha-\nu}K^{-\nu}\sum_{i=1}^k2^{(1-\nu)i},
\end{split}
\end{equation}
for some constants $C,C'<\infty$, and where we used \eqref{eq:boundLk} in the last inequality, together with $L_{i-1}\geq 9L$ for all $i\geq1$ since $\kappa\geq 9$. Since ${C}_y^{L_0}\cap\Sigma(\tilde{\calC})=\hat{C}_{\mathbf{y}_{\sigma}}^L$ for some $\sigma\in{T^{(k)}}$ by \eqref{eq:defSigma} and \eqref{eq:defproperembedding2} for $n=k$, we can similarly bound 
\begin{equation*}
\begin{split}
\sum_{x\in{\Sigma(\tilde{\calC})}\cap C_y^{L_0}}g(x,y)&\leq g(y,y)+\sum_{i=1}^{\lceil\log_2(8L)\rceil}\sum_{z\in{B(\mathbf{y}_{\sigma},2^i)\setminus B(\mathbf{y}_{\sigma},2^{i-1})}}g(z,y)
\\&\leq C\left(1+\sum_{i=1}^{\lceil\log_2(8L)\rceil}2^{i(\alpha-\nu)}\right)\leq C'L^{\alpha-\nu},
\end{split}
\end{equation*}
where the last inequality follows from \eqref{eq:condnualpha}. Note that by \eqref{eq:defSigma}, \eqref{eq:intro_sizeball}, \eqref{eq:boundonlambda} and  \eqref{eq:defproperembedding2} for $n=k$ we have $|\Sigma(\tilde{\calC})|\geq cL^{\alpha}2^{(k-1)}$. Therefore 
\begin{equation}
\label{eq:computecapacity}
\begin{split}
\frac{1}{|\Sigma(\tilde{\mathcal{C}})|}\sum_{x\in{\Sigma(\tilde{\mathcal{C}})}}g(x,y)&\leq C2^{-k}L^{-\alpha}\left(L^{\alpha-\nu}+L^{\alpha-\nu}K^{-\nu}\sum_{i=1}^k2^{(1-\nu)i}\right)\\&\leq
 \begin{cases}
C2^{-k\nu}(LK)^{-\nu}(2^{k(\nu-1)}K^\nu+1)&\text{ if }\nu<1,
\\Ck2^{-k}(LK)^{-1}(K/k+1)&\text{ if }\nu=1,
\\C2^{-k}(LK)^{-\nu}K^{\nu}&\text{ if }\nu>1.
\end{cases}
\end{split}
\end{equation}
Combining this with \eqref{eq:boundLk} and \eqref{eq:defGnu} yields \eqref{eq:capproperembedding}.
\end{proof}

Combining Lemmas~\ref{lem:entropy} and \ref{lem:properembedding}, one obtains a version of Proposition~\ref{pro:entropy} but with $\overline{H}_{\nu}$ instead of $H_{\nu}$ in \eqref{eq:capC}. As this will be the first step in the proof of Proposition~\ref{pro:entropy} and is of independent interest if one is not interested in the exact value of the constant $\Cr{cFPP2}$ in Theorem~\ref{the:FPP} when $\nu\leq1$, let us now explicitly state this intermediate result.

\begin{Cor}
\label{cor:entropy}
    There exist $\Cl[c]{ccardcalC},\Cl[c]{ccapcalC2}>0,$ and $\Cr{CcardA},\Cl{CKL2},\Cl{CcardcalC}<\infty$ such that for all $L\geq1$, $K\geq2,$  $x\in{G}$ and $N\geq \Cr{CKL2}KL,$ there exist $p\in{[\Cr{ccardcalC}N/(KL),\Cr{CcardcalC}N/(KL)]}$ and a family $\overline{\mathcal{A}}=\overline{\mathcal{A}}_{x,N}^{L,K}$ of collections $\mathcal{C}\subset B(x,N)$ as in \eqref{eq:defcalC} such that \eqref{eq:cardC}, \eqref{eq:gammaincludedC} and \eqref{eq:cardA} are satisfied for $\overline{\A}$ and, recalling $\overline{H}_{\nu}$ from below \eqref{eq:capproperembedding},
\begin{equation}
\label{eq:capC2}\begin{aligned}
         &\mathrm{cap}(\Sigma(\tilde{\mathcal{C}}))
         \geq  {\Cr{ccapcalC2}F_{\nu}(N,KL)\overline{H}_{\nu}(\tfrac{N}{KL},K)}
         \text{ for all }\mathcal{C}\in{\overline{\mathcal{A}}}\text{ and }\tilde{\mathcal{C}}\subset\mathcal{C}\text{ with }|\tilde{\mathcal{C}}|\geq n/2.
\end{aligned}
\end{equation}
\end{Cor}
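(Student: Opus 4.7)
The plan is to deduce Corollary~\ref{cor:entropy} essentially as a direct combination of Lemmas~\ref{lem:entropy} and~\ref{lem:properembedding} at a suitably chosen scale. Fix $\kappa = \Cr{Ckappa} \vee 9$ once and for all so that both lemmas apply. Given $N$, $K$, $L$, choose $k = k(N,K,L)$ as the largest integer with $2L_k + 2L \leq N$; provided $\Cr{CKL2}$ is chosen large enough, the inequality $N \geq \Cr{CKL2}KL$ ensures $k \geq 1$, $l_k \geq L$, and by \eqref{eq:boundLk} gives constants so that $cN \leq L_k \leq CN$. Set $p = 2^k$, which by \eqref{eq:boundLk} satisfies $\Cr{ccardcalC}N/(KL) \leq p \leq \Cr{CcardcalC}N/(KL)$ for appropriate constants, and define
\[
\overline{\mathcal{A}}_{x,N}^{L,K} \stackrel{\textnormal{def.}}{=} \{\mathcal{C}(\mathbf{y}) : \mathbf{y} \in \tilde{\mathcal{A}}_{x,k}\}, \qquad \mathcal{C}(\mathbf{y}) = \{\mathbf{y}_\sigma : \sigma \in T^{(k)}\},
\]
with $\tilde{\mathcal{A}}_{x,k}$ as in \eqref{eq:defAtilde}.

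I then verify the four required properties in order. For \eqref{eq:cardC}, each $\mathcal{C}(\mathbf{y})$ has cardinality $|T^{(k)}| = 2^k = p$: indeed, \eqref{eq:defproperembedding2} with $n = k$ together with $\kappa \geq 9$ forces the balls $B(\tau(\sigma), l_0)$, $\sigma \in T^{(k)}$, to be pairwise disjoint, so $\sigma \mapsto \mathbf{y}_\sigma$ is injective. The same inequality \eqref{eq:defproperembedding2} with $n=k$ gives pairwise distance at least $2L_0 + \kappa l_0 - 2l_0 \geq 16KL$ (taking $\kappa$ larger if needed), so $\mathcal{C}(\mathbf{y})$ is a valid collection as in \eqref{eq:defcalC}, and \eqref{eq:defproperembedding3} with $n = 0$ gives $\mathcal{C}(\mathbf{y}) \subset \tilde{C}_x^{L_k} \subset B(x, N)$. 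The entropy bound \eqref{eq:cardA} follows directly from \eqref{eq:numberproperembedding}: $|\overline{\mathcal{A}}| \leq |\tilde{\mathcal{A}}_{x,k}| \leq \exp(CL_k \log(K)/(KL)) \leq \exp(C' p \log(K))$ using $L_k \asymp KL \cdot 2^k$.

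For the covering property \eqref{eq:gammaincludedC}, I take any $L$-nearest neighbor path $\gamma$ from $C_x^L$ to $B(x, N-2L)^c$. Since $L \leq l_k$ and $2L_k \leq N-2L$, the path $\gamma$ contains an initial segment $\gamma'$ from $C_x^{l_k}$ to $(\tilde{C}_x^{L_k})^c$ (using \eqref{eq:dvsdgr} for the last crossing exactly as in the proof of Lemma~\ref{lem:entropy}). Lemma~\ref{lem:entropy} applied to $\gamma'$ produces a $k$-proper embedding $\tau$ with base $x$ such that $C_{\tau(\sigma)}^{l_0} \cap \gamma' \neq \varnothing$ for every $\sigma \in T^{(k)}$. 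Selecting $\mathbf{y}_\sigma$ to be any vertex of $\gamma' \subset \Lambda(L)$ lying in $C_{\tau(\sigma)}^{l_0}$ yields $\mathbf{y} \in \tilde{\mathcal{A}}_{x,k}$ with $\mathcal{C}(\mathbf{y}) \subset \gamma' \subset \gamma$.

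Finally for \eqref{eq:capC2}, I apply the variational formula \eqref{variational} with $\mu$ the uniform probability on $\Sigma(\tilde{\mathcal{C}})$: by \eqref{eq:capproperembedding}, for any $\tilde{\mathcal{C}} \subset \mathcal{C}(\mathbf{y})$ with $|\tilde{\mathcal{C}}| \geq p/2 = 2^{k-1}$,
\[
\mathrm{cap}(\Sigma(\tilde{\mathcal{C}})) \geq \left(\frac{1}{|\Sigma(\tilde{\mathcal{C}})|^2}\sum_{x,y \in \Sigma(\tilde{\mathcal{C}})} g(x,y)\right)^{-1} \geq c\, F_\nu(L_k, LK)\, \overline{H}_\nu\bigl(\tfrac{L_k}{LK}, K\bigr),
\]
which translates to $\Cr{ccapcalC2} F_\nu(N, KL) \overline{H}_\nu(N/(KL), K)$ using $L_k \asymp N$ and the easily verified fact that both $F_\nu(\cdot, LK)$ and $\overline{H}_\nu(\cdot, K)$ change by at most a constant factor under such a rescaling. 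There is no real obstacle in this argument; the only care required is in the calibration of $k$ so that $L_k$ is comparable to $N$ and in checking the geometric inclusions via \eqref{eq:defproperembedding1}--\eqref{eq:defproperembedding3}.
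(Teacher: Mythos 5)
Your proof is correct and follows essentially the same route as the paper's: both define $\overline{\mathcal{A}}$ as $\tilde{\mathcal{A}}_{x,k}$ at a scale $k$ with $L_k\asymp N$, take $p=2^k$, and read off the four properties from Lemmas~\ref{lem:entropy} and~\ref{lem:properembedding} together with \eqref{variational}. The only (harmless) difference is the calibration of $k$ — the paper takes $N/(16\kappa)\leq L_k\leq N/4$ rather than your maximal $k$ with $2L_k+2L\leq N$, which makes the inclusion $\mathcal{C}(\mathbf{y})\subset B(x,N)$ cleaner, since by \eqref{eq:defproperembedding3} the points $\mathbf{y}_\sigma$ only sit within $2L_k+3L_0$ of $x$ (not within $2L_k$ as your appeal to $\tilde{C}_x^{L_k}$ suggests), so some slack of order $L_0=\kappa KL$ must be left in the choice of $k$.
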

\begin{proof}
Let  $\kappa=\Cr{Ckappa}\vee 9$, $\Cr{CKL2}=4\kappa$, $N\geq \Cr{CKL2}KL$, and choose some $k\in{\mathbb{N}_0}$ such that $N/(16\kappa)\leq L_k\leq N/4$, which exists by \eqref{eq:defLkMk}. We let $\overline{\mathcal{A}}_{x,N}^{L,K}\stackrel{\textnormal{def.}}{=}\tilde{\mathcal{A}}_{x,k}$, see \eqref{eq:defAtilde}, and note that any $\mathcal{C}\in{\overline{\mathcal{A}}_{x,N}^{L,K}}$ is as in \eqref{eq:defcalC} by \eqref{eq:defproperembedding2} for $n=k$, and since $\kappa\geq9$. Moreover, for any $k$-proper embedding $\tau$ with base $x$, $\sigma\in{T^{(k)}}$ and $\mathbf{y}_{\sigma}\in{C_{\tau(\sigma)}^{L_0}}$ we have $d(x,\tau(\sigma))\leq 2(L_k+L_0)$ by \eqref{eq:defproperembedding3},  which implies $\mathcal{C}\subset B(x,N)$. Since $|\mathcal{C}(\mathbf{y})|=2^k$ for any $\mathbf{y}\in{\tilde{\mathcal{A}}_{x,k}}$, letting $p=2^k$ \eqref{eq:cardC} is satisfied and so by \eqref{eq:boundLk} one can choose $\Cr{ccardcalC}$ small enough and $\Cr{CcardcalC}$ large enough so that $p\in{[\Cr{ccardcalC}N/(KL),\Cr{CcardcalC}N/(KL)]}$ is satisfied. The inequality \eqref{eq:cardA} follows directly from \eqref{eq:numberproperembedding}, and  \eqref{eq:capC2} from \eqref{variational} and \eqref{eq:capproperembedding}. Finally, for any $L$-nearest neighbor path $\gamma$ from $x$ to $B(x,N-2L)^c$, since $N-2L\geq 2L_k$, we have $\gamma\cap (\tilde{C}_{x}^{L_k})^c\neq \varnothing$. By Lemma~\ref{lem:entropy}, there is a $k$-proper embedding $\tau$ with base $x$ such that $C_{\tau(\sigma)}^{l_0}\cap\gamma$ contains some vertex $\mathbf{y}_{\sigma}$ for any $\sigma\in{T^{(k)}}$, which directly implies \eqref{eq:gammaincludedC}.
\end{proof}

\begin{Rk}
\label{rk:otherschemenu<1} When $\nu<1$, one could replace the scales from \eqref{eq:defLkMk} by $L_k=\ell_0^k(\kappa KL)$, for any $\ell_0\in{(2,2^{\frac1\nu})}$, and the proof would still work. This yields a new version of Corollary~\ref{cor:entropy} but with $p(N/(KL))^{-\frac{\log2}{\log\ell_0}}\in{[\Cr{ccardcalC},\Cr{CcardcalC}]}$, which corresponds to a version of Proposition~\ref{pro:entropy} with the same leading order for $p$ as in \eqref{eq:choicep}, but without the correct constant in \eqref{eq:capC}. One could even take $\ell_0=2^{\frac1\nu}$, but one then additionally needs to replace the right-hand side of \eqref{eq:capC2} by ${\Cr{ccapcalC2}N^{\nu}\overline{H}_{\nu}(\tfrac{N}{KL},K)}/\log(N/(KL))$ (since the sum on the last line of \eqref{eq:sumginSigma} is then replaced by $\sum_{i=0}^k2^i\ell_0^{-\nu i}=k+1$). This new scheme is actually easier to implement, since a bound such as \eqref{eq:defproperembedding2} becomes easy to prove with more space between the different scales, but since we need the choice $p\approx N/(KL)$ in the proof of Lemma~\ref{lem:coarsegraining}  we are not going to pursue this here.
\end{Rk}

As explained in Remark~\ref{rk:otherschemenu<1}, one could also prove a version of Corollary~\ref{cor:entropy} so that $p$ can be chosen with a similar leading order as in \eqref{eq:choicep}. This is however not enough to deduce Proposition~\ref{pro:entropy} since \eqref{eq:capC2} and \eqref{eq:capC} are not equivalent. Indeed, the function $\Cr{ccapcalC2}\overline{H}_{\nu}(x,y)$ does not have the same explicit limit as the function $H_{\nu}(x,y,\eta)$ as $x\rightarrow\infty$ when $\nu\leq 1$. In order to obtain this exact limit we will use a similar strategy to \cite[Proposition~4.3]{GRS21} on $\Z^3$, see also \cite[Lemma~5.2]{MuiSev}, which essentially consists in defining $\mathcal{A}$ as the set of all possible sequence of vertices $x_i$ in $\Lambda(L)\cap \partial B(x,16iKL)$, $1\leq i\leq N/(16KL)$, and is the simplest way to obtain \eqref{eq:gammaincludedC}. This strategy yields the correct constant in \eqref{eq:capC} via Lemma~\ref{lem:capatube}, but it is easily seen that the entropy bound \eqref{eq:cardA} would have to be replaced by $|\A|\leq (N/L)^{CN/L}$. In order to obtain the correct bound in \eqref{eq:cardA}, our new strategy will essentially be to consider vertices $x_i$ in $\Lambda(M)\cap \partial B(x,iM)$, $1\leq i\leq N/M$, for some $M\in{[16KL,N}]$, and use Corollary~\ref{cor:entropy} (for $N=M$) to define $\A\cap \tilde{C}_{x_i}^M$ for each $1\leq i\leq N/M$. The set $\A$ then still has approximately $N/L$ vertices, as required in \eqref{eq:cardC}, and  the good entropy bound in Corollary~\ref{cor:entropy} will yield \eqref{eq:cardA} for a suitable choice of $M$.

\begin{proof}[Proof of Proposition~\ref{pro:entropy}]
When $\nu>1$, we can simply take $\Cr{crho}=1/2$, $H_{\nu}(x,y,\eta)=\Cr{ccapcalC2}\overline{H}_{\nu}(x,y)$ and $\Cr{CNKLp}=\Cr{CNKL}/2=\Cr{CcardcalC}$, and Proposition~\ref{pro:entropy} then follows from Corollary~\ref{cor:entropy} by defining $\mathcal{A}_{x,N}^{L,K}$ as the set of $\mathcal{C}$ which correspond to elements of $\overline{\mathcal{A}}_{x,N}^{L,K}$ plus a few additional vertices chosen arbitrarily so that $\mathcal{C}$ is still as in \eqref{eq:defcalC} and $|\mathcal{C}|=p$, which is always possible if $N\geq \Cr{CKL}KL$ upon choosing $\Cr{CKL}\geq \Cr{CKL2}$ large enough. 

Let us now assume $\nu\leq 1$. We define $M\stackrel{\textnormal{def.}}=KL\log(N/(KL))$, $P\stackrel{\textnormal{def.}}=\lfloor pKL/(\Cr{CcardcalC}M)\rfloor=\lfloor p/(\Cr{CcardcalC}\log(N/(KL))\rfloor$ and $S_{x,i}^{N,M}\stackrel{\textnormal{def.}}{=}(B(x,(5i-1)N/(5P))\setminus B(x,(5i-2)N/(5P)))\cap \Lambda(L)$ for all $1\leq i\leq P$ and $x\in{G}$. Note that $P\geq p/(2\Cr{CcardcalC}\log(N/(KL)) $ if $\Cr{CNKLp}$ is large enough by \eqref{eq:choicep}. If $N\geq \Cr{CKL}KL$ for $\Cr{CKL}$ large enough, then $M\geq \Cr{CKL2}KL$, and so Corollary~\ref{cor:entropy} with $N=M$ therein is in force. Let us define
\begin{equation*}
\mathcal{A}_{x,N}^{L,K}\stackrel{\textnormal{def.}}{=}\left\{\bigcup_{i=1}^{{P}}\mathcal{C}_{i}:\text{ for each } i\in\left\{1,\dots,P\right\},\mathcal{C}_i\in{\overline{\mathcal{A}}_{x_i,M}^{L,K}}\text{ for some }x_i\in{S_{x,i}^{N,M}} \right\}.
\end{equation*}
Each $\mathcal{C}\in{\mathcal{A}_{x,N}^{L,K}}$ is as in \eqref{eq:defcalC} since $d(y,z)\geq (i-j)N/(5P)-2M\geq 16KL$ for each $y\in{\mathcal{C}_i}$ and $z\in{\mathcal{C}_j}$, $i\neq j$, upon taking $p\leq \Cr{CNKL}N/(KL)$ and $\Cr{CNKL}$ small enough. Moreover, by \eqref{eq:defLambda} and \eqref{eq:cardA}
\begin{equation*}
\begin{split}
|\mathcal{A}_{x,N}^{L,K}|&\leq \left(|\sup_{y}\overline{\mathcal{A}}_{y,M}^{L,K}|\left(\frac{\Cr{CLambda}N}{L}\right)^{\alpha}\right)^{P}
\\&\leq \exp\left(\Cr{CcardA}p\log(K)+\frac{\alpha p}{\Cr{CcardcalC}\log(N/(KL))}\log\left(\frac{\Cr{CLambda}N}{L}\right)\right),
\end{split}
\end{equation*}
and we easily deduce that \eqref{eq:cardA} is still satisfied upon changing the constant $\Cr{CcardA}$. For each $L$-nearest neighbor path $\gamma$ from $x$ to $B(x,N-2L)^c$ and  for each $1\leq i\leq P$  we denote by $x_i$ the first point of $B(x,(5i-2)N/(5P))^c\cap \Lambda(L)$ visited by $\gamma$, then $d(x,x_i)\leq (5i-2)N/(5P)+L+\Cr{Cdvsdgr}^{-1}\leq (5i-1)N/(5P)$ by \eqref{eq:defLambda}, \eqref{eq:dvsdgr} and upon taking $\Cr{CNKL}$ small enough, and so $x_i\in{S_{x,i}^{N,M}}$. Moreover, we have ${C}_{x_i}^M\subset B(x,(5i-1)N/(5P)+M)\subset B(x,N-2L)$ if $\Cr{CNKL}$ is small enough, and so $\gamma$ can be decomposed into a path from $x_i$ to $({C}_{x_i}^{M})^c$, from which \eqref{eq:gammaincludedC} follows easily. Note also that $|\mathcal{C}|\leq P(\Cr{CcardcalC}M/(KL))\leq  p$ for any $\mathcal{C}\in{\mathcal{A}_{x,N}^{L,K}}$, and so \eqref{eq:cardC} is satisfied up to adding arbitrarily a few points in $\mathcal{C}$, in such a way that $\calC$ is still as in \eqref{eq:defcalC}, which is always possible if $\Cr{CKL}$ is large enough. 

It thus only remains to verify \eqref{eq:capC}, which will follow from Lemma~\ref{lem:capatube}. Let $\mathcal{C}=\cup_{i=1}^{P}\mathcal{C}_i\in{\A_{x,N}^{L,K}}$ and $\tilde{\mathcal{C}}\subset\mathcal{C}$ with $|\tilde{\calC}|\geq (1-\rho)|\calC|$. Note that since $|\mathcal{C}_k|\geq \Cr{ccardcalC}M/(KL)$ and $|\mathcal{C}|\leq p$, there are at most $2\rho pKL/(\Cr{ccardcalC}M)\leq 4\rho \Cr{CcardcalC}P/\Cr{ccardcalC}$ possible $k$ such that $|\tilde{\mathcal{C}}\cap\mathcal{C}_k|\leq \tfrac12|\mathcal{C}_k|$. Therefore defining $A\stackrel{\textnormal{def.}}{=}\{k\in\{1,\dots,p\}:\,|S_k|\geq \tfrac12|\mathcal{C}_k|\}$, we have $n\stackrel{\textnormal{def.}}{=}|A|\geq (1-4\rho\Cr{CcardcalC}/\Cr{ccardcalC})P\geq \Cr{ccardAP} P$, with $\Cr{ccardAP}$ as in Lemma~\ref{lem:capatube} for $\delta=1/2$, where the last inequality is satisfied upon taking $\rho\leq \Cr{crho}$ for some constant $\Cr{crho}$ small enough. Let us also define $S_k\stackrel{\textnormal{def.}}{=}\Sigma(\tilde{\mathcal{C}}\cap\mathcal{C}_k)$ for each $1\leq k\leq P$. Noting that $S_k\subset \Sigma(\mathcal{C}_k)\subset B(\mathcal{C}_k,8L)\subset B(S_{x,k}^{N,M},8L+M)\subset B(x,iN/P)\setminus B(x,(i-1/2)N/P)$ if $\Cr{CNKL}$ is small enough, and so $d(S_i,S_j)\geq (|i-j|-1/2)N/P$ for all $i\neq j$, and that $P\geq  \Cr{CboundP}$ is satisfied if $\Cr{CNKLp}=\Cr{CNKLp}(\eta)$ is large enough, it follows from \eqref{eq:capC2} and Lemma~\ref{lem:capatube} that \eqref{eq:capunionbound} is satisfied with $\kappa={\Cr{ccapcalC2}F_{\nu}(M,KL)\overline{H}_{\nu}(\tfrac{M}{KL},K)}$. Finally by \eqref{eq:defGnu}, one has
\begin{equation}
\label{eq:condMN}
\begin{split}
\frac{1}{\Cr{cconstantabovekappa}\kappa P}&=
\frac{\Cr{CcardcalC}M^{1-\nu}\log(M/KL)^{1\{\nu=1\}}}{\Cr{cconstantabovekappa}\Cr{ccapcalC2}pKL\overline{H}_{\nu}(\tfrac{M}{KL},K)}
\leq \frac{\eta\log(N/KL)^{1\{\nu=1\}}}{\Cr{cbeta}N^{\nu}\overline{H}_{\nu}(\log(N/(KL)),K)},
\end{split}
\end{equation}
where the last inequality holds if $p\geq \Cr{CNKLp}(N/(KL))^{\nu}\log(N/(KL))^{1-\nu}$ for some constant $\Cr{CNKLp}=\Cr{CNKLp}(\eta)$ large enough. Letting $H_{\nu}(x,y,3\eta)=\overline{H}_{\nu}(\log(x),y)\Cr{cbeta}/(1+3\eta)$, which satisfies $H_{\nu}(x,y,\eta)\rightarrow \frac{\Cr{cbeta}}{1+3\eta}$ as $x\rightarrow\infty$, see below \eqref{eq:capproperembedding}, noting that $\log(P)\leq (1+\eta)\log(N/(KL))$ if  $\Cr{CKL}$ is large enough, and doing a change of variable for $\eta$, we can conclude by \eqref{eq:capunionbound}.
\end{proof}

\begin{Rk}
\label{rk:lowercardinality} Replacing $M$ by $CKL$ for a large enough constant $C$ in the proof of Proposition~\ref{pro:entropy}, one can alternatively choose $p\geq \Cr{CNKLp}(N/(KL))^{\nu}$ if $\nu<1$, and $p\geq  \Cr{CNKLp}(N/(KL))/\log(N/(KL))$ if $\nu=1$ in \eqref{eq:choicep}. However, the entropy bound \eqref{eq:cardA} would then need to be replaced by $|\mathcal{A}|\leq \exp\left(\Cr{CcardA}p\log(N/L)\right)$, which would make our proof of Theorem~\ref{the:FPP} more complicated as explained in Remark~\ref{rk:decayVu},\ref{rk:noapriori}). It is an interesting question whether one can prove a version of Proposition~\ref{pro:entropy} but for $p$ as above and with the same entropy bound as in \eqref{eq:cardA}, see  Remark~\ref{rk:decayVu},\ref{rk:othercardinality}) as to why.
\end{Rk}

\section{Proof of Theorems~\ref{the:limitVuintro2} and (\ref{eq:boundonPEintro})}
\label{sec:proofofFPPintro}

In this short section, we first deduce  from Theorem~\ref{the:FPP} a generalization to graphs satisfying \eqref{eq:standingassumptionintro} of the respective upper bounds in Theorem~\ref{the:limitVuintro2} and \eqref{eq:boundonPEintro}, as well as obtain almost matching lower bounds, see Theorems~\ref{the:limitVunointro} and~\ref{the:FPPnointro}. We start with the setup from Theorem~\ref{the:limitVuintro2}, which correspond to considering the distance $d_{\mathbf{t},u}$ from \eqref{eq:defdzL} for the choices $R=1$ and $t_z^u=1\{\ell_{z,u}>0\}$, which fits the setup from \eqref{eq:defbft}.

\begin{The}
\label{the:limitVunointro}
Take $R=1$, $t_z^u=1\{\ell_{z,u}>0\}$ for all $z\in{G}$ and fix some $\eta\in{(0,1)}$. For all $u>u_{*}$, there exist $\Cl[c]{cFPPVu}=\Cr{cFPPVu}(\eta,u)>0$, $\Cl{clowFPPVu}<\infty$, $\Cl[c]{cdtu}=\Cr{cdtu}(u)>0$ and $\Cl{Cdtu}=\Cr{Cdtu}(u)<\infty$  such that: for all $x\in{G}$
\begin{equation}
\label{eq:lowerboundVunointro}
    \liminf_{N\rightarrow\infty}\frac{\log(\P(d_{\mathbf{t},u}(x;N)\leq \Cr{cFPPVu}N))}{N^{\nu}}\geq\liminf_{N\rightarrow\infty}\frac{\log(\P(x\leftrightarrow B(x,N)^c))}{N^{\nu}}\geq -{\Cr{clowFPPVu}(\sqrt{u}-\sqrt{u_{*}})^2};
\end{equation}
moreover if on the one hand $\nu\leq 1$ and $u>u_{**}$, then for all $x\in{G}$
\begin{equation}
\label{eq:limitVunointro}
   \limsup_{N\rightarrow\infty}\frac{\log(N)^{1\{\nu=1\}}\log(\P(d_{\mathbf{t},u}(x;N)\leq \Cr{cFPPVu}N))}{N^{\nu}}\leq -(1-\eta)\Cr{cbeta}(\sqrt{u}-\sqrt{u_{**}})^2;
\end{equation}
and if on the other hand $\nu>1$ and $u>u_{**}$, then for all $x\in{G}$
\begin{equation}
\label{eq:limitVunu>1nointro}
     -\Cr{Cdtu}\leq \liminf_{N\rightarrow\infty}\frac{\log(\P(d_{\mathbf{t},u}(x;N)\leq \Cr{cFPPVu}N))}{N}\leq \limsup_{N\rightarrow\infty}\frac{\log(\P(d_{\mathbf{t},u}(x;N)\leq \Cr{cFPPVu}N))}{N}\leq -\Cr{cdtu}.
\end{equation}
\end{The}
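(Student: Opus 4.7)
The proof decomposes into three independent parts. The first inequality in \eqref{eq:lowerboundVunointro} is tautological: any connecting path from $x$ to $B(x,N)^c$ in $\V^u$ has zero $\mathbf t$-weight, so $d_{\mathbf t, u}(x;N) = 0$. The second inequality is the sharp near-critical lower bound on the one-arm probability in $\V^u$. On $\Z^3$ this is proved in \cite{GosRod} via the isomorphism between random interlacements and the Gaussian free field (through which $\sqrt u - \sqrt{u_*}$ arises naturally as a level gap) combined with Gaussian large-deviation bounds and the capacity bound of Lemma~\ref{lem:capatube}; the same strategy transposes to graphs satisfying \eqref{eq:standingassumptionintro} since each ingredient is available at this level of generality. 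For the lower bound in \eqref{eq:limitVunu>1nointro}, an elementary tube argument suffices: when $\nu>1$ a tube $\mathcal L \subset B(x,N)$ of bounded width has $\mathrm{cap}(\mathcal L) \leq CN$ by Lemma~\ref{lem:capatube}, so \eqref{eq:defIuintro} yields $\P(\I^u \cap \mathcal L = \varnothing) \geq \exp(-uCN)$, and on this event $d_{\mathbf t, u}(x;N) = 0$.

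For the upper bounds in \eqref{eq:limitVunointro} and \eqref{eq:limitVunu>1nointro}, the main input is Lemma~\ref{lem:coarsegraining} (and hence Theorem~\ref{the:FPP}), applied with $R = 1$, $g_z(\ell) = 1\{\ell > 0\}$ (so that the family $(g_z)$ is increasing), auxiliary level $u' \in (u_{**}, u)$ close to $u_{**}$, and target level $v = u$. Substituting $F_\nu(x) = x^\nu$ for $\nu<1$ or $F_\nu(x) = x/\log x$ for $\nu=1$ into \eqref{eq:boundFPPcond}, and using that $\Cr{cFPP4} = \Cr{cbeta}/(1+\eta')$ for $\nu \leq 1$, the conclusion reads
\[
\P(d_{\mathbf t, u}(x;N) \leq \Cr{cFPPVu} N) \leq 3\exp\!\left(-\tfrac{\Cr{cbeta}}{1+\eta'}\big(\sqrt u - \sqrt{u'_\eps}\big)^2 \tfrac{N^\nu}{\log(N)^{1\{\nu=1\}}}\right)
\]
provided the intermediate scale $L = L(N)$ lies in the window of \eqref{eq:condLNnu<1} (resp.\ \eqref{eq:condLNnu=1}) and the a priori in \eqref{eq:assumptiontzu2} is available at $r$ proportional to $L$. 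Taking $\limsup$ in $N$, then $\eps \to 0$, then $u' \searrow u_{**}$, and finally a change of variable in $\eta$ delivers \eqref{eq:limitVunointro}. The $\nu>1$ case of \eqref{eq:limitVunu>1nointro} follows identically with $F_\nu(x) = x$, producing exponential decay at a positive but non-sharp rate $\Cr{cdtu} > 0$.

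The main obstacle is securing the a priori \eqref{eq:assumptiontzu2} with $r$ proportional to $L$ at a level $u'$ arbitrarily close to $u_{**}$. Proposition~\ref{pro:apriori} supplies such an $r$, but its hypothesis \eqref{eq:assumptiontzu3} with $R = 1$ and $t_z^{u'} = 1\{\ell_{z,u'} > 0\}$ becomes $\exp(-u'/g(z,z)) \leq \delta(1 \wedge (u')^{1/\nu})^\alpha$, which can fail when $u_{**}$ is small relative to $1$ (cf.\ Remark~\ref{rk:decayVu},\ref{rk:whyapriori})). I would handle this in two stages: first apply Proposition~\ref{pro:apriori} at a large fixed level $u_0$ where \eqref{eq:assumptiontzu3} holds unconditionally, obtaining $d_{\mathbf t, u_0}(x;L;2L) \geq cL$ with high probability, and transport this to any $u' \geq u_0$ via the monotonicity $d_{\mathbf t, u'} \geq d_{\mathbf t, u_0}$. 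When $u_{**} < u_0$, the remaining range $u' \in (u_{**}, u_0]$ is treated by starting from strong non-percolation \eqref{eq:defu**} (which is exactly \eqref{eq:assumptiontzu2} at $r = 1$) and then iterating Lemma~\ref{lem:coarsegraining} along the scheme of Remark~\ref{rk:decayVu},\ref{rk:easierproofc1=0}) to progressively enlarge $r$ up to the linear threshold needed for the final application that produces the displayed bound above.
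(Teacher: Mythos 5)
Your treatment of the upper bounds \eqref{eq:limitVunointro} and \eqref{eq:limitVunu>1nointro} has a genuine gap, and you have correctly located where it sits: the a priori estimate \eqref{eq:assumptiontzu2} with $r$ proportional to $L$ cannot be secured at $R=1$ with $t_z^u=1\{\ell_{z,u}>0\}$ for a fixed $u$ near $u_{**}$, and neither of your two repair stages closes it. Stage one (Proposition~\ref{pro:apriori} at a large level $u_0$, then monotonicity) only covers levels above $u_0$, which does not help: to reach the constant $(\sqrt{u}-\sqrt{u_{**}})^2$ the auxiliary level must approach $u_{**}$, which may be far below any $u_0$ for which \eqref{eq:assumptiontzu3} holds at $R=1$. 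Stage two fails quantitatively. Strong non-percolation only gives \eqref{eq:assumptiontzu2} for $r<1$ (not $r=1$: the event $d\leq 1$ allows one hit of $\I^u$ and is not controlled by \eqref{eq:defu**}), and a single application of Lemma~\ref{lem:coarsegraining} then yields $d\gtrsim N/L(N)$, which is sublinear since \eqref{eq:condLNnu<1}--\eqref{eq:condLNnu=1} force $L(N)\to\infty$. If you instead iterate, each pass converts a hypothesis at linear rate $r/L$ into a conclusion at rate $\Cr{cFPP3}\,r/L$ with $\Cr{cFPP3}<1$ a fixed constant, while the window conditions force the scales to grow super-geometrically ($N\asymp L^{1/(1-\nu)}$ for $\nu<1$, $N\asymp e^{L}$ for $\nu=1$); reaching scale $N$ from a fixed base thus takes a number $k=k(N)\to\infty$ of iterations, and the surviving rate $\Cr{cFPP3}^{k(N)}$ tends to zero, so no constant $\Cr{cFPPVu}>0$ is produced.

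The paper's resolution is different: it never works at $R=1$. It fixes $R=R(u,\eta)$ large and takes the increasing weights $g_z((L_x)_x)=1\{B(z,R)\not\leftrightarrow B(z,2R)^c\text{ in }\{x:L_x=0\}\}$ on $\Lambda(R)$. For a level $u'>u_{**}$ and $R$ large enough, hypothesis \eqref{eq:assumptiontzu} of Theorem~\ref{the:FPP} with $s=1/2$ is then exactly strong non-percolation, because $(1\wedge (u')^{1/\nu}R)^{\alpha}=1$; Theorem~\ref{the:FPP} (which already packages Proposition~\ref{pro:apriori} and Lemma~\ref{lem:coarsegraining}) gives a linear lower bound on the renormalized distance, and this transfers to $d_{\mathbf t,u}(x;N)$ because every $z$ on the coarse path with $t_z^u=1$ forces the original path to hit $\I^u$ inside $B(z,2R)$, at the cost of the overlap multiplicity $2^{\alpha}\Cr{CLambda}$ (whence $\Cr{cFPPVu}\propto 1/R$ depends on $u$). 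This renormalization of the weights is the missing idea. Two smaller points: for the second inequality in \eqref{eq:lowerboundVunointro} you defer to \cite{GosRod}, which is specific to $\Z^3$; since only some constant $\Cr{clowFPPVu}$ is claimed, the paper instead uses an elementary argument valid on all graphs in the class --- thin the Poisson process of trajectories hitting $B(x,2N)$, pay $\exp(-C\delta^2\mathrm{cap}(B))$ by Poisson large deviations to keep at most $(u_*-2\delta)\mathrm{cap}(B)$ of them, and couple the remainder with a percolating $\V^{u_*-\delta}$. Your lower bound for \eqref{eq:limitVunu>1nointro} via the capacity of a bounded-width tube is correct and essentially equivalent to the paper's FKG argument along the geodesic from \eqref{eq:intro_shortgeodesic}.
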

\begin{proof}
    For $R\geq u_{**}^{-1/\nu}$ to be fixed later, let us define $g_z((L_x)_{x\in{B(z,3R)}})=1\{B(z,R)\not\leftrightarrow B(z,2R)^c\text{ in }\{x:\,L_x=0\}\}$ for each $L\in{[0,\infty)^{B(z,3R)}}$ and $z\in{\Lambda(R)},$ which is an increasing function. Take also $v>u_{**}$ and $u\in{(u_{**},v)}$ small enough (depending on $\eta$ and $v$) so that $(\sqrt{v}-\sqrt{u})^2\geq (1-\eta)(\sqrt{v}-\sqrt{u_{**}})^2$, and let $\Theta=v/u$ and $\xi=(\sqrt{v}/\sqrt{u})-1$. Then, $t_z^u=0$ if and only if $B(z,R)\leftrightarrow B(z,2R)^c\text{ in }\mathcal{V}^u$, and so \eqref{eq:assumptiontzu} (note that $\Cr{ccondFPP}=\Cr{ccondFPP}(\eta,\xi,\Theta)$ therein for $\eta$, $\xi$ and $\Theta$ as before now depends only on $\eta$ and $v$) for $s=1/2$ is satisfied for $R=R(v,\eta)$ large enough. Then, taking $\zeta=R(\sqrt{v}-\sqrt{u})^{2/\nu}$, the conditions of Theorem~\ref{the:FPP} are fulfilled, and so \eqref{eq:bounddtvfinal} is satisfied, and the constants therein now depend on $v$ and $\eta$. Note also that each time some $z\in{\Lambda(R)}$ with $g_z(\ell_{\cdot,u})>0$ is visited by a path $\gamma$ starting or ending in $B(z,R)$, then the path $\gamma$ also visits $\I^u$ in $B(z,2R)$. Using \eqref{eq:defLambda} and taking $\Cr{cFPPVu}=\Cr{cFPP}/(2^{\alpha}\Cr{CLambda}R)$, in view of \eqref{eq:defGnu}, for $\nu\leq 1$ the left-hand side of \eqref{eq:limitVunointro} (for $v$ instead of $u$) can thus be upper bounded by $-\Cr{cbeta}(\sqrt{v}-\sqrt{u_{**}})^2(1-\eta)/(1+\eta)$. After a change of variable for $\eta$, we obtain \eqref{eq:limitVunointro}. The proof of the upper bound in \eqref{eq:limitVunu>1nointro} for $\nu>1$ and $\eta=1/2$ is similar (but the constant $\Cr{cFPP2}$ from \eqref{eq:bounddtvfinal} now depends on $v$), and for the lower bound one simply asks that each of the vertices of the path $(x=x_1,x_2,\dots,x_{p})$ from \eqref{eq:intro_shortgeodesic}, with $p=(N+\Cr{Cgeo})/\Cr{cgeo}$, are such that $B(x_i,\Cr{cgeo}+\Cr{Cgeo})\subset \V^u$, and uses the FKG inequality.

    It remains to prove \eqref{eq:lowerboundVunointro}. Abbreviate $B=B(x,2N)$ and $\delta=u-u_*$, recall the definition of $N_v^{B}$, $v>0$, from above \eqref{eq:defRI}, and let $\tilde{N}_v^{B}$ be an independent copy of $N_v^{B}$ for any $v>0$. Then letting $\tilde{\V}^v=B\setminus \cup_{k=1}^{\tilde{N}_v^B}\{\text{trace}(X^k)\cap B\}$ for any $v>0$, where $(X^i)_{i\in{\N}}$ are the trajectories of $\omega$ hitting $B$, see above \eqref{eq:defRI}, we have that $\tilde{\V}^v\stackrel{\textnormal{law.}}{=}\V^v\cap B$, and is independent of $N_v^{B}$. Therefore
\begin{equation}
\label{eq:boundonprobatoconnectVu}
\begin{split}
    &\P(x\leftrightarrow   B(x,N)^c\text{ in }\V^u)
    \\&\geq \P\big(N_u^{B}\leq (u_*-2\delta)\mathrm{cap}(B),\tilde{N}_{u_*-\delta}^{B}\geq (u_*-2\delta)\mathrm{cap}(B),x\leftrightarrow  B(x,N)^c\text{ in }\tilde{\V}^{u_*-\delta}\big)
    \\&\geq \P(N_u^{B}\leq (u_*-2\delta)\mathrm{cap}(B))\Big(\P\big(x\stackrel{\tilde{\V}^{u_*-\delta}}{\longleftrightarrow}  B(x,N)^c\big)-\P\big(\tilde{N}_{u_*-\delta}^{B}\leq (u_*-2\delta)\mathrm{cap}(B)\big)\Big).
    \end{split}
\end{equation}
If $P\in{[\Cr{CboundP},N]}$, it follows from large deviations bounds on Poisson random variables, which can for instance be deduced from \cite[(3.7)]{chernoff1952measure} applied to $\text{Poi}(1)-1$ random variables and the infinite divisibility of Poisson random variables,  that if $\delta\leq u_*/2$ and $N$ is large enough then
\begin{equation}
\label{eq:boundNuB}
    \P(N_u^{B}\leq (u_*-2\delta)\mathrm{cap}(B)))\geq \exp(-C\delta^2\mathrm{cap}(B))\geq\exp\big(-{C'\delta^2N^{\nu}}\big)
\end{equation}
for some constants $C,C'<\infty$, where the last inequality follows from \eqref{eq:capball}. We moreover have that $\P(x\leftrightarrow   B(x,N)^c\text{ in }\tilde{\V}^{u_*-\delta})\geq \P(x\leftrightarrow \infty\text{ in }\tilde{\V}^{u_*-\delta})>0$  by definition of $u_*$, and thus by Chebyshev's inequality one can easily show that $\P(\tilde{N}_{u_*-\delta}^{B}\leq (u_*-2\delta)\mathrm{cap}(B))\leq \P(x\leftrightarrow \infty\text{ in }\tilde{\V}^{u_*-\delta})/2$ if $N$ is large enough. Combining this with \eqref{eq:boundonprobatoconnectVu} and \eqref{eq:boundNuB}, and noting that $\delta^2\leq C(\sqrt{u}-\sqrt{u_*})^2$ when $\delta\leq u_*$, and that $d_{\mathbf{t},u}(x;N)=0$ when $x\leftrightarrow B(x,N)^c$ in $\V^u$, finishes the proof of \eqref{eq:lowerboundVunointro}.
\end{proof}

\begin{Rk} 
\label{rk:generalizationmainFPP}
As should be clear from the proof of Theorem~\ref{the:limitVunointro}, one can use Theorem~\ref{the:FPP} to deduce that for any $\eta>0$, if  $\P(d_{\mathbf{t},u}(x;L;2L)=0)\rightarrow0$ as $L\rightarrow\infty$ for some $\mathbf{t}$ as in \eqref{eq:defbft}, then $\P(d_{\mathbf{t},v}(x;N)\leq cN)$ can be upper bounded by $\exp(-\Cr{cbeta}(1-\eta)(\sqrt{v}-\sqrt{u})^2N^{\nu}\log(N)^{-1\{\nu=1\}})$  as $N\rightarrow\infty$ if $\nu\leq1,$ and $\exp(-c'N)$ if $\nu>1$, where $(u,v)\in{\L}$, see \eqref{eq:defcalL}, and $c,c'$ are positive constants, depending on $u$, $v$ and $\eta$.  Theorem~\ref{the:limitVunointro} is thus just an application of this when $R=1$ and $t_z^u=1\{z\in{\I^u}\}$, $u>u_{**}$.
\end{Rk}

In particular, \eqref{eq:limitVunointro} and \eqref{eq:limitVunu>1nointro} show that $d_{\mathbf{t},u}(x;N)$ grows linearly in $N$ with high probability, and so the time constant associated to $d_{\mathbf{t},u}$ is positive when it exists. Note that one knows this on $\Z^d$ from \cite{AndPre}, but that we could not simply adapt the method used therein to general graphs satisfying \eqref{eq:standingassumptionintro}, see Remark~\ref{rk:apriori},\ref{rk:previousFPPbounds}) as to why. Proposition~\ref{pro:apriori} provides a shorter proof of this, but the bounds therein are not optimal. On the contrary, in view of \eqref{eq:lowerboundVunointro} and \eqref{eq:limitVunu>1nointro}, the upper bounds from Theorem~\ref{the:limitVunointro} that we deduced from Theorem~\ref{the:FPP} are sharp up to constants (not depending on $u$ when $\nu<1$ if the phase transition is sharp) whenever $\nu\neq1$, and up to a logarithmic correction in $N$ when $\nu=1$. Actually on $\Z^3$, for which $\nu=1$, the upper bound in \eqref{eq:limitVunointro} is also sharp using \cite{GosRod} as indicated in Theorem~\ref{the:limitVuintro2}, and we believe this should also be true on graphs satisfying \eqref{eq:standingassumptionintro} with $\nu\leq 1$.  However, proving this on general graphs would require at least the existence of a strongly percolative phase for $\V^u$ similar to the one established on $\Z^d$ in \cite{MR3269990}, which is an interesting open question in this context. It is interesting to note that when $\nu<1$, our proof of \eqref{eq:lowerboundVunointro} is much easier than the method used in \cite{GosRod}, but at the cost of not obtaining matching constants with \eqref{eq:limitVunointro}, even if the phase transition was sharp.

\begin{proof}[Proof of Theorem~\ref{the:limitVuintro2}]
Recall that $d_u(N)=d_{\mathbf{t},u}(0;N)$, see \eqref{eq:defduN} and \eqref{eq:defdzL}, when $R=1$ and $t_z^u=1\{\ell_{z,u}>0\}$ for all $z\in{G}$. In particular, the left-hand side of \eqref{eq:limitVuintro} can easily be upper bounded by its right-hand side using \eqref{eq:limitVunointro}, \eqref{eq:assumptionZd} and that $u_*=u_{**}$ by \cite{sharpnessRI1,sharpnessRI2,sharpnessRI3}. The corresponding lower bound is proved in \cite{GosRod}. 
\end{proof}

We now turn to the proof of \eqref{eq:boundonPEintro}. For the lower bound, we will need the following technical lemma, which uses a reasoning similar to \cite[(6.14)]{DrePreRod5} and above.

\begin{Lemme}
\label{lem:exitviatubes}
    There exist constants $\Cl[c]{Cexittube3}>0$, $1< \Cl{Cexittube2}<\infty$ and $C<\infty$  such that for all $N\geq 1$, integers $P\leq N$ and $x\in{G}$, there exists a sequence $x=x'_1,\dots,x'_P$ such that $|i-j|N/P-1/\Cr{Cexittube3}\leq d(x'_i,x'_{j})\leq (|i-j|+1/\Cr{Cexittube3})N/P$ for all $1\leq i,j\leq P$, $B(x'_P,N/P)\cap B(x,N)=\varnothing$, and letting $\mathcal{L}_{x}^{P,N}\stackrel{\textnormal{def.}}{=}\bigcup_{i=1}^PB(x'_i,\Cr{Cexittube2}N/P)$ we have for all $1\leq i,j\leq P$ and $y\in{B(x'_i,N/P)}$
    \begin{equation}
    \label{eq:exitviatubes}
        \P_y(H_{B(x'_j,N/P)}<T_{\mathcal{L}_x^{P,N}})\geq \exp(-C|i-j|).
    \end{equation}
\end{Lemme}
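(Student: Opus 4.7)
The plan is to build the sequence $x'_1,\dots,x'_P$ by extracting points along a quasi-geodesic provided by \eqref{eq:intro_shortgeodesic}, and then to obtain the hitting estimate \eqref{eq:exitviatubes} by chaining single-step applications of \eqref{eq:probahittinglower} via the strong Markov property.

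First I would apply \eqref{eq:intro_shortgeodesic} to $x$ to obtain an infinite path $(x_0=x,x_1,x_2,\dots)$ with $|\Cr{cgeo}|k-p|-d(x_k,x_p)|\leq \Cr{Cgeo}$ for all $k,p\in{\N}$. I then set $x'_i\stackrel{\textnormal{def.}}{=}x_{n_i}$ with $n_i\stackrel{\textnormal{def.}}{=}\lceil i\Delta\rceil$, where $\Delta=(N+N/P+\Cr{Cgeo}+\Cr{cgeo}+2)/(P\Cr{cgeo})$ is chosen just large enough so that $\Cr{cgeo}n_P-\Cr{Cgeo}>N+N/P$, which forces $B(x'_P,N/P)\cap B(x,N)=\varnothing$. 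For any $i,j$, one checks directly that
\begin{equation*}
    \big|d(x'_i,x'_j)-|i-j|N/P\big|\leq \Cr{cgeo}(|\Delta|i-j|-|n_i-n_j||)+\Cr{Cgeo}+|i-j|\big(\Cr{cgeo}\Delta-N/P\big)\leq C(1+|i-j|/P),
\end{equation*}
and since $N/P\geq1$ and $|i-j|\leq P$, both of the required inequalities $|i-j|N/P-1/\Cr{Cexittube3}\leq d(x'_i,x'_j)\leq (|i-j|+1/\Cr{Cexittube3})N/P$ hold upon choosing $\Cr{Cexittube3}=\Cr{Cexittube3}(\Cr{cgeo},\Cr{Cgeo})>0$ sufficiently small (independently of $N$ and $P$).

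Next I would prove \eqref{eq:exitviatubes} by a single-step estimate combined with the strong Markov property. The key single-step claim is that there exists $c>0$ (depending only on the graph) such that for every $k\in{\{1,\dots,P-1\}}$ and every $z\in{B(x'_k,N/P)}$,
\begin{equation}\label{eq:singleplan}
    \P_z\big(H_{B(x'_{k+1},N/P)}<T_{U_k}\big)\geq c,\quad\text{where }U_k\stackrel{\textnormal{def.}}{=}B(x'_k,\Cr{Cexittube2}N/P)\cup B(x'_{k+1},\Cr{Cexittube2}N/P).
\end{equation}
To see this, set $R=d(x'_k,x'_{k+1})+N/P\leq (2+1/\Cr{Cexittube3})N/P$, so that $A=B(x'_{k+1},N/P)\subset B(x'_k,R)$ and $z\in{B(x'_k,R)}$. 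By \eqref{eq:probahittinglower} and \eqref{eq:capball} we get
$\P_z(H_A<T_{B(x'_k,\Cr{Chittinglower}R)})\geq \Cr{cGreen}\Cr{ccapball}/(4^{\nu}(2+1/\Cr{Cexittube3})^{\nu})\eqqcolon c$. The inclusion $B(x'_k,\Cr{Chittinglower}R)\subset U_k$ is ensured by choosing $\Cr{Cexittube2}\geq \Cr{Chittinglower}(2+1/\Cr{Cexittube3})$, which yields \eqref{eq:singleplan}.

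Finally, I would iterate \eqref{eq:singleplan} as follows. Assume w.l.o.g.\ $i<j$, and for $k\in{\{0,\dots,j-i\}}$ abbreviate $\sigma_k=H_{B(x'_{i+k},N/P)}\wedge T_{\mathcal{L}_x^{P,N}}$. Since $U_{i+k}\subset\mathcal{L}_x^{P,N}$, the event $\{H_{B(x'_{i+k+1},N/P)}<T_{U_{i+k}}\}$ implies that the walk has not left $\mathcal{L}_x^{P,N}$ before hitting $B(x'_{i+k+1},N/P)$. Applying the strong Markov property at $\sigma_k$ on the event $\{H_{B(x'_{i+k},N/P)}<T_{\mathcal{L}_x^{P,N}}\}$, and using that then $X_{\sigma_k}\in{B(x'_{i+k},N/P)}$, one obtains by induction in $k$ that
\begin{equation*}
    \P_y\big(H_{B(x'_j,N/P)}<T_{\mathcal{L}_x^{P,N}}\big)\geq c^{|j-i|},
\end{equation*}
which is \eqref{eq:exitviatubes} with $C=\log(1/c)$. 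The only delicate point in this plan is the matching of constants in step~2 (so that both the lower and upper distance bounds hold simultaneously, and $B(x'_P,N/P)$ stays outside $B(x,N)$), but this is a purely elementary optimization in $\Delta$, $\Cr{Cexittube3}$ and $\Cr{Cexittube2}$.
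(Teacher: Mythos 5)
Your proof is correct and takes essentially the same route as the paper's: points spaced by roughly $N/P$ along the quasi-geodesic from \eqref{eq:intro_shortgeodesic}, a constant-probability one-step hitting estimate obtained from \eqref{eq:probahittinglower} and \eqref{eq:capball}, and chaining via the strong Markov property. One small slip: the displayed deviation bound $C(1+|i-j|/P)$ is false when $|i-j|$ is of order $P$ and $N/P$ is large (the term $|i-j|(\Cr{cgeo}\Delta-N/P)$ is then of order $N/P$), but this is harmless because the upper bound in the statement tolerates a deviation of order $N/P$, while the downward deviation in your construction is genuinely $O(1)$, which is what the lower bound requires.
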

\begin{proof}
Let $(x=x_0,x_1,\dots)$ be the infinite path from \eqref{eq:intro_shortgeodesic}, and let $x'_1=x$ and $x'_i=x_{\lceil (i+\Cr{Cgeo}+1)N/(P\Cr{cgeo})\rceil }$ for all $i\geq2$.  Then  by \eqref{eq:intro_shortgeodesic} one can easily check that $|i-j|N/P-\Cr{cgeo}-\Cr{Cgeo}\leq d(x'_i,x'_j)\leq  (|i-j|+2+\Cr{cgeo}+2\Cr{Cgeo})N/P$ for each $1\leq i,j\leq P$, and $d(x,x'_P)\geq (P+1)N/P$. Therefore, the sequence $(x'_k)$ satisfies the conditions of the lemma when taking $\Cr{Cexittube3}=(2+\Cr{cgeo}+2\Cr{Cgeo})^{-1}$. It remains to verify \eqref{eq:exitviatubes} for this sequence, and first note that  for all $0\leq k\leq P-1$
\begin{equation}
\label{eq:xkxk+1sequence}
   B(x'_{k},N/P)\subset{B(x'_{k+1},(3+\Cr{cgeo}+2\Cr{Cgeo})N/P)}.
\end{equation}
In particular by \eqref{eq:capball} and \eqref{eq:probahittinglower}, writing $\Cr{Cexittube2}=(3+\Cr{cgeo}+2\Cr{Cgeo})\Cr{Chittinglower}$, we have for all $1\leq k\leq P-1$ and $z\in{B(x'_{k+1},(3+\Cr{cgeo}+2\Cr{Cgeo})N/P)}$
\begin{equation}
\label{eq:boundxkxk+1}
    \P_{z}(H_{B(x'_{k+1},N/P)}<T_{B(x'_{k+1},\Cr{Cexittube2}N/P)})\geq \frac{\Cr{ccapball}\Cr{cGreen}}{4^{\nu}(3+\Cr{cgeo}+2\Cr{Cgeo})^{\nu}}.
\end{equation}
If $i\leq j$, using \eqref{eq:boundxkxk+1} recursively on $k\in{\{i,\dots,j-1\}}$ combined with \eqref{eq:xkxk+1sequence} and the strong Markov property, one easily deduces \eqref{eq:exitviatubes}. If $j\leq i$ the proof is similar.
\end{proof}

Note that we will actually never use the bound \eqref{eq:exitviatubes} in this section, but only the fact that $\mathcal{L}_{x}^{P,N}$ is connected. The reason we included \eqref{eq:exitviatubes} is that it will later be useful in the proof of the lower bounds in Theorems~\ref{the:localuniquenessintro} and \ref{the:boundcapacityintro}, see \eqref{eq:prooflowerboundcapa} and \eqref{eq:probaE+} below.  Indeed the set $\mathcal{L}_x^{P,N}$ is a union of $P$ balls of size $\Cr{Cexittube2}N/P$ intersecting each other, starting in $x$ and ending in $B(x,N)^c$, and thus essentially corresponds to a generalization of the tube $\mathcal{R}_{N,\Cr{Cexittube2}N/P}$ from \eqref{eq:RNp} to more general graphs. 

Recall the function $F_{\nu}$ from \eqref{eq:defGnu} and below, as well as the families of weights $\mathbf{t}$ from \eqref{eq:defbft}, which depend on the functions $(g_z)_{z\in{\Lambda(R)}}$ which we will additionally assume to satisfy
\begin{equation}
\label{eq:additionalassumptiont}
    \text{ for each }z\in{\Lambda(R)},\,g_z\text{ is increasing and }g_z(\mathbf{0})=0,
\end{equation}
where $\mathbf{0}$ denotes the constant vector equal to $0$. The reason we assume $g_z(\mathbf{0})= 0$ is that if $g_z(\mathbf{0})>0$, then the weights $t_z^u$ are lower bounded by $g_z(\mathbf{0})$, and so the distance $d_{\mathbf{t},u}(x;N)$ is trivially at least linear with probability one. As we will soon explain, the following statement is a rigorous version of the result \eqref{eq:boundonPEintro}, generalized to graphs satisfying \eqref{eq:standingassumptionintro}.

\begin{The}
\label{the:FPPnointro}
Fix $\eta\in{(0,1)}$ and $\zeta\geq 1$. There exist constants $\Cl[c]{ccondFPPintro}>0$, depending only on $\eta$, as well as $\Cl{cRlarge},\Cl{cboundNintro}<\infty$ and $\Cl[c]{cFPPintro}>0,$ depending on $\eta$ and $\zeta$, such that for all $s>0$, $u>0$ satisfying $R\stackrel{\textnormal{def.}}{=}\zeta u^{-\frac1\nu}\geq \Cr{cRlarge}$, families of weights $\mathbf{t}$ as in \eqref{eq:defbft} and \eqref{eq:additionalassumptiont} satisfying
\begin{equation}
\label{eq:assumptiontzunointro}
    \P(t_z^{\eta u}\leq s)\leq \Cr{ccondFPPintro}\text{ for all }z\in{\Lambda(R)},
\end{equation}
and for all $N\geq \Cr{cboundNintro}u^{-\frac1\nu}$ and $x\in{\Lambda(R)}$, we have
\begin{equation}
\label{eq:bounddtvnointro}
   \exp\left(-{\Cl{CFPP2intro}}(1+\eta)F_{\nu}\big(Nu^{\frac1\nu}\big)\right)\leq \P\left(d_{\mathbf{t},u}(x;N)\leq \frac{\Cr{cFPPintro}sN}{R}\right)\leq 
    \exp\left(-{\Cl[c]{cFPP2intro}}(1-\eta)F_{\nu}\big(Nu^{\frac1\nu}\big)\right),
\end{equation}
where $\Cr{cFPP2intro}=\Cr{cbeta}$ and $\Cr{CFPP2intro}=\Cr{Cbeta}$ if $\nu\leq 1$, and $\Cr{cFPP2intro}=\Cr{cFPP2intro}(\zeta)>0$ and $\Cr{CFPP2intro}=\Cr{CFPP2intro}(\zeta)<\infty$ if $\nu>1$.
\end{The}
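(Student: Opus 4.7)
The upper bound in \eqref{eq:bounddtvnointro} will follow from a direct application of Theorem~\ref{the:FPP}. Given $\eta\in(0,1)$ and $\zeta\geq 1$, fix a small $\eta'=\eta'(\eta)>0$ to be chosen later, and apply Theorem~\ref{the:FPP} with the pair $(u,v)$ therein set to $(\eta' u, u)$. Then $|\sqrt{v}-\sqrt{u}|=(1-\sqrt{\eta'})\sqrt{u}$, so we take $\Theta=1/\eta'$, $\xi=(1-\sqrt{\eta'})/\sqrt{\eta'}$, and $\zeta^*=\zeta(1-\sqrt{\eta'})^{-2/\nu}$, all depending only on $\eta$ and $\zeta$; in particular $R=\zeta u^{-1/\nu}\leq \zeta^*|\sqrt{v}-\sqrt{u}|^{-2/\nu}$. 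Since $\zeta\geq 1$, the hypothesis \eqref{eq:assumptiontzu} reduces to $\P(t_z^{\eta' u}\leq s)\leq \Cr{ccondFPP}(\eta'^{1/\nu}\zeta\wedge 1)^\alpha$, which is implied by \eqref{eq:assumptiontzunointro} upon taking $\Cr{ccondFPPintro}\leq \Cr{ccondFPP}\eta'^{\alpha/\nu}$, a quantity depending only on $\eta$. Theorem~\ref{the:FPP} then produces an upper bound of the form $\exp(-\tfrac{\Cr{cFPP2}}{1+\eta''}F_\nu(N(1-\sqrt{\eta'})^{2/\nu}u^{1/\nu}))$ for arbitrarily small $\eta''>0$, with $\Cr{cFPP2}=\Cr{cbeta}$ when $\nu\leq 1$. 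Using the scaling $F_\nu(cy)/F_\nu(y)\to c^{\nu\wedge 1}$ as $y\to\infty$ (with logarithmic corrections when $\nu=1$ absorbed by taking $Nu^{1/\nu}\geq \Cr{cboundNintro}$ large), and choosing $\eta',\eta''$ small enough so that $(1-\sqrt{\eta'})^{2\wedge(2/\nu)}/(1+\eta'')\geq 1-\eta$, we obtain the desired upper bound after setting $\Cr{cFPPintro}\leq\Cr{cFPP}$.

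\textbf{Plan for the lower bound.} The strategy is to exhibit an explicit event on which $d_{\mathbf{t},u}(x;N)=0$. Choose an integer $P$ depending on the regime of $\nu$ and on $\eta$: for $\nu\neq 1$, take $P=\lfloor N/(C_0R)\rfloor$ for a large constant $C_0=C_0(\eta,\zeta)$; for $\nu=1$, take $P$ the largest integer satisfying both $P\leq N/(C_0R)$ and $\log(P)\geq \log(Nu)/(1+\eta/4)$, which are jointly achievable once $Nu\geq\Cr{cboundNintro}$ with $\Cr{cboundNintro}$ large. Apply Lemma~\ref{lem:exitviatubes} to obtain the sequence $x=x'_1,\dots,x'_P$ (which traces the geodesic from \eqref{eq:intro_shortgeodesic}) and the tube $\mathcal{L}_x^{P,N}=\bigcup_{i=1}^P B(x'_i,\Cr{Cexittube2}N/P)$ with $B(x'_P,N/P)\cap B(x,N)=\varnothing$. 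Because $N/P\geq C_0 R$, by interpolating vertices of $\Lambda(R)$ along the geodesic $(x,x_1,x_2,\dots)$ from \eqref{eq:intro_shortgeodesic}—using \eqref{eq:defLambda} to find $\Lambda(R)$-vertices within $R$ of each chosen geodesic point—one constructs an $R$-nearest neighbor path $\gamma\subset\Lambda(R)$ from $x$ to a vertex of $\Lambda(R)\cap B(x'_P,N/P)\subset B(x,N)^c$ satisfying $B(z,3R)\subset\mathcal{L}_x^{P,N}$ for every $z\in\gamma$, provided $C_0$ was chosen large enough in terms of $\Cr{Cexittube2},\Cr{cgeo},\Cr{Cgeo},\Cr{Cdvsdgr}$. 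On the event $\mathcal{E}\stackrel{\textnormal{def.}}{=}\{\I^u\cap\mathcal{L}_x^{P,N}=\varnothing\}$, the local times $\ell_{\cdot,u}$ vanish on $B(\gamma,3R)$, so by \eqref{eq:additionalassumptiont} we have $t_z^u=g_z(\mathbf{0})=0$ for all $z\in\gamma$, and therefore $d_{\mathbf{t},u}(x;N)\leq \sum_{z\in\gamma}t_z^u=0\leq \Cr{cFPPintro}sN/R$.

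\textbf{Capacity estimate and main obstacle.} By \eqref{eq:defIuintro}, $\P(\mathcal{E})=\exp(-u\,\mathrm{cap}(\mathcal{L}_x^{P,N}))$. The spacing bounds for $(x'_i)$ from Lemma~\ref{lem:exitviatubes} imply, for a suitable $\delta\in(0,1)$ depending on $\Cr{Cexittube2}$ and $\Cr{Cexittube3}$, the hypothesis of \eqref{eq:capunionbound2}, so that Lemma~\ref{lem:capatube} yields $\mathrm{cap}(\mathcal{L}_x^{P,N})\leq \Cr{Ccapunionball}(1+\eta/4)F_\nu(N,N/P)$ with $\Cr{Ccapunionball}=\Cr{Cbeta}$ for $\nu\leq 1$. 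A direct case analysis gives $u\,\mathrm{cap}(\mathcal{L}_x^{P,N})\leq \Cr{CFPP2intro}(1+\eta)F_\nu(Nu^{1/\nu})$ with the advertised constants: $\Cr{CFPP2intro}=\Cr{Cbeta}$ for $\nu<1$ (since $F_\nu(N,N/P)=N^\nu$), the same for $\nu=1$ by the choice of $P$ (matching $\log(P)$ to $\log(Nu)$), and $\Cr{CFPP2intro}(\zeta)=\Cr{Ccapunionball}(C_0\zeta)^{\nu-1}$ for $\nu>1$ (using $F_\nu(N,N/P)=N(C_0R)^{\nu-1}$). The main technical difficulties I anticipate are: (i) the explicit construction of the $R$-nearest-neighbor path $\gamma$ lying strictly inside the tube with a $3R$-buffer, which requires a careful chaining argument based on \eqref{eq:intro_shortgeodesic} and \eqref{eq:defLambda} with $C_0$ chosen large enough to absorb the various length-scale constants; (ii) the precise verification of the hypothesis of \eqref{eq:capunionbound2} for $\delta$ determined by the geometric constants of Lemma~\ref{lem:exitviatubes}; and (iii) in the critical case $\nu=1$, the precise tuning of $P$ so that $\log(P)$ matches $\log(Nu)$ to leading order, which is what ultimately yields the sharp constant $\Cr{Cbeta}$ in the lower bound exponent.
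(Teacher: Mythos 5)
Your proposal follows the paper's own argument essentially step for step: the upper bound is a direct application of Theorem~\ref{the:FPP} with the pair of levels $(\eta u,u)$ (so that $\Theta$, $\xi$ and the enlarged $\zeta$ depend only on $\eta$ and $\zeta$), and the lower bound forces $\I^u$ to avoid the tube $\mathcal{L}_x^{P,N}$ of Lemma~\ref{lem:exitviatubes}, inside which an $R$-nearest neighbor path with a $3R$-buffer can be threaded so that $t_z^u=g_z(\mathbf{0})=0$ along it, the cost being computed via \eqref{eq:defIuintro} and \eqref{eq:capunionbound2}. The only real deviation is cosmetic: you space the tube centers by $C_0R$ to fit the buffered path, whereas the paper keeps $P=N/R$ and instead enlarges $\Cr{Cexittube2}$; both work, and for $\nu=1$ your factor $C_0$ only shifts $\log P$ by an additive constant, so the sharp constant $\Cr{Cbeta}$ survives.

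Two points to repair in the write-up. First, you assert that \eqref{eq:assumptiontzunointro}, which controls $t_z^{\eta u}$, implies the corresponding bound on $t_z^{\eta' u}$ for $\eta'<\eta$. Since the $g_z$ are increasing and $u\mapsto\ell_{x,u}$ is increasing, one has $t_z^{\eta' u}\leq t_z^{\eta u}$, hence $\P(t_z^{\eta' u}\leq s)\geq\P(t_z^{\eta u}\leq s)$: the implication goes the wrong way. The paper avoids this by applying Theorem~\ref{the:FPP} at level $\eta u$ exactly, so that the hypothesis matches, and then performing the ``change of variable for $\eta$'' on the hypothesis level and the factor $1-\eta$ in the conclusion simultaneously; you should do the same rather than fixing the hypothesis at level $\eta u$ while shrinking the sprinkling level to $\eta' u$. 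Second, you should record that the prefactor $\Cr{cboundN}$ from \eqref{eq:bounddtvfinal} must be absorbed into the exponential by enlarging $\Cr{cboundNintro}$, since \eqref{eq:bounddtvnointro} carries no prefactor.
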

\begin{proof}
   We start with the upper bound, which is a simple consequence of Theorem~\ref{the:FPP}, replacing $u$ and $v$ therein by $\eta u$ and $u$ respectively, and so $(\sqrt{v}-\sqrt{u})^{\frac{2}{\nu}}$ is replaced by $u^{\frac1\nu}(1-\sqrt{\eta})^{\frac2\nu}$, taking $\Theta=1/\eta$, $\xi=1-\sqrt{\eta}$, $\Cr{ccondFPPintro}=\Cr{ccondFPP}(1\wedge \zeta)=\Cr{ccondFPP}$ (which now only depend on $\eta$), $\Cr{cFPPintro}=\Cr{cFPP}/(1\vee\zeta)$, $\Cr{cFPP2intro}=\Cr{cFPP2}$, taking $\Cr{cboundNintro}$ large enough so that the constant $\Cr{cboundN}$ from \eqref{eq:bounddtvfinal} can be absorbed in the exponential up to changing $1+\eta$ in $(1+\eta)^2$, and doing a change of variable for $\eta$.
    
    For the lower bound, one simply notes by \eqref{eq:exitviatubes} that the set $\L_x^{P,N}$ from Lemma~\ref{lem:exitviatubes} is connected and intersects $B(x,N)^c$, and letting $P=N/R$,  upon possibly further increasing $\Cr{Cexittube2}$, one further has by \eqref{eq:defLambda} that it contains an $R$-nearest neighbor path $(y_1,\dots,y_m)$ from $x$ to $B(x,N)^c$ such that $B(y_i,3R)\subset \L_x^{P,N}$ for all $i\leq m$. Therefore, since $g_z(\mathbf{0})=0$,
    \begin{equation}
    \label{eq:prooflowerFPPintro}
        \P\left(d_{\mathbf{t},u}(x;N)\leq \frac{\Cr{cFPPintro}sN}{R}\right)\geq \P\left(\I^u\cap \mathcal{L}_x^{P,N}=\varnothing\right)\geq\exp\Big(- \Cr{Ccapunionball}(1+\eta)uF_{\nu}\big(N,\frac{N}{P}\big)\Big),
    \end{equation}
    where we used \eqref{eq:defIuintro} and \eqref{eq:capunionbound2} in the last inequality, which is satisfied when taking $S_i=B(x'_i,\Cr{Cexittube2}N/P)$ and $\delta=1/(2\Cr{Cexittube3}^{-1}+2\Cr{Cexittube2})$ therein, as well as $Nu^{\frac1\nu}\geq \Cr{cboundNintro}$ and $R\geq \Cr{cRlarge}$ for some finite constants $\Cr{cboundNintro}$ and $\Cr{cRlarge}$ large enough. Since,  by \eqref{eq:defGnu}, \eqref{eq:defGnu} and our choice of $P$ and $R$ we have for $Nu^{\frac1\nu}$ large enough that $uF_{\nu}(N,N/P)\leq (1+\eta)F_{\nu}(Nu^{\frac1\nu})$ when $\nu\leq 1$, and $uF_{\nu}(N,N/P)\leq CF_{\nu}(Nu^{\frac1\nu})$ when $\nu>1$, we can easily conclude.
\end{proof}

Note that a path $\gamma$ from $x$ to $B(x,N)^c$ with length of order $N$ will typically be at distance less than $cu^{-\frac1\nu}$ from $cF_{\nu}(u^{\frac1\nu}N)$ different trajectories of interlacements, see Lemma~\ref{lem:capatube} as to why, and hence in view of \eqref{eq:bounddtvnointro} this path  will typically  be at distance less than $cu^{-\frac1\nu}$ from a given trajectory as before at least $c'u^{\frac1\nu}N/F_{\nu}(u^{\frac1\nu}N)$ different times, which is of order $1$ when $\nu>1$, but goes to infinity as $u^{\frac1\nu}N$ goes to infinity when $\nu\leq1$. In other words, when a trajectory of interlacements hits $B(\gamma,cu^{-\frac1\nu})$, it will in fact typically hit $B(\gamma,cu^{-\frac1\nu})$ of order $(u^{\frac1\nu}N)^{1-\nu}\log(u^{\frac1\nu}N)^{1\{\nu=1\}}$ many times when $\nu\leq 1$, but only a constant number of times when $\nu>1$.

In order to deduce \eqref{eq:boundonPEintro} from Theorem~\ref{the:FPPnointro}, one simply replaces $R$ by $R\sqrt{d}$ (since $\Lambda(R)=(R/\sqrt{d})\Z^d$ by our choice of the Euclidean distance), and take 
\begin{equation}
\label{eq:defgzZd}
    g_z\big((\ell_x)_{x\in{B(z,3R\sqrt{d})}}\big)=1\big\{\{x\in{z+(-\tfrac{R}{2},\tfrac{R}2]^d}:\,\ell_x>0\}\text{ is typical}\big\}
\end{equation}
in \eqref{eq:defbft}. The precise definition of a typical event is then simply that the function $g_z$ above satisfies \eqref{eq:defbft}, \eqref{eq:additionalassumptiont} and \eqref{eq:assumptiontzunointro} for $s=1/2$. The event $E$ from \eqref{eq:defEintro} then implies the event that  $d_{\mathbf{t},u}(x;N)\leq \frac{sN}{R}$, and the upper bound in \eqref{eq:boundonPEintro} then follows from \eqref{eq:bounddtvnointro}, \eqref{eq:defGnu} and \eqref{eq:assumptionZd}. For the lower bound, one can simply proceed similarly as in \eqref{eq:prooflowerFPPintro}.

\begin{Rk}
\phantomsection
\label{rk:endthmintro}
\begin{enumerate}
    \item\label{rk:2ptfunction}
  The bounds \eqref{eq:limitVunu>1nointro} and \eqref{eq:bounddtvnointro} are also satisfied when replacing $d_{\mathbf{t},u}(x;N)$ by $d_{\mathbf{t},u}(x,y)$ for any $y\in{G}$ with $d(x,y)> N$, as long as one replaces \eqref{eq:intro_shortgeodesic} by the condition
    \begin{equation}
\label{eq:intro_shortgeodesic2}
\begin{array}{l}
\text{for all }x,y\in{G},\text{ there exists a path }(x=x_0,x_1,x_2,\dots,x_P=y) \\[0.3em]
\text{such that }\Cr{cgeo}|k-p|-\Cr{Cgeo}\leq d(x_k,x_p)\leq \Cr{cgeo}|k-p|+\Cr{Cgeo}\text{ for all }k,p\in{\{0,\dots,P\}}.
\end{array}
\end{equation}
The upper bounds are trivial since $d_{\mathbf{t},u}(x;N)\leq d_{\mathbf{t},u}(x,y)$, see \eqref{eq:defdzL} and below, and it is easy to adapt the proof of the lower bound in \eqref{eq:limitVunu>1nointro} using condition \eqref{eq:intro_shortgeodesic2} instead of \eqref{eq:intro_shortgeodesic} therein. For the lower bound in \eqref{eq:bounddtvnointro} one simply proceeds similarly as in \eqref{eq:prooflowerFPPintro} but replacing $d_{\mathbf{t},u}(x;N)$ by $d_{\mathbf{t},u}(x,y)$ and $\mathcal{L}_x^{P,N}$ by a set $\mathcal{L}_{x,y}^{P,N}$ similar to the one from Lemma~\ref{lem:exitviatubes} but with the condition $x'_P=y$ instead of $B(x'_P,N/P)\cap B(x,N)=\varnothing$ therein. The existence of such a set follows from similar arguments as in Lemma~\ref{lem:exitviatubes}, replacing the use of \eqref{eq:intro_shortgeodesic} therein by \eqref{eq:intro_shortgeodesic2}. Condition \eqref{eq:intro_shortgeodesic2} is for instance satisfied on $\Z^d$, $d\geq3$, endowed with the Euclidean distance, see \cite[Remark~2.3]{GRS21}, but there are graphs that satisfy \eqref{eq:standingassumptionintro} and not \eqref{eq:intro_shortgeodesic2}, for instance the graph $G_2$ from \cite[(1.4)]{DrePreRod2} (since $d$ is not proportional to the graph distance on $G_2$, see the proof of \cite[Corollary~3.9]{DrePreRod2}). More generally, even if \eqref{eq:intro_shortgeodesic2} is not satisfied, one can still obtain \eqref{eq:bounddtvnointro} when replacing $d_{\mathbf{t},u}(x;N)$ by $d_{\mathbf{t},u}(x,y)$ as long as $d(x,y)> N$ and $y$ is in the geodesic $x=(x_0,x_1,\dots)$ from \eqref{eq:intro_shortgeodesic}.

\item\label{rk:annuluscrossing} Another possibility is to replace $d_{\mathbf{t},u}(x;N)$ by $d_{\mathbf{t},u}(x;N;2N)$ in \eqref{eq:limitVunointro}, \eqref{eq:limitVunu>1nointro} and \eqref{eq:bounddtvnointro}, and it is in fact possible to adapt our proof to this case for graphs that satisfy \eqref{eq:standingassumptionintro}. For the lower bounds one simply needs to replace in \eqref{eq:prooflowerFPPintro} the set $\L_x^{P,N}$ by a similar tube of length smaller than $(1+\eta)N$, but now starting in $B(x,N)$ and ending in $B(x,2N)^c$, which can easily be constructed from \eqref{eq:intro_shortgeodesic} similarly as in Lemma~\ref{lem:exitviatubes}. For the upper bounds, one needs a version of Proposition~\ref{pro:entropy} where in \eqref{eq:gammaincludedC} one asks instead that $\gamma$ is a path from $B(x,N-2L)$ to $B(x,2N+2L)$, and the rest of the proof is similar. This new version of Proposition~\ref{pro:entropy} can in fact be deduced from the current version of Proposition~\ref{pro:entropy} by considering the union of the families $\mathcal{A}_{y,\rho,p}^{L,K,N}$ over $y\in{B(x,N)\cap \Lambda(L)}$, for which \eqref{eq:cardA} is still satisfied up to changing the constant $\Cr{CcardA}$ therein by our choice of $p$ in \eqref{eq:choicep}.

\item\label{rk:EvsE'} One could replace the event $E$ from \eqref{eq:defEintro} by a similar event $E'$ where we replace $R\Z^d$ by $\Z^d$ and $sN/R$ by $sN$ therein. This corresponds intuitively to counting any point $x$ such that $\I^u\cap \big(x+(-\tfrac{R}{2},\tfrac{R}2]^d\big)$ is typical once, instead of counting the whole region $\I^u\cap \big(x+(-\tfrac{R}{2},\tfrac{R}2]^d\big)$ once.  One can easily show that $E'$ implies that $d_{\mathbf{t},u}(x;N)\leq \frac{csN}{R}$ for $\mathbf{t}$ as in \eqref{eq:defbft} with 
\begin{equation}
    \label{eq:defgz2}
    g_z\big((\ell_x)_{x\in{B(z,3R\sqrt{d})}}\big)=1\big\{\{x\in{y+(-\tfrac{R}{2},\tfrac{R}2]^d}:\,\ell_x>0\}\text{ is typical for all }y\in{z+(-\tfrac{R}{2},\tfrac{R}2]^d}\big\}
\end{equation}
since to any path $\gamma$ of length $k$ from $0$ to $B(N)^c$ in $\Z^d$, one can associate an $R\sqrt{d}$-nearest neighbor path $\gamma'$ from $0$ to $B(N)^c$ in $R\Z^d$ such that $\gamma$ spends a total time larger than $3^{-d}R$ in $x+(-\frac{R}{2},\frac{R}{2}]^d$ for each $x$ in ${\gamma'}$ (except possibly the last point). Using a reasoning similar to \eqref{eq:prooflowerFPPintro} for the lower bound, one deduces that \eqref{eq:boundonPEintro} still holds when replacing $E$ by $E'$ as long as the function $g_z$ from \eqref{eq:defgz2} satisfies \eqref{eq:additionalassumptiont} and \eqref{eq:assumptiontzunointro}. This is for instance the case when $R=\zeta u^{-\frac1{d-2}}$ for some $\zeta\geq C$ and we call $\{x\in{y+(-\tfrac{R}{2},\tfrac{R}2]^d}:\,\ell_x>0\}$ typical when it is non-empty, which essentially corresponds to the FPP distance naturally associated with $(\I^u+(-\frac{R}{2},\frac{R}{2}]^d)^c$. If the condition $R=\zeta u^{-\frac1{d-2}}$ for some $\zeta\geq C$ is not satisfied but \eqref{eq:assumptiontzunointro} is, one can still obtain an upper bound similar to the one in \eqref{eq:limitVuintro} but replacing the probability therein by either the probability of $E$ or $E'$, and $u_*$ by the smallest parameter such that \eqref{eq:assumptiontzunointro} is satisfied, for $g_z$ either as in \eqref{eq:defgzZd} or \eqref{eq:defgz2}. The additional interest of \eqref{eq:defEintro} is that the result is true as soon as $uN^{d-2}$ is large enough, instead of only asymptotically as $N\rightarrow\infty$ in \eqref{eq:limitVuintro}, at the cost of assuming that $uR^{d-2}$ is large.
\end{enumerate}
\end{Rk}

\section{Local uniqueness and capacity bounds}
\label{sec:localuniqueness}
In this section, we prove the two applications of our FPP result: Theorem~\ref{the:boundcapacitynointro} on the capacity of random walks, and Theorem~\ref{the:localuniquenessnointro} on the local uniqueness of random interlacements, which are versions of Theorems~\ref{the:localuniquenessintro} and \ref{the:boundcapacityintro} on general graphs satisfying \eqref{eq:standingassumptionintro}. As explained below \eqref{eq:interetmarcheintro}, the first step will be to deduce from Theorem~\ref{the:FPPnointro} the following bounds on the probability that a random walk hits the interlacement set at level $u$ before exiting a ball of size $N$ around its starting point. To simplify notation, let us abbreviate $\log_*(x)=\log(x)\vee1$, and recall the function $F_{\nu}$ from below \eqref{eq:defGnu}, as well as the notation $\overline{X}_A$ from above \eqref{eq:oldboundcapacity}.

\begin{Lemme}
\label{lem:hittinginter}
Fix some $\eta\in{(0,1/4]}$ and $u_0<\infty$. There exists a constant  $C<\infty$, depending on $\eta$ and $u_0$, such that for all $u\in{(0,u_0]}$, $N\geq Cu^{-\frac1\nu}$,  and $x\in{G}$, 
\begin{equation}
    \label{eq:interetmarche}
    \begin{split}
        \exp\left(-\frac{\Cl{cinteretmarche2}}{1-\eta}F_{\nu}(Nu^{\frac1\nu})\right)&\leq \P\otimes\P_x(\mathcal{I}^u\cap \overline{X}_{B(x,N)}=\varnothing)
        \\&\leq \begin{cases}\exp\left(-\frac{\Cl[c]{cinteretmarche}}{1+\eta}F_{\nu}(Nu^{\frac1\nu})\log_*(\tfrac{1}{u})^{-1_{\alpha=2\nu}}\right)&\text{ if }\alpha\geq 2\nu,
        \\\exp\left(-{\Cr{cinteretmarche}}Nu^{\frac{1-\alpha+2\nu}{\nu}}\right)&\text{ otherwise,}
        \end{cases}
    \end{split}
\end{equation}
where $\Cr{cinteretmarche2}=\Cr{Cbeta}$ and $\Cr{cinteretmarche}=\Cr{cbeta}$ if $\nu\leq 1$, and $\Cr{cinteretmarche2}=\Cr{cinteretmarche2}(u_0)<\infty$ and $\Cr{cinteretmarche}=\Cr{cinteretmarche}(u_0)>0$ if $\nu>1$. 
\end{Lemme}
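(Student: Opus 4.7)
The plan is to deduce the upper bound from Theorem~\ref{the:FPPnointro} combined with a random walk hitting argument, and to obtain the matching lower bound by independently forcing the walk and the interlacement to avoid each other along a common ``tube'' built from Lemma~\ref{lem:exitviatubes}. For the upper bound, I set $R = \zeta u^{-1/\nu}$ for a constant $\zeta = \zeta(\eta,u_0)$ chosen large, and pick the weights $t_z^v = g_z((\ell_{x,v})_{x\in B(z,3R)})$ with
\[
    g_z(\ell) = \1\bigl\{\{x\in B(z,3R):\ell_x>0\}\text{ contains a subset of capacity at least }\kappa\bigr\},
\]
where $\kappa=c_1R^\nu$ if $\alpha>2\nu$, $\kappa=c_1R^\nu/\log_*(1/u)$ if $\alpha=2\nu$, and $\kappa=c_1R^2$ if $\alpha<2\nu$, for a small constant $c_1$. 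This $g_z$ is increasing with $g_z(\mathbf{0})=0$, and lower bounds on the capacity of interlacement trajectories confined to a ball in the spirit of \cite[Lemma~5.3]{DrePreRod5}, together with the Poisson description above \eqref{eq:defRI}, show that $\P(g_z(\ell_{\cdot,\eta u})=0)\leq \Cr{ccondFPPintro}$ once $\zeta$ is large. Theorem~\ref{the:FPPnointro} applied with $s=1/2$ then yields an $\mathcal{I}^u$-measurable good event $\mathcal{G}$ of probability at least $1-\exp(-(1-\eta/2)\Cr{cFPP2intro}F_\nu(Nu^{1/\nu}))$, on which every $R$-nearest-neighbour path in $\Lambda(R)$ from a neighbour of $x$ to $B(x,N)^c$ crosses at least $c_0N/R$ vertices $z$ with $g_z=1$.

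In particular, on $\mathcal{G}$ the coarse-graining of the walk trajectory $(X_k)_{k<T_{B(x,N)}}$ enters at least $c_0N/R$ such ``good'' boxes. Introducing inductively the successive entries $\sigma_k$ of the walk into balls $B(z_k,R)$ with $g_{z_k}=1$ and the corresponding exit times $\tau_k=T_{B(z_{\sigma_k},3\Cr{Chittinglower}R)}$, the strong Markov property applied at each $\sigma_k$ combined with \eqref{eq:probahittinglower} gives, conditionally on $\mathcal{I}^u\in\mathcal{G}$,
\[
    \P_x\bigl(\mathcal{I}^u\cap\overline{X}_{B(x,N)}=\varnothing\bigm|\mathcal{I}^u\bigr)\leq\bigl(1-c\kappa/R^\nu\bigr)^{\lceil c_0N/R\rceil}\leq\exp\bigl(-c' N\kappa/R^{\nu+1}\bigr).
\]
A direct computation gives $N\kappa/R^{\nu+1}\asymp Nu^{1/\nu}$ if $\alpha>2\nu$, $\asymp Nu^{1/\nu}/\log_*(1/u)$ if $\alpha=2\nu$ and $\asymp Nu^{(1-\alpha+2\nu)/\nu}$ if $\alpha<2\nu$. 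Adding the contribution from $\mathcal{G}^c$ and tuning $\zeta$ and $\Cr{cboundNintro}$ so that either the FPP term (when $\nu\leq 1$) or the hitting term (when $\alpha\leq 2\nu$ with $\nu>1$, or when $\alpha<2\nu$) dictates the bound, one obtains the claimed upper bound with the sharp constant $\Cr{cbeta}$ when $\nu\leq 1$ after a change of variables in $\eta$.

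For the lower bound, apply Lemma~\ref{lem:exitviatubes} with a parameter $P$ to produce a connected tube $\mathcal{L}=\mathcal{L}_x^{P,N}$ from $x$ to outside $B(x,N)$. The independence of $\P$ and $\P_x$ gives
\[
    \P\otimes\P_x\bigl(\mathcal{I}^u\cap\overline{X}_{B(x,N)}=\varnothing\bigr)\geq \P_x\bigl(\overline{X}_{B(x,N)}\subset\mathcal{L}\bigr)\cdot\P\bigl(\mathcal{I}^u\cap\mathcal{L}=\varnothing\bigr).
\]
The first factor is at least $\exp(-CP)$ by iterating \eqref{eq:exitviatubes}, while the second equals $\exp(-u\,\mathrm{cap}(\mathcal{L}))\geq\exp(-(1+\eta)\Cr{Cbeta}u F_\nu(N,N/P))$ by \eqref{eq:defIuintro} and \eqref{eq:capunionbound2}. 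Optimizing over $P$ -- take $P$ of order one for $\nu<1$, $P\sim uN/\log(uN)^{2}$ for $\nu=1$, and $P\sim Nu^{1/\nu}$ for $\nu>1$ -- yields the required exponent of order $F_\nu(Nu^{1/\nu})$ with the sharp prefactor $\Cr{Cbeta}$ when $\nu\leq 1$.

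The main obstacle is the correct choice of $\kappa$ and the verification of the typicality assumption \eqref{eq:assumptiontzunointro} in each regime: when $\alpha<2\nu$ the capacity of a single random walk confined to a ball of radius $R$ scales as $R^2$ rather than $R^\nu$, and in the critical case $\alpha=2\nu$ only the aggregated porous structure of $\mathcal{I}^{\eta u}$ restores capacity of order $R^\nu/\log_*(1/u)$. Both features propagate through the hitting probability $c\kappa/R^\nu$ and ultimately explain the three different exponents appearing in \eqref{eq:interetmarche}.
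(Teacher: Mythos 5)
Your overall strategy coincides with the paper's proof of Lemma~\ref{lem:hittinginter}: the same renormalized weights (indicator that the interlacement set in a box of radius of order $u^{-1/\nu}$ contains a piece of capacity at least a threshold $\kappa$), the same application of Theorem~\ref{the:FPPnointro} to force any crossing of $B(x,N)$ through of order $N/R$ good boxes, the same strong-Markov iteration of \eqref{eq:probahittinglower} for the conditional hitting probability given $\mathcal{I}^u$, and the same tube construction from Lemma~\ref{lem:exitviatubes} combined with \eqref{eq:defIuintro} and \eqref{eq:capunionbound2} for the lower bound. Your choices of $\kappa$ for $\alpha\geq 2\nu$ and of $P$ in the lower bound (constant order when $\nu<1$) are harmless variants of the paper's.

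The one genuine problem is the choice $\kappa=c_1R^2$ in the regime $\alpha<2\nu$. The heuristic that "the capacity of a walk confined to a ball of radius $R$ scales as $R^2$" is the $\Z^d$ statement; on a general graph satisfying \eqref{eq:standingassumptionintro} the walk exits $B(x,R)$ after of order $R^{\alpha-\nu}$ steps, and the achievable threshold is $\kappa\asymp R^{\alpha-\nu}\asymp u^{-\alpha/\nu+1}$, which is what the paper extracts from \cite[(3.17) and Lemma~4.4]{DrePreRod2}. The two agree only when $\alpha=\nu+2$, whereas \eqref{eq:condnualpha} only guarantees $\alpha\geq\nu+2$. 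With $\kappa=c_1R^2$ the exponent is $N\kappa/R^{\nu+1}\asymp NR^{1-\nu}\asymp Nu^{(\nu-1)/\nu}$, not the claimed $Nu^{(1-\alpha+2\nu)/\nu}$; the "direct computation" you assert silently uses $\kappa\asymp R^{\alpha-\nu}$, so as written the argument only yields a strictly weaker upper bound whenever $\alpha>\nu+2$. The fix is simply to take $\kappa\asymp R^{\alpha-\nu}$ in that case. A smaller point worth making explicit: in the strong-Markov iteration you should, as the paper does, only count good boxes whose enlarged balls $B(z_k,\Cr{Chittinglower}R)$ are pairwise disjoint and contained in $B(x,N)$; this costs a constant factor in the number of attempts but is needed both for the successive attempts to be genuinely fresh and for the relevant excursion pieces to lie in $\overline{X}_{B(x,N)}$.
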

\begin{proof}
    We start with the proof of the case $\alpha\geq 2\nu$ for the upper bound in \eqref{eq:interetmarche}. Fix $\eta\in{(0,1/4)}$, and let $\Cr{ccondFPPintro}=\Cr{ccondFPPintro}(\eta)$ be the constant from \eqref{eq:assumptiontzunointro}. First note that for $\zeta\geq2$, denoting by $X^1$ the first trajectory of interlacements hitting $B(x,\zeta u^{-\frac1\nu}/2)$, see above \eqref{eq:defRI}, we have by \cite[Lemma~5.3]{DrePreRod5}, \eqref{eq:defIuintro} and \eqref{eq:capball}
    \begin{equation}
    \label{eq:initializationFPP}
    \begin{split}
        \P\big(&\mathcal{I}^{\eta u}\cap B(x,\zeta u^{-\frac1\nu}/2)\neq\varnothing,\mathrm{cap}(\overline{X}^1_{B(x,\zeta u^{-\frac1\nu})})\geq u^{-1}\log_*(\tfrac{1}{u})^{-1_{\alpha=2\nu}}\big)\\&\geq \big(1-C\exp(-c\zeta\log(\zeta)^{-\frac1\nu1\{\alpha=2\nu\}} )\big)(1-\exp(-c\eta \zeta^{\nu}))
        \geq 1-\Cr{ccondFPPintro}
    \end{split}
    \end{equation}
    for some constants $c>0$ and $C<\infty$, and where the last inequality holds when $\zeta= \Cl{cmathfrakC}$ for some finite constant $\Cr{cmathfrakC}=\Cr{cmathfrakC}(\eta)\geq2$. Let $R=\Cr{cmathfrakC}u^{-\frac1\nu},$ and for any $y\in{\Lambda(R)}$ and $u'>0$
    \begin{equation}
    \label{eq:choicetlocaluniqueness}
    {t}_{y}^{u'}=1\{\mathcal{I}^{u'}\cap B(y,R)\text{ contains a connected component }\mathfrak{C}\text{ with }\mathrm{cap}(\mathfrak{C})\geq \tfrac{1}{u}\log_*(\tfrac{1}{u})^{-1_{\alpha=2\nu}}\}.
    \end{equation}
    It then follows from \eqref{eq:initializationFPP} that \eqref{eq:assumptiontzunointro} is fulfilled for $s=1/2$, and so by Theorem~\ref{the:FPPnointro}, letting $x'\in{\Lambda(R)}$ be such that $x\in{B(x',R)}$, if $N\geq \Cr{cboundNintro}u^{-\frac1\nu}/(1-\eta)$ then
    \begin{equation}
\label{eq:bounddtvfinalappli}
    \P\left(d_{\mathbf{t},u}(x';N(1-\eta))\leq \frac{\Cr{cFPPintro}N}{4R}\right)\leq 
    \exp\left(-\frac{\Cr{cFPP2intro}}{1+\eta}F_{\nu}\big(N(1-\eta)u^{\frac1\nu})\big)\right).
\end{equation}
    Note that for all $y\in{G}$, $z\in{B(y,R)}$ and sets $\mathfrak{C}\subset B(y,R)$ with $\mathrm{cap}(\mathfrak{C})\geq u^{-1}\log_*(\tfrac{1}{u})^{-1_{\alpha=2\nu}}$ by \eqref{eq:probahittinglower} we have
    \begin{equation}
    \label{eq:probahittingC}
    \begin{split}
        \P_z(H_{\mathfrak{C}}<T_{B(y,\Cr{Chittinglower}R)})\geq \frac{\Cr{cGreen}u^{-1}\log_*(\tfrac{1}{u})^{-1_{\alpha=2\nu}}}{4^{\nu}R^{\nu}}\geq\frac{\Cr{cGreen}}{4^{\nu}\Cr{cmathfrakC}^{\nu}}\log_*(\tfrac{1}{u})^{-1_{\alpha=2\nu}}.
    \end{split}
    \end{equation}
     Let $y_1,\dots,y_M$ be the sequence of vertices defined recursively as follows: $y_{i}$ is the first vertex $y\in{\Lambda(R)\cap B(x,N-\Cr{Chittinglower}R)}$ such that $B(y,R)$ is visited by $X$, with  $t_{y}^u=1$ and $B(y,R)\cap B(y_j,\Cr{Chittinglower}R)=\varnothing$ for all $1\leq j\leq i-1$. We stop this recursion at the first time $M$ after which there does not exist such a vertex $y$ anymore, and then if $N\eta\geq (\Cr{Chittinglower}+1)R$, on the event $d_{\mathbf{t},u}(x';N(1-\eta))\geq \frac{\Cr{cFPPintro}N}{4R}$ we must have $M\geq \frac{\Cr{cFPPintro}N}{4\Cr{Chittinglower}^{\alpha}\Cr{CLambda}R}$ in view of \eqref{eq:defLambda}, below \eqref{eq:defdzL}, and our definition of $x'$.
    We also let $H_i\stackrel{\textnormal{def.}}{=}\inf\{k\geq0:\,X_k\in{B(y_i,R)}\}$ and $T_i\stackrel{\textnormal{def.}}{=}\inf\{k\geq H_i:\,X_k\in{B(y_i,\Cr{Chittinglower}R)^c}\}$, and then $H_i\leq T_i\leq H_{i+1}$ are stopping times by definition. Conditionally on $\mathcal{I}^u$ and on the event $d_{\mathbf{t},u}(x';N(1-\eta))\geq \frac{\Cr{cFPPintro}N}{4R}$, we thus have for $Nu^{\frac1\nu}$ large enough
    \begin{equation}
    \label{eq:IuhitXrecursively}
    \begin{split}
        \P_x\big(\mathcal{I}^u\cap \overline{X}_{B(x,N)}=\varnothing\big)&\leq \P_x\Big(\forall i\in\big{\{1,\dots,\frac{\Cr{cFPPintro}N}{4\Cr{Chittinglower}^{\alpha}\Cr{CLambda}R}\big\}},\mathcal{I}^u\cap \{X_k,H_i\leq k\leq T_i\}=\varnothing\Big)
        \\&\leq \left(1-\frac{\Cr{cGreen}}{4^{\nu}\Cr{cmathfrakC}^{\nu}}\log_*(\tfrac{1}{u})^{-1_{\alpha=2\nu}}\right)^{\frac{\Cr{cFPPintro}Nu^{\frac1\nu}}{4\Cr{Chittinglower}^{\alpha}\Cr{CLambda}\Cr{cmathfrakC}}},
    \end{split}
    \end{equation}
where the last inequality follows from \eqref{eq:choicetlocaluniqueness} and \eqref{eq:probahittingC} by applying the strong Markov property at time $H_i$ recursively on $i$. Combining this with \eqref{eq:bounddtvfinalappli}, and noting that by \eqref{eq:defGnu}, the second line of \eqref{eq:IuhitXrecursively} is smaller than $\exp(-\Cr{cbeta}F_{\nu}(Nu^{\frac1\nu}))$ if $\nu\leq 1$ and $Nu^{\frac1\nu}$ is large enough, we obtain the upper bound in \eqref{eq:interetmarche} when $\alpha\geq 2\nu$ after a change of variable for $\eta$. When $\alpha<2\nu$, we note that by \cite[(3.17) and Lemma~4.4]{DrePreRod2}, the bound \eqref{eq:initializationFPP} can be replaced by
\begin{equation*}
    \begin{split}
        \P\big(&\mathcal{I}^{\eta u}\cap B(x,\zeta u^{-\frac1\nu}/2)\neq\varnothing,\mathrm{cap}(\overline{X}^1_{B(x,\zeta u^{-\frac1\nu})})\geq u^{-\tfrac{\alpha}{\nu}+1}\big)\\&\geq (1-\zeta^{-\frac{\alpha-\nu}{2}})(1-\exp(-c\eta \zeta^{\nu}))
        \geq 1-\Cr{ccondFPPintro}
    \end{split}
\end{equation*}
when taking $\zeta=\Cr{cmathfrakC}$ for a large enough constant $\Cr{cmathfrakC}<\infty$, and the rest of the proof is similar.

Let us now turn to the lower bound, and we let $ P\leq N$ be an integer which we will fix later. By Lemma~\ref{lem:exitviatubes} we have 
\begin{equation}\label{eq:IuinterXbar}
\begin{split}
    \P_x\big(\mathcal{I}^u\cap \overline{X}_{B(x,N)}=\varnothing\big)&\geq \P\big(\mathcal{I}^u\cap \mathcal{L}_{x}^{P,N}=\varnothing\big)\P_x(T_{\mathcal{L}_{x}^{P,N}}>H_{B(x_P,N/P)})
    \\&\geq \exp\big(-u\mathrm{cap}(\mathcal{L}_x^{P,N})\big)\exp(-CP),
\end{split}
\end{equation}
where in the last inequality we used \eqref{eq:defIuintro}. Moreover, by \eqref{eq:capunionbound2} with $n=P$, which is satisfied for $S_i=B(x'_i,\Cr{Cexittube2}N/P)$ and $\delta=1/(2\Cr{Cexittube3}^{-1}+2\Cr{Cexittube2})$ therein, if $N\geq P\geq \Cr{CboundP}$, one has $\mathrm{cap}(\mathcal{L}_x^{P,N})\leq  \Cr{Ccapunionball}(1+\eta)F_{\nu}(N,N/P)$. If $\nu>1$, we now choose $P=\lceil N(u/u_0)^{\frac1\nu}\rceil$, and then $uF_{\nu}(N,N/P)\leq CuNu^{\frac{1-\nu}{\nu}}=CF_{\nu}(Nu^{\frac1\nu})$. By \eqref{eq:IuinterXbar}, this yields the lower bound in \eqref{eq:interetmarche} when $Nu^{\frac1\nu}$ is large enough. If $\nu\leq 1$, we choose $P=\lceil (u/u_0)N^{\nu}\log(N(u/u_0)^{\frac1\nu})^{-2}\rceil$, which if $Nu^{\frac1\nu}$ is large enough satisfies  $N\geq P\geq \Cr{CboundP}$, $CP\leq \eta \Cr{Ccapunionball}F_{\nu}(Nu^{\frac1\nu})$ and $uF_{\nu}(N,N/P)=uN^{\nu}\log(P)^{-1\{\nu=1\}}\leq (1+\eta)F_{\nu}(Nu^{\frac1\nu})$, and we can conclude similarly as before.
\end{proof}

Let us now explain how to deduce a generalization of Theorem~\ref{the:boundcapacityintro} to graphs as in \eqref{eq:standingassumptionintro} from Lemmas~\ref{lem:exitviatubes} and~\ref{lem:hittinginter}, which improves \cite[Lemma~5.3]{DrePreRod5}. It holds whenever $\nu\geq1$ and $\alpha\geq 2\nu$, which is the case on $\Z^d$ if and only if $d\in\{3,4\}$ by \eqref{eq:assumptionZd}.

\begin{The}
\label{the:boundcapacitynointro}
Fix some $\eta\in{(0,1)}$. There exist constants $\Cl[c]{ccapwalk3},\Cl[c]{ccapwalk2},\Cl[c]{ccapwalk}>0$ as well as $\Cl{Ccapwalk},\Cl{Ccapwalk2}<\infty$, depending on $\eta$, such that for all $x\in{G}$ and $N,t\geq1$, if $\nu>1$ and $t\leq \Cr{ccapwalk3}N^{\nu-1}$, 
    \begin{equation}
    \begin{split}
    \label{eq:boundcapacitynu>1nointro}
    \Cr{ccapwalk2}\exp\big(-\Cr{Ccapwalk}t^{\frac1{\nu-1}}\big)&\leq \P_x\Big(\mathrm{cap}(\overline{X}_{B(x,N)})\leq \frac{N^{\nu}}{t}\Big)
    \\&\leq
    \begin{cases}
    \Cr{Ccapwalk2}\exp\big(-\Cr{ccapwalk}t^{\frac1{\nu-1}}\big),&\text{ if }\alpha>2\nu,
    \\\Cr{Ccapwalk2}\exp\big(-\Cr{ccapwalk}t^{\frac{1}{\nu-1}}\log(\tfrac{N^{\nu-1}}t)^{-\frac{\nu}{\nu-1}}\big),&\text{ if }\alpha=2\nu,
    \end{cases}
    \end{split}
\end{equation}
and if $\nu=1$ and $t\leq \log(N)/(\Cr{Cbeta}(1+\eta/2))$
    \begin{equation}
        \label{eq:boundcapacitynu=1nointro}
\Cr{ccapwalk2}\exp\big(-\exp(\Cr{Cbeta}(1+\eta)t)\big)\leq \P_x\Big(\mathrm{cap}(\overline{X}_{B(x,N)})\leq \frac{N}{t}\Big)
\leq
\Cr{Ccapwalk2}\exp\big(-\exp(\Cr{cbeta}(1-\eta)t)\big).
\end{equation}
\end{The}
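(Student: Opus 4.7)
The plan is to combine the identity $\E_x[\exp(-u\,\mathrm{cap}(\overline{X}_{B(x,N)}))] = \P \otimes \P_x(\I^u \cap \overline{X}_{B(x,N)} = \varnothing)$ from \eqref{eq:linkcapRI} with the sharp Laplace-transform estimates of Lemma~\ref{lem:hittinginter} for the upper bound, and with a tube-confinement argument based on Lemmas~\ref{lem:exitviatubes}~and~\ref{lem:capatube} for the lower bound. Lemma~\ref{lem:hittinginter} already essentially computes the Laplace transform of $\mathrm{cap}(\overline{X}_{B(x,N)})$; the remaining task is to convert this into tail bounds via Chernov's inequality, optimizing the parameter $u$.

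For the upper bound, I would start from
\begin{equation*}
\P_x\!\left(\mathrm{cap}(\overline{X}_{B(x,N)}) \leq \tfrac{N^{\nu}}{t}\right) \leq \exp\!\left(\tfrac{uN^{\nu}}{t}\right) \, \P \otimes \P_x(\I^u \cap \overline{X}_{B(x,N)} = \varnothing),
\end{equation*}
and bound the right-hand side using Lemma~\ref{lem:hittinginter}. When $\nu>1$, $F_{\nu}(Nu^{1/\nu}) = Nu^{1/\nu}$, so the effective exponent is $uN^{\nu}/t - cNu^{1/\nu}$ (times $\log_*(1/u)^{-1}$ if $\alpha=2\nu$); the optimum lies at $u \asymp t^{\nu/(\nu-1)}/N^{\nu}$ and produces $-c\,t^{1/(\nu-1)}$. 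When $\alpha=2\nu$, plugging this optimum into $\log(1/u) \asymp \log(N^{\nu-1}/t)$ gives the extra denominator $\log(N^{\nu-1}/t)^{\nu/(\nu-1)}$. When $\nu=1$, $F_{1}(Nu) = Nu/\log_*(Nu)$, so the exponent reads $uN\bigl(t^{-1} - \Cr{cbeta}/((1+\eta)\log_*(Nu))\bigr)$; choosing $u = \exp(c\,t)/N$ with $c$ slightly smaller than $\Cr{cbeta}/(1+\eta)$ makes the second term strictly dominant, and after a change of variable in $\eta$ this yields $\exp(-\exp(\Cr{cbeta}(1-\eta)t))$, the polynomial factor $1/t$ being absorbed since the theorem is trivial once $t$ falls below a constant (use $\mathrm{cap}(\overline{X}_{B(x,N)}) \leq \mathrm{cap}(B(x,N)) \leq CN^{\nu}$). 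The conditions $t \leq \Cr{ccapwalk3}N^{\nu-1}$ and $t \leq \log N /(\Cr{Cbeta}(1+\eta/2))$ are exactly what is needed so that the optimizing $u$ is bounded above by a constant and satisfies $N \geq Cu^{-1/\nu}$, as required for the application of Lemma~\ref{lem:hittinginter}.

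For the lower bound, I would use the tube $\mathcal{L}_x^{P,N}$ from Lemma~\ref{lem:exitviatubes}: on the event $\{H_{B(x'_P,N/P)} < T_{\mathcal{L}_x^{P,N}}\}$, the walk exits $B(x,N)$ without leaving the tube, so $\overline{X}_{B(x,N)} \subset \mathcal{L}_x^{P,N}$ and hence by monotonicity of capacity together with \eqref{eq:capunionbound2}, $\mathrm{cap}(\overline{X}_{B(x,N)}) \leq \Cr{Ccapunionball}(1+\eta)F_{\nu}(N,N/P)$. For $\nu>1$, $F_{\nu}(N,N/P) = N^{\nu}P^{-(\nu-1)}$, and I would take $P$ of order $t^{1/(\nu-1)}$ to ensure the capacity is at most $N^{\nu}/t$; the probability of the confining event is at least $\exp(-CP) = \exp(-C\,t^{1/(\nu-1)})$ by \eqref{eq:exitviatubes} with $i=1$, $j=P$, and $P\leq N$ thanks to $t \leq \Cr{ccapwalk3}N^{\nu-1}$. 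For $\nu=1$, $F_{1}(N,N/P) \sim N/\log P$, so I would take $\log P = \Cr{Cbeta}(1+\eta)t$ to force the capacity below $N/t$, producing the exponential-of-exponential $\exp(-C\exp(\Cr{Cbeta}(1+\eta)t))$. The principal difficulty I expect is bookkeeping the exact constants $\Cr{cbeta}$ and $\Cr{Cbeta}$ through the Chernov optimization (any slack in the saddle point enters the exponent linearly and must be absorbed into the $\eta$ after change of variable), and, in the $\alpha=2\nu$ case, following the logarithmic correction $\log_*(1/u)^{-1}$ through the optimization to obtain the factor $\log(N^{\nu-1}/t)^{-\nu/(\nu-1)}$ rather than $\log(N^{\nu-1}/t)^{-1}$.
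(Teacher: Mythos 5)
Your proposal is correct and follows essentially the same route as the paper's proof: Chernov bound on the Laplace transform \eqref{eq:linkcapRI} via Lemma~\ref{lem:hittinginter} with the same optimizing choices of $u$ (including the $\log_*(1/u)^{-1}$ bookkeeping when $\alpha=2\nu$), and tube confinement via Lemmas~\ref{lem:exitviatubes} and~\ref{lem:capatube} with $P\asymp t^{1/(\nu-1)}$ (resp.\ $\log P\asymp \Cr{Cbeta}(1+\eta)t$) for the lower bound. The only minor discrepancy is that the stated restrictions on $t$ are needed solely for the lower bound (to guarantee $P\leq N$); for the upper bound the requirement $N\geq Cu^{-1/\nu}$ only forces $t$ to exceed a constant, which is absorbed into $\Cr{Ccapwalk2}$.
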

\begin{proof}
    We start with the upper bounds. It follows from \eqref{eq:interetmarche} and \eqref{eq:linkcapRI} that for all $u>0$ (note that the condition $u\leq u_0$ is not used in the proof of the upper bound in \eqref{eq:interetmarche}) and $N\geq Cu^{-\frac1\nu}$
    \begin{equation*}
            \E_x\big[\!\exp(-u\mathrm{cap}(\overline{X}_{B(x,N)})\big]\leq \exp\left(-\frac{\Cr{cinteretmarche}}{1+\eta}F_{\nu}(Nu^{\frac1\nu})\log_*(\tfrac{1}{u})^{-1_{\alpha=2\nu}}\right).
    \end{equation*}
    Hence by a Chernov bound for all $t,u>0$ and $N\geq Cu^{-\frac1\nu}$
    \begin{equation}
    \label{eq:Chernovcap}
        \P_x\left(\mathrm{cap}(\overline{X}_{B(x,N})\leq\frac{N^{\nu}}{t}\right)\leq \exp\left(\frac{uN^{\nu}}{t}-\frac{\Cr{cinteretmarche}}{1+\eta}F_{\nu}(Nu^{\frac1\nu})\log_*(\tfrac{1}{u})^{-1_{\alpha=2\nu}}\right).
    \end{equation}
    If $\nu>1$ and $\alpha>2\nu$, we take $u=N^{-\nu}({\Cr{cinteretmarche}}t/(2(1+\eta)))^{\frac\nu{\nu-1}}$, and if $\alpha=2\nu$, we take $u=N^{-\nu}(\Cr{cinteretmarche}t/(2\nu\log(Nt^{-\frac1{\nu-1}})))^{\tfrac{\nu}{\nu-1}}$, which yields the upper bound in \eqref{eq:boundcapacitynu>1nointro}. If $\nu=1$, we take $u=\exp(\Cr{cinteretmarche}t(1+\eta)^{-2})/N$, which directly yields the upper bound in \eqref{eq:boundcapacitynu=1nointro} for $t$ large enough (which can be assumed w.l.o.g.\ by changing the constant $\Cr{Ccapwalk2}$) up to replacing $1-\eta$ by $(1+\eta)^{-3}$ therein. Note that the condition  $N\geq Cu^{-\frac1\nu}$ is indeed fulfilled when $\alpha>2\nu$ and $t$ is large enough, as well as when $\alpha=2\nu$ and $t/\log(Nt^{-\frac1{\nu-1}})$ is large enough, which in both cases can be assumed without loss of generality up to changing the constant $\Cr{Ccapwalk2}$.

    Let us now turn to the lower bounds. Recalling the set $\mathcal{L}_{x}^{P,N}$ from Lemma~\ref{lem:exitviatubes} we have by Lemma~\ref{lem:capatube}, for $\delta=1/(2\Cr{Cexittube3}^{-1}+2\Cr{Cexittube2})$ and $\eta/2$ instead of $\eta$, that $\mathrm{cap}(\mathcal{L}_{x}^{P,N})\leq N^{\nu}/t$ if $P=\lceil(\Cr{Ccapunionball}(1+\eta/2)t)^{\frac1{\nu-1}}\rceil$ when $\nu>1$, or if $P=\lceil\exp(\Cr{Ccapunionball}(1+\eta/2)t)\rceil$ when $\nu=1$. Note that $N\geq P\geq \Cr{CboundP}$ is satisfied, as required in Lemma~\ref{lem:capatube}, if $t$ is large enough, which can be assumed without loss of generality by changing the constant $\Cr{ccapwalk2}$, and if we take $\Cr{ccapwalk3}=(\Cr{Ccapunionball}(1+\eta/2))^{-1}$. Then by \eqref{eq:exitviatubes} we have
    \begin{equation}
    \label{eq:prooflowerboundcapa}
        \P_x\Big(\mathrm{cap}(\overline{X}_{B(x,N)})\leq\frac{N^{\nu}}{t}\Big)\geq \P_x(H_{B(x'_P,N)}<T_{\mathcal{L}_x^{P,N}})\geq \exp(-CP),
    \end{equation}
    which gives the lower bound \eqref{eq:boundcapacitynu>1nointro} when $\nu>1$, as well as the lower bound \eqref{eq:boundcapacitynu=1nointro} when $\nu=1$ and $t$ is large enough.
\end{proof}

We did not consider the case $\nu<1$ in Theorem~\ref{the:boundcapacitynointro} since $\mathrm{cap}(\overline{X}_{B(x,N)})$ is always deterministically larger than $cN^{\nu}$ for some constant $c$, see \cite[Lemma~3.2]{DrePreRod2}, and hence it is trivially never much smaller than $N^{\nu}$. Furthermore, $\mathrm{cap}(\overline{X}_{B(x,N)})$ is always larger than $cN$ when $\nu>1$, resp.\ $cN/\log(N)$ when $\nu=1$, and so conditions of the type $t\leq \Cr{ccapwalk3}N^{\nu-1}$, resp.\  $t\leq \log(N)/(\Cr{Cbeta}(1+\eta/2))$ in the lower bounds of Theorem~\ref{the:boundcapacitynointro} are necessary.  Let us now quickly explain how to deduce Theorem~\ref{the:boundcapacityintro} from Theorem~\ref{the:boundcapacitynointro} and \eqref{eq:capRnp}.

\begin{proof}[Proof of Theorem~\ref{the:boundcapacityintro}]
    The upper bound in \eqref{eq:illustrationcapacityintro} follows from \eqref{eq:capRnp}, \eqref{eq:boundcapacitynu>1nointro}, \eqref{eq:boundcapacitynu=1nointro} and \eqref{eq:assumptionZd} after a change of variable for $\eta$.  Note to that effect that the conditions on $t$ and $N$ above \eqref{eq:boundcapacitynu>1nointro} and \eqref{eq:boundcapacitynu=1nointro} are never used in the proofs of the respective upper bounds, and that for $N\leq cp$, the upper bound in \eqref{eq:illustrationcapacityintro} trivially holds upon changing the constants therein. For the lower bound, one first notes that on $\Z^d$, one can simply take $(0=x_0,x_1,\dots)$ in \eqref{eq:intro_shortgeodesic} as multiples of $e_1$, and thus also $x'_1,\dots,x'_P$ in Lemma~\ref{lem:exitviatubes}. In particular $\mathcal{R}_{N,p}$ contains the set $\L_0^{\lfloor cN/p\rfloor,N}\cap B(N)$, and hence one can lower bound the probability in \eqref{eq:illustrationcapacityintro} by the one in \eqref{eq:exitviatubes} for $P=\lfloor cN/p\rfloor$, $i=1$ and $j=P$.
\end{proof}

Interestingly, the bounds from Theorem~\ref{the:boundcapacityintro} become sharp up to constants in the regime $1<\nu<\alpha/2$ (which never occurs on $\Z^d$). Indeed, in this regime of parameters  Lemma~\ref{lem:capatube} implies that $cN^{\nu}P^{1-\nu}\leq  \mathrm{cap}(\mathcal{L}_{x}^{P,N})\leq CN^{\nu}P^{1-\nu}$ for $P\in{[\Cr{CboundP},N]}$, and  \eqref{eq:boundcapacitynu>1nointro} then yields 
\begin{equation}
\label{eq:illustrationcapacitynointro}
     c\exp\big(-CP\big)\leq \P_0\Big(\mathrm{cap}(\overline{X}_{B(N)})\leq \mathrm{cap}\big(\mathcal{L}_{x}^{P,N}\big)\Big)\leq
        C\exp\big(-cP\big)\text{ if }1<\nu<\frac{\alpha}{2}.
\end{equation}
Since on $\Z^d$ one can choose the sequence $(x'_1,\dots)$ from Lemma~\ref{lem:exitviatubes} so that $\mathcal{L}^{\lceil CN/p\rceil,N}_0\subset \mathcal{R}_{N,p}\subset\mathcal{L}^{\lfloor cN/p\rfloor,N}_0$ by a similar reasoning as in the proof of Theorem~\ref{the:boundcapacityintro}, \eqref{eq:illustrationcapacitynointro} is indeed a generalization of \eqref{eq:illustrationcapacityintro} to graphs satisfying \eqref{eq:standingassumptionintro} with $1<\nu<\alpha/2$. When $\nu=\alpha/2$, one obtains on general graphs a bound similar to \eqref{eq:illustrationcapacitynointro} for $G=\Z^4$, replacing $\mathcal{R}_{N,p}$ by $\mathcal{L}_x^{N/p,N}$ and $\log(p)^2$ by $\log(p)^{\nu/(\nu-1)}$ therein. When $\nu=1$, it is however not clear if a bound similar to \eqref{eq:illustrationcapacityintro} for $G=\Z^3$ holds, since the proof on $\Z^3$ used the equality $\Cr{cGreenasymp}=\Cr{CGreenasymp}$ in \eqref{eq:intro_Green_asymp}. If $\nu=1$ and $\Cr{cGreenasymp}<\Cr{CGreenasymp}$, one has to change $1-\eta$ in the upper bound of \eqref{eq:illustrationcapacityintro} by some positive constant, which still improves on the bounds from \cite[Lemma~3.2]{DrePreRod2}.

\begin{Rk}
\label{rk:overdefXbar}
    Theorem~\ref{the:boundcapacitynointro} remains true, and hence Theorem~\ref{the:boundcapacityintro} as well, when replacing $\overline{X}_{B(x,N)}$ by $\hat{X}_{B(x,N)}\stackrel{\textnormal{def.}}{=}\{X_k,k\geq0\}\cap B(x,N)$. Since $\mathrm{cap}(\hat{X}_{B(x,N)})\geq \mathrm{cap}(\overline{X}_{B(x,N)})$ by monotonicity of capacity, it suffices to prove the lower bounds \eqref{eq:boundcapacitynu>1nointro} and \eqref{eq:boundcapacitynu=1nointro} for $\overline{X}_{B(x,N)}$. For \eqref{eq:boundcapacitynu>1nointro}, one simply notes that $\mathrm{cap}(\hat{X}_{B(x,N)})\leq \mathrm{cap}(\overline{X}_{B(x,KN)})$ on the event that $X$ never returns to $B(x,N)$ after time $T_{B(x,KN)},$ for any $K\geq1$. Taking $K$ large enough so that $\P_z(H_{B(x,N)}=\infty)\geq 1/2$ for any $z\in{B(x,KN)^c}$, which is possible by \eqref{eq:boundentrance}, and using the strong Markov property finishes the proof when $\nu>1$.
    
    In order to obtain the correct constant $\Cr{Cbeta}$ in the lower bound \eqref{eq:boundcapacitynu=1nointro} when $\nu=1$, one can proceed as follows : let $x=x'_1,x'_2,\dots$ be the sequence from Lemma~\ref{lem:exitviatubes} for $N(1+\eta/5)$ instead of $N$ and $P=\lceil\exp(\Cr{Cbeta}(1+\eta/5)^2t)\rceil$($\leq N$ under the conditions of Theorem~\ref{the:boundcapacitynointro}) therein, then by a reasoning similar to the case $\nu>1$ and to \eqref{eq:prooflowerboundcapa} one has
    \begin{equation*}
    \begin{split}
        \P_x\Big(\mathrm{cap}(\hat{X}_{B(x,N)})\leq\frac{N^{\nu}}{t}\Big)&\geq \P_x(H_{B(x'_P,\frac{N}{P})}<T_{\mathcal{L}_x^{P,N(1+\eta)/5}},H_{B(x,N)}\circ\theta_{H_{B(x'_P,N)}}=\infty)
        \\&\geq \exp(-CP)\inf_{z\in{B(x'_P,\frac{N}{P})}}\P_z(H_{B(x,N)}=\infty),
    \end{split}
    \end{equation*}
    where $\theta_t$ is the time-shift by $t$. To conclude, it will be enough to show that
    \begin{equation}
    \label{eq:betterlowerbound}
        \inf_{z\in{B(x'_P,\frac{\eta N}{5\Cr{Cexittube2}})}}\P_z(H_{B(x,N)}=\infty)\geq \Cl[c]{cbetterbound}
    \end{equation}
    for some constant $\Cr{cbetterbound}>0$, since $\frac{N}{P}\leq \frac{\eta N}{5\Cr{Cexittube2}}$ upon taking $t$ large enough w.l.o.g.
   The proof of \eqref{eq:betterlowerbound} uses a standard chaining argument similar to \eqref{eq:exitviatubes}, that we now detail since it will actually also be useful in the proof of Theorem~\ref{the:localuniquenessintro}.  By \eqref{eq:intro_shortgeodesic} and the definition of $x'_P$ in the proof of Lemma~\ref{lem:exitviatubes}, one can find a subsequence $x'_P=y_0,y_1,\dots$ of the sequence $x=x_0,x_1,\dots$ such that $d(x,y_i)\geq \Cr{cgeo}(i+\lceil (P+\Cr{Cgeo}+1)N(1+\eta/5)/(P\Cr{cgeo})\rceil)-\Cr{Cgeo}\geq N(1+\eta/5)$ and $\Cr{cgeo}|i-j|-\Cr{Cgeo}\leq d(y_j,y_i)\leq \Cr{cgeo}|i-j|+\Cr{Cgeo}$ for each $i,j\in{\N_0}$. Letting now $y'_1=x_P$ and $y'_i=y_{\lceil (i+\Cr{Cgeo}+1)\eta N/(5\Cr{Cexittube2}\Cr{cgeo})\rceil}$ for all $i\geq2$, \eqref{eq:boundxkxk+1} still holds when replacing $x'_k$ by $y'_k$ and $P$ by $5\Cr{Cexittube2}/\eta$ therein. Let $K'$ be the first $i\in{\N_0}$ such that $B(y'_i,\eta N/(5\Cr{Cexittube2}))\subset B(x,KN)^c$, and let ${\L}_x^{P,N,\eta}=\cup_{i=1}^{K'}B(y'_i,\eta N/5)$. Then, similarly as in \eqref{eq:exitviatubes} we have that $\P_{z}(H_{B(y'_{K'},\eta N/(5\Cr{Cexittube2}))}<T_{{\L}_x^{P,N,\eta}})\geq \exp(-cK')$ for any $z\in{B(x'_P,\eta N/(5\Cr{Cexittube2})}$. Since $K'\leq CK$ for a constant $C=C(\eta)$ by definition and $\L_x^{P,N,\eta}\subset B(x,N)^c$, one easily deduces \eqref{eq:betterlowerbound} by combining this with \eqref{eq:boundentrance} for $K$ large enough.
\end{Rk}

It remains to deduce Theorem~\ref{the:localuniquenessintro} from Theorem~\ref{the:boundcapacityintro} for the upper bound, and from ideas similar to Lemma~\ref{lem:hittinginter} for the lower bound. For each $x\in{G}$, $\xi>1$, $u>0$ and $N\geq1$, let us define the local uniqueness event in $B(x,\xi N)$ for interlacements at level $u$ in $B(x,N)$ by 
\begin{equation}
\label{eq:deflocaluniqnointro}
    \text{LocUniq}_{u,N,\xi}(x)\stackrel{\textnormal{def.}}{=}\bigcap_{x,y\in{\mathcal{I}^u}\cap B(x,N)}\left\{x\leftrightarrow y\text{ in }\mathcal{I}^u\cap B(x,\xi N)\right\}.
\end{equation}
Note that by convention if $\I^u\cap B(x,N)=\varnothing$, then the event $\text{LocUniq}_{u,N,\xi}(x)$ occurs. Sometimes, it is additionally required that $\I^u\cap B(x,N)\neq \varnothing$ in the definition of local uniqueness, and our results would actually still be satisfied with this alternative definition, which can easily be proved using a union bound, \eqref{eq:defIuintro} and \eqref{eq:capball}. Moreover, the event $\text{LocUniq}_{u,N,\xi}(x)$ has only good chances to occur if $B(x,N)$ is connected in $B(x,\xi N)$, that is if for every $y,y'\in{B(x,N)}$ there is a nearest neighbor path in $B(x,\xi N)$ connecting $y$ to $y'$. We thus define $\xi_0$ as the infimum of all $\xi\geq1$ such that $B(x,N)$ is connected in $B(x,\xi N)$ for all $x\in{G}$ and $N$ sufficiently large, which is finite and does not depend on $N$ by \cite[(3.4)]{DrePreRod2}. Note that we actually have $\xi_0=1$ if $d$ is the graph distance, and in fact also for any of the graphs from \cite[(1.4)]{DrePreRod2} or \cite[Theorem~2]{MR2076770}. Let us now prove a generalization of Theorem~\ref{the:localuniquenessintro} to graphs satisfying \eqref{eq:standingassumptionintro}, which improves  \cite[Theorem~5.1]{DrePreRod5}. Recall the function $F_{\nu}$ from below \eqref{eq:defGnu}.

\begin{The}
\label{the:localuniquenessnointro}
    Fix some $\eta\in{(0,1)}$, $u_0>0$ and $\xi>\xi_0$. There exists a  constant $\Cl{cuNlocuniq}<\infty$, depending on $\eta,$ $u_0$ and $\xi$, such that for all $N\geq1$ and $u\in{(0,u_0/2]}$ with $uN^{\nu}\geq \Cr{cuNlocuniq}\log(u_0/u)^{1\{\alpha=2\nu\}/\nu}$
     \begin{equation}
    \label{eq:boundlocaluniqnointro}
    \begin{split}
        \exp\left(-{\Cl{clocaluniq}}F_{\nu}\big(Nu^{\frac1\nu}\big)\right)\leq \sup_{x\in{G}}\P\big(\textnormal{LocUniq}_{u,N,\xi}(x)^c\big)\leq 
        \begin{cases}
            \exp\left(-{\Cl[c]{clocaluniq2}}F_{\nu}\big(Nu^{\frac1\nu}\big)\right)&\text{ if }\alpha>2\nu,
            \\\exp\left(-\frac{\Cr{clocaluniq2}Nu^{\frac1\nu}}{\log(u_0/u)}\right)&\text{ if }\alpha=2\nu,
        \end{cases}
    \end{split}
    \end{equation}
    where $\Cr{clocaluniq}=(1+\eta)(\xi-1)^{\nu}\Cr{Cbeta}$ if $\nu\leq 1$, $\Cr{clocaluniq}=\Cr{clocaluniq}(u_0,\xi)$ if $\nu>1$, $\Cr{clocaluniq2}=(1-\eta)(\xi-1)^{\nu}\Cr{cbeta}$ if $\nu=1$, and $\Cr{clocaluniq2}=\Cr{clocaluniq2}(u_0,\xi)$ if $\nu\neq 1$.
\end{The}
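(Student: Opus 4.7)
The plan is to prove the upper and lower bounds of \eqref{eq:boundlocaluniqnointro} separately, following the strategy sketched in the introduction after \eqref{eq:linkcapRI}. The upper bound will be derived from Theorem~\ref{the:boundcapacitynointro} via a chained-trajectory argument, while the lower bound will combine an explicit tube construction (via Lemma~\ref{lem:exitviatubes}) with the sharp capacity bound of Lemma~\ref{lem:capatube} and a Palm-type identity for the Poisson process $\omega_u$.

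For the lower bound, I will exhibit the following bad event: there is a trajectory $X^1$ of $\omega_u$ hitting $B(x,N)$ whose trace in $B(x,\xi N)$ is contained in a thin tube $\mathcal L$ of controlled capacity (built from Lemma~\ref{lem:exitviatubes} starting at a point of $\partial B(x,N)$ with length $(\xi-1)N$ and $P$ balls to be tuned); no other trajectory of $\omega_u$ hits $X^1 \cap B(x,\xi N)$; and a further trajectory of $\omega_u$ hits $B(x,N)\setminus X^1$. On this event the two resulting interlacement points in $B(x,N)$ lie in disjoint clusters of $\mathcal I^u \cap B(x,\xi N)$, so $\textnormal{LocUniq}_{u,N,\xi}(x)^c$ occurs. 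Conditional on $X^1$, the Palm/Slivnyak formula for $\omega_u$ gives that the probability that no trajectory of $\omega_u \setminus \{X^1\}$ hits $X^1 \cap B(x,\xi N)$ equals $\exp(-u\,\mathrm{cap}(X^1 \cap B(x,\xi N)))$, which on the tube-confinement event is at least $\exp(-u\,\mathrm{cap}(\mathcal L))$. Combining the confinement probability $\exp(-cP)$ from Lemma~\ref{lem:exitviatubes} with the sharp capacity upper bound $\mathrm{cap}(\mathcal L) \leq (1+\eta)\Cr{Cbeta} F_\nu((\xi-1)N,(\xi-1)N/P)$ from \eqref{eq:capunionbound2}, and optimizing $P$ exactly as in the proof of the lower bound in \eqref{eq:interetmarche} of Lemma~\ref{lem:hittinginter}, will produce the target $\exp(-(1+\eta)(\xi-1)^\nu \Cr{Cbeta} F_\nu(Nu^{1/\nu}))$ when $\nu \leq 1$, and the corresponding analog with a constant depending on $u_0$ and $\xi$ when $\nu>1$. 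The constant-order probabilities that $X^1$ exists and that a further trajectory hits $B(x,N)\setminus X^1$ follow from \eqref{eq:defIuintro} and \eqref{eq:capball} under the hypothesis $uN^\nu \geq \Cr{cuNlocuniq}$.

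For the upper bound, I will implement the chained-trajectory argument outlined in the introduction. Fix scales $\xi_0 < \xi_1 < \xi_2 < \xi$ and enumerate the trajectories $Y^1,\dots,Y^M$ of $\omega_u$ hitting $B(x,N)$, where $M$ is Poisson with mean $u\,\mathrm{cap}(B(x,N)) \leq CuN^\nu$ by \eqref{eq:capball}. Applying Theorem~\ref{the:boundcapacitynointro} to the forward and backward halves of each $Y^i$ (each a random walk of effective range $\approx(\xi_1-1)N/2$ started on $\partial B(x,N)$), we obtain $\mathrm{cap}(Y^i \cap B(x,\xi_1 N)) \geq (\xi_1-1)^\nu N^\nu / t$ except on an event of probability $\exp(-\exp(\Cr{cbeta}(1-\eta)t))$ when $\nu = 1$ (and the analogous bound when $\nu > 1$). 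Choosing $t \approx \log(Nu)/\Cr{cbeta}$ so that $\exp(\Cr{cbeta}(1-\eta)t) \approx (\xi-1)\Cr{cbeta} F_1(Nu)$ makes the union bound over the $\leq CuN^\nu$ trajectories absorbable into the $\eta$-slack. On this good event, the Palm formula yields that the number of further trajectories of $\omega_u$ intersecting $Y^i \cap B(x,\xi_1 N)$ is Poisson with mean $\gtrsim \Cr{cbeta} F_\nu(Nu^{1/\nu})$, hence very large; and by \eqref{eq:probahittinglower} together with standard intersection estimates for two random walks in a low-dimensional ball (using the condition $\alpha \geq 2\nu$), a constant fraction of the intermediate trajectories attached to $Y^i$ meets a constant fraction of those attached to $Y^j$ within $B(x,\xi_2 N)$, producing a connection in $\mathcal I^u \cap B(x,\xi N)$ from $Y^i$ to $Y^j$ and ruling out $\textnormal{LocUniq}_{u,N,\xi}(x)^c$.

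The hardest parts will be (a) achieving the sharp constant $(1-\eta)(\xi-1)\Cr{cbeta}$ in the upper bound for $\nu=1$, which requires carefully tuning $\xi_1,\xi_2$, and $t$ so that the union-bound cost $\log M \lesssim \log(uN^\nu)$ is absorbed into a $(1+\eta)$-factor and the chaining of intermediate trajectories does not lose the constant — in the spirit of the tracking carried out in Theorem~\ref{the:boundcapacitynointro}; (b) justifying rigorously the Palm/independence identity $\P(\omega_u \setminus \{X^1\} \cap X^1 \cap B(x,\xi N) = \varnothing \mid X^1) = \exp(-u\,\mathrm{cap}(X^1 \cap B(x,\xi N)))$ in the lower bound, and its analog in the upper bound for the number of intermediate trajectories; and (c) the intersection estimate for two intermediate trajectories starting in $B(x,\xi_1 N)$ meeting in $B(x,\xi_2 N)$, which relies on $\alpha \geq 2\nu$ and which will also be the source of the logarithmic correction $\log(u_0/u)^{-1}$ in the critical case $\alpha=2\nu$, as in Theorem~\ref{the:boundcapacitynointro}. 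The case $\nu>1$ is simpler since no explicit constant is needed, and the exponential decay then follows more directly from Theorem~\ref{the:boundcapacitynointro}.
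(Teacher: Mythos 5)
Your lower bound is essentially the paper's argument (confine one trajectory to a tube from Lemma~\ref{lem:exitviatubes}, pay $\exp(-u\,\mathrm{cap}(\mathcal L))$ for isolating it via \eqref{eq:capunionbound2}, and plant a second trajectory elsewhere), though the paper replaces the Palm-formula step by an explicit thinning of the Poisson process into independent counts $\tilde N_u^{\tilde{\mathcal L}_z^{P,N'}}$, $N_u^{B(x'_{P'},N'/P)}$, etc., and also has to control the \emph{backward} half of the doubly infinite trajectory (events $E_z^\pm$), which your sketch omits but which is a fixable detail.

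The upper bound, however, has a genuine gap for $\nu=1$: your two-level chain (big trajectories $Y^i$, then intermediate trajectories attached to them that must pairwise intersect) cannot produce the sharp constant $(1-\eta)(\xi-1)\Cr{cbeta}$, and no tuning of $\xi_1,\xi_2,t$ repairs this. To make the capacity-failure term $\P(\mathrm{cap}(Y)\leq N/t)\leq\exp(-\exp(\Cr{cbeta}(1-\eta)t))$ smaller than the target $\exp(-(1-\eta)\Cr{cbeta}(\xi-1)Nu/\log(Nu))$ you are forced to take $t\gtrsim\log(Nu)/\Cr{cbeta}$, so only about $uN/t\approx\Cr{cbeta}F_1(Nu)$ intermediate trajectories hit $Y$; since each pair of attached trajectories intersects only with some constant probability $p<1$, the resulting failure bound is $(1-p)^{cF_1(Nu)}=\exp(-c\log\tfrac1{1-p}\,F_1(Nu))$, with a constant in the exponent that is not close to $\Cr{cbeta}(\xi-1)$. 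The paper's fix (and the missing idea) is a \emph{third} level: among the $\approx\eta u\overline N/t$ trajectories hitting $Y$, select the one $Y^{K_Y}$ of maximal capacity, which with overwhelming probability has capacity $\geq\hat N/s$ with $s\approx\log\log(Nu)$ only (see the event $\hat E$ and \eqref{eq:boundPhatE}); the trajectories attached to $Y^{K_Y}$ then number $\approx uN/\log\log(Nu)\gg F_1(Nu)$, so the pairwise-intersection failure term \eqref{eq:interynzm} becomes negligible and the sharp constant comes entirely from the first-level capacity and Poisson lower-deviation events. Relatedly, your claim that "a constant fraction of the trajectories attached to $Y^i$ meets a constant fraction of those attached to $Y^j$" needs independence across pairs, which does not hold under a single process $\omega_u$; the paper manufactures it by splitting $\I^u$ into $1/\eta$ independent copies $\I_k^{\eta u}$ and drawing the two families of attached trajectories from two distinct copies $\I_{i'}^{\eta u}$, $\I_{j'}^{\eta u}$. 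For $\nu>1$, $\alpha>2\nu$, where no explicit constant is claimed, your two-level scheme does suffice, as the paper itself notes in Remark~\ref{rk:endlocuniq},\ref{rk:differentlocaluniq}).
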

\begin{proof}
        We start with the upper bound, and by monotonicity we can assume w.l.o.g.\ that $1/\eta\in{\{20,21,\dots\}}$. Let us decompose $\I^u$ into independent copies $\I_k^{\eta u}$, $k\in{\{1,\dots,1/\eta\}}$ of $\I^{\eta u}$, so that $\I^u= \cup_{k=1}^{1/\eta}\I_k^{\eta u}$, and fix $i,j\in{\{1,\dots,1/\eta\}}$. Let $\overline{\xi}\stackrel{\textnormal{def.}}{=}(1-\eta)^2\xi$, and we assume w.l.o.g.\ that $\eta$ is small enough so that $\overline{\xi}>\xi_0$. Let $Y$ be the trace of an excursion from $B(x,N)$ to $B(x,\overline{\xi} N)^c$ for $\I_i^{\eta u}$, that is the trace of an excursion started on hitting $B(x,N)$ and stopped on reaching $B(x,\overline{\xi}N)^c$ for one of the random walks in the random interlacement process associated to $\I_i^{\eta u}$, and let $Z$ be the trace of an excursion from $B(x,N)$ to $B(x,\overline{\xi} N)^c$ for $\I_j^{\eta u}$.  For $t\geq1$ to be chosen later, abbreviating $\overline{N}\stackrel{\textnormal{def.}}{=}(\overline{\xi}-1)N$, let us define the event
    \begin{equation*}
        E\stackrel{\textnormal{def.}}{=}\Big\{\mathrm{cap}(Y)\geq \frac{\overline{N}^{\nu}}{t},\mathrm{cap}(Z)\geq \frac{\overline{N}^{\nu}}{t}\Big\}.
    \end{equation*}
    Furthermore, we arbitrarily fix some distinct numbers $i',j'\in{\{1,\dots,1/\eta \}\setminus \{i,j\}}$, and define $\overline{\I}^{(1-4\eta)u}_{i,j,i',j'}=\cup_{\substack{k=1,k\neq i,j,i',j'}}^{1/\eta}\I_k^{\eta u}$. Note that $\overline{\I}^{(1-4\eta)u}_{i,j,i',j'}$ contains a set with the same law as $\I^{u(1-4\eta)}$ (but does not necessarily exactly have the same law since it is possible that $i=j$). For $\hat{\xi}\stackrel{\textnormal{def.}}{=}(1-\eta)\xi\in{(\overline{\xi},\xi)}$, let us denote by $Y^1,\dots,Y^{M_Y}$, resp.\  $Z^1,\dots,Z^{M_Z}$, the traces of the trajectories in the interlacement process associated to $\overline{\I}^{(1-4\eta)u}_{i,j,i',j'}$  hitting $Y$, resp.\ $Z$, started on first hitting $Y$, resp.\ $Z$, and stopped on first exiting $B(x,\hat{\xi} N)$. We have on the event $E$
    \begin{equation}
    \label{eq:boundMYMZ}
        \P\Big(M_Y\geq \frac{\eta u\overline{N}^{\nu}}{t},M_Z\geq \frac{\eta u\overline{N}^{\nu}}{t}\,\big|\,\I_i^{\eta u},\I_j^{\eta u}\Big)\geq 1-2\exp\Big(-\frac{(1-6\eta\log(1/\eta))^2u\overline{N}^{\nu}}{t}\Big)
    \end{equation}
    since $M_Y$ and $M_Z$ both dominate Poisson random variables with parameter larger than $u(1-4\eta)\overline{N}^{\nu}/t$ on the event $E$, and we used large deviation bounds on Poisson random variables, see for instance \cite[p.21-23]{MR3185193}. Let us denote by $K_Y$, resp.\ $K_Z$, the index $k\in{\{1,\dots,M_Y\}}$, resp.\ $k\in{\{1,\dots,M_Z\}}$, for which $\mathrm{cap}(Y^k)$, resp.\ $\mathrm{cap}(Z^k)$, is maximal. Abbreviating $\hat{N}\stackrel{\textnormal{def.}}{=}(\hat{\xi}-\overline{\xi})N$ and for $s\geq1$ to be fixed later, let
    \begin{equation*}
        \hat{E}\stackrel{\textnormal{def.}}{=}\Big\{\mathrm{cap}(Y^{K_Y})\geq \frac{\hat{N}^{\nu}}{s},\mathrm{cap}(Z^{K_Z})\geq \frac{\hat{N}^{\nu}}{s}\Big\}.
    \end{equation*}
    By \eqref{eq:boundMYMZ} on the event $E$
    \begin{equation}
    \label{eq:boundPhatE}
        \P(\hat{E}^c\,|\,\I_i^{\eta u},\I_j^{\eta u})\leq 2\exp\Big(-\frac{(1-6\eta\log(1/\eta))^2u\overline{N}^{\nu}}{t}\Big)+2\P\Big(\mathrm{cap}(Y^1)\leq \frac{\hat{N}^{\nu}}{s}\Big)^{\frac{\eta u\overline{N}^{\nu}}{t}}.
    \end{equation}
    Let us further denote by $\hat{Y}^{1},\dots,\hat{Y}^{\hat{M}_Y}$, resp.\ $\hat{Z}^{1},\dots,\hat{Z}^{\hat{M}_Z}$, the traces of the trajectories in the interlacements process associated to $\I_{i'}^{\eta u},$ resp.\ $\I_{j'}^{\eta u}$,  hitting $Y^{K_Y}$, resp.\ $Z^{K_Z}$, started on first hitting $Y^{K_Y}$, resp.\ $Z^{K_Z}$, and stopped on first exiting $B(x,\xi N)$. On the event $\hat{E}$, using a reasoning similar to \eqref{eq:boundMYMZ} to bound the probability that $\hat{M}_Y$ or $\hat{M}_Z$ is smaller than $\eta u\hat{N}^{\nu}/(10s)$, we have
    \begin{equation}
    \label{eq:interynzm}
    \begin{split}
        \P&\left(\bigcap_{n=1}^{\hat{M}_Y}\bigcap_{m=1}^{\hat{M}_Z}\big\{\hat{Y}^n\cap \hat{Z}^m =\varnothing\big\}\,\Big|\,\I_i^{\eta u},\I_j^{\eta u},\overline{\I}^{(1-4\eta)u}_{i,j,i',j'}\right)
        \\&\leq 2\exp\Big(-\frac{\eta u\hat{N}^{\nu}}{2s}\Big)+\P(\hat{Y}^1\cap \hat{Z}^1=\varnothing)^{\frac{\eta u\hat{N}^{\nu}}{10s}},
    \end{split}
    \end{equation}
    where we used that the events $\{\hat{Y}^n\cap \hat{Z}^n=\varnothing\}$, $n\leq \eta u\hat{N}^{\nu}/(10s)$, are conditionally i.i.d.\ since $i'\neq j'$. Let us also abbreviate $\tilde{N}=(\xi-\hat{\xi})N$, and let us denote by $y^1$ and $z^1$ the respective starting points of $\hat{Y}^1$ and $\hat{Z}^1$. Since $y^1$ and $z^1$ are both in the connected component of $B(x,N)$ in $B(x,\hat{\xi} N)$ by definition and $\hat{\xi}>\xi_0$, see below \eqref{eq:deflocaluniqnointro}, there is a nearest neighbor path from $y^1$ to $z^1$ included in $B(x,\hat{\xi}N)$. In particular by \eqref{eq:defLambda} and \eqref{eq:dvsdgr}, and assuming w.l.o.g.\ that $\Cr{Chittinglower}^{-1}(\xi-\hat{\xi})N/4\geq \Cr{Cdvsdgr}^{-1}$,  one can find a sequence $x_1,\dots,x_p\in{\Lambda(\Cr{Chittinglower}^{-1}\tilde{N}/8)\cap B(x,\hat{\xi}N+\Cr{Chittinglower}^{-1}\tilde{N}/8))}$ such that $z^1\in{B(x_1,\Cr{Chittinglower}^{-1}\tilde{N}/8)}$, $y^1\in{B(x_p,\Cr{Chittinglower}^{-1}\tilde{N}/8)}$, $d(x_i,x_{i+1})\leq \Cr{Chittinglower}^{-1}\tilde{N}/2$ and $p\leq \Cr{CLambda}(8\xi\Cr{Chittinglower}/(\xi-\hat{\xi}))^{\alpha}$. Since $B(x_i,\tilde{N}/2)\subset B(x,\xi N)$ for all $1\leq i\leq p$, it follows from using \eqref{eq:capball} and \eqref{eq:probahittinglower} iteratively that the probability that $\hat{Z}^1$ hits $B(y^1,\Cr{Chittinglower}^{-1}\tilde{N})$ before leaving $B(x,\xi N)$ is larger than a constant, depending on $\xi$ and $\hat{\xi}$. Using \eqref{eq:probahittinglower} again, one deduces that for all $r\geq1$,
    \begin{equation}
    \label{eq:interRW}
    \begin{split}
        \P(\hat{Y}^1\cap \hat{Z}^1\neq\varnothing)&\geq c\P\Big(\mathrm{cap}\big(\hat{Y}^1\cap B\big(y^1,\Cr{Chittinglower}^{-1}\tilde{N}\big)\big)\geq \frac{\Cr{Chittinglower}^{-\nu}\tilde{N}^{\nu}}{2^{\nu}r\log(N)^{1_{\alpha=2\nu}}}\Big)\frac{\Cr{cGreen}}{8^{\nu}r\log(N)^{1_{\alpha=2\nu}}}
        \\&\geq c(1-C\exp(-cr^{\frac1\nu}))\frac{\Cr{cGreen}}{8^{\nu}r\log(N)^{1_{\alpha=2\nu}}}\geq \frac{c'}{\log(N)^{1_{\alpha=2\nu}}}
        \end{split}
    \end{equation}
    for some constants $C<\infty$ and $c,c'>0$, depending on $\hat{\xi}$ and $\xi$, where the second inequality follows from \cite[Lemma~5.3]{DrePreRod5} and the inequality $\log(\Cr{Chittinglower}^{-1}\tilde{N})\leq 2^{\nu}\log(N)$ which can be assumed to hold w.l.o.g, and the last inequality follows from taking $r$ a large enough constant. Combining this with \eqref{eq:interynzm} and using a union bound, we deduce that on the event $\hat{E}$
    \begin{equation}
    \label{eq:localuniqueness2traj}
        \P\big(Y^{K_Y}\nleftrightarrow Z^{K_Z}\text{ in }\I^{u}\cap B(x,\xi N)\,|\,\I_i^{\eta u},\I_j^{\eta u},\overline{\I}^{(1-4\eta)u}_{i,j,i',j'}\big)\leq C\exp\Big(-\frac{cuN^{\nu}}{s\log(N)^{1_{\alpha=2\nu}}}\Big)
    \end{equation}
    for some constant $C<\infty$ and $c>0$, depending on $\xi$ and $\eta$. Note also that the total number of traces of excursions $Y$ and $Z$ as before is smaller than $C(uN^{\nu})^2$ with probability at least $1-CuN^{\nu}\exp(-cuN^{\nu})$ for some positive and finite constant $c,C$ by \cite[(5.15)]{DrePreRod5}. Since $\textnormal{LocUniq}_{u,N,\xi}(x)^c$ implies that there exist $i,j,i',j'$ and trajectories $Y$ and $Z$ as before such that the event in the probability on the left-hand side of \eqref{eq:localuniqueness2traj} occurs, using \eqref{eq:boundPhatE} and noting that if the excursion associated to ${Y}$ hit $B(x,N)$ in $y$ for the first time, then $\mathrm{cap}(Y)$ stochastically dominates $ \mathrm{cap}(\overline{X}_{B(y,\overline{N})})$ under $\P_y$, and similarly for ${Z}$, as well as $Y^1$ with $\hat{N}$ instead of $\overline{N}$, we have for all $t,s\geq1$ that the probability of $\textnormal{LocUniq}_{u,N,\xi}(x)^c$ can be upper bounded by
    \begin{equation}
    \label{eq:finalupperloluniq}
    \begin{split}
         C(uN^{\nu})^2\bigg(&\exp\Big(-\frac{(1-6\eta\log(1/\eta))^2u\overline{N}^{\nu}}{t}\Big)+\exp\Big(-\frac{cuN^{\nu}}{s\log(N)^{1_{\alpha=2\nu}}}\Big)
         \\&+\sup_{y\in{G}}\P_y\Big(\mathrm{cap}(\overline{X}_{B(y,\overline{N})})\leq \frac{\overline{N}^{\nu}}{t}\Big)+\P_y\Big(\mathrm{cap}(\overline{X}_{B(y,\hat{N})})\leq \frac{\hat{N}^{\nu}}{s}\Big)^{\frac{\eta u\overline{N}^{\nu}}{t}}\bigg).
    \end{split}
    \end{equation}
    Using Theorem~\ref{the:boundcapacitynointro}, and taking  $t=s=(u^{\frac1\nu}N)^{\nu-1}$ if $\nu>1$ and $\alpha> 2\nu$, one obtains the upper bound in \eqref{eq:boundlocaluniqnointro} in this case. Note that the condition $t\leq \Cr{ccapwalk3}N^{\nu-1}$, or $t\leq \log(N)/(\Cr{Cbeta}(1+\eta/2))$ if $\nu=1$, is actually never used in the proof of the upper bounds of Theorem~\ref{the:boundcapacitynointro}. When $\alpha=2\nu$, one takes $t=(u^{\frac1\nu}N)^{\nu-1}\log(u_0/u)$ and $s=C\log(N)$, where the constant $C$ is chosen large enough so that the last probability in \eqref{eq:finalupperloluniq} is smaller than $1/2$, which is possible by \cite[Lemma~5.3]{DrePreRod5}.  Using Theorem~\ref{the:boundcapacitynointro} again, one can then bound \eqref{eq:finalupperloluniq} by
    \begin{equation*}
        C(uN^{\nu})^2\Big(\exp\Big(-\frac{cu^{\frac1\nu}N}{\log(u_0/u)}\Big)+\exp\Big(-\frac{cuN^{\nu}}{\log(N)^{2}}\Big)\Big).
    \end{equation*}
    Noting that $\nu\geq2$ when $\alpha=2\nu$ by \eqref{eq:condnualpha}, one easily deduces \eqref{eq:boundlocaluniqnointro} if $uN^{\nu}\geq \Cr{cuNlocuniq}\log(u_0/u)^{1/\nu}$ for some constant $\Cr{cuNlocuniq}$ large enough. When $\nu<1$, one notices that by \cite[Lemma~3.2]{DrePreRod2} (or in a more complicated fashion by \eqref{eq:capunionbound}) there exists a constant $c$ so that $\mathrm{cap}(\overline{X}_{B(y,M)})\geq cM^{\nu}$ for all $y\in{G}$ and $M\geq1$, and so taking $t=s=1/c$ we can also conclude. Finally when $\nu=1$ one chooses $t=\log(u\overline{N})/(\Cr{cbeta}(1-\eta))$ and $s=\log(t)/(\Cr{cbeta}(1-\eta))$. The second line of \eqref{eq:finalupperloluniq} can then be bounded using \eqref{eq:boundcapacitynu=1nointro} by $\Cr{Ccapwalk2}\exp\big(-u\overline{N}\big)$. Since $cuN^{\nu}/s\geq u\overline{N}^{\nu}/t\geq \Cr{cbeta}u(\xi-1)(1-2\eta\xi/(\xi-1))(1-\eta)^2N/\log(uN)$ if $uN^{\nu}$ is large enough, one can easily conclude by \eqref{eq:finalupperloluniq}, and after a change of variable for $\eta$
    \medskip
    
    Let us now turn to the lower bound. Let $x_0\in{G}$,  $N'\stackrel{\textnormal{def.}}{=}(1+\eta)\xi N$, $P\in{[2,N']}$ be an integer that we will fix later, $P'\stackrel{\textnormal{def.}}{=}2(1+\eta)P$, and let $x_0=x'_1,\dots,x'_{P'}$ as in Lemma~\ref{lem:exitviatubes},  but for $2(1+\eta)N'$ instead of $N$, $P'$ instead of $P$, and $x_0$ instead of $x$ therein. Let $M\in{\N}$ that we will fix later, fix $I=M+\lceil 1+1/\Cr{Cexittube3}+P\rceil$ and $x\stackrel{\textnormal{def.}}{=}x'_{I}$, and note that $B(x'_M,N'/P)\cup B(x'_{P'},N'/P)\subset B(x,N')^c$ upon additionally assuming that $P\geq \Cl{cNPball}$ for some large enough constant $\Cr{cNPball}=\Cr{cNPball}(\eta)$. Let us also define $J\stackrel{\textnormal{def.}}{=}M+\lceil P(1+1/((1+\eta)\xi))\rceil$ and $z\stackrel{\textnormal{def.}}{=}x'_{J+1}$, which satisfies $B(z,N'/P)\subset B(x,N)$. Let us further abbreviate $\tilde{\L}_{x}^{P,N'}\stackrel{\textnormal{def.}}{=}\cup_{i=M}^{I}B(x'_i,\Cr{Cexittube2}N'/P)$ and $\tilde{\L}_{z}^{P,N'}\stackrel{\textnormal{def.}}{=}\cup_{i=J+1}^{P'}B(x'_i,\Cr{Cexittube2}N'/P)$, and note that $\tilde{\L}_{x}^{P,N'}\cap \tilde{\L}_{z}^{P,N'}=\varnothing$ if $P\geq \Cr{cNPball}$, upon increasing the constant $\Cr{cNPball}$ if necessary. The sets $\tilde{\L}_{x}^{P,N'}$ and $\tilde{\L}_{z}^{P,N'}$ will serve as two disjoint "tubes" starting in $B(x,N)$ and ending in $B(x,N')^c$, in which we will construct two interlacement trajectories hitting $B(x,N)$ and not connected in $\I^u\cap B(x,\xi N)$, see Figure~\ref{fig:locuniqlower}.  

    \begin{figure}[ht]
\centering
\includegraphics[scale=0.26]{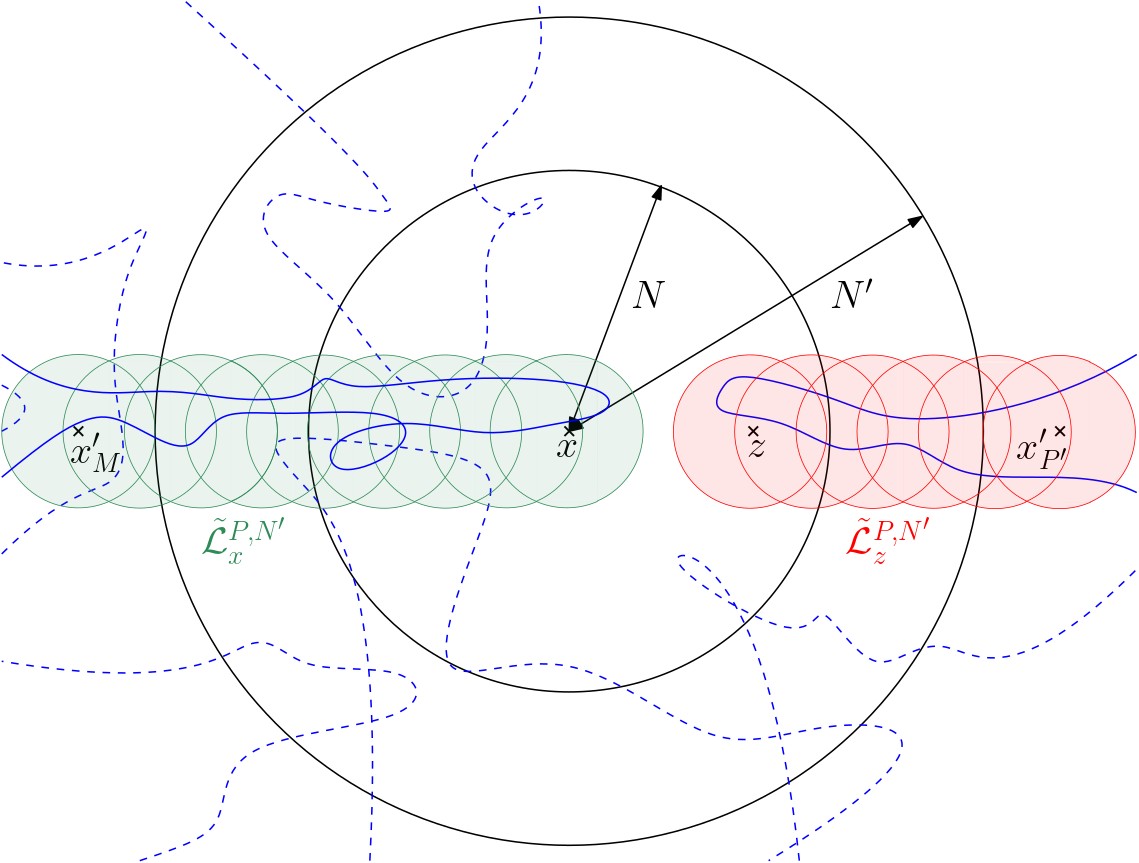}
\caption{The strategy to ensure that the local uniqueness event does not occur: a) There is exactly one trajectory hitting $B(x'_{P'},N'/P)$, and this trajectory visits $B(z,N'/P)$ while staying confined in the tube $\tilde{\L}^{P,N'}_z$. b) There are trajectories hitting $B(x'_M,N'/P)$, the first such trajectory visits $B(x,N'/P)$ while staying confined in the tube $\tilde{\L}^{P,N'}_x$, whereas all the other such (dashed) trajectories never visit $B(x,\xi N)$. c) Any other (dashed) trajectory never visits $\tilde{\L}_z^{P,N'}$.}
    \label{fig:locuniqlower}
\end{figure}

   Let us abbreviate $\Cl{cabbrev}\stackrel{\textnormal{def.}}{=}(2^{2+\nu}\Cr{CGreen}\Cr{Ccapball})^{-\frac1\nu}$. Recall that $N_u^A$ is the number of trajectories in the interlacement process at level $u$ hitting $A$, let $\tilde{N}_u^{\tilde{\mathcal{L}}_z^{P,N'}}$ be the number of trajectories hitting $\tilde{\mathcal{L}}_z^{P,N'}$ but not $B(x'_M,N'/P)\cup B(x'_{P'},N'/P)$, and $\tilde{N}_u^{B(x'_M,N'/P)}$ be the number of trajectories hitting $B(x'_M,N'/P)$ but not $B(x'_{P'},N'/P)$. Then in view of \eqref{eq:defequicap} and \eqref{eq:defRI}, and by thinning property of Poisson random variables,  $\tilde{N}_u^{B(x'_M,N'/P)}$ is a Poisson random variable with parameter 
   \begin{equation*}
   \begin{split}
       &u\sum_{y\in{B(x'_M,N'/P)}}\lambda_y\P_y(H_{B(x'_{P'},N'/P)}=\infty)\P_y(H_{B(x'_{P'},N'/P)}=\infty,\tilde{H}_{B(x'_M,N'/P)}=\infty)
       \\&\geq u\inf_{y\in{\partial  B(x'_M,\Cr{cabbrev}N'/P)^c}}\P_y(H_{B(x'_{P'},N'/P)\cup B(x'_M,N'/P)}=\infty)^2\mathrm{cap}(B(x'_M,N'/P))
    \end{split}
   \end{equation*} 
   where we used the strong Markov property at the first exit time of $B(x'_M,\Cr{cabbrev}N'/P)$ to bound the last probability on the first line. Moreover if $P\geq \Cr{cNPball}$ and upon increasing $\Cr{cNPball}$ (which depends on the choice of $M$), by a union bound, \eqref{eq:boundentrance} our choice of $\Cr{cabbrev}$ we have  for all $y\in{\partial  B(x'_M,\Cr{cabbrev}N'/P)^c}$
   \begin{equation*}
       \P_y\big(H_{B(x'_{P'},N'/P)\cup B(x'_M,N'/P)}<\infty\big)\leq \frac12,
   \end{equation*}
   where we used that $d(x'_{P'},y)\geq (2P-M-1/\Cr{Cexittube3}-\Cr{cabbrev})N'/P-\Cr{Cdvsdgr}^{-1}>\Cr{cabbrev}N'/P$. In particular, if $P\leq (u/u_0)^{\frac1\nu}N$, it follows from \eqref{eq:capball} that $\tilde{N}_u^{B(x'_M,N'/P)}$ dominates a Poisson random variable with constant parameter, and note that by \eqref{eq:capball} it is also clearly dominated by a Poisson random variable with parameter $\Cr{Ccapball}u(N'/P)^{\nu}$. 
   
   Moreover, $\tilde{N}_u^{\tilde{\mathcal{L}}_z^{P,N'}}$ is by definition dominated by a Poisson random variable with parameter $u\mathrm{cap}(\tilde{\mathcal{L}}_z^{P,N'})-u\mathrm{cap}(B(x'_{P'},N'/P))$. To bound $\mathrm{cap}(\tilde{\mathcal{L}}_z^{P,N'})$, note that if we take $S_i=B(x'_{i+J},\Cr{Cexittube2}N'/P)$  for all $1\leq i\leq P'-J$, then since $d(x'_i,x'_j)\leq (j-i+ 1/\Cr{Cexittube3})N'/P\leq (j-i+ 1/\Cr{Cexittube3})N'(3\eta+(\xi-1)/\xi)/(P'-J)$ for all $J\leq i\leq j\leq P'$, and similarly for the lower bound, the inequality \eqref{eq:capunionbound2} is in force for some finite constant $\delta$, $P'-J$ instead of $P$ and $N'(3\eta+(\xi-1)/\xi)$ instead of $N$ therein, as long as $P\geq \Cr{cNPball}$ upon increasing $\Cr{cNPball}$. Using \eqref{eq:defGnu} and by our choice of $N'$, the function $F_{\nu}(N'(3\eta+(\xi-1)/\xi),N'(3\eta+(\xi-1)/\xi)/(P'-J))$ appearing therein can moreover be upper bounded by $F_{\nu}((\xi-1)N,\xi N/P)(1+C\eta)$ for some constant $C<\infty$, upon increasing $\Cr{cNPball}$ again. Combining the previous observations and using that $\tilde{N}_u^{\tilde{\mathcal{L}}_z^{P,N'}}$, $\tilde{N}_u^{B(x'_M,N'/P)}$ and $N_u^{B(x'_{P'},N'/P)}$  are independent, we deduce that there exists a large enough constant $\Cl{ccapN'}<\infty$ such that for all $P\in{[\Cr{cNPball},(u/u_0)^{\frac1\nu}N]}$ 
    \begin{equation}
    \label{eq:NuLPN'}
    \begin{split}&\P\big(\tilde{N}_u^{\tilde{\mathcal{L}}_{z}^{P,N'}}=0,N_u^{B(x'_{P'},N'/P)}=1,\tilde{N}_u^{B(x'_M,N'/P)}\in{[1,\Cr{ccapN'}u(N'/P)^{\nu}]}\big)
    \\&\geq cu\mathrm{cap}(B(x'_{P'},N'/P))\exp\big(-u\Cr{Ccapunionball}(1+C\eta)F_{\nu}((\xi-1)N,\xi N/P))\big),
    \end{split}
    \end{equation}
    for some constant $c>0$ and $C<\infty$ depending on $\eta$, $\xi$ and $u_0$. Let $X^{1,x'_{P'}}$ be the first doubly infinite trajectory of random interlacements hitting $B(x'_{P'},N'/P)$ started at its first hitting time of $B(x'_{P'},N'/P)$, see above \eqref{eq:defRI}, and denote by $E_{z}^+$ the event that $(X^{1,x'_{P'}}_k)_{k\geq0}$ first visits $B(z,N'/P)$ and then returns to $B(x'_{P'},N'/P)$ before exiting $\tilde{\L}_{z}^{P,N'}$, and afterwards never returns to $B(x,\xi N)$ anymore. 
    By the strong Markov property at time $H_{B(z,N'/P)}$ and at the first hitting time of $B(x'_{P'},N'/P)$ after $H_{B(z,N'/P)}$
    we have if $P\geq \Cr{cNPball}$
    \begin{equation}
    \label{eq:probaE+}
    \begin{split}
        \P(E_{z}^+)&\geq \inf_{y\in{B(x'_{P'},N'/P)}}\P_y(H_{B(z,N'/P)}<T_{\tilde{\L}_{z}^{P,N'}})\inf_{y\in{B(z,N'/P)}}\P_y(H_{B(x'_{P'},N'/P)}<T_{\tilde{\L}_{z}^{P,N'}})
         \\&\phantom{\geq}\times\inf_{y\in{\partial B(x'_{P'},N'/P)}}\P_{y}(H_{B(x,\xi N)}=\infty)
        \\&\geq \Cr{cbetterbound}\exp(-CP),
    \end{split}
    \end{equation}
     where  in the last inequality we used  \eqref{eq:exitviatubes} twice and  the fact that $P'\leq 4P$ to bound the probabilities on the first line, as well as a reasoning similar to \eqref{eq:betterlowerbound} (with $5\eta$ instead of $\eta$ and $\xi N$ instead of $N$ therein) to bound the probability on the second line, together with the inequality $\frac{N'}{P}\leq \frac{\xi \eta N}{\Cr{Cexittube2}}$ upon increasing $\Cr{cNPball}$. Moreover, let $E_{z}^-$ be the event that $(X_{-k}^{1,x'_{P'}})_{k\geq0}$ never visits $B(x,\xi N)$, then, using the convention $0/0=\infty$, by \eqref{eq:defRI} and the Markov property at time $T_{B(x'_{P'},{K}N'/P)}$ for all $K>1$ and $P\geq  \Cr{cNPball}$, assuming that $\Cr{cNPball}=\Cr{cNPball}(K)$ is large enough so that $B(x'_{P'},{K} N'/P)\subset B(x,\xi N)^c$,
    \begin{equation}
    \label{eq:probaE-}
    \begin{split}
        \P(E_{z}^-)&\geq \inf_{y\in{B(x'_{P'},N'/P)}}\frac{\P_y(\tilde{H}_{B(x'_{P'},N'/P)}>T_{B(x'_{P'},{K} N'/P)})}{\P_y(\tilde{H}_{B(x'_{P'},N'/P)}=\infty)}
        \\&\phantom{\geq}\times\inf_{y\in{\partial B(x'_{P'},{K} N'/P)^c}}\P_y(H_{B(x'_{P'},N'/P)\cup B(x,\xi N)}=\infty)
        \\&\geq \inf_{y\in{\partial B(x'_{P'},{K} N'/P)^c}}\P_y(H_{B(x,\xi N)}=\infty)-\P_y(H_{B(x'_{P'},N'/P)}<\infty)\geq \frac{\Cr{cbetterbound}}{2}.
    \end{split}
    \end{equation}
    In the last inequality we used  \eqref{eq:boundentrance} to upper bound the second probability by $\Cr{cbetterbound}/2$ if $K$ is a large enough constant, and a reasoning similar to \eqref{eq:betterlowerbound} (with $5\eta$ instead of $\eta$ and $\xi N$ instead of $N$ therein) to lower bound the first probability by $\Cr{cbetterbound}$ if $2KN'/P\leq \xi\eta N/\Cr{Cexittube2}$, which holds upon increasing  $\Cr{cNPball}$.
    
    Let us also denote by $X^{i,x'_{1}}$ the $i$-th doubly infinite trajectory of random interlacements hitting $B(x'_{M},N'/P)$ but not $B(x'_{P'},N'/P)$, started at its first hitting time of $B(x'_{M},N'/P)$, and by $E_{x}^\pm$ the events defined similarly as $E_z^{\pm}$, but replacing $x'_{P'}$ and $z$ by  $x'_M$ and $x$ respectively. One can lower bound $\P(E_{x}^{\pm})$ similarly as in \eqref{eq:probaE+} and \eqref{eq:probaE-}, with small adaptations to ensure that the walk avoids $B(x'_{P'},N'/P)$. Note that to ensure that the walk starting in $B(x'_M,N'/P)$ never visits $B(x,\xi N)$, one needs to slightly adapt the proof of \eqref{eq:betterlowerbound} by forcing the walk to visits successively $B(x'_{M-1},N'/P),\dots,B(x'_1,N'/P)$ while avoiding $B(x,\xi N)$, and then never come back in $B(x,\xi N)$, which is possible upon fixing $M$ a large enough constant (which plays the same role as $K'$ in the proof of \eqref{eq:betterlowerbound}). Moreover by the same reasoning,  one has
    \begin{equation}
    \label{eq:internothitblambda}
    \P\Big(\bigcap_{i=2}^{\Cr{ccapN'}u(N'/P)^{\nu}}X^{i,x'_{1}}\cap B(x,\xi N)=\varnothing\Big)\geq \exp\big(-Cu(N/P)^{\nu}\big).
    \end{equation}
    Note that the intersection of the events on the first line of \eqref{eq:NuLPN'} and on the left-hand side of \eqref{eq:internothitblambda} with $E_{z}^+\cap E_{z}^-\cap E_{x}^+\cap E_{x}^-$ implies that there is a unique trajectory in  the interlacements process at level $u$ visiting $\tilde{\L}_z^{P,N'}\cap B(x,\xi N)$, which visits $B(z,N'/P)\subset B(x,N)$ and then exits $B(x,\xi N)$ without ever hitting $B(x,\xi N)\setminus \tilde{\L}_z^{P,N'}$, and that there is a trajectory hitting $B(x,N'/P)\subset B(x,N)$ but not $\tilde{\L}_z^{P,N'}\cap B(x,\xi N)$, which together implies $\textnormal{LocUniq}_{u,N,\xi}(x)^c$, see Figure~\ref{fig:locuniqlower}. Hence we have by independence of the previous events and \eqref{eq:capball} that if $P\in{[\Cr{cNPball},(u/u_0)^{\frac1\nu}N]}$, then one can lower bound $\P\big(\textnormal{LocUniq}_{u,N,\xi}(x)^c\big)$ by 
    \begin{equation*}
    \begin{split}
        cu(N/P)^{\nu}\exp\big(-u\Cr{Ccapunionball}(1+C\eta)F_{\nu}((\xi-1)N,\xi N/P)-CP-Cu(N/P)^{\nu}\big)
    \end{split}
    \end{equation*}
    for some constants $c>0$ and $C<\infty$. We now take $P=\lceil uN^{\nu}/\log(uN^{\nu})^2\rceil$ if $\nu\leq 1$ and $P=\lfloor(u/u_0)^{\frac1\nu}N\rfloor$ if $\nu>1$, which satisfies $P\in{[\Cr{cNPball},(u/u_0)^{\frac1\nu}N]}$ if  $uN^{\nu}\geq \Cr{cuNlocuniq}$ for some constant $\Cr{cuNlocuniq}=\Cr{cuNlocuniq}(\xi,u_0,\eta)$ large enough,  and one readily deduces the lower bound in \eqref{eq:boundlocaluniqnointro} up to a change of variable for $\eta$.
    \end{proof}

Note that the condition $uN^{\nu}\geq \Cr{cuNlocuniq}$ in Theorem~\ref{the:localuniquenessnointro} when $\alpha>2\nu$ is necessary for the lower bound in \eqref{eq:boundlocaluniqnointro} to hold since when $Nu^{\frac1\nu}\rightarrow0$, the probability of non-local uniqueness converges to $0$, because this probability is smaller than the probability that $\I^u\cap B(x,N)\neq\varnothing$, and thus smaller than $1-\exp(-\Cr{Ccapball}uN^{\nu})$ by \eqref{eq:capball} and \eqref{eq:defIuintro}.

\begin{Rk}
\phantomsection\label{rk:endlocuniq}
\begin{enumerate}
    \item\label{rk:differentlocaluniq} The technique we use to deduce the upper bound of \eqref{eq:boundlocaluniqnointro} from Theorem~\ref{the:boundcapacitynointro} could actually be slightly simplified if $\alpha>2\nu$ and one does not want to compute the exact value of the constant $\Cr{clocaluniq2}$ when $\nu=1$. Currently, the proof connects two trajectories of interlacements $Y$ and $Z$ by first proving that with high probability there is a trajectory $Y^i$ hitting $Y$ and with large capacity, and a trajectory $Z^i$ hitting $Z$ and with large capacity, see \eqref{eq:boundPhatE}, and that still with high probability there is a trajectory of interlacements hitting $Y^i$ which intersects another trajectory of interlacements hitting $Z^i$, see \eqref{eq:interynzm}. One could instead simply ask that there is a trajectory $Y^i$ hitting $Y$ which intersects another trajectory $Z^i$ hitting $Z$, and the probability of this event can be bounded similarly as in \eqref{eq:interynzm} and \eqref{eq:interRW}, and is of the correct order, but only up to constants. In the proof of \cite[Theorem~5.1]{DrePreRod5}, one uses an even simpler approach which consists in connecting $Y$ and $Z$ directly by a single trajectory of random interlacements, and the probability of this event is bounded in \cite[Lemma~4.3]{DrePreRod2} following ideas from \cite[Lemma~12]{MR2819660}. However this simpler strategy would not be sufficient to yield the upper bound in \eqref{eq:boundlocaluniqnointro} when $\nu\geq1$ (even with Theorem~\ref{the:boundcapacitynointro} at hand), as can for instance be seen by comparing the first exponential in \eqref{eq:finalupperloluniq} with \cite[(4.4)]{DrePreRod2} when $\mathrm{cap}(U)$ and $\mathrm{cap}(V)$ are both of order $N^{\nu}/t$ and $L=N$ therein.
    \item \label{rk:suplowerbound}
    One can prove a lower bound on $\P(\textnormal{LocUniq}_{u,N,\xi}(x)^c)$ similar to the one from \eqref{eq:boundlocaluniqnointro} for any $x\in{G}$ with the following property: $B(x,N/2)$ contains a point $z$ such that there exist geodesics $(z=z_0,z_1,\dots,z_{CN})$ and $(x=x_0,x_1,\dots,x_{CN})$ as in \eqref{eq:intro_shortgeodesic} with $B(x_i,cN)\cap B(z_j,cN)=\varnothing$ for all $1\leq i, j\leq CN$, where $c$ is a constant that can be taken arbitrarily small up to changing the constant $\Cr{cuNlocuniq}$ in Theorem~\ref{the:localuniquenessnointro}, and $C<\infty$ is a large enough constant, depending on $\xi$. Indeed one can then proceed similarly as in Lemma~\ref{lem:exitviatubes} to build from the previous geodesics two sets $\tilde{\L}_z^{P,N'}$ and $\tilde{\L}_x^{P,N'}$ which consist respectively of balls $B(z_i,\Cr{Cexittube2}N'/P)$ and $B(x_i,\Cr{Cexittube2}N'/P)$, and are thus disjoint if $P$ is large enough, and which can each contain a random walk starting respectively in $B(z,N'/P)$ and $B(x,N'/P)$, until reaching $B(x,N')^c$, and which then never come back in $B(x,\xi N)$. One can then use these sets to force non-local uniqueness similarly as in the proof of Theorem~\ref{the:localuniquenessnointro}, and we leave the details to the reader. For example, one can thus prove the previous lower bound if $x$ belongs to a doubly infinite geodesic for the graph distance, and that $d=\Cr{cgeo}d_{\mathrm{gr}}$ along this geodesic. In particular, the previous lower bound is satisfied for all $x\in{G}$ if $G$ is any of the graphs from \cite[(1.4)]{DrePreRod2}, see \cite[Theorem~4.1]{MR864581} as to why this is true for $G_4$, and for $G_3$ one simply considers geodesics from $x$ to a point at distance at least $CN$ in the direction of $e_1$, where $e_1$ is the unit vector in the first Cartesian direction, take $z$ at distance $N/4$ from $x$ in the direction $-e_1$, and consider a second geodesic from $z$ to a point at distance at least $CN$ in the direction of $-e_1$.
    
    Actually, one can also prove that the previous lower bound is satisfied for all $x\in{G}$ for any graph $\G$ as in \eqref{eq:standingassumptionintro} with $\nu\geq1$. To prove this, one simply takes $z=x$ in the proof of Theorem~\ref{the:localuniquenessnointro} and replaces the set $\tilde{\L}_z^{P,N'}$ therein by the set ${\L}_x^{P,N'}$ from Lemma~\ref{lem:exitviatubes}. One can then simply require that $\L_{x}^{P,N'}$ contains a unique trajectory of interlacement hitting both $B(x,N)$ and $B(x,N')^c$, which afterwards never returns in $B(x,\xi N)$ for both its backwards and forward parts similarly as in the events $E_z^+\cap E_z^-$, and that there is another random walk starting in $B(x,N)\setminus B(\L_x^{P,N'},\Cl[c]{cfinalball}N)$ which never hits $\L_x^{P,N'}$, where $\Cr{cfinalball}>0$ is a small enough constant. Note that this last event happens with probability at least $1-CN^{-\nu}\mathrm{cap}(\L_x^{P,N'})$ by \eqref{entrancegreenequi}, which is larger than $1/2$ when $\nu\geq 1$ and $P$ is large enough in view of \eqref{eq:defGnu} and \eqref{eq:capunionbound2}, and that $P$ can indeed be chosen large enough for the choice of $P$ from the proof of Theorem~\ref{the:localuniquenessnointro} up to increasing $\Cr{cuNlocuniq}$. One can also show that there exists a walk in $\I^u$ intersecting $B(x,N)\setminus B(\L_x^{P,N'},\Cr{cfinalball}N)$ with high probability if $\Cr{cuNlocuniq}$ is large enough and $\Cr{cfinalball}$ is small enough since by \eqref{eq:intro_sizeball} and \eqref{eq:boundonlambda} we then have $|B(\L_{x}^{P,N'},2\Cr{cfinalball}N)|\leq C(\Cr{Cexittube2}/P+2\Cr{cfinalball})^{\alpha}(N')^{\alpha}P\leq |B(x,N-\Cr{cfinalball}N)|$. We leave the details to the reader.
    \item  \label{rk:constants}
    Let us quickly comment on the value of the constant $\Cr{clocaluniq2}$ from \eqref{eq:boundlocaluniqnointro}, which is not explicit when $\nu<1$, contrary to the case $\nu=1$. This is due to the fact that when $\nu=1$, one can take $s$ large enough so that the second exponential in \eqref{eq:finalupperloluniq} is smaller than the first, and at the same time $\mathrm{cap}(\overline{X}_{B(y,\hat{N})})\geq \frac{\hat{N}^{\nu}}{s}$ occurs with large enough probability by Theorem~\ref{the:boundcapacitynointro}. However when $\nu<1$, $\mathrm{cap}(\overline{X}_{B(y,\hat{N})})\geq \frac{\hat{N}^{\nu}}{s}$ can occur only if $s$ is constant, and then the second exponential in \eqref{eq:finalupperloluniq} could be larger than the first. It is an interesting question whether this is only a technical problem, or if $\Cr{clocaluniq2}$ can in fact be taken smaller than $\Cr{cbeta}(\xi-1)^{\nu}$ when $\nu<1$. 
     \item\label{rk:betterctegffcablesystem} In the proofs of \cite[Theorem~1.7 and Lemma~6.4]{DrePreRod5}, one uses the bounds on local uniqueness from \cite[Theorem~5.1]{DrePreRod5}. If one instead now uses the better bounds from Theorem~\ref{the:localuniquenessnointro}, one can in fact replace $\ell$ by $M\xi\log(r/\xi)\log(\xi)^{1\{\alpha=2\nu\}}$ when $\alpha\geq 2\nu$ and $\nu>1$ (which is the most interesting case) in \cite[(6.19) and (8.5)]{DrePreRod5}. In turn, this means that one can replace the constants $c_7$ and $c_8$ in \cite[Theorem~1.7 and Proposition~6.1]{DrePreRod5} by $\nu-1$, and only assume that $r/\xi\geq \log(\xi)\log\log(\xi)^C$ for $C$ large enough when $\alpha=2\nu$ in \cite[(1.46)]{DrePreRod5}. Note that when $\alpha=2\nu$, this new version of \cite[(1.46)]{DrePreRod5} is still better than the estimate one could obtain from adapting the proof of \cite[Proposition~6.1]{DrePreRod5} to this case, see \cite[Remark~8.1,1)]{DrePreRod5} as to why. We also refer to \cite{DrePreRod8} for further applications of Theorem~\ref{the:FPPnointro} to the percolation of the Gaussian free field on the cable system.
    \item \label{rk:sharpalpha<2nu}
    When $\alpha<2\nu$, one cannot obtain the same upper bounds as in the case $\nu>1$ and $\alpha>2\nu$ from \eqref{eq:boundcapacitynu>1nointro}. Indeed, it follows from subadditivity of capacity and \eqref{eq:intro_Green} that  for all $s,N\geq1$ and $x\in{G}$
\begin{equation*}
    \P_x(\mathrm{cap}(\overline{X}_{B(x,N)})\leq {sN^{\alpha-\nu}})\geq \P_x(T_{B(x,N)}\leq sN^{\alpha-\nu}\Cr{cGreen})\geq 1-C/s
\end{equation*}
for some finite constant $C<\infty$, where in the last inequality we used Markov's inequality combined with \cite[Lemma~A.1]{DrePreRod2}. In particular, this probability is larger than $1/2$ if $s$ is a large enough constant, which implies that upper bounds similar to \eqref{eq:boundcapacitynu>1nointro} cannot be true anymore when $\alpha<2\nu$. This indicates that an upper bound similar to the case $\nu>1$ and $\alpha>2\nu$ in \eqref{eq:boundlocaluniqnointro} should also not be true anymore when $\alpha<2\nu$. In other words, the lower bounds in \eqref{eq:boundlocaluniqnointro} and \eqref{eq:boundcapacitynu>1nointro} are not sharp when $\alpha<2\nu$. One can still use our method to deduce some non-optimal upper bounds on the quantities in \eqref{eq:boundcapacitynu>1nointro} and \eqref{eq:boundlocaluniqnointro} when $\alpha<2\nu$, but it turns out that these bounds would not be better than the ones from \cite[Remark~5.4]{DrePreRod5}. It is an interesting open question to prove matching upper and lower bounds similar to  \eqref{eq:boundlocaluniqnointro} and \eqref{eq:boundcapacitynu>1nointro} when $\alpha<2\nu$ but with different scaling in the exponential, as well as to remove the logarithm discrepancy  when $\alpha=2\nu$.
\end{enumerate}
\end{Rk}

Let us now quickly explain how to deduce Theorem~\ref{the:localuniquenessintro} from Theorem~\ref{the:localuniquenessnointro}.
\begin{proof}[Proof of Theorem~\ref{the:localuniquenessintro}]
    One simply combines Theorem~\ref{the:localuniquenessnointro} for $u_0=1$ and $\xi=2$ (note that $\xi_0=1$ on $\Z^d$) with \eqref{eq:assumptionZd} and \eqref{eq:defGnu}, noting that the supremum in \eqref{eq:boundlocaluniqnointro} can be removed in view of Remark~\ref{rk:endlocuniq},\ref{rk:suplowerbound}).
\end{proof}

We conclude with a remark on the necessity of the condition \eqref{eq:intro_shortgeodesic}, which did not appear in \cite{DrePreRod2,DrePreRod5}.

\begin{Rk}
\label{rk:whyintroshortgeodesic} The condition \eqref{eq:intro_shortgeodesic} is only used in the proof of the lower bound in \eqref{eq:limitVunu>1nointro} and in Lemma~\ref{lem:exitviatubes}, and hence it is also indirectly used for the proof of the lower bounds in  \eqref{eq:bounddtvnointro}, \eqref{eq:boundlocaluniqnointro}, \eqref{eq:boundcapacitynu>1nointro} and \eqref{eq:boundcapacitynu=1nointro}, as well as the same bounds on $\Z^d$ in Section~\ref{sec:intro}. The corresponding upper bounds, as well as Theorem~\ref{the:FPP}, hold without this assumption. We refer to \cite[Lemma~8.1,3)]{DrePreRod5} to see why such an additional condition is in fact necessary to obtain the previous lower bounds when $\nu>1$. When $\nu<1$, one can however still prove the lower bound in \eqref{eq:bounddtvnointro} even without the condition \eqref{eq:intro_shortgeodesic}, and in fact even without \eqref{eq:intro_sizeball}, as long as one does not care about the exact value of the constant $\Cr{CFPP2intro}$ in \eqref{eq:bounddtvnointro}, similarly as in \cite[(1.25)]{DrePreRod5}. Indeed, one can replace in \eqref{eq:prooflowerFPPintro} the use of the set $\L_{x}^{P,N}$ from  Lemma~\ref{lem:exitviatubes} (which is where we used \eqref{eq:intro_shortgeodesic}) by the set $B(x,N)$, and then use that the capacity of $B(x,N)$ is smaller than $\Cr{Ccapball}N^{\nu}$ (i.e.\ it is the same as the capacity of $\L_{x}^{P,N}$ up to constants when $\nu<1$), see \eqref{eq:capball} (whose proof does not require \eqref{eq:intro_sizeball}). Note that condition \eqref{eq:intro_sizeball} was also never used in the proof of the upper bound in Theorem~\ref{the:localuniquenessnointro} when $\nu<1$. 
\end{Rk}

\bibliography{bibliographie}
\bibliographystyle{abbrv}

\end{document}